\documentclass[a4paper, 11pt]{article}
\usepackage{amsfonts}
\usepackage {amssymb}
\usepackage {amsmath}
\usepackage {amsmath}
\usepackage {amsthm}
\usepackage{graphicx}
\usepackage{xypic}
\usepackage {amscd}
\usepackage{mathrsfs}
\usepackage[colorlinks, linkcolor=red, anchorcolor=black, citecolor=red]{hyperref}
\usepackage{enumerate}

\usepackage{geometry}  \geometry{a4paper, total={160mm,220mm}, }

\setlength{\parindent}{0pt}

\newcommand{\Fut}{{\rm Fut}}

\newcommand{\sddb}{{\sqrt{-1}\partial\bar{\partial}}}

\newcommand{\bL}{{\bf L}}
\newcommand{\mcY}{{\mathcal{Y}}}
\newcommand{\mcL}{{\mathcal{L}}}
\newcommand{\mcX}{{\mathcal{X}}}
\newcommand{\mcD}{{\mathcal{D}}}
\newcommand{\mcM}{{\mathcal{M}}}

\newcommand{\vol}{{\rm vol}}
\newcommand{\ord}{{\rm ord}}
\newcommand{\lct}{{\rm lct}}
\newcommand{\vphi}{\varphi}

\newcommand{\bC}{{\mathbb{C}}}

\newcommand{\FS}{{\rm FS}}
\newcommand{\bP}{{\mathbb{P}}}

\newcommand{\mcW}{\mathcal{W}}
\newcommand{\chw}{{\rm CW}}

\newcommand{\mcQ}{{\mathcal{Q}}}

\newcommand{\sslash}{{/\!/}}
\newcommand{\bG}{\mathbb{G}}
\newcommand{\bK}{\mathbb{K}}
\newcommand{\bT}{\mathbb{T}}

\newcommand{\NA}{{\rm NA}}
\newcommand{\MA}{{\rm MA}}

\newcommand{\mcJ}{{\mathcal{J}}}
\newcommand{\mcZ}{\mathcal{Z}}

\newcommand{\triv}{{\rm triv}}
\newcommand{\bQ}{{\mathbb{Q}}}
\newcommand{\reg}{{\rm reg}}

\newcommand{\mcB}{{\mathcal{B}}}
\newcommand{\bR}{{\mathbb{R}}}

\newcommand{\mcH}{{\mathcal{H}}}
\newcommand{\mcO}{{\mathcal{O}}}
\newcommand{\mcF}{{\mathcal{F}}}
\newcommand{\bZ}{{\mathbb{Z}}}

\newcommand{\bD}{{\mathbb{D}}}

\newcommand{\sing}{{\rm sing}}

\newcommand{\bN}{{\mathbb{N}}}

\newcommand{\la}{\langle}
\newcommand{\ra}{\rangle}
\newcommand{\mcE}{\mathcal{E}}
\newcommand{\bfL}{{\bf L}}
\newcommand{\mcI}{\mathcal{I}}

\newcommand{\Val}{{\rm Val}}
\newcommand{\msc}{\mathscr}
\newcommand{\DHM}{{\rm DH}}
\newcommand{\mcR}{\mathcal{R}}

\newcommand{\ccL}{\check{\mcL}}
\newcommand{\wt}{\mathrm{wt}}
\newcommand{\ccZ}{{\check{\mcZ}}}
\newcommand{\ccQ}{{\check{\mcQ}}}

\newcommand{\bfD}{{\bf D}}
\newcommand{\mcU}{{\mathcal{U}}}
\newcommand{\rVal}{\mathring{\Val}}
\newcommand{\bfM}{{\bf M}}
\newcommand{\bfH}{{\bf H}}
\newcommand{\bfE}{{\bf E}}
\newcommand{\bfJ}{{\bf J}}
\newcommand{\lc}{{\rm lc}}
\newcommand{\bfF}{{\bf F}}
\newcommand{\ac}{{\rm ac}}
\newcommand{\bfj}{{\bf j}}

\newcommand{\Aut}{{\rm Aut}}
\newcommand{\PSH}{{\rm PSH}}
\newcommand{\bd}{{\rm bd}}
\newcommand{\cZ}{\mathcal{Z}}
\newcommand{\Lam}{{\bf \Lambda}}
\newcommand{\cF}{\mathcal{F}}
\newcommand{\bfI}{{\bf I}}

\newcommand{\cI}{\mathcal{I}}
\newcommand{\cO}{\mathcal{O}}

\newcommand{\Xdiv}{X^{\mathrm{div}}_\bQ}
\newcommand{\Zdiv}{Z^{\mathrm{div}}_\bQ}

\newcommand{\Blue}{}

\newcommand{\Ad}{{\rm Ad}}

\newcommand{\fg}{\mathfrak{g}}
\newcommand{\ft}{\mathfrak{t}}
\newcommand{\fk}{\mathfrak{k}}
\newcommand{\fa}{\mathfrak{a}}
\newcommand{\cE}{\mathcal{E}}

\newtheorem{thm}{Theorem}[section]
\newtheorem{prop}[thm]{Proposition}
\newtheorem{defn}[thm]{Definition}

\newtheorem{cor}[thm]{Corollary}
\newtheorem{rem}[thm]{Remark}

\newtheorem{exmp}[thm]{Example}
\newtheorem{lem}[thm]{Lemma}

\newtheorem{defn-prop}[thm]{Definition-Proposition}

\begin{document}

\title{$G$-uniform stability and K\"{a}hler-Einstein metrics on Fano varieties}
\author{Chi Li\\
\; \\
\textit{with an appendix by Jun Yu}}
\date{}

\maketitle

\abstract{
Let $X$ be any $\bQ$-Fano variety and $\Aut(X)_0$ be the identity component of the automorphism group of $X$. \Blue{Let $\bG$ be a connected reductive subgroup of $\Aut(X)_0$ that contains a maximal torus of $\Aut(X)_0$. 
We prove that $X$ admits a K\"{a}hler-Einstein metric if and only if $X$ is $\bG$-uniformly K-stable. } This proves a version of Yau-Tian-Donaldson conjecture for arbitrary singular Fano varieties.  A key new ingredient is a valuative criterion for $\bG$-uniform K-stability. 
}

\setcounter{tocdepth}{2}
\tableofcontents

\section{Introduction}

\Blue{In this paper, we always work over the field $\bC$ of complex numbers.} A log Fano pair $(X, D)$ is a normal projective variety $X$ together with an effective $\bQ$-Weil divisor $D$ such that $L:=-(K_X+D)$ is an ample $\bQ$-Cartier divisor and $(X, D)$ has at worst klt singularities \Blue{(see Definition \ref{def-klt})}. If $D=0$, then $X$ is called a $\bQ$-Fano variety.
In \cite{LTW19}, the author together with G. Tian and F. Wang proved the uniform version of Yau-Tian-Donaldson conjecture: a $\bQ$-Fano variety $X$ with a discrete automorphism group admits a K\"{a}hler-Einstein metric if and only if $X$ is uniformly K-stable, if and only if $X$ is uniformly Ding-stable. \Blue{Note that the klt condition is necessary both for the existence of K\"{a}hler-Einstein metrics (see \cite[Proposition 3.8]{BBEGZ}) and for K-(semi)stability (\cite[Theorem 1.3]{Oda13})}.  

In this paper, we consider the case when the automorphism group is not discrete. In this case, Hisamoto \cite{His16b} introduced a $\bG$-uniform stability condition (he called it relatively uniform stability for $\bG$) and made an insightful observation that this stability condition corresponds nicely with an analytic criterion for equivariant \Blue{coercivity} which he obtained by using Darvas-Rubinstein's principle. 
We will derive a refined analytic criterion for the existence of K\"{a}hler-Einstein metric and use Hisamoto's stability condition to formulate our main results. 
\vskip 2mm
{\bf Notation:} In this paper, we will use the following notations:
\begin{enumerate}[(i)]
\item $\Aut(X, D)$ denotes the automorphism group of $(X, D)$ (i.e. the automorphism of $X$ that preserves $D$). $\Aut(X, D)_0$ is its identity component. 
\item $\bG$ is a connected reductive subgroup of $\Aut(X, D)_0$. $C(\bG)$ is the center of $\bG$ and $\bT:=C(\bG)_0$ is the identity component of $C(\bG)$. \Blue{$\bT$ is a connected, commutative and reductive algebraic group, and is well-known to be isomorphic to an algebraic torus $(\bC^*)^r=((S^1)^r)^{\bC}$ (see \cite[Corollary 16.15]{Mil17}). }

\item $\bK$ is a maximal compact subgroup of $\bG$ that contains $(S^1)^r$.
\end{enumerate}

\vskip 2mm
\begin{defn}[{see \cite{His16b, His19}}]\label{defn-bGKstable}
With the above notation, $(X, D)$ is called $\bG$-uniformly K-stable if there exists $\gamma>0$ such that for any $\bG$-equivariant test configuration $(\mcX, \mcD, \mcL)$ for $(X, D, -(K_X+D))$ (see Definition \ref{defn-GequiTC}), the following inequality holds true:
\begin{equation}\label{eq-bGKstable}
\Blue{\bfM^\NA}(\mcX, \mcL)\ge \gamma\cdot \bfJ^\NA_{\bT}(\mcX, \mcL).
\end{equation}
\Blue{See \eqref{eq-MNA} for the definition of $\bfM^\NA$} and \eqref{eq-JNAT} for $\bfJ^\NA_{\bT}$.
If one replace the $\bfM^\NA$ invariant by $\bfD^\NA$ (see \eqref{eq-bfDTC}), then one defines $\bG$-uniform Ding-stability of $(X, D)$ (called relatively uniform $\bfD$-stability for $\bG$ in \cite{His16b}). 
\end{defn}

We will prove the following general existence result:
\begin{thm}\label{thm-YTD}
Let $(X, D)$ be a log Fano pair. $(X, D)$ admits a K\"{a}hler-Einstein metric if $(X, D)$ is $\bG$-uniformly K-stable, or equivalently if $(X, D)$ is $\bG$-uniformly Ding-stable. 
\end{thm}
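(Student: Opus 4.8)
The plan is to follow the variational strategy which reduces the existence of a Kähler--Einstein metric on $(X,D)$ to an equivariant properness property of the Ding (or Mabuchi) functional, and then to show that $\bG$-uniform stability implies this properness. The first reduction step I would carry out is the following: by the work of Darvas--Rubinstein, Hisamoto, and the singular analogue developed in \cite{LTW19}, it suffices to prove that the Ding functional $\bfD$ is $\bG$-\emph{coercive}, i.e. there exist $\epsilon>0$ and $C$ such that $\bfD(\vphi)\ge \epsilon\, \bfJ_{\bT}(\vphi) - C$ for all $\bK$-invariant $\vphi$ in the appropriate space of finite-energy potentials, where $\bfJ_{\bT}$ is the functional obtained by infimizing $\bfJ$ over the torus $\bT$-action. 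Once coercivity is established, the existence of a minimizer and a regularity/smoothness argument in the klt setting (again as in \cite{LTW19}, using the continuity method and partial $C^0$-estimates on singular varieties) yields the Kähler--Einstein metric. So the heart of the matter is a non-Archimedean-to-Archimedean comparison for the $\bG$-equivariant setting.

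The second, and main, step is to prove that $\bG$-uniform Ding-stability implies $\bG$-coercivity of $\bfD$. I would argue by contradiction: if $\bfD$ is not $\bG$-coercive, one produces a sequence $\vphi_j$ (normalized, $\bK$-invariant) with $\bfD(\vphi_j) \le \epsilon_j\, \bfJ_{\bT}(\vphi_j) - C_j$ with $\epsilon_j\to 0$ and $\bfJ_\bT(\vphi_j)\to\infty$ along a geodesic ray of finite energy. The classical (non-equivariant) YTD strategy of Berman--Boucksom--Jonsson then attaches to such a ray a non-Archimedean potential $\Phi$, equivalently a filtration/geodesic ray on $R = \bigoplus_m H^0(X, -m(K_X+D))$, and compares the Archimedean slope of $\bfD$ along the ray with $\bfD^\NA(\Phi)$ and the slope of $\bfJ$ with $\bfJ^\NA(\Phi)$; the $\bG$-invariance forces $\Phi$ to be $\bG$-invariant, and the key point is that the $\bT$-minimization on the Archimedean side matches the $\bfJ^\NA_\bT$ on the non-Archimedean side. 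This produces a $\bG$-equivariant (possibly non-finitely-generated) test configuration violating \eqref{eq-bGKstable} in the limit. The equivalence of Ding- and K-stability versions in this equivariant setting — i.e. that $\bG$-uniform Ding-stability and $\bG$-uniform K-stability coincide — would be handled as in the non-equivariant case via the inequality $\bfD^\NA\le \CM$ (with equality on Fano-type degenerations / after passing to the Ding-optimal degeneration), noting that all constructions respect the $\bG$-action.

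The step I expect to be the main obstacle is the passage from a general finite-energy $\bG$-invariant geodesic ray to an \emph{honest $\bG$-equivariant test configuration} — or at least to a limit object to which the stability inequality \eqref{eq-bGKstable} can be applied — together with the correct bookkeeping of the torus term $\bfJ^\NA_\bT$. In the non-equivariant case this is the content of the ``uniform Ding-stability $\Rightarrow$ coercivity'' part of \cite{LTW19}, built on Berman--Boucksom--Jonsson's approximation of geodesic rays by test configurations; here one must additionally: (a) run the approximation $\bG$-equivariantly, which is where I would invoke the \emph{valuative criterion for $\bG$-uniform K-stability} advertised in the abstract, allowing one to test coercivity against $\bG$-invariant valuations/filtrations rather than algebraic test configurations; and (b) control $\bfJ_\bT$ under this approximation, showing $\lim_j \bfJ_\bT(\vphi^{(j)}_t)/t$ recovers $\bfJ^\NA_\bT$ of the limiting filtration, which requires understanding how the infimum over $\bT$ behaves in the limit (a lower-semicontinuity plus properness argument for the moment-map/Duistermaat--Heckman side). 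Once this valuative reformulation is in place, the contradiction argument closes: a $\bG$-invariant destabilizing filtration with $\CM \le \gamma\,\bfJ^\NA_\bT$ \emph{failing} would contradict the $\bG$-uniform K-stability hypothesis, completing the proof.
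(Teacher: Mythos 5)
Your high-level strategy — reduce to $\bG$-coercivity via the Darvas--Rubinstein/Hisamoto framework, argue by contradiction using a destabilizing geodesic ray, and close the loop with a valuative criterion for $\bG$-uniform stability — does match the architecture of the paper's argument, and you have correctly identified that the valuative criterion and the bookkeeping of the reduced norm $\bfJ^\NA_\bT$ are the crux. However, several of the specific steps you propose are precisely the ones the paper flags as untenable in the singular or $\bG$-equivariant setting, and the paper takes a genuinely different route around them.

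First, you propose to run the contradiction argument using coercivity of the Ding functional and then obtain a minimizer by compactness. The paper instead proves $\bG$-coercivity of the \emph{Mabuchi} functional, precisely because, as it emphasizes in Remark \ref{rem-Hisamoto}, a bound on the Ding energy alone does not control the entropy term well enough to invoke the compactness Theorem \ref{thm-BBEGZ}; one needs $\bfM = \bfH - (\bfI - \bfJ)$ to extract a bound on $\bfH$. Second, the regularity step you describe (``continuity method and partial $C^0$-estimates on singular varieties'') is not what \cite{LTW19} or the present paper do; they use the pluripotential-theoretic framework of BBEGZ (Theorem \ref{thm-BBEGZ} and \cite[Theorem 4.8]{BBEGZ}) rather than any continuity-method argument.

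Third, and most substantively: you identify ``(b) control $\bfJ_\bT$ under this approximation, showing $\lim_j \bfJ_\bT(\vphi^{(j)}_t)/t$ recovers $\bfJ^\NA_\bT$ of the limiting filtration'' as the key step, essentially asserting that the Archimedean $\bfJ_\bT$-slope of the ray coincides with the non-Archimedean $\bfJ^\NA_\bT$. The paper explicitly avoids proving this, and indeed records in Remark \ref{rem-Jxiproper} that the properness of $\xi\mapsto\bfJ'^\infty(\Phi_\xi)$ (which would be needed) is only \emph{expected} and is currently known only under extra regularity of the ray in the smooth case. Instead the paper uses the weaker Corollary \ref{cor-minchi} (positivity of $\bfJ'^\infty(\Phi_\xi)$ on any bounded set of $\xi$), together with an a priori uniform bound $|\xi_k|\le C$ on the twists produced by the valuative criterion; this bound comes from the inequality $A(v_{k,\xi_k})\le \frac{\delta}{\delta-1}A(v_k)$ and Lemma \ref{lem-JNAproper}, and is what makes Corollary \ref{cor-minchi} applicable. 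Relatedly, the paper does not pass to a single limit $\bG$-equivariant test configuration or filtration from the ray; rather it runs a quantitative chain of inequalities on a three-parameter family (a perturbation parameter $\epsilon$ on a $\bG$-equivariant resolution $Y\to X$, an approximation parameter $m$ via multiplier-ideal test configurations, and a choice of divisorial valuations $v_k$ nearly computing $\bL'^\infty$), exploiting the monotonicity of \emph{both} $\bfE$ and $\Lambda$ under passing to the less singular approximants (\eqref{eq-ENAepmlb}--\eqref{eq-KNAepmlb}) and the twist identity $A(v)+\phi_\xi(v)=A(v_\xi)+\phi(v_\xi)$ of \eqref{eq-A+phivxi}. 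Finally, your sketch omits the perturbation step to $\hat L_\epsilon$ on the resolution $Y$, which is essential to handle klt singularities (it is what makes the $\bL^\NA$-estimates in Proposition \ref{prop-uniformLest} uniform). In short: the skeleton of your plan is right, but as written it would hit exactly the gaps the paper was designed to circumvent.
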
 

In the case when $X$ is a smooth Fano manifold and $D=\emptyset$, the above result can be derived from the work \cite{DaS16} (see Remark \ref{rem-unipoly}), which depends on the method of partial $C^0$-estimates. Here we don't require extra constraint on the singularities of $(X, D)$.

To prove Theorem \ref{thm-YTD}, we first need to derive a valuative criterion for $\bG$-uniform Ding/K-stability. To state this criterion, \Blue{recall that $\bT\cong (\bC^*)^r$ and set}
\begin{equation}\label{eq-Nlattice}
N_\bZ={\rm Hom}(\bC^*, \bT),\quad N_{\bQ}=N_\bZ\otimes_{\bZ}\bQ,\quad N_\bR=N_\bZ\otimes_\bZ \bR.
\end{equation}
Also set $M_\bZ=N_\bZ^{\vee}, M_{\bQ}=M_\bZ\otimes_{\bZ}\bQ, M_\bR=M_\bZ\otimes_\bZ \bR$.

Denote by $\Val(X)$ the set of (real) valuations on $X$. For any valuation $v\in\Val(X)$, denote by $A_{(X,D)}(v)$ the log discrepancy of $v$ as defined in \cite{JM12, BFFU15}. \Blue{$\Val(X)$ contains the subset $\Xdiv$ of all divisorial valuations. Recall that any valuation in $\Xdiv$ is of the form $\lambda \cdot \ord_E$ for a prime divisor $E$ on a birational model of $X$ and $\lambda \in \bQ_{>0}$.} 
Denote by $\Val(X)^\bT$ (resp. $\Val(X)^\bG$) the set of $\bT$-invariant (resp. $\bG$-invariant) valuations on $X$. 
Similarly we denote by $(\Xdiv)^\bT$ (resp. $(\Xdiv)^\bG$) the set of $\bT$-invariant (resp. $\bG$-invariant) divisorial valuations on $X$. 
We observe that $N_\bR$ acts on $(\Val(X))^\bT$: $(\xi, v)\mapsto v_\xi$ (see section \ref{sec-NRact}). If we choose any $\ell_0$ such that \Blue{$\ell_0 L=-\ell_0 (K_X+D)$ is Cartier, then $v$ induces a filtration $\mcF_v=\mcF_v R_\bullet$ on $R:=R^{(\ell_0)}:=\bigoplus_{m=0}^{+\infty}H^0(X, m\ell_0 L)$} (see \eqref{eq-filval}). Define an invariant (see \eqref{eq-defSLv}):
\begin{equation}
\Blue{S_L}(v):=\frac{1}{\ell_0^n (-(K_X+D))^{\cdot n}}\int_0^{+\infty} \vol\left(\mcF^{(x)}_v\right)dx.
\end{equation}
Given $(X, D)$, this is an invariant of $v$ and does not depend on the choice of $\ell_0$. 

Let $\mathfrak{t}$ denote the Lie algebra of $(S^1)^r\subset \bT$ which is identified with the set of \Blue{holomorphic vector fields} generated by the elements of $\mathfrak{t}$. Note that there is a natural isomorphism $\mathfrak{t}\cong N_\bR$. 
\Blue{Denote by $\Fut: N_\bR\rightarrow \bR$ the well-known Futaki invariant which can be defined either analytically (see \eqref{eq-Futana}) or algebraically (see \eqref{eq-CWFut}). }

\begin{thm}\label{thm-Gvalcriterion}
With the above notation, the following statements are equivalent.
\begin{enumerate}[(1)]
\item $(X, D)$ is $\bG$-uniformly K-stable;
\item $(X, D)$ is $\bG$-uniformly Ding-stable;
\item $\Fut\equiv 0$ on $N_\bR$ and there exists $\delta_\bG>1$ such that for any $\bG$-invariant divisorial valuation $v$ over $X$ there exists $\xi\in N_\bR$ satisfying $A_{(X,D)}(v_\xi)\ge \delta_\bG \cdot \Blue{S_L}(v_\xi)$.
\item $(X, D)$ is $\bG$-uniformly K-stable \Blue{with respect to} $\bG$-equivariant special test configurations.
\end{enumerate}
\end{thm}
Here the last condition (4) means that in Definition \ref{defn-bGKstable} the inequality \eqref{eq-bGKstable} is required only for $\bG$-equivariant special test configurations (see Definition \ref{defn-TC} and \ref{defn-GequiTC}). 

In practice, we have the following result that serves the same purpose as what a result from \cite{DaS16} does for obtaining K\"{a}hler-Einstein metrics on varieties with large symmetries. Again the advantage of our result is that it works for all singular Fano varieties.
\begin{cor}
Assume that there are only finitely many $\bG$-equivariant special degenerations of $(X, D)$. If $(X, D)$ is $\bG$-equivariantly K-polystable, then $(X, D)$ is $\bG$-uniformly K-stable. Hence $(X, D)$ admits a K\"{a}hler-Einstein metric. 
\end{cor}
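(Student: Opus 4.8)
The plan is to combine Theorem~\ref{thm-Gvalcriterion} with Theorem~\ref{thm-YTD}. By Theorem~\ref{thm-YTD} it suffices to show that $(X,D)$ is $\bG$-uniformly K-stable, and by the equivalence of (1) and (5) in Theorem~\ref{thm-Gvalcriterion} it is enough to exhibit one constant $\gamma>0$ for which $\CM(\mcX,\mcD,\mcL)\ge\gamma\cdot\bfJ^\NA_\bT(\mcX,\mcL)$ holds for every $\bG$-equivariant \emph{special} test configuration of $(X,D,-(K_X+D))$. First I would note that $\Fut\equiv 0$ on $N_\bR$: for $\xi\in N_\bQ$ the product test configurations generated by $\xi$ and by $-\xi$ are $\bG$-equivariant (since $N_\bR$ is central in $\bG$) and special, with $\CM$ equal to $\Fut(\xi)$ and to $-\Fut(\xi)$ respectively; $\bG$-equivariant K-polystability, in particular K-semistability, forces both to be $\ge 0$, hence $\Fut(\xi)=0$, and since $\Fut$ is linear it vanishes on all of $N_\bR$. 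In particular $\CM$ of a $\bG$-equivariant special test configuration is invariant under the $N_\bR$-twist of its generating cocharacter, which is exactly the operation that $\bfJ^\NA_\bT$ is designed to ignore.

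Next I would organize the relevant test configurations. By the Rees construction a $\bG$-equivariant special test configuration is determined by its central fibre $(\mcX_0,\mcD_0)$ --- which by hypothesis lies in a finite list $(X_1,D_1),\dots,(X_N,D_N)$, say with $(X_1,D_1)=(X,D)$ accounting for the product test configurations --- together with a suitable cocharacter of the centralizer $Z_i$ of $\bG$ in $\Aut(X_i,D_i)_0$. Fix $(X_i,D_i)$. Since $\CM$ and $\bfJ^\NA_\bT$ are homogeneous of the same degree under rescaling the cocharacter, and both are invariant under its $N_\bR$-translation (for $\bfJ^\NA_\bT$ by construction, for $\CM$ by the previous paragraph), the ratio $\CM/\bfJ^\NA_\bT$ descends to the compact space $\bP_i$ of rays of the admissible cocharacters inside $N(Z_i)_\bR/N_\bR$. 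On $\bP_i$ the functional $\bfJ^\NA_\bT$ is continuous and non-negative, and its zero locus is precisely the set of rays along which the test configuration is, up to an $N_\bR$-twist, a product over $X_i\cong X$; and $\CM$ is at least lower semicontinuous on $\bP_i$, being essentially the Futaki invariant of $(X_i,D_i)$ evaluated along the corresponding direction.

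I would then invoke K-polystability to fix the sign. For a $\bG$-equivariant special test configuration one has $\CM(\mcX,\mcD,\mcL)=\bfD^\NA(\mcX,\mcD,\mcL)$ (Berman; Li--Xu), and $\bfD^\NA(\mcX,\mcD,\mcL)=A_{(X,D)}(v)-S_{-(K_X+D)}(v)$ for the induced valuation $v$; hence $\CM\ge 0$ by K-semistability, with equality only for product test configurations by K-polystability. On $\bP_i$ this means $\CM\ge 0$ everywhere and $\CM=0$ exactly on the zero locus of $\bfJ^\NA_\bT$. Restricting the lower semicontinuous function $\CM$ and the continuous function $\bfJ^\NA_\bT$ to the compact sets $\{\bfJ^\NA_\bT\ge\varepsilon\}$, letting $\varepsilon\to 0$, and using that near their common zero locus $\CM$ does not decay faster than $\bfJ^\NA_\bT$, one obtains $\CM\ge\gamma_i\cdot\bfJ^\NA_\bT$ on $\bP_i$ for some $\gamma_i>0$. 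Setting $\gamma=\min_{1\le i\le N}\gamma_i>0$ --- a minimum over a \emph{finite} set, which is exactly where the hypothesis on the number of $\bG$-equivariant special degenerations is used --- gives \eqref{eq-bGKstable} for all $\bG$-equivariant special test configurations, hence $\bG$-uniform K-stability, and hence a K\"{a}hler--Einstein metric by Theorem~\ref{thm-YTD}.

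I expect the main obstacle to be the comparison of $\CM$ and $\bfJ^\NA_\bT$ near their common zero locus inside $\bP_i$: one has to rule out that $\CM$ becomes negligible relative to $\bfJ^\NA_\bT$ as the generating cocharacter degenerates toward the $N_\bR$-directions. This is a non-degeneracy statement along the reductive directions of $Z_i$ furnished by K-polystability, together with the continuity of $v\mapsto A_{(X,D)}(v)$ and $v\mapsto S_{-(K_X+D)}(v)$ along the families of valuations attached to a fixed special central fibre --- regularity and positivity inputs of the same nature as those underlying Theorem~\ref{thm-Gvalcriterion} and its analysis of the $N_\bR$-action on $\rVal(X)^\bT$. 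Granting these, the finiteness of $\bG$-equivariant special degenerations is what promotes the pointwise bounds $\gamma_i$ to a single uniform $\gamma>0$ and finishes the proof.
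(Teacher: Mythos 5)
Your plan is a reasonable reconstruction: restrict to special test configurations via the equivalence $(1)\Leftrightarrow(5)$ of Theorem~\ref{thm-Gvalcriterion}, observe $\Fut\equiv 0$ on $N_\bR$, organize by central fiber using the Rees construction, and run a compactness argument fiber by fiber; this is almost certainly the route the paper has in mind (it supplies no proof of the corollary). The genuinely problematic step, however, is the assertion that ``$\CM=0$ exactly on the zero locus of $\bfJ^\NA_\bT$.'' By $\bG$-equivariant K-polystability, $\CM(\mcX,\mcD,\mcL)=0$ precisely when the test configuration is a product; by Lemma~\ref{lem-uni2poly}, $\bfJ^\NA_\bT(\mcX,\mcL)=0$ precisely when it is a product \emph{generated by some $\eta\in N_\bZ$}. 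These two loci coincide only if every cocharacter of $\Aut(X,D)_0$ commuting with $\bG$ already lies in $N_\bR=\mathrm{Lie}(\bT)\cap N_\bR$, i.e.\ only if the connected centralizer of $\bG$ in $\Aut(X,D)_0$ is no larger than $\bT$ (for instance because $\bG$ contains a maximal torus). Absent that, your $\bP_1$ (the rays attached to the trivial central fiber $(X_1,D_1)=(X,D)$) has $\CM\equiv 0$ while $\bfJ^\NA_\bT>0$ throughout, and no $\gamma>0$ exists. A concrete counterexample is $X=\bP^1$ with $\bG=\{e\}$: there is a single special degeneration (so the finiteness hypothesis holds), $X$ is K-polystable, but it is not uniformly K-stable because the product test configuration from the standard $\bC^*$-action gives $\CM=0$ and $\bfJ^\NA=\bfJ^\NA_\bT>0$. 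Compare the paper's own Remark~\ref{rem-unipoly}(2).

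A secondary issue is the compactness of your $\bP_i$ and continuity of $\CM$ on it. The cone $C_i$ of cocharacters whose central fiber is $(X_i,D_i)$ need not be closed; on $\partial C_i$ the central fiber can change, in particular can become $(X,D)$, so $\CM$ can drop to zero on the boundary of $\overline{C_i}/N_\bR$ at points where $\bfJ^\NA_\bT>0$ --- exactly the same gap reappearing in the closure. So a compactness argument alone does not rule out the bad behavior; one really needs the a priori restriction on the centralizer of $\bG$. With that restriction added (and restated precisely), the rest of your argument --- $\CM$ linear and invariant under $N_\bR$-twist because $\Fut_{(X,D)}\equiv 0$, $\bfJ^\NA_\bT$ convex, piecewise linear and $N_\bR$-invariant by Lemma~\ref{lem-JNAproper} and \eqref{eq-JNAconvexTC}, both homogeneous of degree one, finitely many $\bP_i$ --- does produce a uniform slope constant $\gamma=\min_i\gamma_i$ and closes the proof via Theorem~\ref{thm-YTD}.
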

Here by a $\bG$-equivariant special degeneration we mean a special test configuration but \Blue{without the data of $\bC^*$-action}. 

We will then show that the converse to Theorem \ref{thm-YTD} holds true if $\bG$ contains a maximal torus of $\Aut(X)_0$. This is true because the existence of K\"{a}hler-Einstein metrics implies a \Blue{coercivity} condition involving such $\bG$, 
which we will prove by combining the works of Darvas-Rubinstein and Hisamoto, together with some properties of reductive groups proved in Appendix \ref{app-Yu}. 
So we get a sufficient and necessary algebraic conditions for the existence of K\"{a}hler-Einstein metrics for any (singular) Fano variety.
\begin{thm}\label{thm-AutYTD}
Let $(X, D)$ be a log Fano pair. Then $(X, D)$ admits a K\"{a}hler-Einstein metric if and only if $\Aut(X, D)_0$ is reductive and $(X, D)$ is $\bG$-uniformly \Blue{K}-stable, where $\bG$ is any
connected reductive group of $\Aut(X, D)_0$ that contains a maximal torus of $\Aut(X, D)_0$. 
\end{thm}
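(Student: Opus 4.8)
The plan is to split the statement into its two directions and reduce each to results already available. For the ``if'' direction, suppose $\Aut(X,D)_0$ is reductive and $(X,D)$ is $\bG$-uniformly Ding-stable for some connected reductive $\bG\subseteq\Aut(X,D)_0$ containing a maximal torus. By the equivalence of $\bG$-uniform Ding-stability and $\bG$-uniform K-stability (Theorem \ref{thm-Gvalcriterion}, (1)$\Leftrightarrow$(2)), $(X,D)$ is $\bG$-uniformly K-stable, and then Theorem \ref{thm-YTD} produces a K\"ahler-Einstein metric. So this direction is immediate from the two earlier theorems; no new work is needed.

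The substance is the ``only if'' direction: assuming a KE metric exists, I must show $\Aut(X,D)_0$ is reductive and that $(X,D)$ is $\bG$-uniformly Ding-stable for every connected reductive $\bG$ containing a maximal torus. Reductivity of $\Aut(X,D)_0$ in the presence of a KE metric is the Matsushima-type statement; for singular log Fano pairs this should be invoked from the literature (e.g.\ via the structure of the automorphism group acting on the KE metric, Bochner-type arguments adapted to the weak KE setting, or a reference such as the matching result used in \cite{LTW19}). Granting reductivity, fix a maximal torus $\bT_{\max}\subseteq\Aut(X,D)_0$ and let $\bG$ be any connected reductive subgroup with $\bT_{\max}\subseteq\bG\subseteq\Aut(X,D)_0$. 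The core step is to translate the existence of the KE metric into a $\bG$-equivariant properness statement for the Ding (or Mabuchi) functional modulo the action of the center $\bT=C(\bG)_0$: concretely, the KE metric gives properness of the relevant energy functional on the space of $\bK$-invariant potentials modulo $\bT$, by the equivariant version of the Darvas-Rubinstein principle. Here the hypothesis $\bT_{\max}\subseteq\bG$ is what forces $C(\bG)_0$ to contain the full connected center of $\Aut(X,D)_0$, so that ``properness modulo $\bT$'' is the correct normalization — this is where the group-theoretic input from Appendix \ref{app-Yu} about reductive groups enters.

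From $\bG$-equivariant properness one then extracts the algebraic inequality. The mechanism is Hisamoto's observation: for each $\bG$-equivariant test configuration $(\mcX,\mcD,\mcL)$, the non-Archimedean Ding functional $\bfD^\NA$ is the asymptotic slope of the Ding functional along a geodesic ray (or along the path of Fubini-Study approximations) compatible with the degeneration, and $\bfJ^\NA_\bT$ is the corresponding asymptotic slope of $\bfJ_\bT$; equivariant properness of $\bfD$ modulo $\bT$ then yields $\bfD^\NA(\mcX,\mcD,\mcL)\ge\gamma\cdot\bfJ^\NA_\bT(\mcX,\mcL)$ for a uniform $\gamma>0$, which is precisely $\bG$-uniform Ding-stability. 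The main obstacle I anticipate is the bookkeeping in this last step in the singular setting: one must know that geodesic rays associated to $\bG$-equivariant test configurations have the expected asymptotic behavior for the Ding functional (a regularity/convergence issue for weak geodesics on a $\bQ$-Fano variety), and that the $\bT$-translation invariance is handled correctly so that the ``$\bfJ_\bT$'' term — rather than the plain $\bfJ$ — appears with a positive constant. I would address this by combining the pluripotential-theoretic estimates underlying Theorem \ref{thm-YTD} (which already handle the singular case in the reverse direction) with Hisamoto's relative slope formulas, making the argument essentially a synthesis of the valuative criterion of Theorem \ref{thm-Gvalcriterion}, the analytic properness from Darvas-Rubinstein/Hisamoto, and the reductive-group lemmas of Appendix \ref{app-Yu}.
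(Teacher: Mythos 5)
Your proposal is correct and follows essentially the same route as the paper: the sufficiency direction is precisely Theorem \ref{thm-YTD} (with no need to pass through Theorem \ref{thm-Gvalcriterion}, since Theorem \ref{thm-YTD} is stated directly in terms of $\bG$-uniform Ding-stability), and the necessity direction is exactly the combination of Theorem \ref{thm-torusproper} (existence of a $\bK$-invariant KE metric implies $\bG$-properness of the Ding energy, proved via the Darvas--Rubinstein principle together with the reductive-group lemmas of Appendix \ref{app-Yu}) and Theorem \ref{thm-JTslope} (Hisamoto's slope formula, which converts $\bG$-properness into the non-Archimedean inequality $\bfD^\NA \ge \gamma\, \bfJ^\NA_\bT$). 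The one ingredient you leave as a black box---reductivity of $\Aut(X,D)_0$ from existence of a KE metric on a singular log Fano pair---is indeed taken from the literature (\cite[Section 5]{BBEGZ}), as the paper does implicitly when it invokes that the isometry groups of KE metrics are maximal compact subgroups of $\Aut(X,D)_0$.
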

Theorem \ref{thm-AutYTD} is the first \Blue{version} of Yau-Tian-Donaldson conjecture for arbitrary Fano varieties.
We make some remarks about the above results.
\begin{rem}\label{rem-unipoly}
In this remark we use \Blue{Definition} \ref{defn-Gequivstability} and Remark \ref{rem-Gequivstability}. 
\begin{enumerate}
\item
By \Blue{definition}, $\bG$-equivariantly uniform K-stability implies $\bG$-uniform K-stability  (since $\bfJ^\NA\ge \bfJ^\NA_\bT$). The converse is not true in general.
In fact, it is easy to show that $\bG$-equivariantly uniform K-stability is equivalent to two conditions together: $\bG$-uniform K-stability plus the center $C(\bG)$ being discrete.
So for the above results, if $C(\bG)$ is discrete, we can replace $\bG$-uniform K-stability (Ding-stability) by $\bG$-equivariantly uniform K-stability (Ding-stability). 
We note that $\bG$-equivariantly uniform K-stability was considered recently in \cite{Gol19} and \cite{Zhu19}. 
\item
It can be shown that $\bG$-uniform K-stability implies $\bG$-equivariant $K$-polystability (Lemma \ref{lem-uni2poly}).
Conversely $\bG$-equivariant K-polystability does not in general imply $\bG$-uniform K-stability if $\bG$ is too small compared to $\Aut(X, D)_0$ (e.g. take $X=\bP^n$ and $\bG=\{e\}$). With our result, it is natural to expect that for any $\bG$ containing a maximal torus, $\bG$-equivariant K-polystability (or just K-polystability) is equivalent to $\bG$-uniform K-stability (see also \cite{LTZ19}). This is known in the smooth case by the works in \cite{DaS16} and \cite{His16b} through the existence of K\"{a}hler-Einstein metrics. \Blue{When $\bG$ is a maximal torus of $\Aut(X, L)_0$, this has been confirmed for general $\bQ$-Fano varieties by Liu-Xu-Zhuang in \cite{LXZ21} by using deep techniques from birational algebraic geometry. More precisely, in \cite[Theorem 5.2]{LXZ21} it is shown that if $\tilde{\bT}$ denotes a maximal torus of $\Aut(X, D)_0$, then K-polystability (or just $\tilde{\bT}$-equivariant K-polystability) implies $\tilde{\bT}$-uniform K-stability ($\tilde{\bT}$-uniform stability is called reduced uniform stability in \cite{LXZ21}). 
Moreover in this case, we know by Theorem \ref{thm-YTD} that there exists a K\"{a}hler-Einstein metric on $(X, D)$, which in turn implies K-polystability of $(X, D)$ (by \cite{Berm15}) and also $\bG$-uniform stability for any $\bG$ containing a maximal torus of $\Aut(X, D)_0$ (by Theorem \ref{thm-AutYTD}). }
\end{enumerate}
\end{rem}

We end the introduction with a short discussion of proofs.
The general idea for the proof of Theorem \ref{thm-Gvalcriterion} parallels the idea for the proof of valuative criterion by Fujita and the author in \cite{Fuj18, Li17}, which uses the \Blue{equivariant, relative} MMP process from \cite{LX14} (see also section \ref{sec-alternative}). However, we need to understand in detail how to relate the twists of valuations to the twists of \Blue{non-Archimedean potential}s including those from test configurations. Note that the notion of twist of test configurations appeared in Hisamoto's work \cite{His16a,His16b}.
We also need to establish that the $\bfJ^\NA_\bT$ energy for filtration (associated to valuations) can be approximated by $\bfJ^\NA_\bT$ for test configurations. The other observation is that \Blue{some calculations in \cite{Fuj18}, showing that $\bfD^\NA-\epsilon \bfJ^\NA$ is decreasing along the MMP (for $\epsilon \in [0,1]$), are compatible with twists}.

In addition to the valuative criterion in Theorem \ref{thm-Gvalcriterion}, the work here is a synthesis of ideas from \cite{BBJ18}, \cite{His16b} and \cite{LTW19}, and further carries out Berman-Boucksom-Jonsson's program of variational approach (proposed in \cite{BBJ15, BBJ18}) to Yau-Tian-Donaldson conjecture for all $\bQ$-Fano varieties. However compared with all these previous works, we need to find new ways to deal with difficulties arising from singularities and continuous automorphism groups. To overcome the difficulties caused by singularities, we use the perturbative idea from our previous work (\cite{LTW17, LTW19}). But instead of directly proving $\bG$-uniform stability on the resolution as in these works, we will work with valuations that approximately calculate the $\bL^\NA$ part of the non-Archimedean Ding energy. This will also allow us to effectively use a key identity (see \eqref{eq-phiFtwist} and \eqref{eq-A+phivxi}) about twists of \Blue{non-Archimedean potential}s in order to deal with the case with continuous automorphism groups. In addition, our proof depends on monotonicity of both parts of the $\bfJ$ energy functional and some delicate uniform estimates of non-Archimedean quantities. The main line of arguments is essentially contained in a chain of (in)equalities in section \ref{sec-step4}. 

\vskip 3mm
\noindent
{\bf Acknowledgement:} 
The author is partially supported by NSF (Grant No. DMS-1810867) and an Alfred P. Sloan research fellowship. I would like to thank Gang Tian for constant support and his interest in this work, and thank Xiaowei Wang for helpful discussions on related topics, Feng Wang, Xiaohua Zhu and Chenyang Xu for helpful comments, S\'{e}bastien Boucksom for his interest in our work, and Tomoyuki Hisamoto for communications concerning Remark \ref{rem-Hisamoto}. I would like to thank Yuchen Liu for comments and clarifications, which motivate me to write down the results for more general reductive subgroups, and thank Jiyuan Han and Kuang-Ru Wu for attending my lectures about this work patiently and give valuable \Blue{feedback} which allows me to improve the \Blue{presentation}. I am particularly grateful to Jun Yu for answering my questions in the appendix concerning reductive groups. Some parts of this paper were written during the author's visit to BICMR at Peking University, School of Mathematical Sciences at Capital Normal University and Shanghai Center for Mathematical Sciences at Fudan University. I would like to thank these institutes for providing wonderful environment of research. In particular, I would like to thank Gang Tian, Zhenlei Zhang and Peng Wu for their hospitality. I would also like to thank anonymous referees for their careful reading and providing very helpful comments for improving the paper.

\section{Preliminaries}

\subsection{Space of K\"{a}hler metrics over singular projective varieties}
Let $Z$ be an $n$-dimensional normal \Blue{projective} variety. 
We use the following convention: a smooth differential form $\theta$ (of any bi-degree $(p,q)$) on $Z$ is by definition a smooth differential form on the regular locus $Z^{\text{reg}}$ such that for any point $z\in Z$ there exist an open neighborhood $U\subset Z$, a local holomorphic embedding $\iota: U\rightarrow \bC^N$ (for some $N\gg 1$) and a smooth differential form $\Theta$ of bidegree $(p,q)$ on a neighborhood $W$ of $\iota(U)$ such that $\theta=\Theta_{W\cap U_{\reg}}$.  
We also recall the standard definition for plurisubharmonic functions on $Z$. 

\begin{defn}
Let $U$ be an open subset of $Z$. A function $\psi: U\rightarrow [-\infty, \infty)$ is called plurisubharmonic (psh) on $U$ if for any $z\in U$, there exist a neighborhood $z\in U_1\Subset U$, a local holomorphic embedding $\iota: U_1\rightarrow \bC^N$ and a plurisubharmonic function $\Psi$ on a neighborhood of $\iota(U_1)$ inside $\bC^N$ such that $\psi=\Psi\circ \iota$. 

We say that $\psi$ is smooth (resp. continuous, or bounded) if we can furthermore choose $\Psi$ to be smooth (resp. continuous, or bounded). 
\end{defn}
\begin{rem}
By a basic result of Fornaess-Narasimhan (\cite[Theorem 5.3.1]{FN80}), we know that a function $\psi: U\rightarrow [-\infty, \infty)$ is plurisubharmonic if and only if the following two conditions are satisfied:
\begin{enumerate}
\item[(i)] $\psi$ is upper semi-continuous at any point $z\in U$.
\item[(ii)] For any holomorphic map $\tau: \Delta=\{w\in \bC; |w|<1\}\rightarrow U$, the function $\psi\circ \tau$ is subharmonic on $\Delta$.
\end{enumerate}
\end{rem}

\begin{defn}
Let $L$ be an ample line bundle \Blue{on $Z$}. A psh (i.e. plurisubharmonic) Hermitian metric on $L$ is a collection $e^{-\psi}=\{e^{-\psi_\alpha}\}$ where $\psi_\alpha$ are locally defined psh functions, called local potential functions of the Hermitian metric, that are compatible with transition functions of local trivializations of $L$ (in a standard way). \Blue{The psh Hermitian metric $e^{-\psi}$} is called smooth (resp. continuous, bounded) if all $\psi_\alpha$ are smooth (resp. continuous, bounded). 

If $L$ is an ample $\bQ$-line bundle, a psh Hermitian metric on $L$ is a collection $e^{-\psi}=\{e^{-\psi_\alpha}\}$ satisfying the property that there exists $\ell\in \bZ_{>0}$ such that $\ell L$ is a line bundle and $e^{-\ell \psi}=\{e^{-\ell \psi_\alpha}\}$ is a psh Hermitian metric on $\ell L$. 
\end{defn}

A convenient way to get smooth psh Hermitian metrics on an ample $\bQ$-line bundle is as follows. Choose $\ell\in \bZ_{>0}$ sufficiently divisible such that $\ell L$ is a very ample line bundle. Choose a basis ${\bf \mathfrak{s}}:=\{s^{(\ell)}_1, \dots, s^{(\ell)}_{N_\ell}\}$ of $H^0(Z, \ell L)$. Denote by $\iota=\iota_{\bf \mathfrak{s}}: Z\rightarrow \bP^{N_\ell-1}$ the Kodaira embedding induced by the chosen basis such that $\iota^*H=\ell L$ where $H$ is the hyperplane bundle of $\bP^{N_\ell-1}$. 
Define a Hermitian metric on $L$ by pulling back the standard Fubini-Study Hermitian metric:
\begin{equation}\label{eq-refpsi}
e^{-\psi}=(\iota^*h_{\FS})^{1/\ell}=\left(\frac{1}{\sum_i\left|s^{(\ell)}_i\right|^2}\right)^{1/\ell}.
\end{equation}
Set $\omega=\sddb \psi=\frac{1}{\ell}\iota^*\omega_{\FS}$ where $\omega_{\FS}$ is the standard Fubini-Study K\"{a}hler metric on $\bP^{N_\ell-1}$. Then $\omega$ is a smooth positive $(1,1)$-form representing the first Chern class of $2\pi c_1(L)$. 
Moreover, in this construction, if a compact Lie group $\bK$ acts holomorphically on $(Z, L)$, then we can choose the data $\{e^{-\psi}, \omega\}$ to be $\bK$-invariant. Indeed, in this case, because $\bK$ naturally acts on $H^0(Z, \ell L)$, we can choose a $\bK$-invariant Hermitian inner product on $H^0(Z, \ell L)$ and choose the above basis ${\bf \mathfrak{s}}$ to be orthonormal. Then it is easy to see that $e^{-\psi}$ and $\omega$ are $\bK$-invariant. 

From now on, we fix such a reference metric $e^{-\psi}$ and $\omega=\sddb \psi$. 
A function $u: Z\rightarrow [-\infty, \infty)$ is called $\omega$-psh if for any point $z\in Z$, there exist open subsets $\Blue{U\subset U_\alpha\subset Z}$ such that $\psi_\alpha+u$ is psh on $U$. 
We will use the following spaces:
\begin{align}
&{\rm PSH}(\omega):=\PSH(Z, \omega)=\left\{\omega\text{-psh functions} \right\};\\
&\bar{\mcH}(\omega):=\bar{\mcH}(Z, \omega)={\rm PSH}(\omega)\cap C^\infty(Z);\\
&{\rm PSH}_\bd(\omega):=\PSH_\bd(Z, \omega)=\PSH(\omega)\cap \{\text{bounded functions on } Z\};\\
&{\rm PSH}(L)\Blue{:=}\PSH(Z, L)=\left\{\psi+u; u\in {\rm PSH}(\omega)\right\};\\
&{\rm PSH}_\bd(L)\Blue{:=}\PSH_\bd(Z, L)=\left\{\psi+u; u\in {\rm PSH}_\bd(\omega)\right\}.
\end{align}
\Blue{$\PSH(L)$ is the same as the space of psh Hermitian metrics $\{e^{-\vphi}=e^{-\psi-u}\}$ on the $\bQ$-line bundle $L$. Note that, rigorously speaking, $\psi+u$ is not a globally defined function, but rather a collection of local psh functions that satisfy the obvious compatible condition with respect to the transition functions of the $\bQ$-line bundle. However for the simplicity, we will adopt this notation and call any $\vphi\in \PSH(L)$ a psh potential. }

By Hironaka's theorem there exists a resolution of singularities $\mu: Y\rightarrow Z$ which is obtained via an imbedding $\iota: Z\rightarrow \bP^N$ and then by taking the strict transform of $X$ under a sequence of blowups along smooth centers. It is well-known that such resolution of singularities can be made functorial. In particular if there is a group $G$ acting holomorphically on $X$, one can guarantee the existence of $G$-equivariant resolution of singularities (see \cite{Kol07} for more details). \Blue{Because the composition $\iota\circ \mu: Y\rightarrow \bP^N$ is a holomorphic map, the} pullback of the Hermitian metric $e^{-\psi}$ defined in \eqref{eq-refpsi} is a smooth psh Hermitian metric $e^{-\mu^*\psi}$ on $\mu^*L$, whose Chern curvature is a smooth semipositive closed $(1,1)$-form $\tilde{\omega}:=\mu^*\omega$ satisfying $\int_Y \tilde{\omega}^n=\int_Z \omega^n>0$. Because $Z$ is normal, $\mu$ has connected and compact fibers. So every $\tilde{\omega}$-psh function on $Y$ is of the form $u\circ \mu$ for a unique $\omega$-psh function $u$ on $Z$. So we have the identification 
\begin{equation}\label{eq-PSHY}
\PSH(Z, \omega)\cong \PSH(Y, \tilde{\omega}).
\end{equation} 
This is a homeomorphism if we endow both sides with \Blue{the} weak topology which coincides with the $L^1$-topology with respect to the smooth volume form $\omega^n$ (resp. $\tilde{\omega}^n$). If $u_j$ converges to $u$ in $\PSH(Z, \omega)$, then $\sup(u_j)\rightarrow \sup(u)$ by Hartogs' lemma for plurisubharmonic functions \cite[Theorem 1.46]{GZ17}. Moreover, we have the following lemma, which in the smooth case can be proved by using Green's formula.
\begin{lem}\label{lem-Hartogs}
There exists $C=C(\omega)>0$ such that for any $u\in \PSH(\omega)$ with $\sup(u)=0$, we have 
\begin{equation}
\int_Z u\omega^n\ge -C.
\end{equation}
\end{lem}
\begin{proof}
If this is not true, then there exists a sequence $u_j\in \PSH(\omega)$ with $\sup(u_j)=0$ which satisfies 
\begin{equation}\label{eq-nobd}
\int_Z u_j\omega^n\le -j.
\end{equation}
However, by Hartogs lemma in \cite[Theorem 1.46]{GZ17}, applied on the resolution $\mu: Y\rightarrow Z$ as above, we know that $\mu^* u_j$ converges in $L^1(Y, \tilde{\omega}^n)$ to $\tilde{u}=u\circ \mu \in \PSH(\tilde{\omega})$ (see \eqref{eq-PSHY}). So we get $\int_Z u_j \omega^n=\int_Y (\mu^*u_j) \tilde{\omega}^n \rightarrow \int_Y \tilde{u} \tilde{\omega}^n=\int_Z u\omega^n>-\infty$ which contradicts \eqref{eq-nobd}.
\end{proof}
The following global regularization result will be useful to us.
\begin{prop}[{\cite[Corollary C]{CGZ13}}]\label{prop-smapp}
For any $u\in \PSH(Z, \omega)$ there exists a sequence of smooth functions $u_j\in \PSH(Z, \omega)$ which decrease pointwise on $Z$ so that $\lim_{j\rightarrow+\infty}u_j=u$ on $Z$.
\end{prop}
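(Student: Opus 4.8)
The plan is to deduce this regularization statement from a projective embedding together with an extension theorem for plurisubharmonic functions, in the spirit of Coman--Guedj--Zeriahi. First I would reduce to the case where $\omega$ is the restriction of a Fubini--Study form. Pick $\ell_0\in\bN$ with $\ell_0 L$ very ample and Cartier, giving an embedding $\iota\colon Z\hookrightarrow\bP^N$ with $\iota^*\mcO_{\bP^N}(1)=\mcO_Z(\ell_0 L)$, and set $\omega_0:=\tfrac1{\ell_0}\iota^*\omega_{\FS}\in 2\pi c_1(L)$. Since $\omega$ and $\omega_0$ are curvature forms of two smooth Hermitian metrics on the $\bQ$-line bundle $L$, one has $\omega-\omega_0=\sddb f$ with $f$ smooth on all of $Z$ (the difference of the two smooth local weights). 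As $u$ is $\omega$-psh iff $u+f$ is $\omega_0$-psh, and a decreasing smooth $\omega_0$-psh approximation $v_j\searrow u+f$ yields the decreasing smooth $\omega$-psh approximation $v_j-f\searrow u$, it suffices to treat $\omega=\omega_0$.

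Next, since $Z$ is normal, an $\omega_0$-psh function is plurisubharmonic on $Z$ in the strong sense (psh on $Z_{\reg}$ and locally bounded above, hence locally the restriction of a psh function from an ambient $\bC^M$). I would then invoke an extension theorem with growth control: $u$ extends to $\tilde u\in\PSH(\bP^N,\omega_0)$, or, more conservatively, to $\PSH(\bP^N,A\,\omega_0)$ for a constant $A$, the resulting loss of positivity being absorbed afterwards using that $\omega_0$ is Kähler and that $u$ itself lies in the smaller class (by a local gluing argument performed directly on $Z$ with the regularized maximum). On the smooth Kähler manifold $(\bP^N,\omega_0)$, Demailly's regularization together with the Blocki--Kolodziej theorem produces smooth $\tilde u_j\in\PSH(\bP^N,\omega_0)$ with $\tilde u_j\searrow\tilde u$ pointwise; restricting, $u_j:=\tilde u_j|_Z$ are smooth, $\omega_0$-psh on $Z$, and decrease to $u$.

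The one genuinely singular step, and the main obstacle, is the extension: one must move the psh function off the possibly badly singular subvariety $Z$ into $\bP^N$ while controlling $\sddb$ of the extension, and then trim away any loss of positivity. The safe route is to cite the Coman--Guedj--Zeriahi extension theorem; a self-contained argument would reproduce their local analysis — a finite cover of $Z$ by polydiscs, extension across the singular locus via Ohsawa--Takegoshi-type results, and gluing by regularized maxima, with the Kähler form supplying the room to glue and to absorb the $\varepsilon$-losses. The reduction to the embedded picture and the regularization on the smooth ambient space $\bP^N$ are routine.
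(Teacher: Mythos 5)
The paper does not prove this proposition at all; it simply cites \cite[Corollary C]{CGZ13}, and that corollary is exactly the statement. Your sketch essentially reconstructs the CGZ argument behind their Corollary C: embed $Z$ projectively so that $\omega$ may be taken to be (a multiple of) the restricted Fubini--Study form, invoke their psh extension theorem to pass to an $\omega_{\FS}$-psh function on $\bP^N$, smooth on the smooth ambient K\"ahler manifold by Demailly--B\l{}ocki--Ko\l{}odziej, and restrict. So you are following the same route as the source the paper relies on, and you are right that the only genuinely nontrivial step is the extension across the singularities of $Z$.

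One caveat on your ``more conservative'' fallback: if the extension lands only in $\PSH(\bP^N, A\,\omega_0)$ with $A>1$, then the smoothed functions restrict to $\PSH(Z, A\,\omega_0)$, not $\PSH(Z,\omega_0)$, and the sentence about ``absorbing the loss of positivity by a local gluing with the regularized maximum'' is not actually an argument --- a regularized maximum of a smooth $A\omega_0$-psh function with anything else will not in general be $\omega_0$-psh, and it is not clear how the resulting sequence would still decrease to $u$. What makes the CGZ result work is precisely that their extension theorem produces an extension with \emph{no} loss of positivity (extension in $\PSH(\bP^N,\omega_0)$ itself), and that is what one should cite. With that clarification, your main route is sound and is the intended one.
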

For any $u\in \PSH(Z, \omega)$, set $\tilde{u}=\mu^*u\in \PSH(Y, \tilde{\omega})$ and define:
\begin{equation}
\tilde{\omega}_{\tilde{u}}^n:=\lim_{j\rightarrow+\infty} {\bf 1}_{\{\tilde{u}>-j\}}\left(\tilde{\omega}+\sddb \max(\tilde{u}, -j)\right)^n.
\end{equation}
This is always well-defined by \cite[Proposition 1.6]{BEGZ}. 
Set $\omega_u^n=\mu_*\tilde{\omega}_{\tilde{u}}^n$ such that $\int_Z \omega_u^n=\int_Y \tilde{\omega}_{\tilde{u}}^n$. 
More generally, for any $\{\vphi_k; 1\le k\le n\}\subset \Blue{\PSH(L)}$, we define their mixed complex Monge-Amp\`{e}re measure as:
\begin{equation}\label{eq-mixed}
(\sddb\vphi_1)\wedge \cdots (\sddb\vphi_n)=\mu_*\left(\left\langle \sddb\mu^*\vphi_1 \wedge \cdots\wedge \sddb\mu^*\vphi_n \right\rangle\right)
\end{equation}
where we used the non-pluripolar product of closed positive currents on compact K\"{a}hler manifolds introduced in \cite{BEGZ}: set $u_j=\vphi_j-\psi$, $\tilde{u}_j=\mu^*u_j$ and define
\begin{eqnarray*}
&&\left\la \sddb\mu^*\vphi_1\wedge \cdots\wedge \sddb\mu^*\vphi_n \right\ra\\
&&=
\lim_{j\rightarrow+\infty} {\bf 1}_{\bigcap_{k=1}^n \{u_k>-j\}} (\tilde{\omega}+\sddb \max\{\tilde{u}_1, -j\})\wedge \cdots \wedge (\tilde{\omega}+\sddb \max\{\tilde{u}_n, -j\}).
\end{eqnarray*} 
This non-pluripolar product generalizes the mixed Monge-Amp\`{e}re measure of bounded psh metrics (due to Bedford-Taylor) and is again always well-defined by \cite[Proposition 1.6]{BEGZ}. 

We will use the space $\mcE^1$ of finite energy $\omega$-psh functions (see \cite{GZ07, BEGZ, BBEGZ}):
\begin{align}
&\mcE(\omega):=\mcE(Z, \omega)=\left\{u\in {\rm PSH}(Z, \omega); \int_Z \omega_u^n=\int_Z \omega^n\right\};\\
&\mcE^1(\omega):=\mcE^1(Z,\omega)=\left\{u\in \mcE(Z, \omega); \int_Z |u| \omega_u^n<\infty\right\};\\
&\mcE^1(L):=\mcE^1(Z, L)=\left\{\psi+u; u \in \mcE^1(Z, \omega) \right\}.
\end{align}
We have the inclusion $\PSH_\bd(\omega)\subset \mcE^1(\omega)\subset \mcE(\omega)$.

Set $V=(L^{\cdot n})$. For any $\vphi \in {\rm PSH}_\bd(Z, L)$, we have the following important functional: 
\begin{eqnarray}
\bfE(\vphi)&:=&\bfE_{\psi}(\vphi)=\frac{1}{(n+1)(2\pi)^nV} \sum_{i=0}^n \int_Z (\vphi-\psi) (\sddb\psi)^{n-i}\wedge (\sddb\vphi)^{i} \label{eq-Ephibd}\\
&=&\frac{1}{(n+1)(2\pi)^n V}\sum_{i=0}^n \int_Y (\mu^*(\vphi-\psi))(\sddb \mu^*\psi)^{n-i}\wedge (\sddb \mu^*\vphi)^i. \nonumber
\end{eqnarray}
Following \cite[2.2]{BEGZ}, for any $\vphi \in \PSH(Z, L)$, define:
\begin{eqnarray}
\bfE(\vphi)&=&\inf\left\{\bfE(\tilde{\vphi}); \vphi \in \PSH_\bd(Z, L), \tilde{\vphi}\ge \vphi \right\}.
\end{eqnarray}
Then $\bfE$ is a concave, non-decreasing and finite-valued function on $\cE^1$. See \eqref{eq-Ephi} for an explicit formula generalizing \eqref{eq-Ephibd}.  
Following \cite{BBEGZ}, we endow $\mcE^1$ with the strong topology. 
\begin{defn}\label{defn-strong}
The strong topology on $\mcE^1$ is defined \Blue{as} the coarsest refinement of the weak topology such that $\bfE$ is continuous.
\end{defn}

For any interval $I\subset \bR$, denote the Riemann surface $$\bD_I=I\times S^1=\{\tau\in \bC^*; s=-\log|\tau|^2\in I\}.$$ 
\begin{defn}[{see \cite[Definition 1.3]{BBJ18}}]\label{defn-pshpath}
A $\omega$-psh path, or just called a psh path, on an open interval $I$ is a map $U=\{u(s)\}: I\rightarrow \PSH(\omega)$ such that the $U(\cdot, \tau):=U(\log|\tau|)$ is a $p_1^*\omega$-psh function on $X\times \bD_I$. A psh ray (emanating from $u_0$) is a psh path on $(0, +\infty)$ (with $\lim_{t\rightarrow 0}u(s)=u_0$). Note in the literature, psh path (resp. psh ray) are also called subgeodesic (resp. subgeodesic ray). 

In the above situation, we also say that $\Phi(s)=\{\psi+u(s)\}$ is a psh path (resp. a psh ray). 
\end{defn}
We will use geodesics connecting bounded potentials. 
\begin{prop}[{\cite[Proposition 1.17]{DNG18}}]\label{prop-geod}
Let $u_0, u_1\in \PSH_\bd(\omega)$. Then  
\begin{equation}\label{eq-envelope}
U=\sup\left\{u; u\in \PSH(Z\times\bD_{[0,1]}, p_1^*\omega);\quad U\le u_{0,1} \text{ on } \partial (Z\times \bD_{[0,1]})\right\}.
\end{equation}
is the unique bounded $\omega$-psh function on $Z\times\bD_{[0,1]}$ that is the solution of the Dirichlet problem:
\begin{equation}
(p_1^*\omega+\sddb U)^{n+1}=0 \text{ on } Z\times\bD_{[0,1]}, \quad U|_{Z\times\partial \bD_{[0,1]}}=u_{0,1}.
\end{equation}
\end{prop}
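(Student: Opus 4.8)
The plan is to recognize $U$ as the solution, produced by Perron's method, of the Dirichlet problem for the homogeneous complex Monge--Amp\`{e}re equation on the product $Z\times\bD_{[0,1]}$, adapting the classical Bedford--Taylor--Walsh theory for smooth domains to the two features that make the statement nontrivial: $Z$ is only normal (so $\omega$ is merely smooth semipositive, not K\"{a}hler) and $u_0,u_1$ are merely bounded. For the first point I would pass to a resolution $\pi\colon Z'\to Z$ and work with $p_1^*(\pi^*\omega)$-psh functions on $Z'\times\bD_{[0,1]}$, using the non-pluripolar Monge--Amp\`{e}re machinery of Boucksom--Eyssidieux--Guedj--Zeriahi, which is insensitive to the pluripolar locus where $\pi$ fails to be an isomorphism; bounded $\omega$-psh potentials correspond to bounded $\pi^*\omega$-psh ones, and all products of currents below are understood in the non-pluripolar sense. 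After this reduction one argues as if on $Z\times\bD_{[0,1]}$ directly.

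First I would set up barriers. The function $\underline U(x,s):=\max\bigl(u_0(x)-Bs,\ u_1(x)-B(1-s)\bigr)$ is $p_1^*\omega$-psh on $Z\times\bD_{[0,1]}$ because $s=\log|\tau|$ is pluriharmonic there, so each term is $p_1^*\omega$-psh and a maximum of two such is again one; for $B$ exceeding both $\sup u_1-\inf u_0$ and $\sup u_0-\inf u_1$ it restricts to $u_0$ on $\{s=0\}$ and to $u_1$ on $\{s=1\}$, so $\underline U$ is a competitor in \eqref{eq-envelope} and $U\ge\underline U$ is bounded below. For an upper bound, each competitor $u$ restricts on a vertical strip $\{x\}\times\bD_{[0,1]}$ to a subharmonic function with boundary values $\le\max(u_0(x),u_1(x))$, so $u\le\max(\sup u_0,\sup u_1)$ by the maximum principle; hence $U$ is bounded. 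The same barriers, localized near the boundary, pin down the boundary trace of $U$: it equals $u_0$ on $\{s=0\}$ and $u_1$ on $\{s=1\}$, so $U$ solves the boundary condition.

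Next I would show $U$ is $p_1^*\omega$-psh with $(\omega+\sddb U)^{n+1}=0$. Being a supremum of a family of $p_1^*\omega$-psh functions, $U$ has a $p_1^*\omega$-psh upper semicontinuous regularization $U^*$ (Brelot--Cartan), and the barriers keep $U^*$ in the competitor family, so $U^*=U$. For the equation it suffices to check that $U$ is maximal: if on some coordinate ball $B\Subset(Z'\setminus{\rm Exc}(\pi))\times\bD_{(0,1)}$ the measure $(\omega+\sddb U)^{n+1}$ were nonzero, one could solve the local homogeneous Dirichlet problem on $B$ with boundary data $U|_{\partial B}$ to obtain $\widetilde U\ge U$ on $B$, with $\widetilde U\ne U$ and $(\omega+\sddb\widetilde U)^{n+1}=0$ on $B$; gluing $\widetilde U$ on $B$ with $U$ outside $B$ produces a $p_1^*\omega$-psh competitor strictly larger than $U$ at some point, contradicting the definition of $U$. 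Equivalently, one invokes the Bedford--Taylor characterization, extended to the nef--big class, of maximal psh functions as those with vanishing non-pluripolar Monge--Amp\`{e}re.

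Finally, uniqueness follows from maximality: a bounded $p_1^*\omega$-psh function with vanishing non-pluripolar Monge--Amp\`{e}re is maximal, so if $V$ is another bounded solution then both $U$ and $V$ are maximal and share the boundary values $u_{0,1}$. Since $\max(U,V)$ is $p_1^*\omega$-psh with boundary trace $u_{0,1}$, it is a competitor in \eqref{eq-envelope}, hence $\max(U,V)\le U$ and thus $V\le U$; running the argument with the roles exchanged, using the maximality of $V$ and that $U$ also attains $u_{0,1}$ on the boundary, gives $U\le V$, so $U=V$. I expect the genuine difficulty to lie precisely in the last two steps once $u_0,u_1$ are only bounded (continuity up to the boundary may fail, so the boundary condition must be read through the appropriate regularization) and $Z$ is singular (forcing one to perform the local balayage away from ${\rm Exc}(\pi)$ and to have the maximality criterion and the gluing available for the non-pluripolar Monge--Amp\`{e}re operator in the nef--big setting) --- this careful bookkeeping is what \cite{DNG18} carries out.
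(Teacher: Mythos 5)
The paper supplies no proof of this statement; it is quoted verbatim from \cite[Proposition 1.17]{DNG18}, so there is no ``paper's own proof'' to compare against. Your sketch is the standard Perron-envelope argument (barriers from $\max(u_0-Bs,\,u_1-B(1-s))$, regularization, local balayage to obtain maximality, comparison/maximality for uniqueness) adapted to the normal/nef--big setting, which is precisely the route taken in \cite{DNG18}, and you correctly flag the two places where the real work lies --- reading boundary values through the regularization when $u_0,u_1$ are merely bounded, and having the non-pluripolar Monge--Amp\`{e}re machinery and domination principle available after passing to a resolution. The one spot I would tighten: in the uniqueness step, ``running the argument with the roles exchanged'' is not symmetric, since $U$ is an envelope and $V$ is an arbitrary solution; the direction $U\le V$ really needs the domination (or comparison) principle for maximal $p_1^*\omega$-psh functions with the given boundary trace, which is exactly the boundary-regularity subtlety you already identify --- so this is a loose phrasing rather than a wrong idea.
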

We will call $\Phi=\{\vphi(s)=\psi+U(\cdot, s)\}$ the \textit{geodesic} joining $\vphi_0=\psi+u_0$ and $\vphi_1=\psi+u_1$.

For finite energy potentials $u_0, u_1\in \mcE^1(\omega)$, let $u^j_0, u^j_1$ be bounded smooth $\omega$-psh functions decreasing to $u_0, u_1$ (see Proposition \ref{prop-smapp}). Let $u_t^j$ be the bounded geodesic connecting $u^j_0$ to $u^j_1$. \Blue{Using the expression \eqref{eq-envelope}, we know that} $j\rightarrow u^j_{t}$ is non-increasing. Set:
\begin{equation}
u_t:=\lim_{j\rightarrow+\infty}u^j_t.
\end{equation}
\Blue{We call $U=\{u_t\}$ to be a (finite-energy) geodesic joining $u_0$ to $u_1$. By \cite[Theorem 1.7]{BBJ18},
the map $t\mapsto u_t$ associated to the geodesic $U$ is a continuous map from $[0, 1]$ to $\cE^1$ with respect to the strong topology. }

Generalizing Darvas' result in the smooth case (\cite{Dar15}), the works in \cite{Dar17, DNG18} showed that 
$\mcE^1$ can be characterized as the metric completion of $\bar{\mcH}(\omega)$ under a \Blue{length metric} $d_1$ which can be defined as follows.  
Fix a log resolution $\mu: Y\rightarrow Z$ and a K\"{a}hler form $\omega_P>0$ on $Y$. Then for any $\epsilon>0$,
\begin{equation}\label{eq-omegaep}
\omega_\epsilon:=\mu^* \omega+\epsilon \omega_P
\end{equation}
is a K\"{a}hler form and one can define Darvas' \Blue{$d_{1,\epsilon}$-metric} on $\bar{\mcH}(Z, \omega_\epsilon)$. Note that $u\in \bar{\mcH}(Z, \omega)$ implies $u\in \bar{\mcH}(Y, \omega_\epsilon)$. One then defines (see \cite[Definition 1.10]{DNG18})
\begin{eqnarray*}
d_1(u_0, u_1)=\liminf_{\epsilon\rightarrow 0}d_{1,\epsilon}(u_0, u_1).
\end{eqnarray*}

It is known that $u_j\rightarrow u$ in $\mcE^1$ under the strong topology 
if and only if $d_1(u_j, u)=0$.  Moreover in this case the Monge-Amp\`{e}re measures $(\sddb(\psi+u_j))^n$ converges weakly to $(\sddb(\psi+u))^n$ (see \cite[Proposition 2.6]{BBEGZ}).

\subsection{Energy functionals and K\"{a}hler-Einstein metrics}

\Blue{Let $e^{-\psi}$ be again the smooth reference Hermitian metric on $L$ as defined in \eqref{eq-refpsi}.} 
For any $\Blue{\vphi \in\;} \PSH(L)$ such that $\vphi-\psi\in \mcE^1(\omega)$, we use the following well-studied functionals: 
\begin{eqnarray}
\bf\bfE(\vphi)&:=&\bfE_\psi(\vphi)\nonumber \\
&=&\frac{1}{(n+1)(2\pi)^n V} \sum_{i=0}^n \int_Z (\vphi-\psi) (\sddb\psi)^{n-i}\wedge (\sddb\vphi)^{i},\label{eq-Ephi}\\
{\bf \Lambda}(\vphi)&:=&{\bf \Lambda}_\psi(\vphi)=\frac{1}{(2\pi)^nV}\int_Z (\vphi-\psi)(\sddb \psi)^n \label{eq-Kphi}, \\
\bfJ(\vphi)&:=&J_\psi(\vphi)={\bf \Lambda}_\psi(\vphi)-\bfE_\psi(\vphi)\nonumber \\
&=&\frac{1}{(2\pi)^nV}\int_Z (\vphi-\psi)(\sddb\psi)^n-\bfE_\psi(\vphi), \label{eq-Jphi}\\
\bfI(\vphi)&:=&\bfI_\psi(\vphi)=\bfI(\psi, \vphi)= \int_X (\vphi-\psi)\left((\sddb\psi)^n-(\sddb\vphi)^n\right), \label{eq-Iphi}\\
({\bf I}-\bfJ)(\vphi)&=&({\bf I}_\psi-\bfJ_\psi)(\vphi)=\bfE_\psi(\vphi)-\frac{1}{(2\pi)^n V}\int_Z (\vphi-\psi)(\sddb \vphi)^n.
\end{eqnarray}
These functionals first appeared in K\"{a}hler geometry in the smooth setting (see \cite{Tia00}). They have since been well studied in much more generality in \cite{BEGZ} for any big classes on a compact K\"{a}hler manifold. In particular, the formula \eqref{eq-Ephi} (in which we have used the mixed Monge-Amp\`{e}re measures defined in \eqref{eq-mixed})
is valid according to \cite[Corollary 2.18]{BEGZ} after equating the integrals with corresponding integrals on the resolution of singularities (and using the identification \eqref{eq-PSHY}). 
\Blue{We recall some basic inequalities that will be useful later
\begin{lem}\label{eq-lemIJ}
For any $\vphi\in \cE^1(L)$, we have the inequalities:
\begin{align}\label{eq-IJineq}
\frac{1}{n+1}\bfI(\vphi)\le \bfJ(\vphi)\le \frac{n}{n+1} \bfI(\vphi), 
\end{align}
Moreover for any $t\in [0, 1]$, we have:
\begin{align}\label{eq-Dingineq}
\bfJ(t\vphi+(1-t)\psi)\le t^{1+\frac{1}{n}}\bfJ(\vphi).
\end{align}
\end{lem}
The inequality \eqref{eq-IJineq} is well-known (see \cite[6.1]{Tia00} or \cite[section 1.4]{BBEGZ}). The inequality \eqref{eq-Dingineq} is first proved in \cite{Din88} by integrating the following inequality:
\begin{align*}
\frac{d}{dt}\bfJ(t\vphi+(1-t)\psi)&=\frac{1}{(2\pi)^nL^{\cdot n}}\int_X (\vphi-\psi)((\sddb\psi)^n-(\sddb(t\vphi+(1-t)\psi))^n)\\
&=\frac{1}{t}\bfI(t\vphi+(1-t)\psi)\ge \frac{1}{t}(1+\frac{1}{n})\bfJ(t\vphi+(1-t)\psi). 
\end{align*}}
Another property we will need is the monotonicity of ${\bf \Lambda}$ and $\bfE$ functionals:
\begin{equation}\label{eq-monotone}
\vphi_1\le \vphi_2 \quad \Longrightarrow \quad {\bf \Lambda}(\vphi_1)\le {\bf \Lambda}(\vphi_2)\quad \text{ and }\quad \bfE(\vphi_1)\le \bfE(\vphi_2).
\end{equation}
From now on, let $Q$ be a Weil \Blue{$\bQ$-divisor on $Z$ that is not necessarily effective. Assume that $K_Z+Q$ is $\bQ$-Cartier. }
Let $\mu: Y\rightarrow Z$ be a log resolution of singularities of $(Z, Q)$ such that $\mu^{-1}Z^\sing=\sum_k E_k$ is the reduced exceptional simple normal crossing divisor, $Q':=\mu^{-1}_*Q$ is the strict transform of $Q$ and $Q'+\sum_k E_k$ has simple normal crossings. We can write:
\begin{equation}
K_Y+Q'=\mu^*(K_Z+Q)+\sum_k a_k E_k.
\end{equation}
As before, we can assume that the construction of the resolution $\mu$ is functorial. In particular, if a \Blue{connected} group $G$ acts holomorphically on $(Z, Q)$, then we can assume that $\mu$ is $G$-equivariant and the divisors $Q'$ and $E_k$ are all $G$-invariant.
\begin{defn}\label{def-klt}
$(Z, Q)$ is said to have sub-klt singularities if there exists a log resolution of singularities as above such that $a_k>-1$ for all $k$. If $Q$ is moreover effective, then $(Z, Q)$ is said to have klt singularities.
\end{defn}
Fix $\ell_0\in \bN^*$ such that $\ell_0 (K_Z+Q)$ is Cartier. If $\sigma$ is a nowhere-vanishing holomorphic section of the corresponding line bundle over \Blue{an open set} $U$ of $Z$, then there is a pull-back meromorphic volume form on $\mu^{-1}(U)$:
\begin{equation*}
\mu^*\left(\sqrt{-1}^{\ell_0 n^2} \sigma\wedge \bar{\sigma}\right)^{1/\ell_0}=\prod_{i}|z_i|^{2a_i}dV,
\end{equation*}
where $\{z_i\}$ are local holomorphic coordinates satisfying $E_i=\{z_i=0\}$ and $dV$ is a smooth volume for on $Y$. $(Z, Q)$ has sub-klt singularities \Blue{if and only if the above volume form is locally integrable. We assume that this is the case from now on. }
\begin{defn}[{see \cite[section 3]{BBEGZ}}]
Assume $\Blue{L=-K_Z-Q}$ is an ample $\bQ$-line bundle. 
Let $\vphi \in\cE^1(Z, L)$ be a finite energy psh potential on the $\bQ$-line bundle $L$. \Blue{We define a measure:}
\begin{equation}
\Blue{
\frac{e^{-\vphi}}{|s_Q|^2}:=
\left(\sqrt{-1}^{\ell_0 n^2} \sigma\wedge \bar{\sigma}\right)^{1/\ell_0}{|\sigma^*|_{\ell_0 \vphi}^{2/\ell_0}},}
\end{equation}
where $\sigma^*$ is the dual of $\sigma$ which is a nowhere-vanishing section of $-\ell_0(K_Z+Q)$.
\end{defn}

The Ding- and Mabuchi- functionals on $\mcE^1(Z, L)$ are defined as follows:
\begin{eqnarray}
\bL(\vphi)&=&\bL_{(Z,Q)}(\vphi)=-\log\left(\frac{1}{(2\pi)^nL^{\cdot n}}\int_Y e^{- \vphi}\frac{1}{|s_Q|^2}\right)\label{eq-LB}\\
\bfD(\vphi)&=&\bfD_{(Z,Q),\psi}(\vphi)=\bfD_\psi(\vphi)=-\bfE_\psi(\vphi)+\bL_{(Z,Q)}(\vphi) \label{eq-DB}\\
\bfH(\vphi)&:=&\bfH_{(Z,Q),\psi}(\vphi)=\frac{1}{(2\pi)^n L^{\cdot n}}\int_X \log\frac{(\sddb\vphi)^n}{e^{-\psi}/|s_Q|^2}(\sddb \vphi)^n \label{eq-Hphi}\\
\bfM(\vphi)&:=&\bfM_{(Z,Q),\psi}(\vphi)=\bfM_\psi(\vphi)= \bfH(\vphi)-({\bf I}-\bfJ)_\psi(\vphi) \label{eq-Mphi}.
\end{eqnarray}
\Blue{In the formula \eqref{eq-Hphi}, if $(\sddb\vphi)^n$ is not absolutely continuous with respect to the measure $e^{-\psi}/|s_Q|^2$, then we define $\bfH(\vphi)$ to be $+\infty$. By Jensen's inequality, it is easy to get:
$$
\bfH(\vphi)\ge -\log\left(\frac{1}{(2\pi)^nL^{\cdot n}}\int_X \frac{e^{-\psi}}{|s_Q|^2}\right)>-C>-\infty
$$ 
for a constant $C>0$ independent of $\vphi$, where we used the assumption that $(Z, Q)$ has sub-klt singularities.  
Moreover, because inequality \eqref{eq-IJineq} implies $\bfI-\bfJ\le n\bfJ$, we get the inequality:
\begin{equation}\label{eq-Mge-nJ}
\bfM(\vphi)\ge \bfH(\vphi)-n \bfJ(\vphi)\ge -C-n\bfJ(\vphi). 
\end{equation}
 }

In the rest of this subsection, we will assume $(Z, Q)=(X, D)$ is a log Fano pair. In other words, we assume that $D$ is an effective divisor, $L=-K_X-D$ is an ample $\bQ$-Cartier divisor and $(X,D)$ has klt singularities.
\begin{defn}
A finite energy potential $\Blue{\vphi\in \cE^1}(X, -(K_X+D))$ is a K\"{a}hler-Einstein \Blue{potential} on $(X, D)$ if it satisfies the following equation in the pluripotential sense:
\begin{equation}
(\sddb\vphi)^n=\frac{e^{-\vphi}}{|s_D|^2}.
\end{equation}
\Blue{We then say that the curvature current $\sddb\vphi$ is a K\"{a}hler-Einstein metric.}
\end{defn}
By \cite{BBEGZ}, it is known that any K\"{a}hler-Einstein \Blue{potential} $\vphi$ is automatically bounded, smooth on $X^{\reg}\setminus D$.  
\begin{defn}[{\cite[Definition 1.3]{BBEGZ}}]
A positive measure $\nu$ on $X$ is tame if $\Blue{\nu}$ puts no mass on closed analytic sets and if there is a resolution of singularities $\mu: Y\rightarrow X$ such that the lift $\nu_Y$ of $\nu$ to $Y$ has $L^p$ density for some $p>1$.
\end{defn}
The following compactness result is very important in the variational approach to solving Monge-Amp\`{e}re equations using the pluripotential theory.
\begin{thm}[{\cite[Theorem 2.17]{BBEGZ}}]\label{thm-BBEGZ}
Let $\nu$ be a tame probability measure on $X$. 
For any $C>0$, the following set is compact in the strong topology:
\[
\left\{u\in \mcE^1(X, \omega); \quad
\sup_{M}u=0, \quad \int_Z \log\frac{\omega_{u}^n}{\nu}\omega^n_{u}\;\Blue{\le}\; C
\right\}.
\]
\end{thm}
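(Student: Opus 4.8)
The statement to be proved is Theorem 2.17 of \cite{BBEGZ}: compactness of sublevel sets of the entropy functional $u\mapsto \Ent_\nu(u):=\int_Z\log\frac{\omega_u^n}{\nu}\,\omega_u^n$ inside $\{u\in\mcE^1(X,\omega):\sup_X u=0\}$, under the strong topology, for a tame probability measure $\nu$. The plan is to run the standard weak-compactness plus upgrade-to-strong argument. First I would extract from a sequence $u_j$ with $\sup_X u_j=0$ and $\Ent_\nu(u_j)\le C$ a weakly convergent subsequence $u_j\to u$ in $\PSH(\omega)$; this is immediate since the normalization $\sup u_j=0$ makes the family relatively compact in $L^1$, and by Hartogs' lemma (as in Lemma \ref{lem-Hartogs}) $u$ is a genuine $\omega$-psh function with $\sup_X u=0$. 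The real content is then: (i) $u\in\mcE^1(X,\omega)$; (ii) $\Ent_\nu(u)\le C$ (lower semicontinuity of entropy); and (iii) the convergence is in fact strong, i.e. $\bfE(u_j)\to\bfE(u)$ equivalently $d_1(u_j,u)\to 0$.

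For (i) and the energy bound, the key mechanism is that a uniform entropy bound forces a uniform energy bound: by the variational characterization one has, for a suitable comparison, $\bfE(u_j)\ge -\,\bfH$-type quantity controlled by $\Ent_\nu(u_j)$ via the Mabuchi/Ding functional. Concretely I would invoke that $\bfM$ (or the free-energy functional $\bfH-(\bI-\bfJ)$) is bounded below along the normalized sequence because its entropy part is bounded above and its energy part $(\bI-\bfJ)$ is bounded; combined with the compactness of normalized $\mcE^1$ under an energy bound (a standard consequence of the theory in \cite{BBEGZ, BEGZ10}), this gives $u_j$ relatively compact in $\mcE^1$ with the $\mcE^1$-limit coinciding with the weak limit $u$, hence $u\in\mcE^1$ and $\int_Z\omega_{u_j}^n\to\int_Z\omega_u^n$ with the Monge–Amp\`ere measures converging weakly. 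For (ii), once $\omega_{u_j}^n\rightharpoonup\omega_u^n$ weakly and $\nu$ is tame (so $\log$ of the density behaves well after lifting to a resolution $Y$ where $\nu_Y$ has $L^p$ density, $p>1$), lower semicontinuity of relative entropy with respect to weak convergence of measures — applied on $Y$, where everything is a genuine measure with controlled reference — yields $\Ent_\nu(u)\le\liminf_j\Ent_\nu(u_j)\le C$.

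The remaining point (iii), strong convergence, is where I expect the main obstacle to lie, and it is the heart of the theorem. Weak $\mcE^1$-convergence with matching masses gives $d_1(u_j,u)\to 0$ only if one also controls that no energy escapes; the standard route is to show $\bfE(u_j)\to\bfE(u)$. One direction, $\limsup\bfE(u_j)\le\bfE(u)$, follows from upper semicontinuity of $\bfE$ under weak (decreasing-regularized) limits together with the normalization. For the reverse inequality $\liminf\bfE(u_j)\ge\bfE(u)$ one typically argues by contradiction: if $\bfE(u_j)$ stayed bounded away from $\bfE(u)$, then passing through the entropy bound one builds a competitor contradicting either the lower semicontinuity in (ii) or a strict convexity/compactness statement — this is exactly the mechanism by which a \emph{tame} reference measure is essential (tameness rules out concentration of $\omega_u^n$ on pluripolar or analytic sets, which is what would otherwise allow $d_1$-distance to be lost in the limit). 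Technically I would lift to the log resolution $\mu:Y\to X$, work with $\omega_\epsilon=\mu^*\omega+\epsilon\omega_P$ as in \eqref{eq-omegaep}, prove the corresponding statement on $(Y,\omega_\epsilon)$ (where $\nu_Y$ has $L^p$ density and Darvas' smooth theory and the results of \cite{DNG18, BBJ18} on finite-energy geodesics and $d_1$ apply directly), and then let $\epsilon\to 0$ using $d_1=\liminf_\epsilon d_{1,\epsilon}$; controlling this last limit uniformly is the delicate estimate. I will cite \cite[Theorem 2.17]{BBEGZ} for the full details, as the statement is used here as a black box.
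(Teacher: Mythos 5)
The paper does not prove this statement: it is imported verbatim as \cite[Theorem 2.17]{BBEGZ} and used as a black box (e.g.\ in the sketch of $(2)\Rightarrow(3)$ for Theorem \ref{thm-analytic}). Since your proposal also ends by citing that theorem for the details, your treatment matches the paper's.

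One remark on the intermediate sketch, for your own benefit: the mechanism you give for extracting a uniform $\mcE^1$-bound from the entropy bound — namely that $\bfM=\bfH-(\bfI-\bfJ)$ is bounded below ``because its entropy part is bounded above and its energy part $(\bfI-\bfJ)$ is bounded'' — is circular, because the boundedness of $(\bfI-\bfJ)$ (equivalently of $\bfJ$) is precisely what one has to establish from the entropy bound; nothing gives it for free. In BBEGZ the energy bound comes instead from a Legendre-duality (Young-type) inequality bounding $\int_X(-u)\,\omega_u^n$ in terms of $\Ent_\nu(\omega_u^n)$ plus the normalization $\sup u=0$, with no a priori energy input. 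Since you ultimately defer to the reference, this gap does not affect the usability of your proposal, but as written that step would not close.
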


\Blue{\subsection{Analytic criterion for the existence under group actions}}

Let $\bG$ be a connected reductive subgroup of $\Aut(X, D)_0$ and $\bT:=C(\bG)_0\cong (\bC^*)^r=((S^1)^r)^\bC$ be the identity component of the center $C(\bG)$.  Any $\xi\in N_\bR$ corresponds to a holomorphic vector field written as $\xi-i J\xi$ where $J$ is the complex structure (on the regular part). In other words, we identify $\xi$ with a real vector field and $J\xi\in \mathfrak{t}$, where $\mathfrak{t}$ is the Lie algebra of $(S^1)^r$. For any $\xi\in N_{\bR}$, let $\Blue{\sigma_\xi}: \bC \rightarrow \bG$ be the one parameter subgroup generated by $\xi$. 
Then we have:
\begin{equation}\label{eq-sigmaxi}
\sigma_\xi(\Blue{\mathfrak{z}}=s+iu)=\exp(s\xi)\cdot \exp(u J\xi).
\end{equation}
If $\xi\in N_\bZ$, then $\sigma_\xi\circ (-\log)=:\hat{\sigma}_\xi: \bC^*\rightarrow \bG$ is a well defined one parameter subgroup. In this paper, we will freely use the change of variables:
\begin{equation}
\bC^*\rightarrow \bR, \quad t\mapsto \Blue{-\log|t|^2}=:s.
\end{equation}

Let $\bK$ be a maximal compact subgroup of $\bG$ containing $(S^1)^r$. Denote by $(\mcE^1)^\bK:=(\mcE^1(L))^\bK$ the set of $\bK$-invariant finite energy psh Hermitian metrics on $L$. For any $\vphi\in (\mcE^1)^\bK$ define:
\begin{equation}\label{eq-JbTvphi}
\bfJ_\bT(\vphi):=\bfJ_{\psi, \bT}(\vphi):=\inf_{\sigma\in \bT}\bfJ_\psi(\sigma^*\vphi).
\end{equation}
\begin{lem}[see {\cite[Lemma 1.9]{His16b}}]\label{lem-infobt}
For any $\vphi\in\; (\cE^1)^\bK$, The function $\sigma\mapsto \bfJ_\psi(\sigma^*\vphi)$ defined on $\bT\cong N_\bR\times (S^1)^r$ is $(S^1)^r$ invariant, convex and proper. As a consequence there always exists $\sigma\in \bT$ that achieves the infimum.
\end{lem}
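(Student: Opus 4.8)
\textbf{Proof proposal for Lemma \ref{lem-infobt}.}

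The plan is to follow Hisamoto's strategy and reduce everything to convexity along geodesics in the torus $\bT$, together with properness coming from the fact that $\Fut$ vanishes (or, in this generality, from the structure of $\bfJ_\psi$ under the $\bT$-action). First I would record the basic transformation behaviour: for $\sigma\in\bT$, the pull-back $\sigma^*\vphi$ is again in $(\mcE^1)^\bK$ because $\bK$ normalizes $\bT$ (indeed $\bT=C(\bG)_0$ is central, so $\bK$ commutes with $\bT$), and $\bfE_\psi$, ${\bf\Lambda}_\psi$ transform by adding the corresponding change-of-basepoint terms; in particular $\bfJ_\psi(\sigma^*\vphi)$ depends only on the image of $\sigma$ in $\bT/(S^1)^r\cong N_\bR$, which gives the claimed $(S^1)^r$-invariance. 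Concretely, writing $\sigma=\sigma_\xi(iu)$ for the compact directions, the action is by a unitary change of frame and leaves all the integrals in \eqref{eq-Ephi}, \eqref{eq-Kphi} unchanged.

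Next I would prove convexity. The point is that $s\mapsto \sigma_\xi(s)^*\vphi$ is, up to an affine term, a psh ray: the orbit map $\bC^*\ni t\mapsto \hat\sigma_\xi(t)^*\vphi$ extends to a $p_1^*\omega$-psh metric on $X\times\bC^*$ for $\xi\in N_\bZ$ (this is the standard fact that pulling back a positively curved metric along a one-parameter group of automorphisms produces a subgeodesic), and by homogeneity/density this persists for all $\xi\in N_\bR$. Along a psh ray the functional $\bfE_\psi$ is concave and ${\bf\Lambda}_\psi$ is affine in a suitable parametrization — more precisely $\bfE_\psi$ is concave along subgeodesics by the standard Berndtsson-type argument, while ${\bf\Lambda}_\psi(\sigma_\xi(s)^*\vphi)$ is linear in $s$ because $(\sddb\psi)^n$ is the fixed reference measure and the potential difference grows linearly along the group action. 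Hence $\bfJ_\psi=\text{\bf\Lambda}_\psi-\bfE_\psi$ is convex in $s$ along each one-parameter subgroup, and since the abelian group $N_\bR$ acts with commuting flows, $\xi\mapsto \bfJ_\psi(\sigma_\xi(1)^*\vphi)$ is a convex function on $N_\bR$ (convexity along all lines through the geodesic structure of $N_\bR\cong\bT/(S^1)^r$).

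For properness I would argue that this convex function is coercive on $N_\bR$. The derivative of $s\mapsto \bfJ_\psi(\sigma_\xi(s)^*\vphi)$ at infinity is governed by the asymptotic slope, which is precisely a non-Archimedean $\bfJ$-type invariant of the product test configuration generated by $\xi$; the key input is that $\bfJ^\NA$ of a non-trivial product configuration with $\Fut\equiv 0$ is strictly positive in every direction $\xi\ne 0$, and depends only on the ray direction, so the convex function grows at least linearly in $|\xi|$. Equivalently, one can invoke directly that $\bfJ_\psi(\sigma^*\vphi)\to+\infty$ as $\sigma\to\infty$ in $\bT/(S^1)^r$ by comparing with the fixed smooth reference and using Lemma \ref{lem-Hartogs} to bound the sublinear corrections. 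Given convexity plus coercivity on the finite-dimensional space $N_\bR$, the infimum in \eqref{eq-JbTvphi} is attained, which is the last assertion.

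The main obstacle I anticipate is not the convexity — that is the soft Berndtsson/maximum-principle package applied on a resolution via $\omega_\epsilon$ as in \eqref{eq-omegaep} — but the \emph{properness/coercivity} step: one must make sure the asymptotic slope of $\bfJ_\psi$ along torus directions is strictly positive, which is where the hypothesis that $\bG$ is reductive and $\Fut$ vanishes on $N_\bR$ is really used, and one must handle the fact that $\vphi$ is only in $\mcE^1$, so the slope has to be computed via regularization (Proposition \ref{prop-smapp}) and the continuity of $\bfE$ in the strong topology rather than by naive differentiation under the integral sign.
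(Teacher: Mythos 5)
Your overall strategy matches the paper's: read $(S^1)^r$-invariance from the $\bK$-invariance of $\vphi$ plus the fact that $\bT$ is central, get convexity from the holomorphicity of the orbit map $(x,\mathfrak{s})\mapsto \sigma_\xi(\mathfrak{s})\cdot x$ (the paper packages this in Proposition~\ref{prop-convex1}), and get properness from the slope-at-infinity being $\bfJ^\NA$ of the $\xi$-twisted trivial configuration, which is strictly positive for $\xi\neq 0$ by \cite{BHJ17}.

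Two inaccuracies are worth correcting. First, your claim that ${\bf\Lambda}_\psi(\sigma_\xi(s)^*\vphi)$ is \emph{linear} in $s$ is wrong. The integrand $\sigma_\xi(s)^*\vphi-\psi$ does not grow linearly in $s$ (that would require $\psi$ to be $\sigma_\xi$-invariant, which it is not). What is true is that ${\bf\Lambda}_\psi$ is \emph{convex} along the orbit: the orbit metric is $p_1^*\omega$-psh on $X\times\bC^*$ (pullback of a positively curved metric under a holomorphic map), and the fiber integral of a psh function against the fixed reference measure $(\sddb\psi)^n$ is convex in $s=-\log|t|$. Combined with affineness of $\bfE$ along the orbit (which is in fact a geodesic, since the pulled-back curvature has rank $\le n$, so $(\sddb\Phi)^{n+1}=0$), one still obtains convexity of $\bfJ={\bf\Lambda}-\bfE$, so your conclusion survives, but the intermediate linearity claim should be replaced by convexity. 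Second, your properness step imports a hypothesis the lemma does not have: you assert that the strict positivity of $\bfJ^\NA$ along directions $\xi\neq 0$ uses $\Fut\equiv 0$. This is not the case. By \cite{BHJ17}, $\bfJ^\NA$ of a normal ample test configuration vanishes if and only if the test configuration is trivial, and the $\xi$-twisted product configuration is nontrivial for $\xi\neq 0$ regardless of the Futaki character. So properness holds with no assumption on $\Fut$, exactly as the statement of the lemma requires; the role of $\Fut\equiv 0$ in the paper is elsewhere (e.g.\ Lemma~\ref{lem-JNAproper}), not here.
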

\begin{proof}
\Blue{
Because $\vphi$ is $\bK$-invariant and $\bK$ contains $(S^1)^r$, $\vphi$ is also $(S^1)^r$-invariant. 
For any $\sigma=\exp(\xi)\exp(i\xi')\in \bT$ with $\xi, \xi'\in N_\bR$, we have $\sigma^*\vphi=\exp(\xi)^*\vphi$.   
So $\sigma\mapsto {\bfJ}_\psi(\sigma^*\vphi)=\bfJ_\psi(\sigma_\xi(1)^*\vphi)$ can be seen as a function of $\xi\in N_\bR\cong \bR^r$. For its convexity, see Proposition \ref{prop-convex1}. To verify its properness, we just need to show the following slope is positive for any $\xi\neq 0\in N_\bR$: 
$$
\bfJ'^\infty(\{\sigma_\xi(s)^*\vphi\}):=\lim_{s\rightarrow+\infty} \frac{\bfJ_\psi(\sigma_\xi(s)^*\vphi)}{s}.
$$ 
Here $\psi$ is a smooth psh potential defined in \eqref{eq-refpsi}. 
We now claim that $a:=\bfJ'^\infty(\{\sigma_\xi(s)^*\vphi\})=\bfJ'^\infty(\{\sigma_\xi(s)^*\psi\})=:b$. Assuming this claim, we just need to prove the second slope is positive: $b>0$. 
Again it is well-known that $s\mapsto \bfJ_\psi(\sigma_\xi(s)^*\psi)=:f(s)$ is a convex function (see Proposition \ref{prop-convex1}). By convexity and $f(0)=0$, it has a positive slope at $+\infty$ as long as it takes a positive value. It is well-known that for any $\vphi'\in \cE^1$, $\bfJ(\vphi')$ is non-negative and is equal to 0 only if $\vphi'-\psi$ is a constant. 
So we just need to show that $\sigma_\xi(s)^*\psi-\psi$ is not a constant function for some $s\in \bR$. To see this,  we note that
$
\left.\frac{d}{ds}\right|_{s=0}\sigma_\xi(s)^*\psi$ is a Hamiltonian function of $\xi$ with respect to the smooth K\"{a}hler metric $\sddb\psi$, which can not be constant unless $\xi$ is 0 (since we assume that the $\bT$-action is faithful). So, for $s$ small enough, $\sigma_\xi(s)^*\psi-\psi$ is not a constant function either. }

\Blue{
Finally we verify the above claim.  By using the definition of $\bfJ$ in \eqref{eq-Jphi} and the co-cycle property of $\bfE$, we get the decomposition:
\begin{align}\label{eq-lemJslope}
&\bfJ_\psi(\sigma_\xi(s)^*\vphi)-\bfJ_\psi(\sigma_\xi(s)^*\psi)=\int_X (\sigma^*\vphi-\sigma^*\psi)(\sddb\psi)^n-\bfE_{\sigma^*\psi}(\sigma^*\vphi)
\end{align}
where, for simplicity of notation, we write $\sigma=\sigma_\xi(s)$.
We need to show that the slope of the left-hand-side at $s=+\infty$ is equal to 0. 
First, by change of variable formula, $\bfE_{\sigma^*\psi}(\sigma^*\vphi)=\bfE_\psi(\vphi)$ does not depend on $s$ and hence its slope is 0. 
If the Hermitian metric $e^{-\vphi}$ is smooth, then we easily see that the first term on the right-hand-side in \eqref{eq-lemJslope} is a bounded function of $s$. For general $e^{-\vphi}$, we further decompose the first term as:
\begin{align}\label{eq-lemJslope2}
&\int_X (\sigma^*\vphi-\sigma^*\psi)((\sddb\psi)^n-(\sddb\sigma^*\psi)^n)+\int_X (\sigma^*\vphi-\sigma^*\psi)(\sddb\sigma^*\psi)^n.
\end{align}
The second term on the right does not dependent on $s$ by change of variables. For the first term, we can use the inequality proved in \cite[Lemma A.1]{BBJ18} (see also \cite[Lemma 1.9]{BBEGZ}) to know that its absolute value is bounded by
$$
\bfI(\sigma^*\vphi, \sigma^*\psi)^{\frac{1}{2^n}}\bfI(\psi, \sigma^*\psi)^{\frac{1}{2^n}}\cdot \max\{\bfI(\psi, \sigma^*\vphi), \bfI(\psi, \sigma^*\psi)\}^{1-\frac{1}{2^{n-1}}}.
$$
By using $\bfI(\sigma^*\vphi, \sigma^*\psi)=\bfI(\vphi, \psi)$, it is easy to see that the above quantity is bounded by $C s^{1-\frac{1}{2^n}}$ for some constant independent of $s$. So if we divide the first term in \eqref{eq-lemJslope2}
by $s$ and let $s\rightarrow+\infty$, we see that its slope is also equal to 0. Combining the above discussions, the proof is then completed. }
\end{proof}
\Blue{
To state the following result, we first introduce the Futaki invariant. For any $\xi\in N_\bR$, let $V_\xi$ be the corresponding holomorphic vector field. The canonical lift of $V_\xi$ on $L=-(K_X+D)$ corresponds to a (Hamiltonian) function $\theta_\xi(\psi)$ that is defined as:
\begin{equation}
\theta_\xi(\psi):=\left.\frac{d}{ds}\right|_{s=0}\sigma_\xi(s)^*\psi:=e^{\psi}\left.\frac{d}{ds}\right|_{s=0}\frac{d}{ds}\sigma_\xi(s)^*e^{-\psi}
\end{equation}
and satisfies $\iota_{V_\xi}\sddb \psi=\sqrt{-1}\bar{\partial}\theta_\xi(\psi)$. Define the Futaki invariant:
\begin{equation}\label{eq-Futana}
\Fut(\xi)=-\frac{1}{(2\pi)^n L^{\cdot n}}\int_X \theta_\xi(\psi)(\sddb\psi)^n.
\end{equation}
See \eqref{eq-CWFut} for an algebraic definition of this invariant.  
\begin{lem}
Let $\bfF$ be either $\bfD$ or $\bfM$. Fix $\vphi\in (\cE^1)^\bK$ and $\xi\in N_\bR$. Then for any $s\in \bR$, we have $\bfF(\sigma_\xi(s)^*\vphi)=\bfF(\vphi)-s\cdot (2\pi)^n L^{\cdot n} \cdot \Fut(\xi)$. 
\end{lem}
\begin{proof}
First note that $\bfL(\vphi)=-\log\left(\frac{1}{(2\pi)^nL^{\cdot n}}\int_X e^{-\vphi}/|s_Q|^2\right)$ is invariant under the $\bT$-action:
$\bfL(\sigma^*\vphi)=\bfL(\vphi)$ for any $\sigma\in \bT$. For the $\bfE$ term, by the cocycle condition, with $\sigma=\sigma_\xi(s)$, 
\begin{align*}
\bfE(\sigma^*\vphi)&=\bfE_\psi(\sigma^*\vphi)=\bfE_\psi(\sigma^*\psi)+\bfE_{\sigma^*\psi}(\sigma^*\vphi)=\bfE_\psi(\sigma^*\psi)+\bfE_\psi(\vphi). 
\end{align*}
We then have the identity:
\begin{align*}
\frac{d}{ds}\bfE_\psi(\sigma_\xi(s)^*\psi)&=\int_X \sigma^*\theta_\xi(\psi) (\sddb \sigma^*\psi)^n=\int_X \theta_\xi(\psi)(\sddb\psi)^n\\
&=-(2\pi)^nL^{\cdot n} \cdot \Fut(\xi). 
\end{align*}
This clearly implies the wanted identity for $\bfD=-\bfE+\bfL$. To prove the identity for $\bfM$, note that since $(\bfI-\bfJ)(\vphi)=-\int_X (\vphi-\psi)(\sddb\vphi)^n+\bfE_\psi(\vphi)$, $\bfM$ can be re-written as 
\begin{align*}
\bfM_\psi(\vphi)&=\int_X \log \frac{(\sddb\vphi)^n}{e^{-\psi}/|s_Q|^2}(\sddb\vphi)^n-\int_X \log\frac{e^{-\vphi}/|s_Q|^2}{e^{-\psi}/|s_Q|^2}(\sddb\vphi)^n-\bfE_\psi(\vphi)\\
&=\int_X \log \frac{(\sddb\vphi)^n}{e^{-\vphi}/|s_Q|^2}(\sddb\vphi)^n-\bfE_\psi(\vphi)
\end{align*}
The first term on the right is again invariant under $\bT$-action by the change of variable formula. The last term has been dealt with above. 
\end{proof}
By this lemma, if $\bfF\in \{\bfD, \bfM\}$ is bounded from below on $(\cE^1)^\bK$, then $\Fut\equiv 0$ on $N_\bR$ and $\bfF$ is invariant under the $\bT$-action. 
We now introduce the stronger condition to guarantee the existence of K\"{a}hler-Einstein metrics. 
}
\begin{defn}[\cite{DR17, His16b}]\label{def-funcoercive}
We say that the energy $\bfF\in \{\bfD, \bfM\}$ is $\bG$-coercive 
 if there exists $\gamma>0$, $C>0$ such that for any $\vphi\in (\mcE^1)^\bK$ we have:
\begin{equation}\label{eq-FGproper}
\bfF(\vphi) \ge \gamma\cdot  \bfJ_\bT(\vphi)-C.
\end{equation}
\end{defn}

\begin{thm}[{\cite{BBEGZ}, \cite{DR17}, \cite{Dar17}, \cite[Theorem 3.4]{His16b}}]\label{thm-analytic}
Let $(X, D)$ be a log Fano pair. Let $\bG$ be a connected reductive subgroup of $\Aut(X, D)_0$, and set $\bT=C(\bG)_0$ and $\bK\subset \bG$ as before.
Consider the following conditions:
\begin{enumerate}[(1)]
\item The Ding energy is $\bG$-coercive. 
\item The Mabuchi energy is $\bG$-coercive.
\item $(X, D)$ admits a $\bK$-invariant K\"{a}hler-Einstein metric.
\end{enumerate}
Then  \Blue{we have the implications (1) $\Rightarrow$ (2) $\Rightarrow$ (3)}. 

Moreover, if we assume that $\Aut(X, D)_0$ is reductive and set $\bG=\Aut(X, D)_0$, then all of the above conditions are equivalent.
\end{thm}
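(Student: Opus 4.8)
This statement is not new; it assembles results of \cite{BBEGZ, DR17, Dar17} with \cite[Theorem 3.4]{His16b}, so the plan is to explain how the inputs recalled in this section fit together, with attention to the singular setting. I begin with two reductions. First, applying Jensen's inequality to the probability measures $\frac{(\sddb\vphi)^n}{(2\pi)^nV}$ and $\frac{e^{-\vphi}/|s_D|^2}{\int e^{-\vphi}/|s_D|^2}$ (their relative entropy being nonnegative) gives $\bfM\ge\bfD$ up to an additive constant, which is harmless for properness; hence (1)$\Rightarrow$(2), and it suffices to treat (2). Second, $\bG$-properness of $\bfF\in\{\bfD,\bfM\}$ forces $\Fut\equiv 0$ on $N_\bR$: for $\xi\in N_\bR$ the ray $s\mapsto\sigma_\xi(s)^*\vphi$ is a geodesic along which $\bfJ_\bT$ is constant, while $s\mapsto\bfF(\sigma_\xi(s)^*\vphi)$ is convex with asymptotic slope $\Fut(\xi)$ at one end, so $\Fut(\xi)\neq 0$ contradicts \eqref{eq-FGproper}; being convex and bounded, this function is then constant, so $\bfF$ is invariant under $\exp(N_\bR)$. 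Choosing the reference $\psi$ to be $\bK$-invariant makes $\bfF$ in addition $(S^1)^r$-invariant, hence $\bT$-invariant on $(\mcE^1)^\bK$ (recall $\bT\subset C(\bG)$ commutes with $\bK$); this legitimizes the normalization below.

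For (2)$\Rightarrow$(3) I would run the variational method. Take a minimizing sequence $\vphi_j\in(\mcE^1)^\bK$ for $\bfM$. Using Lemma \ref{lem-infobt}, pick $\sigma_j\in\bT$ achieving $\bfJ(\sigma_j^*\vphi_j)=\bfJ_\bT(\vphi_j)$, replace $\vphi_j$ by $\sigma_j^*\vphi_j$ (same $\bfM$ by the first paragraph, still $\bK$-invariant), and add constants so that $\sup(\vphi_j-\psi)=0$. Properness now bounds $\bfJ_\bT(\vphi_j)=\bfJ(\vphi_j)$; together with $\sup(\vphi_j-\psi)=0$ and Lemma \ref{lem-Hartogs} this bounds $\Lam(\vphi_j)$, hence $\bfE(\vphi_j)$, and the comparison $\bfI-\bfJ\le n\,\bfJ$ then bounds the entropy $\bfH(\vphi_j)=\bfM(\vphi_j)+(\bfI-\bfJ)(\vphi_j)$. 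Since $(X,D)$ is klt, the adapted measure $\mes_\psi$ is tame, so Theorem \ref{thm-BBEGZ} extracts a subsequence converging in the strong topology of $\mcE^1$ to some $\vphi_\infty\in(\mcE^1)^\bK$; by strong lower semicontinuity of the entropy and strong continuity of $\bfI-\bfJ$, $\vphi_\infty$ minimizes $\bfM$. A finite-entropy minimizer of $\bfM$ solves the K\"{a}hler-Einstein equation in the pluripotential sense by \cite{BBEGZ}, is bounded and smooth on $X^\reg$ by the regularity recalled above, and is $\bK$-invariant as a strong limit of $\bK$-invariant potentials. This is (3).

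For the equivalence when $\bG=\Aut(X,D)_0$ is reductive, I would invoke the Darvas-Rubinstein principle \cite{DR17}, in the form of \cite[Theorem 3.4]{His16b}, fed with the singular-case analytic inputs recalled in this section: existence and strong continuity of finite-energy geodesics, geodesic convexity and $d_1$-lower semicontinuity of $\bfD$ and $\bfM$, and $d_1$-completeness of $\mcE^1$. Fix a $\bK$-invariant K\"{a}hler-Einstein potential $\vphi_{\KE}$; it minimizes $\bfD$ and $\bfM$. Uniqueness of K\"{a}hler-Einstein metrics up to $\Aut(X,D)_0$ (Bando--Mabuchi--Berndtsson, in the singular setting \cite{BBEGZ}), combined with $\bK$ being a maximal compact subgroup of $\bG=\Aut(X,D)_0$ --- so that the identity component of its centralizer in $\bG$ is $\bT$, by the structure theory of reductive groups in Appendix \ref{app-Yu} --- shows that the $\bK$-invariant K\"{a}hler-Einstein potentials form a single $\bT$-orbit, which is precisely the minimizing set of $\bfD$ (resp.\ $\bfM$) on $(\mcE^1)^\bK$. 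Geodesic convexity together with the rigidity of the equality case (a geodesic along which the functional is affine is generated by a holomorphic vector field, hence by an element of $\bT$ in the $\bK$-invariant category) then yields $\bfD(\vphi)\ge\gamma\, d_1\!\left(\vphi,\bT\cdot\vphi_{\KE}\right)-C$ on $(\mcE^1)^\bK$. Comparing $d_1(\vphi,\bT\cdot\vphi_{\KE})$ with $\bfJ_\bT(\vphi)$ via Darvas' estimate $\bfJ_\psi(\cdot)\asymp d_1(\psi,\cdot)$ (passing between the orbits $\bT\cdot\psi$ and $\bT\cdot\vphi_{\KE}$ only costs the additive constant $d_1(\psi,\vphi_{\KE})$) gives (1), hence (2); with the previous paragraph, all three conditions are equivalent.

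The main obstacle is this last step: matching the flat directions of the convex functionals $\bfD$ and $\bfM$ on $(\mcE^1)^\bK$ with exactly the $\bT$-action, and turning geodesic convexity together with this rigidity into the quantitative bound \eqref{eq-FGproper}. This rests both on the reductive-group fact that the identity component of the centralizer of a maximal compact subgroup $\bK$ in $\bG=\Aut(X,D)_0$ is $\bT$ (Appendix \ref{app-Yu}), and on the full weight of the singular pluripotential theory --- existence and continuity of finite-energy geodesics, convexity and lower semicontinuity of the energies, and above all that a finite-entropy minimizer of $\bfM$ actually solves the Monge-Amp\`{e}re equation; all of this is contained in the cited works, and the role here is only to combine them.
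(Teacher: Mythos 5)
Your proof follows essentially the same route as the paper: for (2)$\Rightarrow$(3) you run exactly the paper's variational argument (take a minimizing sequence, renormalize by $\bT$ via Lemma \ref{lem-infobt}, bound the entropy using $\bfM\ge\bfH-n\bfJ$, apply the compactness Theorem \ref{thm-BBEGZ}, and pass to the KE equation via \cite{BBEGZ}), and for the converse you invoke the Darvas--Rubinstein principle and reduce to the homogeneity of the $\bK$-invariant KE locus under $\bT$ (the paper's property (P5)), via Berndtsson uniqueness plus the reductive-group facts from Appendix \ref{app-Yu}. One small imprecision worth flagging: you phrase the needed group-theoretic input as ``the identity component of the centralizer of $\bK$ in $\bG$ is $\bT$''; while this is a true fact when $\bK$ is maximal compact, it is not what the Appendix actually supplies or what the argument directly uses. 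The paper's derivation of (P5) runs through the normalizer, Proposition \ref{prop-Yu1} ($N_\bG(\bK)=C(\bG)_0\cdot \bK$), together with Proposition \ref{prop-Yu3}: given two $\bK$-invariant KE metrics with $\omega_2=f^*\omega_1$ for $f\in\Aut(X,D)_0$ (Berndtsson), one shows $f\in N_\bG(\bK)=\bT\cdot\bK$ and hence $\omega_2=t^*\omega_1$ with $t\in\bT$. Your centralizer statement alone does not immediately yield that the automorphism $f$ produced by Berndtsson uniqueness can be replaced by an element of $\bT$; the normalizer computation is the needed bridge. This is a presentational gap rather than a substantive one, since the conclusion you claim is correct and the needed lemmas are precisely those in Appendix \ref{app-Yu}.
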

The existence part of the above result can be derived from the work in \cite{BBEGZ, His16b}. For the reader's convenience, we sketch the proof of $(2)\Rightarrow (3)$ and refer the details to \cite{BBEGZ,Dar17, DNG18}. Because Mabuchi energy is bigger than the Ding energy, $(1)\Rightarrow (2)$ also follows.
\begin{proof}[Sketch of the proof of $(2) \Rightarrow (3)$]
 Assume that $\bfM$ is $\bG$-coercive. Then $\bfM$ is bounded from below over $(\mcE^1)^\bK$. Choose a sequence of potentials $\vphi_j\in (\mcE^1)^\bK$ such that $\bfM(\vphi_j)\rightarrow \inf_{(\mcE^1)^\bK}\bfM(\vphi)$. Then $\bfJ_{\bT}(\vphi_j)\le C$ independent of $j$. By Lemma \ref{lem-infobt} there exists $\sigma_j\in \bT$ such that $\tilde{\vphi}_j:=\sigma_j^*\vphi_j$ satisfies $\bfJ(\tilde{\vphi}_j)=\bfJ_\bT(\vphi_j)$. Clearly $\tilde{\vphi}_j\in (\cE^1)^\bK$. Moreover we can assume that $\sup(\tilde{\vphi}_j-\psi)=0$. 
  
If $\bfM$ is $\bG$-coercive, then it is bounded from below on $(\cE^1)^\bK$. 
As mentioned above, this implies that $\bfM$ is invariant under the $\bT$-action on $(\cE^1)^\bK$. 
Moreover, \Blue{using the inequality $\bfM=\bfH-(\bfI-\bfJ)\ge \bfH-n\bfJ$ (see \eqref{eq-Mge-nJ})}, we know that $\bfH(\tilde{\vphi}_j)$ is uniformly bounded from above. So by the compactness Theorem \ref{thm-BBEGZ}, $\tilde{\vphi}_j$ converges strongly to $\vphi_\infty\in (\cE^1)^\bK$. By the lower semicontinuity of $\bfM$ under strong convergence (see \cite[Lemma 4.3]{BBEGZ}), we know that $\vphi_\infty$ is a minimizer of $\bfM$ over $(\cE^1)^\bK$. Now we can easily adapt \cite[Proof of Theorem 4.8]{BBEGZ} to the $\bK$-invariant setting conclude that the $\vphi_\infty$ is a $\bK$-invariant K\"{a}hler-Einstein \Blue{potential}.
 \end{proof}

The last statement of Theorem \ref{thm-analytic} follows from the works of Darvas and Hisamoto via the general framework by Darvas-Rubinstein (in \cite{DR17}) for proving Tian's properness conjecture from \cite{Tia12}. \Blue{Note that the reductivity of $\Aut(X, D)_0$ is a necessary condition for the existence of K\"{a}hler-Einstein metrics (see \cite[Theorem 5.2]{BBEGZ}).} 
Here we prove a more general result.
\begin{thm}\label{thm-torusproper}
Let $(X, D)$ be a log Fano pair. \Blue{Assume that} $\bG$ is a connected reductive subgroup of $\Aut(X, D)_0$ that contains a maximal torus of $\Aut(X,D)_0$. Then all of the conditions in the above theorem are equivalent.
\end{thm}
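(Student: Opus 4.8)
We want to show that for $\bG$ a connected reductive subgroup of $\Aut(X,D)_0$ containing a maximal torus $\bT_{\max}$ of $\Aut(X,D)_0$, the three conditions — $\bG$-properness of $\bfD$, $\bG$-properness of $\bfM$, and existence of a $\bK$-invariant K\"{a}hler-Einstein metric — are all equivalent. The implications $(1)\Rightarrow(3)$ and $(2)\Rightarrow(3)$ are already supplied by Theorem \ref{thm-analytic} (they require no hypothesis on $\bG$ beyond reductivity), so the content is the reverse implication: a K\"{a}hler-Einstein metric forces $\bG$-properness. The strategy is to \emph{bootstrap from the case} $\bG=\Aut(X,D)_0$, where the full equivalence is the last statement of Theorem \ref{thm-analytic}. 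So the first step is: assuming $(X,D)$ admits a K\"{a}hler-Einstein metric, conclude via Theorem \ref{thm-analytic} (applied with $\bfG=\Aut(X,D)_0$) that $\Aut(X,D)_0$ is reductive and that $\bfD$ (equivalently $\bfM$) is $\Aut(X,D)_0$-proper, i.e.
\begin{equation*}
\bfD(\vphi)\ge \gamma\cdot \bfJ_{C(\Aut(X,D)_0)_0}(\vphi)-C
\end{equation*}
for all $\vphi$ invariant under a maximal compact $\bK_{\max}$ of $\Aut(X,D)_0$. The task is then to deduce the same inequality (with possibly worse constants, and a possibly different $\bfJ$-norm attached to $\bT=C(\bG)_0$) over the larger space $(\mcE^1)^\bK$ of merely $\bK$-invariant potentials, where $\bK\subset\bG$ is a maximal compact of $\bG$ containing $(S^1)^r$.

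**Key steps.** The implementation has three moving parts. \textbf{(a) Reduce the test space from $\bK$-invariant to $\bK_{\max}$-invariant potentials.} Given $\vphi\in(\mcE^1)^\bK$ one does not directly get a $\bK_{\max}$-invariant potential, so instead one averages the \emph{inequality}: the KE metric $\vphi_{\KE}$ can be taken $\bK_{\max}$-invariant, and one exploits that both $\bfD$ and $\bfJ_\bullet$ behave controllably under the group action and under taking $\bfE$-geodesics to $\vphi_{\KE}$. Concretely, $\bfD(\vphi)-\bfD(\vphi_{\KE})$ dominates (up to constants) a distance-type quantity to the orbit of $\vphi_{\KE}$, by the convexity of $\bfD$ along geodesics and the fact that $\vphi_{\KE}$ is a minimizer; this reduces everything to comparing the two $\bfJ$-pseudonorms. \textbf{(b) Compare $\bfJ_{\bT}$ with $\bfJ_{\bT_{\max}}$.} Here is where the containment $\bT_{\max}\subset\bG$, hence $\ft_{\max}\subset\fg$, is used. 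Since $\bT=C(\bG)_0$ and $\bG\supset\bT_{\max}$, one has $\ft\subseteq\ft_{\max}$ as the center of $\fg$ sits inside the maximal torus; conversely one needs the reverse-direction estimate $\bfJ_{\bT_{\max}}(\vphi)\le \bfJ_{\bT}(\vphi)+($bounded$)$ on $\bK$-invariant potentials, which should follow from the structure theory of reductive groups — every $\bK_{\max}$-conjugate of the relevant torus direction is accounted for, modulo the action of the (compact) Weyl-type group, which moves things only a bounded amount when applied to a fixed KE metric. This is precisely the kind of statement the paper relegates to Appendix \ref{app-Yu} (``some properties of reductive groups proved in Appendix \ref{app-Yu}''), so I would invoke that appendix for the purely group-theoretic input: that a maximal torus of $\Aut(X,D)_0$ meets every $\Aut(X,D)_0$-conjugacy class of relevant one-parameter subgroups, and that $C(\bG)_0$ together with the $\bK$-action suffices to reconstruct the $\bT_{\max}$-orbit minimization up to a uniform constant. \textbf{(c) Assemble.} Combine (a) and (b): $\bfD(\vphi)\ge \bfD(\sigma^*\vphi_0) + c\,\bfJ_{\bT}(\vphi)-C'$ type chains, using that $\bfD$ is constant on $\bT_{\max}$-orbits (slope $=$ Futaki $=0$, since a KE metric exists), to land the desired $\bfD(\vphi)\ge\gamma\bfJ_{\bT}(\vphi)-C$. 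Then $\bfM\ge\bfD$ gives the $\bfM$-version for free, and the cycle of implications closes.

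**Main obstacle.** The delicate point is step (b): controlling $\bfJ_{\bT_{\max}}$ by $\bfJ_{\bT}$ on the \emph{larger} class $(\mcE^1)^\bK$, not just on $\bK_{\max}$-invariant potentials. The norm $\bfJ_{\bT}(\vphi)=\inf_{\sigma\in\bT}\bfJ(\sigma^*\vphi)$ only allows twisting by the center of $\bG$, whereas $\bfJ_{\bT_{\max}}$ allows twisting by a full maximal torus of the ambient group; a priori the latter could be much smaller, which would make properness in $\bfJ_{\bT}$ strictly stronger than what the KE metric hands us. What saves the argument is that (i) we only need the inequality applied after the minimizing twist has been performed and paired against the \emph{fixed} KE potential, and (ii) the extra directions in $\ft_{\max}\setminus\ft$ are, after $\bK$-averaging, controlled because the KE metric is invariant under a maximal compact containing those directions' compact parts — so the ``missing'' twists only shift the energy by a bounded amount. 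Making this quantitative — i.e., producing the uniform constant — is the real work, and it is exactly why the paper isolates the reductive-group facts into Appendix \ref{app-Yu} and cites Hisamoto's refinement that $\bfJ_{C(\bG)}$ and $\bfJ_{C(\bG)_0}$ give equivalent properness. I would structure the proof to quote those two black boxes (the appendix and Hisamoto/Darvas--Rubinstein) and spend the visible argument on the geodesic-convexity reduction (a) and the bookkeeping in (c), flagging (b) as the step where the hypothesis ``$\bG$ contains a maximal torus'' is genuinely indispensable — drop it, and the $\bP^n$, $\bG=\{e\}$ example from Remark \ref{rem-unipoly} shows the theorem is false.
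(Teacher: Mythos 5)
Your proposal correctly identifies the skeleton — only $(3)\Rightarrow(1)$ needs proving, Darvas--Rubinstein's principle and the reductive-group facts of Appendix~\ref{app-Yu} are the key tools, and the maximal-torus hypothesis on $\bG$ is indispensable — but the mechanism you propose (``bootstrap from the $\bG=\Aut(X,D)_0$ case'') is not what the paper does, and the gap you flag in your own step (b) is a genuine one that your outline does not close.

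Concretely: the known $\Aut(X,D)_0$-properness inequality lives on the class of $\bK_{\max}$-invariant potentials, whereas you need the inequality on the strictly larger class $(\mcE^1)^\bK$. There is no energy-comparison trick, averaging procedure, or ``bounded error in $\bfJ$-norms'' that transfers a coercivity estimate from the smaller invariant class to the larger one — averaging over a non-abelian compact group does not preserve positivity of currents, and the potentials you actually need the estimate for are never $\bK_{\max}$-invariant. Your step (b) as stated also mixes up the relevant norms: the norm appearing in the last statement of Theorem~\ref{thm-analytic} is $\bfJ_{C(\Aut(X,D)_0)_0}$ (twisting over the identity component of the \emph{center} of the full automorphism group), not $\bfJ_{\bT_{\max}}$ (twisting over a full maximal torus); and since $C(\Aut(X,D)_0)_0\subseteq C(\bG)_0=\bT\subseteq\bT_{\max}$, the chain of inequalities $\bfJ_{\bT_{\max}}\le\bfJ_\bT\le\bfJ_{C(\Aut(X,D)_0)_0}$ goes the wrong way for the ``reverse estimate'' you invoke.

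What the paper actually does is cleaner and avoids the bootstrap entirely: it re-applies the Darvas--Rubinstein principle directly to the data $\mathcal{R}=(\mcE^1)^\bK\cap L^\infty$, $\bar{\mathcal{R}}=(\mcE^1)^\bK$, $d_1$, $\bfD$, with twisting group $\bT=C(\bG)_0$. All hypotheses (P1)--(P7) are routine except (P5), the statement that the set of $\bK$-invariant K\"ahler--Einstein potentials is a single $\bT$-orbit. This is the sole place where the maximal-torus hypothesis and Appendix~\ref{app-Yu} enter, and the verification is a short concrete argument: given two $\bK$-invariant KE metrics $\omega_1,\omega_2$, their isometry groups $K_1, K_2$ are maximal compacts of $\Aut(X,D)_0$ both containing $\bK$; Proposition~\ref{prop-Yu3} (using that $\bK$ contains a maximal compact torus) gives $K_2=t^{-1}K_1t$ for some $t\in\bT$; Berndtsson's uniqueness gives $\omega_2=f^*\omega_1$ for some $f\in\Aut(X,D)_0$; then $ft^{-1}\in N_{\Aut(X,D)_0}(K_1)$, and Proposition~\ref{prop-Yu1} factors this as $K_1\cdot C(\Aut(X,D)_0)_0\subset K_1\cdot\bT$, forcing $f=k_1t'$ with $k_1\in K_1$, $t'\in\bT$, hence $\omega_2=t'^*\omega_1$. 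No energy estimates, no $\bfJ$-norm comparisons, no averaging. If you want to salvage your bootstrap idea you would in effect have to prove this (P5) homogeneity anyway, at which point the direct DR application is the path of least resistance.
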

\begin{proof}
We just need to show that condition (3) implies (1). For this, we use Darvas-Rubinstein's principle from \Blue{\cite[Theorem 3.4]{DR17}}. In their notations (see also \cite{Dar17}), we consider the data
$$
\mathcal{R}=(\mcE^1)^\bK\cap L^\infty(X), \quad
\overline{\mathcal{R}}=(\mcE^1)^\bK,  \quad \mcM=\{\text{$\bK$-invariant K\"{a}hler-Einstein \Blue{metrics} on } (X, D)\},
$$
where $\bK\subset \bG$ is a maximal compact subgroup. \Blue{One easily verifies} that the data $(\mcR, d_1, \bfD, \bT)$ satisfies the properties (P1)-(P7) in \cite[Hypothesis 3.2]{DR17} except for (P5) which needs more argument. The property (P5) means that the space of $\bK$-invariant K\"{a}hler-Einstein metrics is homogeneous under the action of $\bT$ where $\bT$ is the identity component of the center of $\bG$.

Let $\omega_i, i=1,2$ be any two $\bK$-invariant K\"{a}hler-Einstein metrics and set $$K_i={\rm Isom}(\omega_i)_0=\{g\in \Aut(X, D)_0; g^*\omega_i=\omega_i\}.$$ Then by \cite[section 5]{BBEGZ}, $K_i, i=1,2$ are maximal compact subgroups of $\Aut(X, D)_0$. 
Because $\omega_i$ is $\bK$-invariant, we know that $\bK\subseteq K_1\cap K_2$. By assumption, $\bK$ contains a maximal compact torus of $\bG$. By Proposition \ref{prop-Yu3}, $K_2=t^{-1}K_1 t$ for some $t\in \bT=C(\bG)_0$. 

On the other hand, by Berndtsson's theorem (see \cite[Appendix C]{BBEGZ}), there exists $f\in \Aut(X, D)_0=:\mathfrak{G}$ satisfying $\omega_2=f^* \omega_1$. So we get $f^{-1}K_1f=K_2=t^{-1} K_1 t$. This implies $f t^{-1}\in N_{\mathfrak{G}}(K_1)$. By Proposition \ref{prop-Yu1} (see also \cite[Proposition 2.13]{His19}), $f t^{-1}\in K_1 C(\mathfrak{G})_0$. So $f=k_1 \cdot t \cdot t_1=: k_1\cdot t'$ for $k_1\in K_1$, $t\in \bT$, $t_1\in C(\mathfrak{G})_0\subset \bT$ and $t':=t\cdot t_1\in \bT$. So we get $\omega_2=f^*\omega_1=t'^* k_1^*\omega_1=t'^* \omega_1$. We are done.
\end{proof}
\Blue{
\begin{rem}
Combining the above two theorems, we see that as long as $\bG$ contains a maximal torus of $\Aut(X,D)_0$, the $\bG$-coercivity condition in Definition \ref{def-funcoercive} 
does not depend on the choice of the maximal compact group of $\bG$. In fact, if $\bfM$ is $\bG$-coercive for a maximal compact subgroup $\bK_1$, then by Theorem \ref{thm-analytic} there is a $\bK_1$-invariant K\"{a}hler-Einstein metric $\omega_1$. If $\bK_2$ is another maximal compact subgroup of $\bG$, there exists $g\in \bG$ such that $\bK_2=g^{-1}\bK_1g$. Then it is easy to see that $g^*\omega_1$ is a $\bK_2$-invariant K\"{a}hler-Einstein metric. By Theorem \ref{thm-torusproper} we know that $\bfM$ is indeed $\bG$-coercive for the choice $\bK_2$. 
\end{rem}
}

\subsection{Valuations on $T$-varieties}\label{sec-NRact}

Let $\bT$ be \Blue{an algebraic torus acting faithfully on $Z$}. By the structure theory of $\bT$-varieties , $Z$ can be described using the language of divisorial fans (see \cite[Theorem 5.6]{AHS08}). For us, we just need to know that
$Z$ is birationally a torus fibration over the Chow quotient of $Z$ by $\bT$ which will be denoted by $Z/\!/\bT$. \Blue{Here the Chow quotient is obtained by taking an inverse limit for a system of GIT quotients (see \cite[section 3]{Suss13} for a discussion and references on the relation between these torus quotients).} 
As a consequence the function field $\bC(Z)$ is the quotient field of the Laurent polynomial algebra:
\begin{equation}
\bC(Z\sslash \bT)[M_\bZ]=\bigoplus_{\alpha\in M_\bZ}\bC(Z\sslash \bT)\cdot 1^{\alpha}.
\end{equation}
Given a valuation $\nu$ of the functional field $\bC(Z\sslash \bT)$ and a vector $\lambda\in N_\bR$, we obtain a valuation (\cite[page 236]{AHS08}): 
\begin{equation}
v_{\nu, \lambda}: \bC[Z\sslash \bT][M_\bZ]\rightarrow \bR, \quad \sum_{i} f_i\cdot 1^{\alpha_i}\quad \mapsto\quad \min\left(\nu(f_i)+\la \alpha_i, \lambda \ra \right).
\end{equation}
In particular, for any $\xi\in N_\bR$, $\xi$ determines a valuation which will be denoted by $\wt_\xi:=v_{{\rm triv},\xi}$:
\begin{equation}
\wt_\xi\left(\sum_i f_i \cdot 1^{\alpha_i}\right)=\min_i \la\alpha_i, \xi\ra.
\end{equation}

The vector space $N_\bR$ acts on $\Val(Z)^\bT$ in the following natural way. If $v=\nu_{\nu,\lambda}$, then
\begin{equation}
\xi\circ v= \xi \circ v_{\nu, \lambda}= v_{\nu, \lambda+\xi}=:v_\xi.
\end{equation}
\Blue{
More explicitly, if $f\in \bC(Z)_\alpha$ with $\alpha=(\alpha_1,\dots, \alpha_r)\in \bZ^r$ i.e. if 
$f$ satisfies,  for any $\mathbf{t}=(t_1,\dots, t_r)\in (\bC^*)^r$, 
\begin{equation}\label{eq-falpha}
f\circ \mathbf{t}^{-1}=\mathbf{t}^{\alpha} f\quad \text{ where } \mathbf{t}^\alpha=\prod_{i=1}^r t_i^{\alpha_i},
\end{equation} 
then we have:
\begin{equation}
v_\xi(f)=v(f)+\la \alpha, \xi\ra. 
\end{equation}
}

\subsection{K-stability and Ding-stability}
\subsubsection{Stability via test configurations}

In this section we recall the definition of test configurations and stability of log Fano varieties.
\begin{defn}[{\cite{Tia97, Don02}, see also \cite{LX14}}]\label{defn-TC}
\Blue{Let $Z$ be a normal projective variety and $Q$ be a $\bQ$-divisor. Assume that $L=-(K_Z+Q)$ is an ample $\bQ$-Cartier divisor and $(Z, Q)$ has at worst sub-klt singularities. }
\begin{enumerate}[(1)]
\item A test configuration for $(Z, L)$, denoted by $(\mcZ, \mcL)$, consists of the following data
\begin{itemize}
\item A \Blue{normal} variety $\mcZ$ and a flat projective morphism $\pi: \mcZ\rightarrow \bC$;
\item A $\pi$-semiample $\bQ$-line bundle $\mcL$;
\item A $\bC^*$-action on $(\mcX, \mcL)$ that makes $\pi$ equivariant and induces a $\bC^*$-equivariant isomorphism $ (\mcZ, \mcL)|_{\pi^{-1}(\bC\backslash\{0\})}\cong (Z, L)\times \bC^*$.
\end{itemize}
Let $\mcQ=\mcQ_{(\mcZ, \mcL)}$ denote the closure of $Q\times\bC^*$ in $\mcZ$ under the inclusion $Q\times\bC^*\subset Z\times\bC^*\; {\cong}\;\mcZ\times_\bC\bC^*$. 
\Blue{If we want to emphasize the boundary divisors, we also say that $(\mcZ, \mcQ, \mcL)$ is a test configuration for $(Z, Q, L)$.} 

Denote by $\bar{\pi}: (\bar{\mcZ}, \bar{\mcQ}, \bar{\mcL})\rightarrow \bP^1$ the canonical compactification of $(\mcZ, \mcQ, \mcL)\rightarrow \bC$  adding a trivial fiber over $\{\infty\}\in \bP^1$. 

\item

A test configuration is called a special test configuration, if the following conditions are satisfied:
\Blue{\begin{itemize}
\item $(\mcZ, \mcZ_0+\mcQ)$ has plt singularities. \Blue{In particular, the central fibre $\mcZ_0=\pi^{-1}(\{0\})$ is irreducible and normal. }
\item $\mcL=-(K_{\mcZ/\bC}+\mcQ)$, which is thus a $\pi$-ample $\bQ$-line bundle.
\end{itemize}
}

A test configuration $(\mcZ, \mcQ, \mcL)$ is called dominating if there exists a $\bC^*$-equivariantly birational morphism $\rho: (\mcZ, \mcQ) \rightarrow (Z, Q)\times\bC$.

Two test configurations $(\mcZ_i, \mcQ_i, \mcL_i), i=1, 2$ are called equivalent, if there exists a test configuration $(\mcZ_3, \mcQ_3)$ that $\bC^*$-equivariantly dominates both test configurations via $q_i: (\mcZ_3, \mcQ_3)\rightarrow (\mcZ_i, \mcQ_i)$, $i=1,2$ and satisfies $q_1^*\mcL_1=q_2^*\mcL_2$. \Blue{See \cite{BHJ17} for more details. Note that, by taking fiber product with the trivial test configuration, any test configuration is equivalent to a dominating test configuration. } 

\Blue{A test configuration $(\mcZ, \mcL)$ is called a product test configuration (for the pair $(Z, Q)$) if there is a $\bC^*$-equivariant isomorphism $(\mcZ, \mcQ, \mcL)\cong (Z\times\bC, Q\times\bC, p_1^*L)$ and the $\bC^*$-action on the right-hand-side is given by the product of a $\bC^*$-action on $(Z, Q, L)$ with the standard multiplication on  $\bC$. }

\item
For any \Blue{test configuration} $(\mcZ, \mcQ, \mcL)$ of $(Z, Q, \Blue{L=-(K_Z+Q)})$, define the divisor 
$\Delta_{(\mcZ, \mcQ, \mcL)}$ to be the $\bQ$-divisor supported on $\mcZ_0$ that is given by:
\begin{equation}
\Delta:=\Delta_{(\mcZ, \mcQ, \mcL)}=-K_{\mcZ/\bC}-\mcQ-\mcL.
\end{equation}
\end{enumerate}
\end{defn}
Set $V=(L^{\cdot n})$ to be the volume. For any (dominating) \Blue{test configuration} $(\mcZ, \mcL)$, we \Blue{attach the non-Archimedean invariants following the notation of Boucksom-Hisamoto-Jonsson in \cite{BHJ17}}:
\begin{eqnarray}
\bfE^\NA(\mcZ,  \mcL)&=&
\frac{1}{V}\frac{\left(\bar{\mcL}^{\cdot n+1}\right)}{n+1},\label{eq-ENA} \\
{\bf \Lambda}^\NA(\mcZ, \mcL)&=&\frac{1}{V} \left(\bar{\mcL}\cdot \rho^*(L\times\bP^1)^{\cdot n}\right), \label{eq-KNA}\\
\bfJ^\NA(\mcZ, \mcL)&=&
\frac{1}{V} \left(\bar{\mcL}\cdot \rho^*(L\times\bP^1)^{\cdot n}\right)-\frac{1}{V} \frac{\left(\bar{\mcL}^{\cdot n+1}\right)}{n+1}, \label{eq-JNA} \\
\bL^\NA(\mcZ, \mcL)&:=&\bL^\NA(\mcZ, \mcQ, \mcL)=\lct(\mcZ, \mcQ+\Delta; \mcZ_0)-1,\label{eq-LNA}\\
\bfD^\NA(\mcZ, \mcL)&:=&\bfD^\NA(\mcZ, \mcQ, \mcL)=\frac{- \bar{\mcL}^{\cdot n+1}}{(n+1)V}+\left(\lct(\mcZ, \mcQ+\Delta; \mcZ_0)-1\right), \label{eq-bfDTC}\\
\Blue{\bfM^\NA(\mcZ, \mcL)}&:=&\bfM^\NA(\mcZ, \mcQ, \mcL)=\frac{1}{(n+1)V}\left(n \bar{\mcL}^{\cdot n+1}+(n+1) \bar{\mcL}^{\cdot n}\cdot K^{\log}_{(\bar{\mcZ}, \bar{\mcQ})/\bP^1}\right). \label{eq-MNA}
\end{eqnarray}
\Blue{where in \eqref{eq-LNA}
$
\lct(\mcZ, \mathcal{Q}+\Delta; \mcZ_0)=\sup\{t; (\mcZ, \mcQ+\Delta+t\mcZ_0) \text{ is sub-log-canonical}\}
$
and in \eqref{eq-MNA}
$
K^{\log}_{(\bar{\mcZ},\bar{\mcQ})/\bP^1}=K_{\bar{\mcZ}}+\bar{\mcQ}+\mcZ_0^{\rm red}-\pi^*(K_{\bP^1}+\{0\})$.}
\begin{rem}\label{rem-calLNA}
There is an explicit and useful formula for $\bL^\NA(\mcZ, \mcQ, \mcL)$.
Choose a $\bC^*$-equivariant log resolution $\pi_\mcZ: \mcU\rightarrow (\mcZ, \mcQ)$ such that $(\mcZ, \mcZ_0+\pi_\mcZ^{-1}(\mcQ))$ is a log smooth pair. Write:
\begin{equation*}
K_{\mcU}=\pi_\mcZ^*(K_{\mcZ}+\mcQ)+\sum_i a_i E_i+\sum_j a'_j E'_j, \quad \pi^*\mcZ_0=\sum_i b_i E_i, \quad \pi^*\Delta=\sum_i c_i E_i,
\end{equation*}
where \Blue{$\{E_i\}_i$ are irreducible components of $\mcU_0$}. 
Then we have the following formula (see \cite[Theorem 3.11]{Berm15} and \cite[Proposition 7.29]{BHJ17}):
\begin{equation}
\bL^\NA(\mcZ, \mcL)=\min_i \frac{a_i-c_i+1}{b_i}-1.
\end{equation}
In particular, this means that $\lct(\mcZ, \mcQ+\Delta; \mcZ_0)$ is calculated by some $E_i$ whose center over $\cZ$ is supported on $\mcZ_0$.

\end{rem}

The following result is now well known:
\begin{prop}[{see \cite{Berm15, BHJ19, PRS08}}]\label{prop-BHJslope}
\Blue{Let $(Z, Q)$ be a log pair with at worst sub-klt singularities. Assume $L=-(K_X+D)$ is an ample $\bQ$-Cartier line bundle. }
Assume that $(\mcZ, \mcQ, \mcL)$ is a \Blue{test configuration} for $(Z, Q, L)$. Let $\Blue{e^{-\Phi}=\{e^{-\vphi(t)}\}}$ be a bounded and psh Hermitian metric on $\mcL$. Then the following limit holds true:
\begin{equation}\label{eq-BHJslope}
\bfF'^\infty(\Phi):=\lim_{t\rightarrow 0} \frac{\bfF(\vphi(t))}{-\log|t|^2}=\bfF^\NA(\mcZ,  \mcL),
\end{equation}
where the energy $\bfF$ is any one from $\{\bfE, {\bf \Lambda}, \bfJ, \bfL, \bfD\}$.
\end{prop}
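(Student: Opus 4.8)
The plan is to reduce everything to the three cases $\bfF\in\{\bfE,{\bf \Lambda},\bfL\}$ using the linear relations $\bfJ={\bf \Lambda}-\bfE$ and $\bfD=\bfL-\bfE$ together with their non-Archimedean counterparts $\bfJ^\NA={\bf \Lambda}^\NA-\bfE^\NA$ (from \eqref{eq-ENA}--\eqref{eq-JNA}) and $\bfD^\NA=-\bfE^\NA+\bfL^\NA$ (from \eqref{eq-bfDTC}). After replacing $(\mcZ,\mcQ,\mcL)$ by an equivalent dominating model and then by a $\bC^*$-equivariant resolution that is an isomorphism over $\bC^*$, I may assume $\mcZ$ is smooth and that $\mcZ_0$, $\mcQ$ and the exceptional locus together form a simple normal crossing divisor; this changes neither side of \eqref{eq-BHJslope}. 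I would then fix a smooth reference metric $\bar{\psi}$ on $\bar{\mcL}$ over $\bar{\mcZ}$, positively curved relative to $\bP^1$ and restricting to (the product metric induced by) $\psi$ away from $\mcZ_0$, and write $\vphi(t)=\bar{\psi}+u(t)$; the hypothesis that $\Phi$ is bounded and positively curved says precisely that $\{u(t)\}$ extends to a bounded function $\bar{u}$ on $\bar{\mcZ}$ which is $\bar{\psi}$-plurisubharmonic near every fibre (the trivial extension across $\mcZ_0$ and the trivial fibre at $\infty$ is plurisubharmonic after upper semicontinuous regularization, and $\bar{\mcZ}^{\sing}$, being of codimension $\ge 2$, is not charged by Monge--Amp\`ere measures of bounded metrics).

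For $\bfF\in\{\bfE,{\bf \Lambda}\}$ I would first note that $\bfE$ and ${\bf \Lambda}$ are $1$-Lipschitz with respect to the sup-norm on potentials (immediate from their definitions, since the relevant mixed Monge--Amp\`ere masses all equal $(2\pi)^nV$). Since $\sup_{\mcZ_t}|u(t)|=\sup_{\mcZ_t}|\vphi(t)-\bar{\psi}|$ is bounded uniformly in $t$, this forces $\bfF(\vphi(t))-\bfF(\bar{\psi}|_{\mcZ_t})=O(1)$, so $\bfF'^\infty(\Phi)$ exists and equals the slope of $\bfF$ along the smooth reference ray $\{\bar{\psi}|_{\mcZ_t}\}$. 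For that ray one writes $\bfE(\bar{\psi}|_{\mcZ_t})$ (resp.\ ${\bf \Lambda}(\bar{\psi}|_{\mcZ_t})$) as a fibre integral over $\mcZ_t$ and identifies its asymptotic slope with an intersection number on $\bar{\mcZ}$ by a Fubini/Stokes argument applied to the closed positive currents built out of $\sddb\bar{\psi}$ and $\rho^*\sddb\psi$ on $\bar{\mcZ}$ (equivalently, by a Deligne-pairing computation): the distributional $\sddb$ on $\bP^1$ of $s\mapsto\bfF(\bar{\psi}|_{\mcZ_t})$, $s=-\log|t|^2$, is smooth over $\bC^*$ plus a single atom at $t=0$ whose mass is the relevant intersection number, giving $\bfE'^\infty(\Phi)=\frac{(\bar{\mcL}^{\cdot n+1})}{(n+1)V}=\bfE^\NA$ and ${\bf \Lambda}'^\infty(\Phi)=\frac{1}{V}(\bar{\mcL}\cdot\rho^*(L\times\bP^1)^{\cdot n})={\bf \Lambda}^\NA$. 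This last computation is classical and I would cite \cite{BHJ19}.

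For $\bfF=\bfL$ I would prove directly that $\int_{\mcZ_t}e^{-\vphi(t)}/|s_Q|^2=|t|^{-2(\lct(\mcZ,\mcQ+\Delta;\mcZ_0)-1)+o(1)}$ as $t\to 0$, which gives $\bfL'^\infty(\Phi)=\lct(\mcZ,\mcQ+\Delta;\mcZ_0)-1=\bfL^\NA$. Boundedness of $\Phi$ yields $e^{-\vphi(t)}\asymp e^{-\bar{\psi}}$ with a multiplicative constant independent of $t$, so the growth rate is governed purely by birational geometry; pulling the fibre integral up to a $\bC^*$-equivariant log resolution $\mcU\to\mcZ$ as in Remark \ref{rem-calLNA} (with $K_\mcU=\pi_\mcZ^*(K_\mcZ+\mcQ)+\sum_i a_iE_i+\sum_j a'_jE'_j$, $\pi_\mcZ^*\mcZ_0=\sum_i b_iE_i$, $\pi_\mcZ^*\Delta=\sum_i c_iE_i$), the integral becomes, up to factors bounded above and below uniformly in $t$, a finite sum of monomial integrals along the fibres $\{t=\mathrm{const}\}$ whose $t\to 0$ behaviour is controlled by $\min_i\frac{a_i-c_i+1}{b_i}$; by Remark \ref{rem-calLNA} this quantity is exactly $\lct(\mcZ,\mcQ+\Delta;\mcZ_0)=\bfL^\NA+1$. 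Taking $-\log$ and dividing by $\log|t|^2$ settles the $\bfL$-case, hence all of $\bfE,{\bf \Lambda},\bfJ,\bfL,\bfD$. The step I expect to be the real obstacle is the Fubini/Stokes identification of the $\bfE$- and ${\bf \Lambda}$-slopes over the possibly singular total space $\bar{\mcZ}$: one must justify the intersection-number computation in the presence of $\bar{\mcZ}^{\sing}$ and of the mild negativity of $\sddb\bar{\psi}$ near the central fibre, whereas the $\bfL$-case is a routine unwinding of the log-resolution formula for $\bfL^\NA$ together with elementary asymptotics of monomial integrals (cf.\ \cite{Berm15}).
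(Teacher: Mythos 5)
The paper does not prove this proposition; it is stated as a known result, citing \cite{BHJ19} (and implicitly \cite{Berm15} for the $\bfL$-part). Your sketch is a faithful reconstruction of the argument from those references, and the strategy is sound: reduce to $\bfE,{\bf\Lambda},\bfL$ by linearity of the NA functionals; for $\bfE$ and ${\bf\Lambda}$ use $1$-Lipschitzness (in sup-norm) to replace $\Phi$ by a smooth reference metric on $\bar{\mcL}$ and identify the slope with an intersection number by a Stokes/Deligne-pairing computation; for $\bfL$ use boundedness to reduce to the smooth reference and read off the asymptotics of the monomial integrals on a log resolution, matching the $\min_i\frac{a_i-c_i+1}{b_i}$ formula of Remark \ref{rem-calLNA}. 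You also correctly identify the one nontrivial analytic step as the intersection-number identification over the (possibly singular, not relatively ample) compactified total space, which is exactly where \cite{BHJ19} expend the most effort, and you are right to delegate it there.

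Two small clarifications worth keeping in mind. First, when you pass to a dominating model and a $\bC^*$-equivariant resolution you pull back $\mcL$, so $\bar{\mcL}$ becomes only relatively semiample and your smooth reference metric $\bar{\psi}$ need only have \emph{fibrewise} semipositive curvature, with possible negativity near the exceptional locus; this is harmless for the Stokes computation (no positivity is required to identify the residual mass) but should not be conflated with a positively curved metric. Second, the $1$-Lipschitz bound for $\bfE$ is not a consequence of linearity (only ${\bf\Lambda}$ is linear in $\vphi-\psi$); it follows from the variational formula $\tfrac{d}{ds}\bfE(\vphi_s)=\frac{1}{(2\pi)^nV}\int(\partial_s\vphi_s)(\sddb\vphi_s)^n$ applied along the segment from $\vphi(t)$ to $\bar\psi|_{\mcZ_t}$, so one should say that rather than appeal to a constant total mass alone. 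Neither point affects the correctness of the argument.
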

\Blue{The slope formula \eqref{eq-BHJslope} for $\bfF\in \{\bfE, \Lam, \bfJ\}$ is proved in \cite[Theorem A]{BHJ19} using the method of Deligne pairings (see \cite{PRS08}). For $\bfF\in \{\bfL, \bfD\}$, the slope formula is proved in \cite[Theorem 1.3]{Berm15}. 
From this formula we derive a result that will be used later:
\begin{cor}
Let $(\mcZ, \mcL)$ be a test configuration for $(Z, L)$ that dominates the trivial test configuration $(Z\times \bC, p_1^*L)$ by a morphism $\rho$. Then for any $\delta\in [0, 1]$, the following inequality holds true:
\begin{equation}\label{eq-NADingineq}
\bfJ^\NA(\mcZ, \delta \mcL+(1-\delta)\rho^*L_{\bC})\le \delta^{1+\frac{1}{n}}\bfJ^\NA(\mcZ, \mcL).
\end{equation}
\end{cor}
\begin{proof}
Choose a bounded psh Hermitian metric $e^{-\Phi}$ on $\mcL$. Set $e^{-\Phi_\delta}=e^{-(1-\delta)\Phi-\delta \rho^*p_1^*\psi}=\{e^{-\vphi_\delta(t)}\}$. 
By \eqref{eq-Dingineq}, we get:
\begin{equation*}
\bfJ(\vphi_\delta(t))\le \delta^{1+\frac{1}{n}}\bfJ(\vphi).
\end{equation*}
Dividing $-\log|t|^2$ on both sides and letting $t\rightarrow 0$, we use \eqref{eq-BHJslope} to get the inequality \eqref{eq-NADingineq}. 
\end{proof}
\begin{rem}
The inequality \eqref{eq-NADingineq} is also derived in \cite{BoJ18a} by using non-Archimedean pluripotential theory. 
\end{rem}
}

\begin{defn}\label{defn-unistability}

\begin{enumerate}[(1)]

\item
$(Z, Q)$ is called {\it uniformly K-stable} if there exists $\gamma>0$ such that $\Blue{\bfM^\NA}(\mcZ, \mcL)\ge \gamma\cdot \bfJ^\NA(\mcZ, \mcL)$ for any \Blue{test configuration} $(\mcZ, \mcL)$ of $(Z, L)$.

\item
$(Z, Q)$ is called {\it uniformly Ding-stable} if there exists $\gamma>0$ such that $\bfD^\NA(\mcZ, \mcL)\ge \gamma\cdot \bfJ^\NA(\mcZ,  \mcL)$ for any \Blue{test configuration} $(\mcZ, \mcL)$ of $(Z, L)$. 

For convenience, we will call $\gamma$ to be a slope constant.

\end{enumerate}
\end{defn}
For any special test configuration $(\mcZ^s,  \mcL^s)$, its \Blue{$\bfM^\NA$ invariant} coincides with its $\bfD^\NA$ invariant, which coincides with the Futaki invariant which is defined in \eqref{eq-Futana} (see also \eqref{eq-CWFut}):
\begin{equation}\label{eq-CMstc}
\bfD^\NA(\mcZ^s, \mcL^s)=
\Blue{\bfM}^\NA(\mcZ^s,  \mcL^s)=-\frac{(-K_{(\overline{\mcZ^s}, \overline{\mcQ^s})/\bP^1})^{\cdot n+1}}{(n+1)L^{\cdot n}}=\Fut_{(\mcZ^s_0, \mcQ^s_0)}(\eta).
\end{equation}

By the work in \cite{BBJ15, Fuj19a} (see also \cite{LX14}), to test uniform K-stability, one only needs to test on special test configurations. As a consequence,
 \begin{thm}[\cite{BBJ15, Fuj19a}]\label{thm-BBJ}
For a log Fano pair $(X, D)$, $(X, D)$ is uniformly K-stable if and only if $(X, D)$ is uniformly Ding-stable.
\end{thm}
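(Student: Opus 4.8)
\emph{Proof plan.} The implication ``uniformly Ding-stable $\Rightarrow$ uniformly K-stable'' is the elementary one: it follows at once from the pointwise comparison
\begin{equation*}
\CM(\mcZ,\mcQ,\mcL)\ \ge\ \bfD^{\NA}(\mcZ,\mcQ,\mcL),
\end{equation*}
valid for \emph{every} normal test configuration of the klt log Fano pair $(X,D)$. Indeed, an intersection-theoretic identity (cf.\ \cite{BHJ17,BBJ15}) rewrites the difference $\CM-\bfD^{\NA}$ as a non-Archimedean entropy, a sum $\sum_E A_{(X\times\bC,\,D\times\bC)}(\ord_E)\cdot(\text{nonnegative intersection number})$ over the vertical prime divisors $E$ of $\mcZ_0$, which is $\ge 0$ precisely because $(X,D)$ is klt; analytically this reflects the inequality $\bfD\le\bfM$ on $\mcE^1(X,-(K_X+D))$ (the gap being a relative entropy, nonnegative by Jensen), passed to slopes at infinity via Proposition~\ref{prop-BHJslope}. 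Thus if $\bfD^{\NA}\ge\gamma\,\bfJ^{\NA}$ for all normal test configurations, then $\CM\ge\bfD^{\NA}\ge\gamma\,\bfJ^{\NA}$ as well.

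For the reverse implication the plan is to reduce the Ding inequality to \emph{special} test configurations, on which $\bfD^{\NA}$ and $\CM$ coincide by \eqref{eq-CMstc}. Precisely, I would prove the Ding-energy analogue of the reduction of \cite{LX14}:
\begin{equation*}
\inf_{(\mcZ,\mcQ,\mcL)\ \mathrm{normal}}\ \frac{\bfD^{\NA}(\mcZ,\mcQ,\mcL)}{\bfJ^{\NA}(\mcZ,\mcL)}\ =\ \inf_{(\mcZ^{s},\mcQ^{s},\mcL^{s})\ \mathrm{special}}\ \frac{\bfD^{\NA}(\mcZ^{s},\mcQ^{s},\mcL^{s})}{\bfJ^{\NA}(\mcZ^{s},\mcL^{s})}.
\end{equation*}
Granting this, uniform K-stability gives $\CM(\mcZ^{s},\mcQ^{s},\mcL^{s})\ge\gamma\,\bfJ^{\NA}(\mcZ^{s},\mcL^{s})$ for every special test configuration; since $\CM=\bfD^{\NA}$ there, the right-hand infimum is $\ge\gamma$, hence so is the left-hand one, which is exactly uniform Ding-stability with the same slope constant $\gamma$.

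To establish the displayed equality I would run the $\bC^{*}$-equivariant MMP with scaling of \cite{LX14}, starting from a normal test configuration $(\mcZ,\mcQ,\mcL)$ of $(X,D,-(K_X+D))$: first a base change $t\mapsto t^{d}$ followed by normalization, which scales all the non-Archimedean energies homogeneously by the same degree and so leaves the ratio unchanged; then replacing $\mcL$ by the relative log-anticanonical polarization on a suitable birational model, using monotonicity of $\bfE^{\NA}$ (equivalently of $\bfE$, cf.\ \eqref{eq-monotone}) together with a concavity property of $\bfL^{\NA}$ in the polarization to see that $\bfD^{\NA}$ does not increase; and finally running the relative MMP to make the central fibre irreducible and, after one more base change, plt, arriving at a special test configuration $(\mcZ^{s},\mcQ^{s},\mcL^{s})$. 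Along each step one checks, by the negativity lemma as in \cite{LX14} in the Ding-functional form of \cite{Fuj19a,BBJ15}, that $\bfD^{\NA}$ is non-increasing while $\bfJ^{\NA}$ does not collapse, quantitatively $\bfJ^{\NA}(\mcZ^{s},\mcL^{s})\ge c\,\bfJ^{\NA}(\mcZ,\mcL)$ for a constant $c>0$ depending only on $(X,D)$.

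The hard part is exactly this control of $\bfJ^{\NA}$ along the MMP. Unlike $\bfD^{\NA}$ and $\CM$, which decrease step by step, $\bfJ^{\NA}$ is not monotone, and one must prevent it from degenerating. Following \cite{Fuj19a,BBJ15}, the way around this is to bound $\bfJ^{\NA}$ below by equivalent norm-type quantities --- the minimal norm, $\bfI^{\NA}-\bfJ^{\NA}$, and the $L^{1}$-norm of the Duistermaat--Heckman measure of $(\mcZ,\mcL)$ --- to track how each of these transforms under the birational modifications and the base change, and to invoke boundedness of the relevant numerical invariants to extract the uniform constant $c$. A secondary technical point is the log-anticanonical reduction: establishing that it genuinely does not increase $\bfD^{\NA}$, and not merely $\CM$, which is where the concavity of the non-Archimedean $\bfL$-functional with respect to the polarization enters.
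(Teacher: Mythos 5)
Your ``easy'' direction and your overall plan to reduce to special test configurations via the $\bC^*$-equivariant MMP of \cite{LX14} are both the right shape, and they do match the structure of the argument in \cite{BBJ15,Fuj19a} (which the paper implements in its proof of Theorem \ref{thm-Gvalcriterion} via \eqref{eq-D-Jxidecrease}). But the way you propose to control $\bfJ^{\NA}$ along the MMP is wrong, and this is not a cosmetic issue.

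You assert ``$\bfJ^{\NA}$ does not collapse, quantitatively $\bfJ^{\NA}(\mcZ^{s},\mcL^{s})\ge c\,\bfJ^{\NA}(\mcZ,\mcL)$ for a constant $c>0$ depending only on $(X,D)$.'' This is false. The MMP of \cite{LX14} can, and often does, send a nontrivial normal test configuration $(\mcZ,\mcQ,\mcL)$ to the \emph{trivial} special test configuration $(\mcZ^{s},\mcQ^{s},\mcL^{s})$, which has $\bfJ^{\NA}(\mcZ^{s},\mcL^{s})=0$ while $\bfJ^{\NA}(\mcZ,\mcL)>0$; no uniform $c>0$ can survive this. (For instance, blow up a subvariety of $X\times\{0\}$ that does not correspond to any destabilizing degeneration and polarize by a small perturbation of $\rho^*(L\times\bC)$ --- the log-anticanonical reduction followed by the MMP with scaling contracts everything back.) Once $\bfJ^{\NA}(\mcZ^{s})=0$ your chain $\bfD^{\NA}(\mcZ)\gtrsim\bfD^{\NA}(\mcZ^{s})\ge\gamma_0\bfJ^{\NA}(\mcZ^{s})\ge c\gamma_0\bfJ^{\NA}(\mcZ)$ delivers only $\bfD^{\NA}(\mcZ)\ge0$, which is not what uniform Ding-stability demands. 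Bounding $\bfJ^{\NA}$ from below by the minimal norm or the $L^1$-norm of the Duistermaat--Heckman measure does not help: all these norms are equivalent to each other, and all of them vanish simultaneously on a product test configuration.

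The actual mechanism --- the one the paper records as \eqref{eq-D-Jxidecrease} (with $\xi=0$) and which comes from \cite{Fuj18,Fuj19a,BBJ15} --- is not a separate decrease of $\bfD^{\NA}$ plus a separate non-collapse of $\bfJ^{\NA}$. It is the statement that for every $\epsilon\in[0,1)$ the single combined functional $\bfD^{\NA}-\epsilon\,\bfJ^{\NA}$ is non-increasing (up to the positive degree $d$ of the base change) along each MMP step:
\begin{equation*}
d\bigl(\bfD^{\NA}(\mcZ,\mcQ,\mcL)-\epsilon\,\bfJ^{\NA}(\mcZ,\mcL)\bigr)\ \ge\ \bfD^{\NA}(\mcZ^{s},\mcQ^{s},\mcL^{s})-\epsilon\,\bfJ^{\NA}(\mcZ^{s},\mcL^{s}).
\end{equation*}
With this in hand, uniform K-stability gives $\bfD^{\NA}(\mcZ^{s})=\CM(\mcZ^{s})\ge\gamma_0\bfJ^{\NA}(\mcZ^{s})$ for every special test configuration; plugging $\epsilon=\gamma_0$ into the display gives $\bfD^{\NA}(\mcZ)\ge\gamma_0\bfJ^{\NA}(\mcZ)$ with the \emph{same} slope constant, and the case $\mcZ^{s}$ trivial is automatically handled because then the right-hand side is $0$ for all $\epsilon$, forcing $\bfD^{\NA}(\mcZ)\ge\bfJ^{\NA}(\mcZ)$. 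The inequality itself is proved step by step: the base change scales both $\bfD^{\NA}$ and $\bfJ^{\NA}$ by $d$; the log-anticanonical reduction uses the explicit linear formula $\bL^{\NA}(\mcX^{\lc},\mcD^{\lc},\mcL^{\lc}_t)=1+(1+t)e_1$ (not a concavity of $\bL^{\NA}$ in the polarization, as you suggest) together with a direct intersection computation showing the $t$-derivative of $\bfD^{\NA}-\epsilon\bfJ^{\NA}$ is $\le0$; and the subsequent MMP steps use the negativity lemma. I would rebuild your proof around this combined monotonicity rather than the separate $\bfJ^{\NA}$ bound, which cannot be fixed.
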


\subsubsection{Stability via filtrations}

We here briefly recall the relevant definitions about filtrations and refer the details to \cite{BC11} (see also \cite{BHJ17}). 
For any integer $\ell_0$ such that $-\ell_0 (K_Z+Q)=\ell_0 L$ is Cartier, we set:
\begin{equation}
R^{(\ell_0)}_m:=H^0(X, m\ell_0 L), \quad R^{(\ell_0)}:=\bigoplus_{m=0}^{+\infty} R^{(\ell_0)}_m, \quad N^{(\ell_0)}_m:=\dim_{\bC} R^{(\ell_0)}_{m}.
\end{equation}
If the integer $\ell_0$ is clear, we also denote the above data by $R_m, R, N_m$.
\begin{defn}\label{defn-gdfiltr}
A filtration $\mcF R_\bullet$ of the graded $\bC$-algebra $R=\bigoplus_{m=0}^{+\infty}R_m$ consists of a family of subspaces $\{\mcF^x R_m\}_x$ of $R_m$ for each $m\ge 0$ satisfying:
\begin{itemize}
\item (decreasing) $\mcF^x R_m\subseteq \mcF^{x'}R_m$, if $x\ge x'$;
\item (left-continuous) $\mcF^xR_m=\bigcap_{x'<x}\mcF^{x'}R_m$; 
\item (multiplicative) $\mcF^x R_m\cdot \mcF^{x'} R_{m'}\subseteq \mcF^{x+x'}R_{m+m'}$, for any $x, x'\in \bR$ and $m, m'\in \bZ_{\ge 0}$;
\item (linearly bounded) There exist $e_-, e_+\in \bZ$ such that $\mcF^{m e_-} R_m=R_m$ and $\mcF^{m e_+} R_m=0$ for all $m\in \bZ_{\ge 0}$. 
\end{itemize}
We say that $\mcF$ is a $\bZ$-filtration if $\mcF^x R_m=\mcF^{\lceil x\rceil} R_m$ for each $x\in \bR$ and $m\in \bZ_{\ge 0}$. 

Given such a filtration $\mcF$, for any $\theta\in \bR$, the $\theta$-shifting of $\mcF$, denoted by $\mcF(\theta)$ is defined to be the filtration given by:
\begin{equation}\label{eq-Fshift}
\mcF(\theta)^x R_m:=\mcF^{x-m\ell_0 \theta} R_m.
\end{equation}
\end{defn}
Given any filtration $\{\mcF^{x}R_m\}_{x\in\bR}$ and $m\in \bZ_{\ge 0}$, the successive minima on $R_m$ is the decreasing sequence 
\[
\lambda^{(m)}_{\max}=\lambda^{(m)}_1\ge \cdots \ge \lambda^{(m)}_{N_m}=\lambda^{(m)}_{\min}
\]
defined by:
\[
\lambda^{(m)}_j=\max\left\{\lambda \in \bR; \dim_{\bC} \mcF^{\lambda} R_m \ge j \right\}.
\]
If $\{\mcF^xR_m\}_{x}$ is a $\bZ$-filtration, then $\{\mcF^x R_m\}_x$ can be equivalently described as a $\bC^*$-equivariant degeneration of $R_m$. More precisely, there is a $\bC^*$-equivariant vector bundle $\mcR_m$ over $\bC$ such that 
\begin{equation}
\mcR_m\times_{\bC}\bC^*\cong R_m\times\bC^*, \quad (\mcR_m)_0=\bigoplus_{i=0}^{\Blue{N_m}}\mcF^{\lambda^{(m)}_{i+1}}R_m/\mcF^{\lambda^{(m)}_{i}}R_m.
\end{equation}
Denote $\mcF^{(t)}:=\mcF^{(t)}R=\bigoplus_{k=0}^{+\infty} \mcF^{kt}R_k$ and define 
\begin{equation}
\vol\left(\mcF^{(t)}\right)=\vol\left(\mcF^{(t)}R\right):=\limsup_{k\rightarrow+\infty}\frac{\dim_{\bC}\mcF^{mt}H^0(Z, m\ell_0 L)}{m^{n}/n!}.
\end{equation}
The following results are very useful.
\begin{prop}[{\cite{BC11}, \cite[Corollary 5.4]{BHJ17}}]\label{BHJvol}
\begin{enumerate}[(1)]
\item 
The probability measure 
\[
\frac{1}{N_m} \sum_{j}\delta_{m^{-1}\lambda^{(m)}_j}=-\frac{d}{d t} \frac{{\rm dim}_{\bC} \mcF^{mt}H^0(Z, m\ell_0 L)}{N_m}
\]
converges weakly as $m\rightarrow+\infty$ to the probability measure:
\[
\DHM(\mcF):=-\frac{1}{\ell_0^n L^{\cdot n}} d\; \vol\left(\mcF^{(t)}\right)=-\frac{1}{\ell_0^n L^{\cdot n}} \frac{d}{d t} \vol\left(\mcF^{(t)}\right) dt.
\]
\item The support of the measure $\DHM(\mcF)$  is given by ${\rm supp}(\DHM(\mcF))=[\lambda_{\min}, \lambda_{\max}]$ with 
\begin{align}
&
\displaystyle \lambda_{\min}:= \lambda_{\min}(\mcF):=\inf\left\{t\in \bR; \vol\left(\mcF^{(t)}\right)<\ell_0^n L^{\cdot n} \right\}; \label{lambdamin} \\
&
\lambda_{\max}:=\lambda_{\max}(\mcF):=\lim_{m\rightarrow+\infty}\frac{\lambda_{\max}^{(m)}}{m}=\sup_{m\ge 1}\frac{\lambda_{\max}^{(m)}}{m}.\label{lambdamax}
\end{align}
\end{enumerate}
\begin{rem}\label{rem-supadd}
The limit in the \eqref{lambdamax} exists because $\{\lambda^{(m)}_{\max}\}_{m\in \bZ_{>0}}$ is superadditive in the sense that $\lambda^{(m+m')}_{\max}\ge \lambda^{(m)}_{\max}+\lambda^{(m')}_{\max}$, by the multiplicative property of filtrations in Definition \ref{defn-gdfiltr}. 
\end{rem}
\end{prop}

For a filtration $\mcF R_\bullet$, choose $e_-$ and $e_+$ as in the definition \ref{defn-gdfiltr}. For convenience, we can choose $e_+=\lceil \lambda_{\rm max}(\mcF R)\rceil \in \bZ$. 
Set $e=e_+-e_-$ and define (fractional) ideals:
\begin{eqnarray}
I_{(m,x)}&:=&I^\mcF_{(m,x)}:={\rm Image}\left(\mcF^x R_m\otimes \mcO_Z(m \ell_0 L)\rightarrow \mcO_Z\right); \label{eq-Imx}\\
\tilde{\mcI}_m&:=&\tilde{\mcI}^{\mcF}_m:= I^{\mcF}_{(m, m e_+)}t^{-m e_+}+I^{\mcF}_{(m,me_+-1)}t^{1-m e_+}+\cdots\nonumber \\
&&\hskip 4cm \cdots+ I^{\mcF}_{(m, me_-+1)}t^{-m e_--1}+\mcO_Z\cdot t^{-me_-}; \label{eq-tcIm}\\
\mcI_m&:=&\mcI_m^{\mcF(e_+)}=\tilde{\mcI}^\mcF_m\cdot t^{m e_+}=I^\mcF_{(m, m e_+)}+I^{\mcF}_{(m, m e_+-1)} t^1+\cdots\nonumber\\
&&\hskip 4cm \cdots+I^{\mcF}_{(m, m e_-+1)} t^{me-1}+(t^{me})\subseteq \mcO_{Z_\bC}. \label{eq-cIm}
\end{eqnarray}

\begin{lem}\label{lem-Isubadd}
$\{\tilde{\cI}_m\}_{m\in \bZ_{\ge 0}}$ defined above is a graded sequence of fractional ideals on $Z\times\bC$. 
\end{lem}
\begin{proof}
Note that by the multiplicative property of the filtrations, we have the inclusion $I_{(m,i)}\cdot I_{(m,j)}\subseteq I_{(m+m', i+j)}$
for any $m, m'\in \bZ_{\ge 0}$ and $i, j\in \bZ$. Here we set  $I_{(m,i)}=0$ for $i\gg 1$ and $I_{(m,i)}=\cO_{Z}$ for $i\ll 0$.
So we get: for any $m, m'\in \bZ_{\ge 0}$
\begin{eqnarray*}
\tilde{\cI}_m\cdot \tilde{\cI}_{m'}=\sum_{i, j}I_{(m,i)}\cdot I_{(m',j)} t^{-(i+j)}\subseteq \sum_{i,j} I_{(m+m', i+j)} t^{-(i+j)}\subseteq \sum_k I_{(m+m', k)}t^{-k}=\tilde{\cI}_{m+m'}.
\end{eqnarray*}
\end{proof}

\begin{defn-prop}[{\cite[Lemma 4.6]{Fuj18}}]\label{defn-ckTCm}
With the above notations, for $m$ sufficiently divisible, define the $m$-th approximating test configuration $\Blue{(\ccZ^{\mcF}_m,  \ccL^{\mcF}_m)}$ as:
\begin{enumerate}[(1)]
\item $\ccZ^{\mcF}_m$ is the normalization of blowup of $Z\times\bC$ along the ideal sheaf $\mcI^{\mcF(e_+)}_m$;
\item The semiample $\bQ$-divisor is given by:
\begin{equation} \label{eq-ckmcL}
\ccL^{\mcF}_m=\pi^*(L\times\bC)-\frac{1}{m \ell_0} E_m+\frac{e_+}{\ell_0}\Blue{\ccZ^{\mcF}_{m,0}},
\end{equation}
where $E_m$ is the exceptional divisor of the normalized blowup \Blue{and $\ccZ^{\mcF}_{m,0}$ is the central fibre of the flat family $\ccZ^{\mcF}_m\rightarrow \bC$}.
\end{enumerate}
For simplicity of notations, we also denote the data by $(\ccZ_m, \ccL_m)$ if the filtration is clear. Note that $m\ell_0 \ccL_m$ is Cartier over $\ccZ_m$.
\end{defn-prop}

We will be interested in the following invariants attached to filtrations:
\begin{eqnarray}
\bfE^\NA(\mcF)&=&\int_{\lambda_{\min}}^{+\infty} \frac{x}{\ell_0} \cdot \DHM(\mcF)=\lim_{m\rightarrow+\infty} \frac{1}{N_m}\sum_{j=1}^{N_m}\frac{\lambda^{(m)}_j}{m \ell_0}; \label{eq-ENAcF}\\
\Lam^\NA(\mcF)&=&\lim_{m\rightarrow+\infty}\frac{\lambda^{(m)}_{\max}(\mcF)}{m \ell_0}=\sup_{m\ge 1}\frac{\lambda^{(m)}_{\max}(\mcF)}{m \ell_0}\Blue{=\frac{\lambda_{\max}(\cF)}{\ell_0}}; \label{eq-lamaxF}\\
\bfJ^\NA(\mcF)&=&\Lam^\NA(\mcF)- \bfE^\NA(\mcF); \label{eq-JNAF}\\
&&\nonumber\\
\bL^\NA(\mcF)&:=&
\lct\left(Z\times\bC, Q\cdot \left(\mcI^{\mcF(e_+)}_\bullet\right)^{\frac{1}{\ell_0}}; (t)\right)+\frac{e_+}{\ell_0}-1; \label{eq-LNAcF} \\
&&\nonumber\\
\bfD^\NA(\mcF)&:=&- \bfE^\NA(\mcF)+\bL^\NA(\mcF).  \label{eq-bfDNAcF}
\end{eqnarray}
In the above definition of $\bL^\NA$, we used the following notations (see \cite{JM12} for the definition of log canonical thresholds of graded sequence of ideals):
\begin{eqnarray*}
&&\lct\left(Z\times\bC, Q\cdot \left(\mcI^{\mcF(e_+)}_\bullet\right)^{\frac{1}{\ell_0}}; (t)\right)=\lim_{m\rightarrow+\infty} \lct\left(\left(Z\times\bC, Q\cdot\mcI^{\mcF(e_+)}_m\right)^{\frac{1}{m\ell_0}}; (t)\right);\\
&&\lct\left(Z\times\bC, Q\cdot \left(\mcI^{\mcF(e_+)}_m\right)^{\frac{1}{m\ell_0}}; (t)\right)\\
&&\hskip 10mm=\sup\left\{c; \left(Z\times\bC, Q\cdot \left(\mcI_m^{\mcF(e_+)}\right)^{\frac{1}{m\ell_0}}\cdot (t)^c \right) \text{ is sub-log-canonical } \right\}.
\end{eqnarray*}
See Lemma \ref{lem-LNAlim} for the fact that the limit in the definition of $\bfL^\NA(\cF)$ indeed exists. 

\begin{exmp}\label{ex-tc2fil}
Assume $(\mcZ,  \mcL)$ is a test configuration for $(Z, L)$. Choose $\ell_0>0$ such that $\ell_0 \mcL$ is Cartier. Then we have an associated $\bZ$-filtration $\mcF=\mcF_{(\mcZ, \ell_0\mcL)}$ on $R=R^{(\ell_0)}$ defined in the following way:
\vskip 1mm
$s\in \mcF^x R^{(\ell_0)}_m$ if and only if $t^{-\lceil x\rceil}\bar{s}$ extends to a holomorphic section of $m \ell_0 \mcL$, where $\bar{s}$ is the meromorphic section of $m\ell_0\mcL$ defined as the pull-back of $s$ via the projection $(\mcZ, \mcL)\times_\bC\bC^*\cong (Z, L)\times\bC^*  \rightarrow Z$. Assume the test configuration is dominating and write $\mcL=\rho^*L_{\bC}+D$ (see Definition \ref{defn-TC}) where $L_\bC=p_1^*L$. Then by \cite[Lemma 5.17]{BHJ17}, this filtration has the following more explicit description:
\begin{equation}\label{eq-filTCvan}
\cF^x R_m=\bigcap_{E}\{s\in H^0(Z, m\ell_0 L); r(\ord_E)(s)+m\ell_0\;\ord_E(D)\ge x b_E\},
\end{equation}
where $E$ runs over the irreducible components of the central fibre $Z_0$, $b_E=\ord_E(Z_0)=\ord_E(t)$ and $r(\ord_E)$ denotes the restriction of $\ord_E$ to $\bC(Z)$ under the inclusion $\bC(Z)\subset \bC(X\times\bC^*)=\bC(\mcX)$.
\vskip 1mm
For this filtration, we have $\bfF^\NA(\mcF)=\bfF^\NA(\mcZ, \mcL)$ for $\bfF$ being the functionals defined in \eqref{eq-ENAcF}-\eqref{eq-bfDNAcF}. For $m$ sufficiently divisible we have (see \cite[Theorem 5.18 and Lemma 7.7]{BHJ17})
\begin{equation}
\Lam^\NA(\mcZ, \mcL)=\frac{\lambda_{\max}(\mcF_{(\mcZ, \ell_0\mcL)})}{\ell_0}=\frac{\lambda_{\max}^{(m)}(\mcF_{(\mcZ, \ell_0\mcL)})}{\ell_0 m}
=\frac{1}{V}\rho^*(L\times\bP^1)^{\cdot n}\cdot \overline{\mcL}.
\end{equation} 
Moreover, because $\mcF_{(\mcZ, \ell_0\mcL)}$ is finitely generated (see \cite{WN12, Sze15, BHJ17}), for $m$ sufficiently divisible, the $m$-th approximating test configuration \Blue{$(\ccZ_m, \ccL_m)$ is equivalent to $(\mcZ, \mcL)$}.

\end{exmp}

\begin{exmp}\label{ex-va2fil}
\Blue{
Recall that $\Blue{\Zdiv}$ denotes the space of divisorial valuations. 
For any $v\in \Blue{\Zdiv}$, we have an associated filtration $\mcF=\mcF_v$:}
\begin{equation}\label{eq-filval}
\mcF_v^x R_m:=\{s\in R_m; v(s)\ge x\}.
\end{equation}
\Blue{Here we choose a holomorphic section $\mathfrak{e}$ of $L$ that does not vanish at the center of $v$, and define $v(s)=v(f)$ if $s=f \cdot \mathfrak{e}$ with $f\in \mcO_Z$. }

The following quantity plays an important role in recent studies of K-stability (see e.g. \cite{Fuj19a, Li17, BlJ20}):
\begin{equation}\label{eq-defSLv}
S_L(v)=\frac{1}{\ell_0^{n+1} L^{\cdot n}}\int_0^{+\infty} \vol(\mcF_v^{(x)}R)dx=:\frac{1}{L^{\cdot n}}\int_0^{+\infty} \vol(L-t v) dt,
\end{equation}
where we have denoted by $\vol(L-t v)$ the quantity $\vol(\mcF_v^{(t\ell_0)}R^{(\ell_0)})/\ell_0^{n}$.
Using integration by parts we get:
\begin{equation}\label{eq-ENAFv}
\bfE^\NA(\mcF_v)=-\frac{1}{\ell_0^n L^{\cdot n}}\int_0^{+\infty} \frac{x}{\ell_0} \cdot d\vol(\mcF^{(x)}R)=S_L(v).
\end{equation}
\Blue{According to the work of Blum-Jonsson \cite{BlJ20}, $S_L(v)$  computes the expected vanishing order of holomorphic sections of $L$ with respect to the valuation $v$.}

Moreover, by \cite[Proposition 2.1]{Fuj19b} (see also \cite[(5.3)]{BoJ18b}), we have a very useful inequality:
\begin{equation}\label{eq-SvsJ}
\frac{1}{n} S_L(v) \le \bfJ^\NA(\mcF_{v})=\Lam^\NA(\mcF_{v})-S_L(v)\le n S_L(v).
\end{equation}
\Blue{Note that $\Lam^\NA(\cF_v)$ is often denoted by $T(v)$ in the literature (for example in \cite{BlJ20}). }
\end{exmp}
\Blue{
\begin{rem}
More generally, for any valuation $v\in \Val(Z)$ of finite log discrepancy, \eqref{eq-filval} defines a filtration (see \cite[Lemma 3.1]{BlJ20}). In fact, there is an Izumi inequality that ensures that the linear boundedness condition (see \cite[Proposition 5.10]{JM12}, \cite[Theorem 3.1]{Li18}) in Definition \ref{defn-gdfiltr} is satisfied . 
\end{rem}}

\begin{exmp} 
Assume that \Blue{an algebraic torus} $\bT$ acts on $(Z, L)$. Then we have a weight decomposition:
\begin{equation}
R_m=\bigoplus_{\alpha\in M_\bZ} (R_m)_\alpha=(R_m)_{\alpha^{(m)}_1}\oplus \cdots\oplus  (R_m)_{\alpha^{(m)}_{N_m}}.
\end{equation}
For any $\xi\in N_\bR$, let $\kappa^{(m)}_j=\la \alpha^{(m)}_j, \xi\ra, j=1,\dots, N_m$ be the weights of $\xi$ on $R_m$. The Chow weight of $\xi$ on $L$ is then defined as:
\begin{equation}\label{eq-defCW}
\chw_L(\xi):=\lim_{m\rightarrow+\infty} \frac{1}{N_m}\sum_{j}\frac{\kappa^{(m)}_j}{m\ell_0}.
\end{equation}
\Blue{This invariant is so called because such weight under suitable normalization appeared in the study of Chow stability in Geometric Invariant Theory. 
In our set-up, we have $L=-K_Z-Q$ with the canonical $\bT$-action. Then there is an identity 
\begin{equation}\label{eq-CWFut}
\Fut_{(Z, Q)}(\xi)=-\chw_L(\xi)
\end{equation}
where the left-hand-side is defined analytically by \eqref{eq-Futana}. The truth of this identity follows from an application of the equivariant Riemann-Roch theorem (see \cite[Proposition 3]{Don05}). }

On the other hand, $\xi$ determines a valuation $\wt_\xi$. Now let $W$ be the center of $\wt_\xi$ and $U$ be a $\bT$-invariant Zariski open set such that $U\cap W\neq \emptyset$. Let $\mathfrak{e}$ be an $\bT$-equivariant non-vanishing generator of $\mcO_Z(\ell_0 L)$(U). \Blue{Then for any $\xi\in N_\bR$, there exists $\mathbf{w}(\xi)\in \bR$ such that $\exp(s \xi)\circ \mathfrak{e}=\exp(s \mathbf{w}(\xi))\mathfrak{e}$. For convenience of notation, we set:
\begin{equation}\label{eq-Lieder}
\mathscr{L}_{\xi}\mathfrak{e}=\mathbf{w}(\xi)\mathfrak{e}=\left.\frac{d}{ds}\right|_{s=0}\exp(s\xi)\circ \mathfrak{e}.
\end{equation}
} 
Then we have:
\begin{equation*}
\bfE^\NA(\mcF_{\wt_\xi})=\frac{1}{N_m}\lim_{m\rightarrow+\infty}\sum_{j} \frac{\kappa^{(m)}_j}{m\ell_0}-\mathbf{w}(\xi)=\chw_L(\xi)-\mathbf{w}(\xi).
\end{equation*}
\end{exmp}

\begin{lem}[{see \cite[Lemma 5.17]{BHJ17}}]
The filtrations in the above examples are saturated. In other words, for $m$ sufficiently divisible, we have:
\begin{equation}
\mcF^xR^{(\ell_0)}_m=H^0\left(Z, \mcO_Z(-m K_Z\otimes I^\mcF_{m,x})\right).
\end{equation}
\end{lem}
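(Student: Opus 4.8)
\emph{Proof strategy.} The inclusion $\mcF^xR^{(\ell_0)}_m\subseteq H^0(Z,\mcO_Z(m\ell_0 L)\otimes I^\mcF_{m,x})$ holds for an arbitrary filtration and is a tautology: by the definition of $I^\mcF_{m,x}$ in \eqref{eq-Imx}, after trivializing the line bundle $\mcO_Z(m\ell_0 L)$ locally, every section in $\mcF^x R_m$ is literally a local section of $\mcO_Z(m\ell_0 L)\otimes I^\mcF_{m,x}$. So the content of the lemma is the reverse inclusion, and the plan is to reduce all three examples to the single case $\mcF=\mcF_v$ of a valuation filtration, then push saturation through the shifts and intersections used to assemble the other cases.

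\emph{Step 1 (valuation filtrations).} Let $v\in\rVal(Z)$, with center $Z_v$ on $Z$ and local ring $\mcO_{Z,Z_v}$. Since $\ell_0L$ is Cartier, $\mcO_Z(m\ell_0 L)$ is free of rank one over $\mcO_{Z,Z_v}$; fixing a generator writes each $s\in R_m$ near $Z_v$ as $s=f_s$ with $f_s\in\mcO_{Z,Z_v}\subset\bC(Z)$, and $v(s):=v(f_s)$ is well defined because a change of generator alters $f_s$ by a unit of $\mcO_{Z,Z_v}$, on which $v$ vanishes. Under this identification $\mcF_v^x R_m=\{s:v(f_s)\ge x\}$, and the stalk $(I^{\mcF_v}_{m,x})_{Z_v}$ is the $\mcO_{Z,Z_v}$-ideal generated by these $f_s$. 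As $v\ge 0$ on $\mcO_{Z,Z_v}$, every element of that ideal has $v$-value $\ge x$; hence if $s\in H^0(Z,\mcO_Z(m\ell_0 L)\otimes I^{\mcF_v}_{m,x})$ then $f_s\in (I^{\mcF_v}_{m,x})_{Z_v}$ forces $v(s)\ge x$, i.e. $s\in\mcF_v^x R_m$. In particular $\wt_\xi\in\rVal(Z)$ gives the third example for free.

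\emph{Step 2 (shifts and finite intersections preserve saturation).} If $\mcF$ is saturated then so is its $\theta$-shift: by \eqref{eq-Fshift}, $I^{\mcF(\theta)}_{m,x}=I^\mcF_{m,x-m\ell_0\theta}$, so $H^0(Z,\mcO_Z(m\ell_0 L)\otimes I^{\mcF(\theta)}_{m,x})=\mcF^{x-m\ell_0\theta}R_m=\mcF(\theta)^xR_m$. If $\mcF=\bigcap_\alpha\mcF_\alpha$ is a finite intersection of saturated filtrations then $\mcF^xR_m\subseteq\mcF_\alpha^xR_m$ gives $I^\mcF_{m,x}\subseteq I^{\mcF_\alpha}_{m,x}$, and therefore
\[
H^0(Z,\mcO_Z(m\ell_0 L)\otimes I^\mcF_{m,x})\subseteq\bigcap_\alpha H^0(Z,\mcO_Z(m\ell_0 L)\otimes I^{\mcF_\alpha}_{m,x})=\bigcap_\alpha\mcF_\alpha^xR_m=\mcF^xR_m ,
\]
which together with the tautological inclusion yields equality; the middle equality is just the commutation of $H^0$ with intersections of subsheaves.

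\emph{Step 3 (test configurations, and where divisibility enters).} For $\mcF=\mcF_{(\mcZ,\ell_0\mcL)}$, after passing to an equivalent dominating model and taking $m$ sufficiently divisible, the formula \eqref{eq-filTCvan} presents $\mcF^xR_m$ as the finite intersection over the components $E$ of $\mcZ_0$ of the spaces $\{s:r(\ord_E)(s)+m\ell_0\,\ord_E(D)\ge x\,b_E\}$, each of which is the $x$-level of $\mcF_{v_E}(\ord_E(D)/b_E)$ with $v_E:=b_E^{-1}r(\ord_E)$ a valuation in $\rVal(Z)$ (or the trivial valuation when $E$ dominates $Z$, where the claim is immediate). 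Steps 1 and 2 then finish it; this is exactly \cite[Lemma 5.17]{BHJ17}, and the ``sufficiently divisible'' hypothesis is precisely what makes \eqref{eq-filTCvan} and the identification of the base ideals $I^\mcF_{m,x}$ available. I do not expect a genuine obstacle here: the only real care needed is in Step 1, making the fractional base ideals and the local trivialization of $\mcO_Z(m\ell_0 L)$ over the \emph{singular} variety $Z$ precise enough that ``$v(s)$'' is intrinsic and that the base ideal is generated, stalk by stalk, by the local expressions of its defining sections.
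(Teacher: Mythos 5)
Your proof is correct, and since the paper gives no argument of its own (it simply defers to \cite[Lemma~5.17]{BHJ17}), you have supplied a complete, self-contained proof. The structure — prove saturation directly for $\mcF_v$ by passing to the stalk of the base ideal at the center of $v$ and using $v\ge 0$ on $\mcO_{Z,Z_v}$, observe that the $\theta$-shift and finite intersections preserve saturation, and then use \eqref{eq-filTCvan} to write $\mcF_{(\mcZ,\ell_0\mcL)}^x R_m$ as a finite intersection of shifted valuation filtrations $\mcF_{v_E}(\ord_E(D)/b_E)^x R_m$ — is clean and handles all three examples at once. It is worth noting explicitly, as you implicitly do, that \eqref{eq-filTCvan} (which the paper also attributes to \cite[Lemma~5.17]{BHJ17}) is a statement about the geometry of the normal model $\mcZ$ and is logically independent of the saturation claim, so there is no circularity in using it as input. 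Two small cleanliness remarks: (i) the parenthetical in Step 2 calling the first ``$=$'' ``commutation of $H^0$ with intersections of subsheaves'' is mislabelled — that equality is just the saturation hypothesis applied to each $\mcF_\alpha$; the containment $I^{\mcF}_{m,x}\subseteq I^{\mcF_\alpha}_{m,x}$ is what gives the initial inclusion, and no commutation statement is actually invoked; (ii) you are silently correcting the paper's typos, reading the displayed $\mcO_Z(-mK_Z\otimes I^\mcF_{m,x})$ as $\mcO_Z(m\ell_0 L)\otimes I^\mcF_{m,x}$ and the map in \eqref{eq-Imx} as $\mcF^xR_m\otimes\mcO_Z(-m\ell_0 L)\to\mcO_Z$, which is the intended base-ideal construction — you might say this explicitly.
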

To characterize Ding stability via filtrations, the following lemma is crucial.
\begin{prop}[{\cite[Lemma 4.3]{Fuj19a}, \cite[Lemma 4.7]{Fuj18}, \cite[Theorem 4.13]{BoJ18a}}]\label{prop-JccXconv}
Let $\mcF$ be any filtration. If we let $(\ccZ_m, \ccL_m)$ be the same as in Definition \ref{defn-ckTCm}, then for any $\bfF\in \{\Lam, \bfE, \bfJ, \bfL\}$, the following convergence is true:
\begin{eqnarray}
\lim_{m\rightarrow+\infty}\bfF^\NA(\ccZ_m, \ccL_m)=\bfF^\NA(\mcF). \label{eq-limEccZm} \label{eq-limJccZm}
\end{eqnarray}
\end{prop}

\Blue{The following result follows immediately from Definition \ref{defn-unistability} and the above result.
\begin{cor}[\cite{Fuj18}]
Assume that $(Z, Q)$ is uniformly Ding-stable. 
Then there exists $\gamma>0$ such that for any filtration $\mcF$, 
\begin{equation}
\bfD^\NA(\mcF)\ge \gamma\cdot \bfJ^\NA(\mcF).
\end{equation}
\end{cor}}

\subsubsection{Boucksom-Jonsson's non-Archimedean formulation}

Here we briefly recall the non-Archimedean formulation after Boucksom-Jonsson's work in \cite{BoJ18a, BoJ18b}. \Blue{Let $Z$ be a normal projective variety equipped with an ample $\bQ$-line bundle $L$. } We denote by $(Z^\NA, L^\NA)$ the Berkovich analytification of $(Z, L)$ with respect to the trivial absolute value on the ground field $\bC$. $Z^\NA$ is a topological space, whose points can be considered as semivaluations on $Z$, i.e. valuations $v: \bC(W)^*\rightarrow \bR$ on function field of subvarieties $W$ of $Z$, trivial on $\bC$. In particular, $\Blue{\Zdiv}\subset Z^\NA$. The topology of $Z^\NA$ is generated by functions
of the form $v\mapsto v(f)$ with $f$ a regular function on some Zariski open set $U\subset Z$. One can show that $Z^\NA$ is compact and Hausdorff, and $\Blue{\Zdiv}\subset Z^\NA$ is dense.

In this paper, we will only use \Blue{non-Archimedean potential}s on $L^\NA$ coming from test configurations and filtrations. Moreover we will always identify a \Blue{non-Archimedean potential}s with functions on $\Blue{\Zdiv}$. 

For any $w\in \Blue{\Zdiv}$, let $G(w)$ denote \Blue{ the unique $\bC^*$-invariant divisorial valuation on $Z\times\bC$ that satisfies 
$G(w)|_{\bC(Z)}=w$ and $G(w)(t)=1$ for the standard coordinate function $t$ on $\bC$. Alternatively it is determined by the following condition:  
for any $f=\sum_{i} f_i t^i\in \bC(Z)[t, t^{-1}]$ with $f_i\in \bC(Z)$, 
\begin{equation}\label{eq-G(w)}
G(w)\left(\sum_i f_i t^i\right)=\min_i \{w(f_i)+i\}
\end{equation}
}

\begin{defn}
Let $(\mcZ, \mcL)$ be a dominating test configuration for $(Z, L)$ with $\rho: \mcZ\rightarrow Z\times\bC$ being a $\bC^*$-equivariant morphism.  The non-Archimedean \Blue{potential associated to} $(\mcZ, \mcL)$ is defined as the following function on $\Blue{\Zdiv}$:
\begin{equation}\label{eq-phicZcL}
\phi_{(\mcZ, \mcL)}(w)=G(w)\left(\mcL-\rho^*(L\times\bC)\right).
\end{equation}
\Blue{Let $\cI$ be a $\bC^*$-invariant ideal on $Z\times\bC$ that is co-supported on $Z\times\{0\}$.
If $(\mcZ, \mcL)$ is obtained as the normalized blowup of $(Z, L)\times \bC$ along $\mcI$:}
\begin{equation}
\mcZ=\text{ normalization of } Bl_\mcI (Z\times\bC), \quad \mcL=\pi^*L\times\bC-c E
\end{equation}
for some $c\in \bQ>0$, where $\pi: \mcZ\rightarrow Z\times\bC$ is the natural projection and $E$ is the exceptional divisor of blowup, then:
\begin{equation}
\phi_{(\mcZ, \mcL)}(w)=-G(w)(c E)=-c\cdot G(w)(\mcI).
\end{equation}
\Blue{In particular the non-Archimedean potential associated to the trivial test configuration is identically 0.}

The set of \Blue{non-Archimedean potential}s obtained in such a way will be denoted as $\mcH^\NA(L)$.
\end{defn}

\begin{defn}\label{defn-phiF}
Let $\mcF=\mcF R_\bullet$ be a filtration. For any $w\in \Blue{\Zdiv}$, define the \Blue{non-Archimedean potential} associated to $\mcF$ as:
\begin{eqnarray}
\phi^{\mcF}_m(w)&=&-\frac{1}{m} G(w)\left(\left(\tilde{\mcI}^\mcF_m \right)^{\frac{1}{\ell_0}}\right)=-\frac{1}{m} G(w)\left(\left(\mcI^{\mcF(e_+)}_m t^{-m e_+} \right)^{\frac{1}{\ell_0}}\right)\nonumber\\
&=&
-\frac{1}{\ell_0}\frac{1}{m}G(w)\left(\mcI^{\mcF(e_+)}_m\right)+\frac{e_+}{\ell_0}; 
\label{eq-NAphimF}\\
\phi^{\mcF}(w)&=&
-G(w)\left(\left(\tilde{\mcI}^\mcF_\bullet \right)^{\frac{1}{\ell_0}}\right)=\lim_{m\rightarrow+\infty} \phi^\mcF_m(w). \label{eq-NAphiF} 
\end{eqnarray}
In particular, if $v\in \Blue{\Zdiv}$ and $\mcF=\mcF_v$, then we denote $\phi_v=\phi^{\mcF_v}$. 
\end{defn}
By Lemma \ref{lem-Isubadd}, we easily verify that $\phi^\cF_m$ satisfies property that $(m+m')\phi^\cF_{m+m'}\ge m \phi^\cF_m+m' \phi^\cF_{m'}$. This implies that the limit exists in \eqref{eq-NAphiF}. 
See \cite{BoJ18b} for an equivalent description using Fubini-Study operators on graded norms. 

Note that from the definitions \ref{defn-phiF} and \ref{defn-ckTCm} we have the identity:
\begin{equation}\label{eq-phiFTC}
\phi^\mcF_m=\phi_{(\ccZ^{\mcF}_m, \ccL^{\mcF}_m)}.
\end{equation}
\Blue{
Moreover, it is easy to see that we have the identity:
\begin{align}
\phi^{\cF}_m(w)&=-\frac{1}{m\ell_0}G(w)\left(\sum_{x}I^{\cF}_{m,x}t^{-x}\right)=-\frac{1}{m\ell_0}\min_{x} \left(w(I^\cF_{m,x})-x\right)\nonumber \\
&=-\frac{1}{m\ell_0} \min_{x,s} \{w(s)-x; s\in \cF^x R_m\}=\frac{1}{m\ell_0}\max_{x,s} \left\{x-w(s);  s\in \cF^x R_m\right\}.  \label{eq-phiFm2}
\end{align}
}

In \cite{BoJ18a, BoJ18b}, Boucksom-Jonsson defined and studied the non-Archimedean version of the class of finite energy psh metrics, and extended much of the pluripotential theory to the non-Archimedean setting. In particular, they defined non-Archimedean mixed Monge-Amp\`{e}re measures and non-Archimedean integrals. Using the non-Archimedean functionals are defined formally by the same formula as in the Archimedean case:
if $\phi$ is a finite energy \Blue{non-Archimedean potential}, one can write:
\begin{eqnarray}
\bfE^\NA(\phi)&:=&\bfE^\NA_{L}(\phi)=\frac{1}{(n+1) (2\pi)^{n} L^{\cdot n}}\sum_{j=0}^n \int_{Z^\NA}\phi \; \MA^\NA(\phi^{[j]}, \phi_\triv^{[n-j]}), \label{eq-ENAphi} \\
\Lam^\NA(\phi)&:=&\frac{1}{(2\pi)^{n} L^{\cdot n}}\int_{Z^\NA}\phi\cdot \MA^\NA(\phi), \\
\bfJ^\NA(\phi)&:=&\bfJ^\NA_L(\phi)=\Lam^\NA(\phi)-\bfE^\NA(\phi), \label{eq-JNAphi}\\
\bL^\NA(\phi)&:=&\bL^\NA_{(Z,Q)}(\phi)=\inf_{w\in Z^{\rm div}_\bQ} \left(A_{(Z, Q)}(w)+\phi(w)\right). \label{eq-LNAphi}
\end{eqnarray}
Since we don't need the details of non-Archimedean definitions, we just point out the important fact that these non-Archimedean functionals recover the corresponding \Blue{functionals} for test configurations. In other words, for any test configuration $(\mcZ, \mcL)$ of $(Z, L)$ and any functional ${\bf F}$ appearing in \eqref{eq-ENA}-\eqref{eq-bfDTC}, we have the following identity (see \cite{Berm15, BHJ17})
\begin{equation}\label{eq-FeqTC}
{\bf F}^\NA(\phi_{(\mcZ, \mcL)})={\bf F}^\NA(\mcZ, \mcL). 
\end{equation}
We point out a consequence of this identity:
\begin{lem}\label{lem-LNAlim}
For any filtration $\cF$, the limit
$\lim_{m\rightarrow+\infty}\bfL^\NA(\phi^\cF_m)$ exists. This implies that the limit in defining $\bfL^\NA$ in \eqref{eq-LNAcF} is well defined. \end{lem}
\begin{proof}
We have pointed out that the multiplicative property of filtrations implies that $(m+m')\phi^{\cF}_{m+m'}\ge m\phi^{\cF}_m+m'\phi^{\cF}_{m'}$. Using the formula in \eqref{eq-LNAphi}, we get $(m+m') \bfL^\NA(\phi^{\cF}_{m+m'})\ge m\bfL^\NA(\phi^{\cF}_m)+m'\bfL^\NA(\phi^{\cF}_{m'})$. By Fekete's lemma, this implies the existence of the limit $\lim_{m\rightarrow+\infty}\bfL^\NA(\phi^{\cF}_{m})$. The last statement follows from the identity $\bfL^\NA(\phi^{\cF}_m)=\bfL^\NA(\check{\mcZ}_m, \check{\mcL}_m)$ by \eqref{eq-FeqTC} for $\bfF=\bfL$. 

\end{proof}

For filtrations, we need the following inequality:
\begin{lem}
For any filtration $\cF$, we always have the following inequality:
\begin{equation}\label{eq-weakL}
\inf_{w\in Z^{\rm div}_\bQ}(A_{(Z,Q)}(w)+\phi^\cF (w))\ge \bfL^\NA({\cF}).
\end{equation}
\end{lem}
\begin{proof}
Because $\phi^{\cF}_m$ is associated to test configurations, by \eqref{eq-FeqTC} and \eqref{eq-phiFTC} we have the identities:
\begin{equation}
\inf_{w\in Z^{\rm div}_\bQ}(A_{(Z,Q)}(w)+\phi^{\cF}_m(w))=\bfL^\NA(\phi^{\cF}_m)=\bfL^\NA(\check{\mcZ}^\cF_m, \check{\mcL}^\cF_m).
\end{equation}
Note that the functional $\phi\mapsto \inf_{w\in Z^{\rm div}_\bQ}(A_{(Z,Q)}(w)+\phi(w))$ is upper semi-continuous with respect to the pointwise convergence. Since $\phi^{\cF}_m\rightarrow \phi^\cF$ pointwise,  the inequality follows easily by letting $m\rightarrow+\infty$ and using the fact that $\bfL^\NA(\check{\mcZ}^\cF_m, \check{\mcL}^\cF_m)\rightarrow \bfL^\NA(\cF)$ (by Lemma \ref{lem-LNAlim}). 
\end{proof}
We will also use the following fact:
\begin{lem}\label{lem-phivv=0}
For any $v\in \Blue{\Zdiv}$, $\phi_v(v)=0$. 
\end{lem}
\begin{proof}
According to \eqref{eq-Imx}, for any $v\in \Blue{\Zdiv}$, $I^{\cF_v}_{(m,x)}={\rm Image}\left(\mcF^x_v R_m\otimes \mcO_Z(m \ell_0 L)\rightarrow \mcO_Z\right)$ where $\cF^x_v R_m=\{s\in H^0(Z, m\ell_0 L), v(s)\ge x\}$. So $v(I^{\cF_v}_{(m,x)})\ge x$. Recall that according to \eqref{eq-tcIm}, we have:
\begin{eqnarray*}
\tilde{\mcI}^{\mcF_v}_m&=& I^{\mcF_v}_{(m, m e_+)}t^{-m e_+}+I^{\mcF_v}_{(m,me_+-1)}t^{1-m e_+}+
\cdots+ 
I^{\mcF_v}_{(m, me_-+1)}t^{-m e_--1}+\mcO_Z\cdot t^{-me_-}. 
\end{eqnarray*}
Using Definition \ref{defn-phiF}, for $m\gg 1$, we then have:
\begin{eqnarray*}
\phi^{\cF_v}_m(v)&=&-\frac{1}{m\ell_0} G(v)(\tilde{\mcI}^{\cF_v}_m)=-\frac{1}{m\ell_0}\min_j G(v)(I^{\cF_v}_{(m, j)}t^{-j})\\
&=&-\frac{1}{m\ell_0}\min_j \left(v(I^{\cF_v}_{(m,j)})-j\right)\le 0.
\end{eqnarray*}
By letting $m\rightarrow+\infty$, we get $\phi_v(v)\le 0$. 

On the other hand, we claim that $\phi_v(v)\ge 0$, and hence $\phi_v(v)=0$.  
To see this, for any $m\gg 1$ we choose a section $s\in H^0(Z, m\ell_0 L)$ that vanishes at the center of $v$ over $Z$.  Set $x=v(s)>0$.  Then $G(v)(\mcI_{(m,\lfloor x\rfloor)}t^{-\lfloor x\rfloor})\le x-\lfloor x\rfloor<1$. It is easy to see that this implies $\phi^{\cF_v}_m(v)> -\frac{1}{m\ell_0}$.  Letting $m\rightarrow+\infty$, we indeed get $\phi_v(v)\ge 0$. 
\end{proof}

\addtocounter{thm}{-1}

\begin{rem}\label{rem-continuity}
\begin{enumerate}
\item
For any filtration $\cF$, it is expected that \Blue{the upper semicontinuous regularization $(\phi^{\cF})^*$ of $\phi^{\cF}$ is always a \Blue{non-Archimedean potential} on $L^\NA$ and there is also an identity ${\bf F}^\NA((\phi^{\mcF})^*)={\bf F}^\NA(\mcF)$}
where the right-hand-side were already defined in \eqref{eq-ENAcF}-\eqref{eq-LNAcF} and the left-hand-side can be well defined using the same formula as in \eqref{eq-ENAphi}-\eqref{eq-LNAphi} (see \cite{BoJ18a}). In fact, by the recent work \cite{BoJ18a}, the psh property of $(\phi^\cF)^*$ for any normal projective variety $(Z, L)$ would follow from a conjecture called {\it continuity of envelopes}. 
The truth of this latter conjecture is known when $Z$ is smooth (\cite[Theorem 8.5]{BFJ16}). Note that we do not need continuity of envelopes or even this identity for filtrations in this paper. Indeed, we only need the easier inequality \eqref{eq-weakL} in the inequalities \eqref{eq-weak1} and \eqref{eq-weak2}. 
\item
For $\phi_v$ in Lemma \ref{lem-phivv=0}, it further expected that $\phi_v$ is a continuous solution to the non-Archimedean Monge-Am\`{e}re equation $\MA^\NA(\phi_v)=\delta_v$. This again follows from the continuity of envelopes which is known in the smooth case (see \cite[Theorem 5.13]{BoJ18b}). 
\end{enumerate}

\end{rem}


Later we will also use the fact that the multiplicative group $\bR^\times_+$ acts on the space of \Blue{non-Archimedean potential}s that come from filtrations. For any $b>0$ and a \Blue{non-Archimedean potential} that is represented by a function $\phi$ on $\Blue{\Zdiv}$, the action is given by the formula (see \cite[(2.1)]{BoJ18a}):
\begin{equation}\label{eq-NAscaling}
(b\circ \phi)(v)=b\cdot\phi(b^{-1}v).
\end{equation}
In the case that $\phi=\phi_{(\mcZ, \mcL)}$ and $b\in \bZ_{>0}$, the rescaling operation corresponds to the base change. To see this we denote 
\begin{equation}
(\mcZ, \mcQ, \mcL)^{(b)}:=\left({\text{ normalization of }} (\mcZ, \mcQ, \mcL)\times_{\bC, {\rm m}_b}\bC, b\cdot \eta\right)\overset{\pi_b}{\longrightarrow} (\mcZ, \mcQ, \mcL),
\end{equation}
where
${\rm m}_b: t'\rightarrow t'^b=t$, $b\cdot\eta:=b\cdot {\rm m}_b^*\eta$. Then it is easy to verify that
$(\pi_b)_*G(v)=b G(b^{-1}v)$ so that 
\begin{eqnarray}
\phi_{(\mcZ, \mcL)^{(b)}}(v)&=&G(v)(\pi_b^*(\mcL-\rho^*(L\times\bC)))=(\pi_b)_*G(v)(\mcL-\rho^*(L\times\bC))\nonumber \\
&=&b G(b^{-1}v)(\mcL-\rho^*(L\times\bC))=b\phi_{(\mcZ, \mcL)}(b^{-1}v)=(b\circ \phi_{(\mcZ, \mcL)})(v). \label{TCbasechange}
\end{eqnarray} 

\section{Twists of non-Archimedean potentials}

\subsection{Twists of test configurations}\label{sec-TCtwist}
\Blue{Let $Z$ be a normal projective variety and $Q$ be an effective $\bQ$-divisor. Let $\bG$ be a reductive complex Lie group that acts faithfully on $Z$ and preserves the $\bQ$-divisor divisor $Q$. 
Assume that $L:=-K_X-Q$ is $\bQ$-Cartier. Then there is an induced $\bG$-action on the $\bQ$-line bundle $L$.}

\begin{defn}\label{defn-GequiTC}
$(\mcZ, \mcQ, \mcL)$ is a $\bG$-equivariant test configuration for $(Z, Q, L)$ if 
\begin{itemize}
\item $(\mcZ, \mcQ, \mcL)$ is a test configuration for $(Z, Q, L)$. 
\item $\bG$ acts on $(\mcZ, \mcQ, \mcL)$ such that the action of $\bG$ commutes with the $\bC^*$-action of the test configuration, and the action of $\bG$ on $(\mcZ, \mcQ, \mcL)\times_\bC {\bC^*}{\cong} (Z, Q, L)\times\bC^*$ coincides with the action of $\bG$ on (the first factor of) $(Z, Q, L)\times \bC^*$.


\end{itemize}
\end{defn}
\Blue{To continue, we note that in the definition of test configuration (Definition \ref{defn-TC}), the $\bC^*$-action on $(\mcZ, \mcL)$ can be equivalently encoded in the infinitesimal action of the generating holomorphic vector field. In other words, a test configuration is completely determined by the data $(\mcZ, \mcL, \eta)$ where $\eta$ is a holomorphic vector field on $\mcZ$ that lifts to a holomorphic vector field on the total space of a line bundle $m \mcL$ for some $m>1$ and satisfies $\pi_*\eta=t\partial_t$ where $t$ is the standard coordinate function on $\bC$. From this point of view, the next definition is a natural generalization. }

\Blue{Recall that $\bT$ always denotes the identity component of the center of $\bG$ and $N_\bR$ was defined in \eqref{eq-Nlattice}.}
\begin{defn}[\cite{His16b}]
Let $(\mcX, \mcL)$ be a test configuration for $(X, L)$ and $\eta$ be the holomorphic vector field generating the $\bC^*$-action. 
For any $\xi\in N_\bR$, the $\xi$-twist of $(\mcZ, \mcL, \eta)$ is the data $(\mcZ,  \mcL, \eta+\xi)$, which, for simplicity, will also be denoted by $(\mcZ_\xi, \mcL_\xi)$. If $\xi\in N_\bZ$, then $(\mcZ_\xi, \mcL_\xi)=(\mcZ, \mcL, \eta+\xi)$ is a test configuration. In general, we shall call $(\mcZ, \mcL, \eta+\xi)$ to be an $\bR$-test configuration.
\end{defn}
The twists of test configurations first appeared in the work of Hisamoto (\cite{His16a, His16b}).
The following result begins to study the twists of test configurations from \Blue{the} non-Archimedean point of view. 
\begin{prop}
Let $(\mcZ, \mcQ, \mcL)$ be a $\bG$-equivariant dominating test configuration for $(Z, Q, L)$. For any $\xi\in N_\bZ$, the \Blue{non-Archimedean potential} $\phi_{(\mcZ_\xi, \mcL_\xi)}$ defined by the twisted test configuration is related to $\phi_{(\mcZ, \mcL)}$ by the following identity:
for any $w\in \Blue{\Zdiv}$
\begin{equation}\label{eq-phitwist1}
\phi_{(\mcZ_\xi, \mcL_\xi)}(w)=\phi_{(\mcZ, \mcL)}(w_\xi)+\theta^L_\xi(w),
\end{equation}
where the function $\theta^L_\xi$, also denoted by $\theta_\xi$ if the $\bT$-equivariant $\bQ$-line bundle $L=-K_Z-Q$ is clear, is given by:
\begin{equation}\label{eq-thetaxi}
\theta_\xi(w)=A_{(Z, Q)}(w_\xi)-A_{(Z, Q)}(w).
\end{equation}
Moreover, the following identities hold true:
\begin{eqnarray}
\bfE^\NA(\mcZ_\xi, \mcL_\xi)&=&\bfE^\NA(\mcZ,  \mcL)-\Fut_{(Z,Q)}(\xi); \label{eq-EYBtwist}\\
\bL^\NA(\mcZ_\xi,  \mcL_\xi)&=&\bL^\NA(\mcZ,  \mcL); \label{eq-bLYBtwist}\\
\bfD^\NA(\mcZ_\xi,  \mcL_\xi)&=&\bfD^\NA(\mcZ, \mcL)+\Fut_{(Z,Q)}(\xi).\label{eq-DYBtwist}
\end{eqnarray}
\end{prop}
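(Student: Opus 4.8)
The plan is to prove the pointwise identity \eqref{eq-phitwist1} first, by tracking how the non-Archimedean metric of Definition~\ref{defn-GequiTC} — computed via the Gauss extension as in \eqref{eq-phicZcL} — transforms under the change of trivialization produced by the twist, and then to deduce the three energy identities from it. I would begin by recording the elementary geometry of the twist: since $\xi\in N_\bZ$ generates a one-parameter subgroup $\hat{\sigma}_\xi:\bC^*\to\bT\subset\bG$ landing in the center of $\bG$, the data $(\mcZ_\xi,\mcQ_\xi,\mcL_\xi)=(\mcZ,\mcQ,\mcL,\eta+\xi)$ is again a $\bG$-equivariant test configuration with the \emph{same} total space, the same projection $\pi$ to $\bC$, and the same underlying $\bQ$-divisor; only the $\bC^*$-action and the attached trivialization over $\bC^*$ change. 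If $\rho,\rho_\xi:\mcZ\to Z\times\bC$ denote the dominating morphisms coming from $\mathfrak{i}_\eta$ and $\mathfrak{i}_{\eta+\xi}$, a direct check with the equivariance conditions gives $\rho_\xi=\Psi_\xi\circ\rho$, where $\Psi_\xi:Z\times\bC\dashrightarrow Z\times\bC$ is the birational ``shearing'' $(z,s)\mapsto(\hat{\sigma}_\xi(s)^{-1}z,s)$, an isomorphism over $\bC^*$.

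The heart of the argument is \eqref{eq-phitwist1}, which I would obtain by substituting $\rho_\xi=\Psi_\xi\circ\rho$ into \eqref{eq-phicZcL}: the metric $\phi_{(\mcZ_\xi,\mcL_\xi)}(w)$ then becomes $\phi_{(\mcZ,\mcL)}$ evaluated at the $\Psi_\xi$-transform of $w$, plus the contribution of the $\bQ$-divisor $(L\times\bC)-\Psi_\xi^*(L\times\bC)$, which is supported over $0\in\bC$. Two facts identify the two terms. (a) Using the torus-graded description of $\bC(Z)$ from section~\ref{sec-NRact}, $\Psi_\xi^*$ fixes $\bC(Z\sslash\bT)(t)$ and multiplies the character $1^\alpha$ by $t^{-\langle\alpha,\xi\rangle}$; feeding this into the definition of the Gauss extension shows that $G(w)$ is carried to $G(w_\xi)$ under $\Psi_\xi$, for $w\in\rVal(Z)^\bT$. (b) Because $L=-(K_Z+Q)$, the divisor $(L\times\bC)-\Psi_\xi^*(L\times\bC)$ is, up to sign, the log-discrepancy divisor of $\Psi_\xi$; evaluating it at $G(w_\xi)$ and invoking the standard fact that the Gauss extension preserves log discrepancies, $A_{(Z\times\bC,\,Q\times\bC+Z_0)}(G(v))=A_{(Z,Q)}(v)$, produces exactly $A_{(Z,Q)}(w_\xi)-A_{(Z,Q)}(w)=\theta_\xi(w)$ of \eqref{eq-thetaxi}. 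Combining (a) and (b) yields \eqref{eq-phitwist1}. (Although $w\mapsto w_\xi$ is a priori defined only on $\rVal(Z)^\bT$, this is harmless: $\phi_{(\mcZ,\mcL)}$ and $A_{(Z,Q)}$ are $\bT$-invariant, so only $\bT$-invariant valuations enter the applications, and for general $w$ one simply defines $w_\xi$ through its Gauss extension.)

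For the energy identities, \eqref{eq-bLYBtwist} follows by plugging \eqref{eq-phitwist1} into \eqref{eq-LNAphi}: $\bT$-invariance of $w\mapsto A_{(Z,Q)}(w)+\phi(w)$ (which, as in Remark~\ref{rem-calLNA}, lets one restrict the defining infimum to $\bT$-invariant divisorial valuations), the identity $A_{(Z,Q)}(w)+\theta_\xi(w)=A_{(Z,Q)}(w_\xi)$, and the bijectivity of $w\mapsto w_\xi$ on such valuations give
\[
\inf_w\big(A_{(Z,Q)}(w)+\phi_{(\mcZ_\xi,\mcL_\xi)}(w)\big)=\inf_{w'}\big(A_{(Z,Q)}(w')+\phi_{(\mcZ,\mcL)}(w')\big).
\]
For \eqref{eq-EYBtwist}, I would argue that passing from $\eta$ to $\eta+\xi$ shifts the $\bC^*$-weights on $\bigoplus_m H^0(\mcZ_0,m\mcL_0)$ by the corresponding $\bT$-weights, so the barycenter of the Duistermaat--Heckman measure — which computes $\bfE^\NA$ up to the usual normalization — shifts by the Chow weight $\chw_L(\xi)=-\Fut_{(Z,Q)}(\xi)$ of \eqref{eq-CWFut}, \eqref{eq-defCW}; equivalently one integrates \eqref{eq-phitwist1} against the non-Archimedean Monge--Amp\`ere measure and changes variables by $w\mapsto w_{-\xi}$. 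Finally \eqref{eq-DYBtwist} is immediate from $\bfD^\NA=-\bfE^\NA+\bL^\NA$ together with \eqref{eq-EYBtwist} and \eqref{eq-bLYBtwist}.

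The main obstacle is step (a)--(b), i.e.\ the proof of \eqref{eq-phitwist1} itself: one has to match the change of trivialization (the shearing $\Psi_\xi$) with the twist $w\mapsto w_\xi$ at the level of valuations — which genuinely uses the $\bT$-variety structure of $Z$ and the torus-graded description of its function field — and then pin down the residual term as $\theta_\xi$ by controlling the behavior of $K_Z+Q$ under $\Psi_\xi$ via the log-discrepancy formula for Gauss extensions. By contrast, once \eqref{eq-phitwist1} is available the energy statements are comparatively formal, amounting to a reparametrization of an infimum and a weight-counting argument.
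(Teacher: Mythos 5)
Your proposal is correct in substance and, for the core pointwise identity \eqref{eq-phitwist1}, follows the same route as the paper: both proofs track the shearing birational map on $Z\times\bC$ (your $\Psi_\xi$ is the inverse of the paper's $\bar{\sigma}_\xi$, a direction-of-arrows convention), use the torus-graded description of $\bC(Z)$ to show that the Gauss extension carries $G(w)$ to $G(w_\xi)$, and pin down the residual divisorial term as $\theta_\xi$ via the log-discrepancy formula for Gauss extensions. Your derivation of \eqref{eq-bLYBtwist} from \eqref{eq-phitwist1} is the same as the paper's, including the (implicit in the paper) reduction to $\bT$-invariant valuations which you correctly flag; \eqref{eq-DYBtwist} is formal in both.

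The one genuine difference is \eqref{eq-EYBtwist}. You prove it by weight counting: the twist shifts the successive minima $\lambda^{(m)}_j$ of the associated $\bZ$-filtration by the $\bT$-weights $\langle\alpha^{(m)}_j,\xi\rangle$, and hence shifts the barycenter of the Duistermaat--Heckman measure by $\chw_L(\xi)=-\Fut_{(Z,Q)}(\xi)$. This is valid, and is in fact exactly the argument the paper itself deploys later for the filtration identity \eqref{eq-EFtwist}, combined with the identification $\mcF_{(\mcZ_\xi,\ell_0\mcL_\xi)}=(\mcF_{(\mcZ,\ell_0\mcL)})_\xi$ given in the example preceding Proposition~\ref{prop-Fvxi}. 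The paper's proof of \eqref{eq-EYBtwist} in this proposition is different: it interpolates $\mcL_b=\pi^*L+bE$, compares $\overline{q_i^*\mcL_b}^{\cdot n+1}$ for $i=1,2$, observes the $b$-derivative of the difference vanishes, and so reduces to the product case $b=0$, where the difference is $(n+1)\chw_L(\xi)$. Your route unifies the test-configuration and filtration cases up front; the paper's stays within the intersection-theoretic framework of this particular proposition. Your alternative suggestion of ``integrating \eqref{eq-phitwist1} against the non-Archimedean Monge--Amp\`ere measure'' is weaker than what you need, since $\bfE^\NA$ is not linear in $\phi$ and the measure itself changes under the twist; the weight-counting version is the one that closes the argument.
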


\begin{proof}
Since $\hat{\sigma}_{-\xi}(t)$ \Blue{is} the $\bC^*$-action generated by $-\xi$, we can let $\bar{\sigma}_{-\xi}: Z_\bC:=Z\times\bC\dashrightarrow Z_\bC$ be the birational map defined by:
$(x, t)\mapsto (\sigma_{-\xi}(t)\circ x, t)$ for any $(x, t)\in Z\times\bC^*$. 
In the following argument, we will also use $\la\eta\ra$ (resp. $\la \xi\ra$) to denote the $\bC^*$-action generated by $\eta$ (resp. $\xi$).
Consider the commutative diagram:
\begin{equation}\label{eq-3morph}
\xymatrix{
& \ar_{q_1}[ld] \mcU \ar^>>>>>>>{\rho_\mcW}[dd]  \ar^{q_2}[rd] & \\
\mcZ=\mcZ^{(1)} \ar_{\Blue{\rho_1}}[dd] \ar@{-->}[rr] &   & \mcZ=\mcZ^{(2)} \ar^{\Blue{\rho_2}}[dd]   \\
 & \ar_{\mu_1}[ld] \mcW \ar^{\mu_2}[rd] & \\
Z_\bC=Z^{(1)}_\bC \ar^{\bar{\sigma}_{-\xi}}@{-->}[rr]  &  & Z_\bC=Z^{(2)}_\bC 
}
\end{equation}
\Blue{where $\rho_i, i=1,2$ are $\bC^*$-equivariant dominant morphisms, and $\mathcal{U}$ (resp. $\mcW$) is any variety that $\la \eta\ra\times \la \xi\ra \cong (\bC^*)^2$-equivariantly dominates the fibre product of $\mcZ^{(1)}$ (resp. $Z^{(1)}_\bC$)  with $\mcZ^{(2)}$ (resp. $Z^{(2)}_\bC$)}. 
The map $\Blue{\rho_1}\circ q_1$ is $\la\eta\ra$-equivariant. Moreover, the test configuration $(\mcZ_\xi, \mcL_\xi)$ is equivalent to the test configuration $(\mcU, q_2^*\mcL, \eta)$.
We now decompose:
\begin{eqnarray}
q_2^*\mcL-q_1^*\Blue{\rho_1}^*L_\bC&=&q_2^*\mcL-q_2^*\Blue{\rho_2}^*L_\bC+q_2^*\Blue{\rho_2}^*L_\bC-q_1^*\Blue{\rho_1}^*L_\bC\nonumber\\
&=& q_2^*(\mcL-\Blue{\rho_2}^*L_\bC)+\Blue{\rho_\mcW}^*(\mu_2^*L_\bC-\mu_1^*L_\bC). \label{eq-twistdec}
\end{eqnarray}
\Blue{
Fix $w\in \Blue{\Zdiv}$ and $\alpha\in \bZ^r$. \Blue{For any $f\in \bC(Z)_\alpha$ (i.e. $f\circ \mathbf{t}^{-1}=\mathbf{t}^{\alpha}\cdot f$ for any $\mathbf{t}\in \bC^*)^r$, see \eqref{eq-falpha})}, let $\bar{f}=p_1^*f$ denote the function on $Z\times \bC^*$ via the projection $p_1$ to the first factor. Then $\bar{\sigma}_{-\xi}^* \bar{f}=\bar{f}\circ \bar{\sigma}_{-\xi}(t)=t^{\la \alpha, \xi\ra} \bar{f}$. \Blue{By the definition of $G(w)$ in \eqref{eq-G(w)}, we know that $G(w)(\bar{f}))=w(f)$}. So we can calculate:
\begin{eqnarray*}
(q_2)_*G(w)(\bar{f})&=&G(w)((q_2)^*\bar{f})=G(w)(t^{\la \alpha, \xi\ra} \bar{f})=\la \alpha, \xi\ra+G(w)(\bar{f})\\
&=&\la\alpha, \xi\ra+w(f)=w_\xi(f)=G(w_\xi)(\bar{f}).
\end{eqnarray*}}
So $(q_2)_*G(w)=G(w_\xi)$. For any $w\in \Blue{\Zdiv}$, by \eqref{eq-twistdec}, we have:
\begin{eqnarray*}
\phi_\xi(w)&=&\phi(w_\xi)+\theta_\xi(w),
\end{eqnarray*}
where $\theta_\xi(w)=G(w)(\mu_2^*L_\bC-\mu_1^*L_\bC)$. 
\Blue{Recall that $L_\bC=-(K_{\cZ_\bC}+Q_\bC)=-p_1^*(K_Z+Q)$ where $p_1: Z_\bC=Z\times\bC\rightarrow Z$ is the projection. 
To get identity \eqref{eq-thetaxi}, we first use the identity $A_{(Z,Q)}(w)=A_{(Z_\bC, Q_\bC)}(G(w))-1$ (see \cite[Proposition 4.11]{BHJ17}) and the expression of $\theta_\xi$ to get:
\begin{equation*}
A_{(Z, Q)}(w)+\theta_\xi(w)=A_{(Z^{(1)}_\bC, Q^{(1)}_\bC)}(G(w))-1-G(w)\left(\mu_2^*(K_{Z^{(2)}_\bC}+Q_\bC)-\mu_1^* (K_{Z^{(1)}_\bC}+Q_\bC)\right).
\end{equation*}
To continue, we observe that the right-hand-side of the above identity does not change if we replace $\mathcal{W}$ by 
higher birational models. So by possibly taking further blowing-ups we can assume that the center of the divisorial valuation $G(w)$ on $\mathcal{W}$ is a prime divisor. We then have the identity $A_{(Z_\bC, Q_\bC)}(G(w))=G(w)(K_{\mcW}-\mu_1^* (K_{Z_\bC}+Q_\bC))$ and can continue to compute the right-hand-side as:
\begin{eqnarray*}
&&G(w)\left(K_{\mcW}-\mu_1^* (K_{Z^{(1)}_\bC}+Q_\bC)\right)-1-G(w)\left(\mu_2^*(K_{Z^{(2)}_\bC}+Q_\bC)-\mu_1^* (K_{Z^{(1)}_\bC}+Q_\bC)\right)\\
&=&G(w)\left(K_{\mcW/(Z^{(2)}_\bC, Q^{(2)}_\bC)}\right)-1=(\mu_2)_*(\mu_1^{-1})_*G(w)\left(K_{\mcW/(Z^{(2)}_\bC, Q^{(2)}_\bC)}\right)\\
&=&(q_2)_*G(w)\left(K_{\mcW/(Z^{(2)}_\bC, Q^{(2)}_\bC)}\right)-1\\
&=&G(w_\xi)\left(K_{\mcW/(Z_\bC, Q_\bC)}\right)-1=A_{(Z, Q)}(w_\xi).
\end{eqnarray*}
}

By \eqref{eq-thetaxi} and \eqref{eq-phitwist1}, we have the identity:
\begin{eqnarray*}
A_{(Z, Q)}(w)+\phi_\xi(w)&=& A_{(Z, Q)}(w)+\phi(w_\xi)+\theta_\xi(w)=A_{(Z, Q)}(w_\xi)+\phi(w_\xi).
\end{eqnarray*}
Taking the infimum over $w$ on both sides and by the change of variable, we get the identity \eqref{eq-bLYBtwist}.

Let us prove \eqref{eq-EYBtwist}. Assume $\mcL=\pi^*(-K_Z-Q)+E$. Let $\mcL_b=\pi^*(-K_Z-Q)+b E$. Consider $$h(b):=\frac{1}{n+1}\overline{q_2^*\mcL_b}^{\cdot n+1}-\frac{1}{n+1}\overline{q_1^*\mcL_b}^{\cdot n+1}, $$
where the compactifications we use are using the isomorphism induced by $\eta$.
\begin{eqnarray*}
\frac{b}{db}h(b)&=&q_2^*\mcL_b^{\cdot n}\cdot q_2^*E- q_1^* \mcL_b^{\cdot n}\cdot q_1^* E=0.
\end{eqnarray*}
 So we get:
\begin{eqnarray*}
\bfE^\NA(\mcZ_\xi, \mcL_\xi)-\bfE^\NA(\mcZ,  \mcL)&=&\frac{1}{n+1}\overline{q_2^*\mcL}^{\cdot n+1}-\frac{1}{n+1}\overline{q_1^*\mcL}^{\cdot n+1}=h(1)=h(0)\\
&=&\frac{1}{n+1}\overline{q_2^*L}^{\cdot n+1}-\frac{1}{n+1}\overline{q_1^*L}^{\cdot n+1}\\
&=&\chw_L(\xi)=-\Fut_{(Z,Q)}(\xi). \quad (\text{see \eqref{eq-CWFut}})
\end{eqnarray*}

The identity \eqref{eq-DYBtwist} follows from \eqref{eq-bLYBtwist} and \eqref{eq-EYBtwist}.

\end{proof}

\begin{rem}
Note that the identities \eqref{eq-EYBtwist}-\eqref{eq-DYBtwist} \Blue{can also be obtained} by using Archimedean energy functionals. Let $\Phi=\{\vphi(t)\}$ be a smooth and psh Hermitian metric on $\mcL$. Then $\hat{\sigma}_\xi(t)^*\Phi:=\{\hat{\sigma}_\xi(t)^*\vphi(t)\}$ is a smooth and psh Hermitian metric on $(\mcZ_\xi, \mcL_\xi)$. On the other hand, because the action of $\bT\cong (\bC^*)^r$ on $-(K_Z+Q)$ is induced by the pull back of (logarithmic) n-forms, one can easily verify that:
\begin{eqnarray*}
\bfL(\hat{\sigma}_\xi(t)^*\vphi(t))=\bfL(\vphi(t)), \quad \bfE(\hat{\sigma}_\xi(t)^*\vphi(t))=\bfE(\vphi(t))-\log|t|^2 \cdot\Fut(\xi).
\end{eqnarray*}
The identities \eqref{eq-EYBtwist}-\eqref{eq-bLYBtwist} follow by taking the slope at infinity and using \eqref{eq-BHJslope}.
\end{rem}

If $\xi\in N_\bQ$ and $b\xi\in N_\bZ$ for some $b\in \bN$, then $(\mcZ_\xi, \mcQ_\xi, \mcL_\xi)$ induces a test configuration by base change: 
\begin{equation}\label{eq-TCxib}
(\mcZ_\xi, \mcQ_\xi, \mcL_\xi)^{(b)}:=\left( {\text{ normalization of }} (\mcZ, \mcQ, \mcL)\times_{\bC, {\rm m}_b}\bC, b \eta+b \xi \right),
\end{equation}
where
${\rm m}_b: t'\rightarrow t'^b=t$, $b\eta:=b\cdot {\rm m}_b^*\eta$ and $b \xi=b\cdot {\rm m}_b^*\xi$. Then with $\phi=\phi_{(\mcZ,\mcL)}$, we define the $\xi$-twist of $\phi$ to be the \Blue{non-Archimedean potential} represented by the following function on $\Blue{\Zdiv}$:
\begin{equation}\label{eq-phixirational}
\phi_\xi(v)=(b^{-1}\circ \phi_{(\mcZ_\xi, \mcL_\xi)^{(b)}})(v).
\end{equation}
For the non-Archimedean energies appearing in \eqref{eq-ENA}-\eqref{eq-bfDTC}, we also set:
\begin{equation}
\bfF^\NA(\mcZ_\xi, \mcQ_\xi, \mcL_\xi)=b^{-1} \bfF^\NA((\mcZ_\xi, \mcQ_\xi, \mcL_\xi)^{(b)}).
\end{equation}

\begin{lem}
For any $\xi\in N_\bQ$, the same identity as in \eqref{eq-phitwist1} holds true:
\begin{equation}\label{eq-phitwist2}
\phi_\xi(v)=\phi(v_\xi)+\theta_\xi(v).
\end{equation}
\end{lem}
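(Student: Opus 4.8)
The plan is to reduce the rational case to the already-established integral case \eqref{eq-phitwist1} by a base change, keeping careful track of how the rescaling operation \eqref{eq-NAscaling} interacts with twists. First I would fix $b\in\bN$ with $b\xi\in N_\bZ$ and apply the integral identity to the test configuration $(\mcZ,\mcQ,\mcL)^{(b)}$ together with the integral vector $b\xi$; here I should note that $(\mcZ,\mcQ,\mcL)^{(b)}$ is again a $\bG$-equivariant dominating test configuration of $(Z,Q,L)$ (it dominates $Z_\bC$ via the composition of the base-change map with $\rho$), so \eqref{eq-phitwist1} applies and gives, for every $w\in\rVal(Z)$,
\begin{equation*}
\phi_{(\mcZ_\xi,\mcL_\xi)^{(b)}}(w)=\phi_{(\mcZ,\mcL)^{(b)}}(w_{b\xi})+\theta^L_{b\xi}(w).
\end{equation*}
Note the twisted configuration $(\mcZ_\xi,\mcQ_\xi,\mcL_\xi)^{(b)}$ is by definition the $b\xi$-twist of $(\mcZ,\mcQ,\mcL)^{(b)}$ (both sides carry the vector field $b\eta+b\xi$), so the left-hand side is legitimately of the form to which \eqref{eq-phitwist1} applies.

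Next I would unwind the definitions \eqref{eq-phixirational} and \eqref{TCbasechange}. By \eqref{eq-phixirational}, $\phi_\xi(v)=(b^{-1}\circ\phi_{(\mcZ_\xi,\mcL_\xi)^{(b)}})(v)=b^{-1}\phi_{(\mcZ_\xi,\mcL_\xi)^{(b)}}(bv)$, and by \eqref{TCbasechange} applied to the untwisted configuration, $\phi_{(\mcZ,\mcL)^{(b)}}(bv)=b\,\phi_{(\mcZ,\mcL)}(v)$, or equivalently $b^{-1}\phi_{(\mcZ,\mcL)^{(b)}}(w)=\phi_{(\mcZ,\mcL)}(b^{-1}w)$. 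Substituting $w=bv$ into the displayed integral identity and multiplying by $b^{-1}$ then gives
\begin{equation*}
\phi_\xi(v)=b^{-1}\phi_{(\mcZ,\mcL)^{(b)}}\big((bv)_{b\xi}\big)+b^{-1}\theta^L_{b\xi}(bv).
\end{equation*}
So the proof reduces to two elementary identities: $(bv)_{b\xi}=b(v_\xi)$ as elements of $\rVal(Z)^\bT$, and $b^{-1}\theta^L_{b\xi}(bv)=\theta^L_\xi(v)$.

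For the first identity I would use the description of the $N_\bR$-action from section \ref{sec-NRact}: writing $v=v_{\nu,\lambda}$, scaling by $b$ replaces $\nu$ by $b\nu$ and $\lambda$ by $b\lambda$, hence $bv=v_{b\nu,b\lambda}$, and then twisting by $b\xi$ gives $v_{b\nu,b\lambda+b\xi}=v_{b\nu,b(\lambda+\xi)}=b\,v_{\nu,\lambda+\xi}=b(v_\xi)$ — equivalently one checks both valuations agree on each $f_i\cdot 1^{\alpha_i}$ directly. For the second identity, recall $\theta^L_\xi(w)=A_{(Z,Q)}(w_\xi)-A_{(Z,Q)}(w)$ from \eqref{eq-thetaxi}; since the log discrepancy is homogeneous of degree one under scaling of valuations ($A_{(Z,Q)}(b w)=b\,A_{(Z,Q)}(w)$), we get $\theta^L_{b\xi}(bv)=A(bv_{b\xi})-A(bv)=A(b(v_\xi)\cdot b^{-1}\cdot b)$... more cleanly, $\theta^L_{b\xi}(bv)=A_{(Z,Q)}((bv)_{b\xi})-A_{(Z,Q)}(bv)=A_{(Z,Q)}(b(v_\xi))-A_{(Z,Q)}(bv)=b\big(A_{(Z,Q)}(v_\xi)-A_{(Z,Q)}(v)\big)=b\,\theta^L_\xi(v)$. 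Combining, $\phi_\xi(v)=\phi_{(\mcZ,\mcL)}(b^{-1}\cdot b(v_\xi))+\theta^L_\xi(v)=\phi(v_\xi)+\theta_\xi(v)$, which is \eqref{eq-phitwist2}.

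The main obstacle I anticipate is bookkeeping rather than conceptual: one must be scrupulous about the direction of the rescaling convention in \eqref{eq-NAscaling} (the $b^{-1}$ appears both in $\phi_\xi$ via \eqref{eq-phixirational} and implicitly through the base-change formula \eqref{TCbasechange}, and a sign or inversion error there propagates), and one must confirm that the twist operation and the base-change operation genuinely commute at the level of the vector fields — i.e. that $(\mcZ_\xi,\mcL_\xi)^{(b)}$ as defined in \eqref{eq-TCxib} really is the $b\xi$-twist of $(\mcZ,\mcL)^{(b)}$, so that the integral identity \eqref{eq-phitwist1} is being applied to a genuine twist. Once those compatibilities are nailed down, the two homogeneity computations ($N_\bR$-action and log discrepancy under scaling) are routine, and there is no analytic input needed beyond what is already in the excerpt.
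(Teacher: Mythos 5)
Your proof is correct and takes essentially the same route as the paper's: reduce to the integral case by base change via \eqref{eq-phixirational}, apply \eqref{eq-phitwist1} to $(\mcZ,\mcQ,\mcL)^{(b)}$ with $b\xi\in N_\bZ$, use the rescaling identity \eqref{TCbasechange}, and then appeal to the two homogeneity facts $(bv)_{b\xi}=b(v_\xi)$ and $A_{(Z,Q)}(bw)=bA_{(Z,Q)}(w)$ to conclude $b^{-1}\theta_{b\xi}(bv)=\theta_\xi(v)$. The paper's computation is more compressed (it writes $b\circ\phi$ and leaves the homogeneity steps implicit), but the manipulations and the ingredients are identical.
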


\begin{proof}
For simplicity, we write $\phi_{(\mcZ, \mcL)^{(b)}}=b\circ \phi$.
From \eqref{eq-phixirational} and \eqref{eq-phitwist1}, we can calculate:
\begin{eqnarray*}
\phi_\xi(v)&=&(b^{-1}\circ (b\circ \phi)_{b\xi})(v)=b^{-1}\cdot (b\circ \phi)_{b\xi}(bv)\\
&=&b^{-1}\cdot \left((b\circ\phi)((bv)_{b\xi})+\theta_{b\xi}(bv)\right)\\
&=&b^{-1}\cdot \left(b\cdot \phi(b^{-1}(bv)_{b\xi})+\theta_{b\xi}(bv)\right)\\
&=&\phi(v_\xi)+b^{-1}\theta_{b\xi}(bv).
\end{eqnarray*}
Now we can note that:
\begin{eqnarray*}
b^{-1}\theta_{b\xi}(bv)&=&b^{-1} \left(A_{(Z, Q)}((bv)_{b\xi})-A_{(Z, Q)}(bv)\right)\\
&=&A_{(Z, Q)}(v_\xi)-A_{(Z, Q)}(v)=\theta_\xi(v).
\end{eqnarray*}

\end{proof}
For any $\xi\in N_\bR$, we can define $\phi_\xi$ using the formula \eqref{eq-phitwist2}. We will see in the following subsection that the twist $\phi_\xi$ can be understood as \Blue{the non-Archimedean potential} from a twisted filtration. Indeed, the identity \eqref{eq-phitwist2} is nothing but the non-Archimedean analogue of the well-known formula in the Archimedean case.

\subsection{Twists of filtrations}

Let $\mcF=\mcF R_\bullet $ be a filtration of $R=R^{(\ell_0)}=\bigoplus_{m=0}^{+\infty} H^0(Z, m\ell_0L)$. Assume that $\mcF$ is $\bT$-equivariant, which means that $\mcF^x R_m$ is a $\bT$-invariant subspace of $R_m$ for any $x\in \bR$.
For $\alpha\in M_\bZ=N_\bZ^\vee$, denote the weight space
\begin{equation}
(R_m)_\alpha=\{s\in R_m; \tau\circ s=\tau^\alpha s \text{ for all } \tau\in (\bC^*)^r \}.
\end{equation}
Then we have:
\begin{equation}
(\mcF^x R_m)_\alpha:=\{s\in \mcF^x R_m; \tau\circ s=\tau^\alpha s\}=\mcF^x R_m \cap (R_m)_\alpha,
\end{equation}
and the decomposition:
\begin{equation}
\mcF^x R_m=\bigoplus_{\alpha\in M_\bZ} (\mcF^x R_m)_\alpha.
\end{equation}

\begin{defn}
For any $\xi\in N_\bR$, the $\xi$-twist of $\mcF$ is the filtration $\mcF_\xi R_\bullet$ defined by:
\begin{equation}\label{eq-Ftwist}
\mcF_\xi^x R_m=\bigoplus_{\alpha\in M_\bZ} (\mcF_\xi^x R_m)_\alpha, \quad \text{where}\quad
(\mcF_\xi^x R_m)_\alpha:=(\mcF^{x-\la \alpha, \xi\ra} R_m)_\alpha.
\end{equation}
\end{defn}

\begin{exmp}
Let $(\mcZ, \mcQ, \mcL)$ be a test configuration for $(Z, Q, L)$, which determines a filtration $\mcF:=\mcF_{(\mcZ, \ell_0 \mcL)}$ of $R^{(\ell_0)}$ (see Example \ref{ex-tc2fil}). Recall that $s\in \mcF^x R_m$ if and only if $t^{-\lceil x\rceil}\bar{s}$ extends to a holomorphic section.
Let $\xi\in N_\bZ$. If $s\in (\mcF^x R_m)_\alpha$, then $\bar{\sigma}_\xi^*\bar{s}=t^{\la \alpha, \xi\ra}\bar{s}$ which implies $s\in \left(\mcF^{x-\la \alpha, \xi\ra}_{(\mcZ_\xi, \ell_0 \mcL_\xi)} R_m\right)_\alpha$.
So we get the identification:
$\mcF^x_{(\mcZ_\xi, \ell_0 \mcL_\xi)}R_m=\mcF^x_{(\mcX, \ell_0 \mcL),\xi} R_m$.
\end{exmp}
The following proposition deals with twists of filtrations associated to valuations.

\begin{prop}\label{prop-Fvxi}
Let $v\in (\Blue{\Zdiv})^\bT$ and $\mcF=\mcF_v$ be defined as in \eqref{eq-filval}.
We have the following identification of the filtration associated to the twisted valuation: for any $\xi\in N_\bR$
\begin{equation}\label{eq-Fvxi}
(\mcF_{v_\xi}^xR_m)_\alpha=\left(\mcF_v^{x-\la \alpha, \xi\ra-m\ell_0 \theta_{\xi}(v)}R_m\right)_\alpha,
\end{equation}
where $\theta_\xi(v)=\theta^L_\xi(v)$ is given by \eqref{eq-thetaxi}:
\begin{equation}\label{eq-thetaxi2}
\theta_\xi(v)=A_{(Z, Q)}(v_\xi)-A_{(Z, Q)}(v).
\end{equation}
\end{prop}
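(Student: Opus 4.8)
The plan is to reduce \eqref{eq-Fvxi} to a single numerical identity comparing the values of $v$ and $v_\xi$ on $\bT$-homogeneous sections of $m\ell_0 L$, and then to establish that identity by combining the $\bT$-variety description of invariant valuations from Section~\ref{sec-NRact} with the log-discrepancy identity \eqref{eq-thetaxi}. Since both sides of \eqref{eq-Fvxi} are the subspaces of the weight space $(R_m)_\alpha$ cut out by an inequality on the value of a fixed valuation, and since $v, v_\xi\in\rVal(Z)^\bT$ take finite values on every nonzero $s\in R_m$, the asserted equality amounts to
\[
v_\xi(s)\ =\ v(s)+\la\alpha,\xi\ra+m\ell_0\,\theta_\xi(v)\qquad\text{for every }s\in(R_m)_\alpha .
\]
It is enough to prove this for all $m$ sufficiently divisible, and one may argue directly with $\xi\in N_{\bR}$, since the constructions below are insensitive to integrality.

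To compute the left-hand side, I would write $v=v_{\nu,\lambda}$ and $v_\xi=v_{\nu,\lambda+\xi}$ as in Section~\ref{sec-NRact}, and choose a $\bT$-invariant affine Zariski open $U\subset Z$ meeting $\mathrm{center}_Z(v)$ on which $\mcO_Z(\ell_0 L)$ admits a $\bT$-equivariant nowhere-vanishing section $\mathfrak{e}$, of some weight $\beta\in M_{\bZ}$; one should also check that (after shrinking) the same $U$ meets $\mathrm{center}_Z(v_\xi)$ and serves to compute $v_\xi$ — a routine point, as both centers dominate $\mathrm{center}_{Z\sslash\bT}(\nu)$ and meet the generic $\bT$-orbits over it. Over $U$ every $s\in(R_m)_\alpha$ has the form $s=g_s\cdot 1^{\alpha-m\beta}\cdot\mathfrak{e}^{\otimes m}$ with $g_s\in\bC(Z\sslash\bT)$, by $\bT$-homogeneity together with $\bC(Z)=\mathrm{Frac}(\bC(Z\sslash\bT)[M_{\bZ}])$. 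Computing $v(s)$ and $v_\xi(s)$ with respect to the trivialization $\mathfrak{e}^{\otimes m}$ and using the defining formula for $v_{\nu,\,\cdot}$ on $\bC(Z\sslash\bT)[M_{\bZ}]$ gives at once
\[
v(s)=\nu(g_s)+\la\alpha-m\beta,\lambda\ra,\qquad v_\xi(s)=\nu(g_s)+\la\alpha-m\beta,\lambda+\xi\ra,
\]
hence $v_\xi(s)-v(s)=\la\alpha,\xi\ra-m\la\beta,\xi\ra$.

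It then remains to identify $-\la\beta,\xi\ra$ with $\ell_0\,\theta_\xi(v)$, and this is the core of the argument. The reason such a term appears at all — in contrast with the purely graded shift $(\mcF^{x-\la\alpha,\xi\ra}R_m)_\alpha$ governing twists of the \emph{test-configuration} filtration just above — is that $\mathfrak{e}^{-1}$ is a local $\ell_0$-log-pluricanonical form, so its $\bT$-weight $\beta$ records precisely how the log discrepancy varies along the $N_{\bR}$-action. Concretely, this is exactly what \eqref{eq-thetaxi} encodes: by the previous proposition $\theta_\xi(v)=A_{(Z,Q)}(v_\xi)-A_{(Z,Q)}(v)=G(v)(p_2^*L_{\bC}-p_1^*L_{\bC})$, computed on the graph of $\bar{\sigma}_\xi\colon (x,t)\mapsto(\hat{\sigma}_\xi(t)\circ x,t)$; representing the $\bQ$-Cartier class $\ell_0(p_2^*L_{\bC}-p_1^*L_{\bC})$ by the rational function $\bar{\sigma}_\xi^*(p_1^*\mathfrak{e})/(p_1^*\mathfrak{e})$, the $\bT$-equivariance of $\mathfrak{e}$ makes this function a monomial in $t$, and since $G(v)(t)=1$ one reads off $\ell_0\,\theta_\xi(v)=-\la\beta,\xi\ra$. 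Combining this with the previous display yields $v_\xi(s)=v(s)+\la\alpha,\xi\ra+m\ell_0\theta_\xi(v)$, which is \eqref{eq-Fvxi}.

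I expect the last step — pinning down, with the correct sign and normalization (note the factor $m\ell_0$, not $\ell_0$), that the weight of the local equivariant pluricanonical generator equals $-\ell_0\theta_\xi(v)$, and that this weight is well defined independently of the chosen equivariant trivialization near the center — to be the main obstacle; everything else is bookkeeping with the $\bT$-variety structure, together with the routine (but necessary) verification that a single $\bT$-invariant open and equivariant frame can be used for both $v$ and $v_\xi$.
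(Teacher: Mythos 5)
Your overall route—reduce to the numerical identity $v_\xi(s)=v(s)+\la\alpha,\xi\ra+m\ell_0\theta_\xi(v)$ for $\bT$-homogeneous $s$, evaluate against a $\bT$-equivariant frame of $\mcO_Z(\ell_0 L)$, then identify the residual term with $\theta_\xi(v)$ via $G(v)$ applied to $\bar\sigma_\xi^*$ of the frame—is the same as the paper's. The genuine gap is in the step you label routine: you insist on a \emph{single} $\bT$-invariant open $U$ carrying a \emph{single} equivariant frame $\mathfrak{e}$ that serves to compute both $v(s)$ and $v_\xi(s)$. That forces $U$ to meet both ${\rm center}(v)$ and ${\rm center}(v_\xi)$ while admitting a nonvanishing section of $\ell_0 L$, and this can fail. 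Take $Z=\bP^1$, $\bT=\bC^*$ acting by scaling, $v=\ord_0$ (so $\lambda>0$), and $\xi$ with $\lambda+\xi<0$, so $v_\xi$ is a multiple of $\ord_\infty$: the centers are $\{0\}$ and $\{\infty\}$, the only $\bT$-invariant open containing both is $\bP^1$ itself, and $\ell_0 L=\mcO_{\bP^1}(2\ell_0)$ has no nonvanishing global section. Your "both centers dominate ${\rm center}_{Z\sslash\bT}(\nu)$" observation does not rescue this, since here $Z\sslash\bT$ is a point.

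Concretely, with $s=f\,\mathfrak{e}^m$ and $\mathfrak{e}$ a unit near ${\rm center}(v)$, the quantity $\nu(g_s)+\la\alpha-m\beta,\lambda+\xi\ra$ you compute is $v_\xi(f)$, and this equals $v_\xi(s)$ only when $\mathfrak{e}$ is also a unit near ${\rm center}(v_\xi)$; otherwise $v_\xi(s)=v_\xi(f)-m\,v_\xi(\mathfrak{e}/\mathfrak{e}')$ where $\mathfrak{e}'$ is a frame honest at that center. The paper's proof keeps both frames: with $\mathfrak{e}$ on $U\supset{\rm center}(v)$ and $\mathfrak{e}'$ on $U'\supset{\rm center}(v_\xi)$ it obtains
\[
v_\xi(s)=v(s)+\la\alpha,\xi\ra+m\left(v\!\left(\frac{\mathfrak{e}}{\mathfrak{e}'}\right)-\frac{\msc{L}_\xi\mathfrak{e}'}{\mathfrak{e}'}\right),
\]
and the subsequent $G(v)$ computation decomposes $p_1^*\bar\sigma_\xi^*\bar{\mathfrak{e}'}/p_1^*\bar{\mathfrak{e}}$ into a $t$-monomial factor (producing the $\xi$-weight $\msc{L}_\xi\mathfrak{e}'/\mathfrak{e}'$) and a time-independent factor $\mathfrak{e}'/\mathfrak{e}$ (producing $v(\mathfrak{e}/\mathfrak{e}')$), matching the two pieces above. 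Your argument silently sets $v(\mathfrak{e}/\mathfrak{e}')=0$; once you reinstate the second frame and carry the extra term, the rest of the proof proceeds as you describe.
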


\begin{proof}
 Let \Blue{$W$ (resp. $W'$) be the closure of the center of} $v$ (resp. $v_\xi$) on $Z$. Let $U$ (resp. $U'$) be a $\bT$-invariant Zariski open set such that $U\cap W\neq \emptyset$ (resp. $U'\cap W'\neq \emptyset$), and let $\mathfrak{e}$ (resp. $\mathfrak{e}'$) be an equivariant nonvanishing section of $-\ell_0 (K_Z+Q)|_U$ (resp. $-\ell_0 (K_Z+Q)|_{U'}$). 

Assume $s\in (\mcF_{v_\xi}^x R_m)_\alpha$. Write $s=f \mathfrak{e}^{ m}$ on $U$ and $s=f' \mathfrak{e}'^{ m}$ on $U'$ for $f\in \mcO_Z(U)$ and $f'\in \mcO_Z(U')$. We have the identity \Blue{(see \eqref{eq-Lieder} for the definition of $\mathscr{L}_\xi$)}:
\begin{eqnarray*}
\la \alpha, \xi\ra =\frac{\msc{L}_\xi s}{s}=\frac{\msc{L}_\xi(f)}{f}+m\frac{\msc{L}_\xi \mathfrak{e}}{\mathfrak{e}}.
\end{eqnarray*}
Then we have the following identities:
\begin{eqnarray}
v_\xi(s)&=&v_\xi(f')=v(f')+\frac{\msc{L}_\xi f'}{f'}\nonumber \\
&=&v(f)+v\left(\frac{\mathfrak{e}^{m}}{\mathfrak{e}'^{ m}}\right)+\la \alpha, \xi \ra-m\frac{\msc{L}_\xi \mathfrak{e}'}{\mathfrak{e}'}\nonumber \\
&=&v(s)+\la \alpha, \xi\ra+m\left(v\left(\frac{\mathfrak{e}}{\mathfrak{e}'}\right)-\frac{\msc{L}_\xi \mathfrak{e}'}{\mathfrak{e}'}\right)\nonumber \\
&=&v(s)+\la \alpha, \xi\ra+\ell_0 m\cdot \tilde{\theta}_\xi(v), \label{eq-vxis}
\end{eqnarray}
where
\begin{equation}
\tilde{\theta}_\xi(v)=\frac{1}{\ell_0}\left(v\left(\frac{\mathfrak{e}}{\mathfrak{e}'}\right)-\frac{\msc{L}_\xi \mathfrak{e}'}{\mathfrak{e}'}\right)=:\frac{1}{\ell_0}\left(v\left(\frac{\mathfrak{e}}{\mathfrak{e}'}\right)-{\bf c}\right).
\end{equation}
So $v_\xi(s)\ge x$ if and only if $v(s)\ge x-\la \alpha, \xi\ra-\tilde{\theta}_\xi(v)$. We need to verify $\tilde{\theta}_\xi=\theta_\xi$. 
To see this, we use the commutative diagram \Blue{in \eqref{eq-3morph}} and calculate. 
\begin{eqnarray*}
\theta_\xi(v)&=&-G(v)(\mu_2^*L_\bC-\mu_1^*L_\bC)=G(v)(\mu_2^*(K_{Z_\bC}+Q_\bC)-\mu_1^*(K_{Z_\bC}+Q_\bC))\\
&=&-\frac{1}{\ell_0}G(v)\left(\frac{\mu_2^*\bar{\mathfrak{e}'}}{\mu_1^*\bar{\mathfrak{e}}}\right)=-\frac{1}{\ell_0}G(v)\left(\frac{\mu_1^*\bar{\sigma}_\xi^*\bar{\mathfrak{e}'}}{\mu_1^*\bar{\mathfrak{e}}}\right)=\frac{1}{\ell_0}\left(-G(v)\left(\frac{\mu_1^*\bar{\sigma}_\xi^*\bar{\mathfrak{e}'}}{\mu_1^*\bar{\mathfrak{e}'}}\right)-
G(v)\left(\frac{p^*_1\bar{\mathfrak{e}'}}{p'^*_1\bar{\mathfrak{e}}}\right)\right)\\
&=&-\frac{1}{\ell_0}\left(G(v)\left(t^{\bf c}\right)-v\left(\frac{\mathfrak{e}'}{e}\right)\right)=\frac{1}{\ell_0}\left(v\left(\frac{\mathfrak{e}}{\mathfrak{e}'}\right)-{\bf c}\right)=\tilde{\theta}_\xi(v).
\end{eqnarray*}

\end{proof}

\begin{prop}
Let $\mcF$ be a $\bT$-equivariant filtration and $\xi\in N_\bR$. 
For any $w\in (\Blue{\Zdiv})^\bT$, we have the following identities:
\begin{eqnarray}
\phi^{\mcF_\xi}_m(w)&=&\phi^{\mcF}_m(w_\xi)+\theta_\xi(w)\label{eq-phimFtwist} \\
\phi^{\mcF_\xi}(w)&=&\phi^{\mcF}(w_\xi)+\theta_\xi(w). \label{eq-phiFtwist}
\end{eqnarray}

\end{prop}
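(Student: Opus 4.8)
First, \eqref{eq-phiFtwist} follows from \eqref{eq-phimFtwist} by letting $m\to\infty$: by \eqref{eq-NAphiF} one has $\phi^{\mcF_\xi}(w)=\lim_m\phi^{\mcF_\xi}_m(w)$ and $\phi^{\mcF}(w_\xi)=\lim_m\phi^{\mcF}_m(w_\xi)$, while $\theta_\xi(w)=A_{(Z,Q)}(w_\xi)-A_{(Z,Q)}(w)$ does not depend on $m$. So the whole content is the finite level identity \eqref{eq-phimFtwist}, which is nothing but the filtration analogue of the twisting formula \eqref{eq-phitwist1}--\eqref{eq-phitwist2} for test configurations. I would prove it by unwinding both sides to a common Legendre type expression in the weights of $\mcF$ and matching term by term.

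Concretely: combining \eqref{eq-NAphimF} with the explicit description \eqref{eq-cIm} of $\mcI^{\mcF(e_+)}_m$ and the elementary behaviour of a Gauss extension on a sum of $t$-homogeneous (fractional) ideals, $\phi^{\mcF}_m(w)$ takes the shape $\frac{1}{m\ell_0}\max_x(x-w(I^{\mcF}_{m,x}))$ with $w(I^{\mcF}_{m,x})=\min\{w(s): s\in\mcF^xR_m\}$, where $w(s)$ denotes the order of $s$ along $w$ computed with a $\bT$-equivariant local generator of $-m\ell_0(K_Z+Q)$ near the center of $w$, exactly as in the computation \eqref{eq-vxis}. Since $w\in\rVal(Z)^\bT$ and $\mcF$ (hence $\mcF_\xi$) is $\bT$-equivariant, both $w$ and the filtration are compatible with the weight decomposition $R_m=\bigoplus_\alpha(R_m)_\alpha$, and one checks that the $\max$ may be restricted to weight vectors $s\in(R_m)_\alpha$. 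Now fix a weight vector $s$ of weight $\alpha$. On the filtration side, \eqref{eq-Ftwist} says that the $\mcF_\xi$-weight of $s$ equals the $\mcF$-weight of $s$ plus $\la\alpha,\xi\ra$. On the valuation side, the key computation \eqref{eq-vxis} in the proof of Proposition \ref{prop-Fvxi} says that $w_\xi(s)=w(s)+\la\alpha,\xi\ra+m\ell_0\,\theta_\xi(w)$, where now $w_\xi(s)$ is taken with respect to a generator near the center of $w_\xi$. Substituting these two facts into the Legendre formula for $\phi^{\mcF_\xi}_m(w)$ and for $\phi^{\mcF}_m(w_\xi)$ respectively, the two copies of $\la\alpha,\xi\ra$ cancel and what remains is precisely the additive constant $\theta_\xi(w)$; this is \eqref{eq-phimFtwist}. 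An alternative, more geometric route: since $\mcI^{\mcF(e_+)}_m$ is $\bT$-invariant, $(\ccZ^{\mcF}_m,\ccQ^{\mcF}_m,\ccL^{\mcF}_m)$ is a $\bT$-equivariant test configuration; by the same argument as in the Example preceding Proposition \ref{prop-Fvxi}, its $\xi$-twist is, up to a base change and rescaling when $\xi\notin N_\bZ$, the approximating test configuration $(\ccZ^{\mcF_\xi}_m,\ccQ^{\mcF_\xi}_m,\ccL^{\mcF_\xi}_m)$, and then \eqref{eq-phimFtwist} drops out of \eqref{eq-phiFTC} together with \eqref{eq-phitwist1}--\eqref{eq-phitwist2}.

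The main obstacle is bookkeeping of normalizations, not any genuine analytic difficulty. The "order of a section" enters with three different reference trivializations of $-m\ell_0(K_Z+Q)$: the one implicit in defining $\mcF$ and $\mcF_\xi$ on $R_m$, the one used to evaluate $\phi^{\mcF_\xi}_m$ at $w$, and the one used to evaluate $\phi^{\mcF}_m$ at $w_\xi$; it is the mismatch between the last two --- coming from the fact that $w$ and $w_\xi$ have different centers --- that manufactures the term $\theta_\xi(w)=A_{(Z,Q)}(w_\xi)-A_{(Z,Q)}(w)$, and this has to be controlled exactly as in the proof of Proposition \ref{prop-Fvxi} (the identity there of the two descriptions of $\theta_\xi$). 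A minor further point is that the truncation parameters $e_\pm$ one chooses for $\mcF$ and for $\mcF_\xi$ differ, since twisting shifts $\lambda_{\min},\lambda_{\max}$ by an amount linear in the weights; but this only shifts the relevant flag ideals by a fixed power of $t$ and hence changes $\phi^{\mcF}_m$ and $\phi^{\mcF_\xi}_m$ by matching constants, so it is invisible in the final identity. Finally, for $\xi\in N_\bR\setminus N_\bQ$ nothing extra is needed: the computation above never used rationality of $\xi$; alternatively, all three terms of \eqref{eq-phimFtwist} are piecewise linear in $\xi$ on bounded sets, so one may simply pass to the limit from $N_\bQ$.
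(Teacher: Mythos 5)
Your proposal is correct and follows essentially the same route as the paper's proof: reduce to the finite level, expand $\phi^{\mcF_\xi}_m$ and $\phi^{\mcF}_m$ through the weight decomposition of the (fractional) ideals $(I^{\mcF}_{m,x})_\alpha$, and feed in the two shifts — the filtration shift $(\mcF_\xi^x R_m)_\alpha=(\mcF^{x-\langle\alpha,\xi\rangle}R_m)_\alpha$ from \eqref{eq-Ftwist} and the valuation shift from \eqref{eq-vxis} — so that the $\langle\alpha,\xi\rangle$ terms disappear and only $\theta_\xi(w)$ survives. The alternative geometric route you sketch (twisting the approximating test configurations $(\ccZ^{\mcF}_m,\ccL^{\mcF}_m)$ and citing \eqref{eq-phiFTC} together with \eqref{eq-phitwist1}--\eqref{eq-phitwist2}) is a valid cross-check that the paper does not spell out.

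One thing worth nailing down if you write this out in full: with the two shifts stated exactly as you wrote them, namely $\mathrm{wt}_{\mcF_\xi}(s)=\mathrm{wt}_{\mcF}(s)+\langle\alpha,\xi\rangle$ and $w_\xi(s)=w(s)+\langle\alpha,\xi\rangle+m\ell_0\theta_\xi(w)$, the two copies of $\langle\alpha,\xi\rangle$ enter the Legendre expressions for $\phi^{\mcF_\xi}_m(w)$ and $\phi^{\mcF}_m(w_\xi)$ with the \emph{same} sign, so as literally written they would add rather than cancel. The cancellation requires one of the two to carry the opposite sign, and which one does depends on the orientation convention tying $\langle\alpha,\xi\rangle$, the weight of a section versus the weight of the quotient function $f=s/\mathfrak e^m$, and the direction of the $N_\bR$-action $v\mapsto v_\xi$. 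The paper's own chain of equalities in this proposition has the same compressed bookkeeping, so you are in good company, but if you present your argument you should pin the conventions once and for all (for instance by verifying them against \eqref{eq-phitwist1} for a rank-one $\bT$-action on $\bP^1$) so that the cancellation is seen explicitly rather than asserted.
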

\begin{proof}
Note that the second identity is obtained from the first one by letting $m\rightarrow+\infty$. So we just need to prove the first identity.
Set
\begin{equation}
(I^{\mcF_\xi}_{m,x})_\alpha={\rm Im}\left((\mcF^x R_m)_\alpha\otimes \mcO_Z(m\ell_0 L)\rightarrow \mcO_Z\right).
\end{equation}
By definitions in \eqref{eq-Imx} and \eqref{eq-Ftwist}, we have an identity of ideals:
\begin{equation}\label{eq-Iximx}
(I^{\mcF_\xi}_{m,x})_\alpha=(I^{\mcF}_{m,x-\la \alpha, \xi\ra})_\alpha
\end{equation}
So by \eqref{eq-tcIm} we have identities of fractional ideals:
\begin{equation}
\tilde{\mcI}^\mcF_m=\sum_x\sum_\alpha (I^\mcF_{m,x})_\alpha t^{-x}, \quad \tilde{\mcI}_{m}^{\mcF_\xi}=\sum_x \sum_\alpha (I^\mcF_{m,x-\la \alpha, \xi\ra})_\alpha t^{-x}
\end{equation}

Using the expression of \Blue{non-Archimedean potential} associated to filtrations in \eqref{eq-NAphiF} to $\phi^{\mcF_\xi}$ (see \eqref{eq-phiFm2}) and using the $(\bC^*\times\bT)$-invariance of the valuation of any $G(w)$, we indeed get \eqref{eq-phimFtwist}: 
\begin{eqnarray*}
-\phi_m^{\mcF_\xi}(w)&=&\frac{1}{m\ell_0}\min_{\alpha}\min_x\left(w((I^{\mcF_\xi}_{m,x})_\alpha)-x\right)\\
&=&\frac{1}{m\ell_0}\min_{\alpha}\min_x\left(w((I^{\mcF}_{m,x-\la \alpha, \xi\ra})_\alpha) -x\right)\\
&=&\frac{1}{m\ell_0}\min_\alpha \min_x\left(w((I^{\mcF}_{m,x})_\alpha)-x-\la \alpha, \xi\ra\right)\\
&=&-\theta_\xi(w)-\frac{1}{m\ell_0}\min_{\alpha}\min_x\left(w_\xi((I^\mcF_{m,x})_\alpha)-x\right) \quad (\text{by } \eqref{eq-vxis})\\
&=&-\theta_\xi(w)-\phi^{\mcF}_m(w_\xi).
\end{eqnarray*}
\end{proof}

\begin{lem}
For any $\xi\in N_\bR$, the following identities hold true:
\begin{eqnarray}
\bL^\NA(\mcF_\xi)&=&\bL^\NA(\mcF); \label{eq-bLFtwist}\\
\bfE^\NA(\mcF_\xi)&=&\bfE^\NA(\mcF)-\Fut_{(Z,Q)}(\xi); \label{eq-EFtwist}\\
\bfD^\NA(\mcF_\xi)&=&\bfD^\NA(\mcF)+\Fut_{(Z,Q)}(\xi).\label{eq-DFtwist}
\end{eqnarray}
In particular, if $\Fut_{(Z,Q)}\equiv 0$, then $\bfE^\NA(\mcF_\xi)=\bfE^\NA(\mcF)$ and $\bfD^\NA(\mcF_\xi)=\bfD^\NA(\mcF)$.
\end{lem}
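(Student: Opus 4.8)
The plan is to establish the three identities in the order $\bfE^\NA(\mcF_\xi)$, $\bL^\NA(\mcF_\xi)$, $\bfD^\NA(\mcF_\xi)$, the last being a formal consequence of the first two together with the definition $\bfD^\NA=-\bfE^\NA+\bL^\NA$ from \eqref{eq-bfDNAcF}. First I would record a preliminary point: because the $\bT$-weights occurring in $R_m=R^{(\ell_0)}_m$ stay in a fixed bounded region and grow at most linearly in $m$, the shifts $\la\alpha,\xi\ra$ are $O(m)$, so $\mcF_\xi$ is again a linearly bounded $\bT$-equivariant filtration and $\bfE^\NA(\mcF_\xi),\bL^\NA(\mcF_\xi),\bfD^\NA(\mcF_\xi)$ are well defined.

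For the energy identity I would fix $m$ sufficiently divisible and use the $\bT$-weight decomposition $R_m=\bigoplus_\alpha (R_m)_\alpha$. By \eqref{eq-Ftwist}, on the weight space $(R_m)_\alpha$ the filtration $\mcF_\xi$ is simply $\mcF$ shifted by the constant $\la\alpha,\xi\ra$; hence the multiset of successive minima $\{\lambda^{(m)}_j(\mcF_\xi)\}_j$ is obtained from $\{\lambda^{(m)}_j(\mcF)\}_j$ by adding $\la\alpha^{(m)}_j,\xi\ra$ to the $j$-th entry, $\alpha^{(m)}_j$ being the weight of the corresponding vector. Averaging,
\[
\frac{1}{N_m}\sum_{j}\frac{\lambda^{(m)}_j(\mcF_\xi)}{m\ell_0}=\frac{1}{N_m}\sum_{j}\frac{\lambda^{(m)}_j(\mcF)}{m\ell_0}+\frac{1}{N_m}\sum_{j}\frac{\la\alpha^{(m)}_j,\xi\ra}{m\ell_0},
\]
and letting $m\to+\infty$, using \eqref{eq-ENAcF} for the first two terms and \eqref{eq-defCW}--\eqref{eq-CWFut} for the last, one gets $\bfE^\NA(\mcF_\xi)=\bfE^\NA(\mcF)+\chw_L(\xi)=\bfE^\NA(\mcF)-\Fut_{(Z,Q)}(\xi)$, i.e. \eqref{eq-EFtwist}; note this uses no rationality of $\xi$.

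For the $\bL^\NA$ identity I would combine the twist formula \eqref{eq-phiFtwist} with $\theta_\xi(w)=A_{(Z,Q)}(w_\xi)-A_{(Z,Q)}(w)$ from \eqref{eq-thetaxi}: for every $w\in\rVal(Z)^\bT$,
\[
A_{(Z,Q)}(w)+\phi^{\mcF_\xi}(w)=A_{(Z,Q)}(w)+\phi^{\mcF}(w_\xi)+\theta_\xi(w)=A_{(Z,Q)}(w_\xi)+\phi^{\mcF}(w_\xi).
\]
Since $w\mapsto w_\xi$ is a bijection of $\rVal(Z)^\bT$ with inverse $w\mapsto w_{-\xi}$, and preserves divisoriality, taking the infimum over $\bT$-invariant divisorial valuations on both sides gives $\bL^\NA(\mcF_\xi)=\bL^\NA(\mcF)$, \emph{provided} one knows that, for any $\bT$-equivariant filtration $\mathcal{G}$, the quantity $\bL^\NA(\mathcal{G})=\inf_{w\in Z^{\rm div}_\bQ}\big(A_{(Z,Q)}(w)+\phi^{\mathcal{G}}(w)\big)$ from \eqref{eq-LNAphi} is already computed by the infimum over $\bT$-\emph{invariant} divisorial valuations. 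This reduction is the place where $\bT$-equivariance of $\mcF$ is genuinely used, and I expect it to be the main obstacle. I would prove it through the approximating test configurations of Definition--Proposition \ref{defn-ckTCm}: $\bL^\NA(\mathcal{G})=\lim_m \bL^\NA(\ccZ^{\mathcal{G}}_m,\ccQ^{\mathcal{G}}_m,\ccL^{\mathcal{G}}_m)$ by the definition of the log canonical threshold of the graded sequence $\mcI^{\mathcal{G}(e_+)}_\bullet$, the ideals $\mcI^{\mathcal{G}(e_+)}_m$ are $\bT$-invariant (because $\mathcal{G}$ is), and the log canonical threshold of $\bT$-invariant data is computed on a $\bT$-equivariant log resolution, hence by a $\bT$-invariant divisorial valuation. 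Finally, $\bfD^\NA(\mcF_\xi)=-\bfE^\NA(\mcF_\xi)+\bL^\NA(\mcF_\xi)$ together with the two identities just proved yields \eqref{eq-DFtwist}, and the last assertion of the lemma (when $\Fut_{(Z,Q)}\equiv 0$) is then immediate.

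An alternative route for \eqref{eq-bLFtwist} that bypasses the reduction step is to argue entirely through the approximating test configurations: for $\xi\in N_\bQ$ one checks, via \eqref{eq-phiFTC} and the test-configuration twist identity \eqref{eq-phitwist2}, that $(\ccZ^{\mcF}_m)_\xi$ is equivalent after a base change to $\ccZ^{\mcF_\xi}_m$, so \eqref{eq-bLYBtwist} gives $\bL^\NA(\ccZ^{\mcF_\xi}_m)=\bL^\NA(\ccZ^{\mcF}_m)$; letting $m\to+\infty$ proves \eqref{eq-bLFtwist} for rational $\xi$, and the general case follows by approximation together with convexity of $\xi\mapsto\bL^\NA(\mcF_\xi)$. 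I would present the valuative argument as the main line and record the test-configuration version as a remark.
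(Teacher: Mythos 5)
Your proof follows the same route as the paper's: the $\bfE^\NA$ identity via successive minima on an equivariant adapted basis and the Chow-weight/Futaki identification \eqref{eq-CWFut}, the $\bL^\NA$ identity via the key equality $A_{(Z,Q)}(v)+\phi^{\mcF_\xi}(v)=A_{(Z,Q)}(v_\xi)+\phi^{\mcF}(v_\xi)$ coming from \eqref{eq-phiFtwist} and \eqref{eq-thetaxi2}, and then $\bfD^\NA=-\bfE^\NA+\bL^\NA$. The one genuine difference is that you explicitly flag, and propose to close, a gap that the paper silently passes over: the infimum in \eqref{eq-LNAphi} is a priori over all of $Z^{\rm div}_\bQ$, whereas the twist identity \eqref{eq-phiFtwist} (and indeed the very definition of $v\mapsto v_\xi$) is only available for $\bT$-invariant valuations, so one needs to know that $\bL^\NA(\mcG)$ for a $\bT$-equivariant filtration $\mcG$ is already computed by $\bT$-invariant divisorial valuations before the change-of-variable $v\leftrightarrow v_\xi$ can be applied. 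Your reduction via the approximating test configurations $(\ccZ^{\mcG}_m,\ccQ^{\mcG}_m,\ccL^{\mcG}_m)$ and equivariant log resolutions is the standard and correct way to establish this, and the paper implicitly relies on the same fact (compare the statement and proof of Proposition \ref{prop-infGinv}, which handles essentially the same reduction in the Archimedean-slope setting). Your alternative test-configuration route to \eqref{eq-bLFtwist} is also sound, although for the passage from $N_\bQ$ to $N_\bR$ a simple density-plus-continuity argument (using the uniform estimate \eqref{eq-unidiffh}-type bounds on $\bL^\NA$) is cleaner than invoking convexity, since the conclusion is that $\xi\mapsto\bL^\NA(\mcF_\xi)$ is constant.
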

\begin{proof}
By \eqref{eq-phiFtwist} and \eqref{eq-thetaxi2}, we get
\begin{eqnarray}\label{eq-A+phivxi}
A_{(Z,Q)}(v)+\phi_{m,\xi}(v)=A_{(Z,Q)}(v)+\phi_m(v_\xi)+\theta_\xi(v)=A_{(Z,Q)}(v_\xi)+\phi_m(v_\xi)
\end{eqnarray}
where $\phi_m=\phi^\mcF_m=\phi_{(\ccZ^{\mcF}_m, \ccL^{\mcF}_m)}$ (see \eqref{eq-phiFTC}). 
Taking infimum for $v$ ranging in $\rVal$ and using \eqref{eq-FeqTC}, we get the identity $\bfL^\NA(\phi_{m,\xi})=\bfL^\NA(\phi_m)$. The identity \eqref{eq-bLFtwist} follows by letting $m\rightarrow +\infty$ and using the definition \eqref{eq-LNAcF}.
 
Next choose a basis $\{s^{(m)}_1,\dots, s^{(m)}_{N_m}\}$ adapted to the filtration $\{\mcF^x R_m\}$, which means that
\begin{equation}
\mcF^x R_m={\rm span}\{s^{(m)}_1, \dots, s^{(m)}_{k_x}\}
\end{equation}
for some $k_x\in \{1,\dots, N_m\}$. Because $\mcF^x R_m$ is $(\bC^*)^r$-invariant, we can assume that $s^{(m)}_j$ are equivariant in the sense that:
\begin{equation}
\tau\circ s^{(m)}_j=\tau^{\alpha^{(m)}_j} \cdot s^{(m)}_j.
\end{equation}
Let $\lambda^{(m)}_1\ge \lambda^{(m)}_2\cdots\ge \lambda^{(m)}_{N_m}$ be the succesive minima. 
Because of the $\bT$-equivariance, 
\begin{equation}
\lambda^{(m)}_j+\la \alpha^{(m)}_j, \xi\ra=:\lambda^{(m)}_j+\kappa^{(m)}_j, \quad j=1,\dots, N_m,
\end{equation}
are the set of successive minima for the twisted filtration. So we get:
\begin{eqnarray}
\bfE^\NA(\mcF_{\xi})&=&\frac{1}{N_m}\lim_{m\rightarrow+\infty}\sum_{j=1}^{N_m} \frac{\lambda^{(m)}_j+\kappa^{(m)}_j}{m\ell_0}\nonumber\\
&=&\bfE^\NA(\mcF)+\chw_L(\xi). \label{eq-EFxiCW}
\end{eqnarray}
Finally recall that in our set-up, $\chw_L(\xi)=-\Fut_{(Z,Q)}(\xi)$ (see \eqref{eq-CWFut}).

\end{proof}

\begin{defn}
For any $v\in \Blue{\Zdiv}$, define the invariant:
\begin{equation}
\beta(v):=\beta_{(Z, Q)}(v)=A_{(Z, Q)}(v)-S_{L}(v).
\end{equation}
\end{defn}
\begin{prop}
For any $v\in \Blue{\Zdiv}$ we have the inequality:
\begin{equation}\label{eq-betavsbfD}
\beta(v)\ge \bfD^\NA(\mcF_v).
\end{equation}
Moreover for any $\xi\in N_\bR$, we have the identity:
\begin{equation}\label{eq-betavxi}
\beta(v_\xi)=\beta(v)+\Fut_{(Z,Q)}(\xi).
\end{equation}
\end{prop}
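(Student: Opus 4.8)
\emph{The inequality \eqref{eq-betavsbfD}.} The plan is to reduce it to the single estimate $\bL^\NA(\mcF_v)\le A_{(Z,Q)}(v)$. Indeed, combining the definition \eqref{eq-bfDNAcF} of $\bfD^\NA(\mcF_v)$ with $\bfE^\NA(\mcF_v)=S_L(v)$ from \eqref{eq-ENAFv} gives $\beta(v)-\bfD^\NA(\mcF_v)=A_{(Z,Q)}(v)-\bL^\NA(\mcF_v)$, so the inequality is equivalent to that estimate. I would then write $\bL^\NA(\mcF_v)=\bL^\NA(\phi_v)=\inf_{w\in Z^{\rm div}_\bQ}\bigl(A_{(Z,Q)}(w)+\phi_v(w)\bigr)$ using \eqref{eq-LNAphi} and the identification of the non-Archimedean functionals of $\mcF_v$ with those of $\phi_v=\phi^{\mcF_v}$. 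When $v$ is divisorial this finishes at once: take $w=v$ and use $\phi_v(v)=0$ from Lemma \ref{lem-phivv=0}. For a general $v\in\rVal(Z)$ the plan is to pass through the approximating test configurations, $\bL^\NA(\mcF_v)=\lim_m\bL^\NA(\ccZ^{\mcF_v}_m,\ccQ^{\mcF_v}_m,\ccL^{\mcF_v}_m)$ (as in the proof of Fujita's theorem recalled above), and to bound each term by plugging the Gauss extension $G(v)\in\rVal(Z\times\bC)$ into the log canonical threshold computing it; since $A_{(Z\times\bC,Q\times\bC)}(G(v))=A_{(Z,Q)}(v)+1$, $G(v)(t)=1$, and $\tfrac{1}{\ell_0 m}G(v)\bigl(\tilde{\mcI}^{\mcF_v}_m\bigr)=-\phi^{\mcF_v}_m(v)\to 0$ (this limit being exactly $\phi_v(v)=0$), this yields $\bL^\NA(\ccZ^{\mcF_v}_m,\ccQ^{\mcF_v}_m,\ccL^{\mcF_v}_m)\le A_{(Z,Q)}(v)+\phi^{\mcF_v}_m(v)$, and letting $m\to\infty$ gives the claim.

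\emph{The identity \eqref{eq-betavxi}.} Here the plan is pure bookkeeping with twists and shifts of filtrations. By the defining relation \eqref{eq-thetaxi2} one has $A_{(Z,Q)}(v_\xi)=A_{(Z,Q)}(v)+\theta_\xi(v)$, so it remains to compute $S_L(v_\xi)$. Since $v_\xi\in\rVal(Z)^\bT$ is still a genuine valuation, Izumi's inequality gives $\lambda_{\min}(\mcF_{v_\xi})=0$, hence $S_L(v_\xi)=\bfE^\NA(\mcF_{v_\xi})$ by \eqref{eq-ENAFv}. The key step is to identify the filtration $\mcF_{v_\xi}$: comparing Proposition \ref{prop-Fvxi} (formula \eqref{eq-Fvxi}) with the definition \eqref{eq-Ftwist} of the $\xi$-twist and the definition \eqref{eq-Fshift} of the $\theta$-shift, one reads off
\[
\mcF_{v_\xi}=\bigl((\mcF_v)_\xi\bigr)\bigl(\theta_\xi(v)\bigr),
\]
the $\theta_\xi(v)$-shift of the $\xi$-twist of $\mcF_v$. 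Now $\bfE^\NA$ of a $\theta$-shift simply adds $\theta$ (immediate from \eqref{eq-ENAcF}, as every successive minimum $\lambda^{(m)}_j$ is raised by $m\ell_0\theta$), while $\bfE^\NA$ of the $\xi$-twist adds $\chw_L(\xi)$ by \eqref{eq-EFxiCW}; hence $S_L(v_\xi)=\bfE^\NA(\mcF_{v_\xi})=S_L(v)+\chw_L(\xi)+\theta_\xi(v)$. Plugging this and $\chw_L(\xi)=-\Fut_{(Z,Q)}(\xi)$ (from \eqref{eq-CWFut}) into $\beta(v_\xi)=A_{(Z,Q)}(v_\xi)-S_L(v_\xi)$, the two $\theta_\xi(v)$ terms cancel and \eqref{eq-betavxi} drops out.

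\emph{Main obstacle.} The second identity is essentially formal once $\mcF_{v_\xi}=((\mcF_v)_\xi)(\theta_\xi(v))$ is established; the only point requiring attention is that the filtration associated to a \emph{valuation} picks up this extra $\theta_\xi(v)$-shift under twisting --- unlike the filtration of a twisted test configuration, which is literally the twisted filtration --- and that this shift is exactly cancelled by the corresponding shift in $A_{(Z,Q)}(v_\xi)$. I expect the real work to lie in the non-divisorial case of \eqref{eq-betavsbfD}: turning the divisorial-only infimum defining $\bL^\NA(\mcF_v)$ into a bound ``evaluated at $v$'' is not a one-line substitution, and will need either the Gauss-extension estimate on the approximating test configurations outlined above, or a density argument based on the fact that $A_{(Z,Q)}(v)$ can be realized as a limit of log discrepancies of divisorial valuations converging to $v$.
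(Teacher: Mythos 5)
Your proof follows the same route as the paper's, and is in fact a bit more careful on the one point where the paper's argument is terse. The paper deduces $\bL^\NA(\mcF_v)\le A_{(Z,Q)}(v)$ by writing $\bL^\NA(\phi_v)=\inf_w(A(w)+\phi_v(w))\le A(v)$ and implicitly ``taking $w=v$''; but that infimum in \eqref{eq-LNAphi} is a priori only over $Z^{\rm div}_\bQ$, and $v$ need not be divisorial. You are right that this requires a word of justification, and your proposed resolution --- pass through the lct formulation of $\bL^\NA(\mcF_v)$ and test against the Gauss extension $G(v)$ --- is exactly the right one. In fact it can be streamlined: the lct in \eqref{eq-bfDNAcF} is a sub-log-canonicity condition which can be tested against \emph{any} valuation with finite log discrepancy, so plugging $G(v)$ in directly (no need for the approximating test configurations $\ccZ_m$) already gives $\bL^\NA(\mcF_v)\le A_{(Z,Q)}(v)+\phi_v(v)=A_{(Z,Q)}(v)$. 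For the identity \eqref{eq-betavxi}, your key identification $\mcF_{v_\xi}=\bigl((\mcF_v)_\xi\bigr)(\theta_\xi(v))$ and the bookkeeping of $\bfE^\NA$ under shift and twist is precisely the paper's argument, written (slightly) more cleanly.

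One remark on signs. Applying \eqref{eq-EFxiCW} and \eqref{eq-CWFut} literally, as you do, yields $\beta(v_\xi)=\beta(v)-\chw_L(\xi)=\beta(v)+\Fut_{(Z,Q)}(\xi)$, which is the \emph{opposite} sign from the stated identity \eqref{eq-betavxi}. The paper's displayed proof writes $\bfE^\NA(\mcF_\xi(\theta_\xi(v)))=\bfE^\NA(\mcF_v)+\Fut_{(Z,Q)}(\xi)+\theta_\xi(v)$, which is consistent with the stated conclusion but directly contradicts the cited \eqref{eq-EFtwist}, $\bfE^\NA(\mcF_\xi)=\bfE^\NA(\mcF)-\Fut_{(Z,Q)}(\xi)$; the same tension is present in \eqref{eq-DYBtwist} and \eqref{eq-DFtwist} versus $\bfD^\NA=-\bfE^\NA+\bL^\NA$. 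This looks like a sign slip in the paper's $\Fut$ versus $\chw_L$ conventions, harmless for the applications (where $\Fut\equiv 0$ on $N_\bR$ is in force), but you should note that your chain of equalities as written does not actually land on the stated sign, so asserting that ``\eqref{eq-betavxi} drops out'' is not quite right.
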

\begin{proof}
Recall that 
\begin{equation}
\Blue{\bfD^\NA}(\mcF_v)=\Blue{\bfD^\NA}(\phi_v)=-\bfE^\NA(\phi_v)+\bL^\NA(\phi_v).
\end{equation}
By \eqref{eq-ENAFv}, we have 
\begin{eqnarray*}
S_{L}(v)&=&\bfE^\NA(\mcF_v)=\frac{1}{\ell_0^n L^{\cdot n}}\int_0^{+\infty} -\frac{x}{\ell_0} \cdot d\;\vol(\mcF^{(x)}R^{(\ell_0)})
\end{eqnarray*}
Moreover, using the inequality \eqref{eq-weakL} and $\phi_v(v)=0$ (by Lemma \ref{lem-phivv=0}), we always have:
\begin{equation}\label{eq-weak1}
\bfL^\NA(\cF_v)\le \inf_w \left(A(w)+\phi_v(w)\right)\le A(v).
\end{equation}
So we get \eqref{eq-betavsbfD}. 
Because by \eqref{eq-Fvxi} $\mcF_{v_\xi}=\mcF_\xi(\theta_\xi(v))$ (see \eqref{eq-Fshift}), we use \eqref{eq-EFtwist} and \eqref{eq-thetaxi2} to 
get the identity \eqref{eq-betavxi}:
\begin{eqnarray*}
S_L(v_\xi)&=&\bfE^\NA(\mcF_{v_\xi})=\bfE^\NA(\mcF_\xi(\theta_\xi(v))\\
&=&\bfE^\NA(\mcF_v)-\Fut_{(Z,Q)}(\xi)+\theta_\xi(v)\\
&=&S_{L}(v)-\Fut_{(Z,Q)}(\xi)+A(v_\xi)-A(v).
\end{eqnarray*}
\end{proof}

\subsection{$\bG$-Uniform Ding stability}
Let $(Z, Q)$, $L=-K_Z-Q$, $\bG$ and $\bT$ be as before. 
\begin{defn}
For any $\bT$-equivariant test configuration $(\mcZ, \mcQ, \mcL)$ of $(Z, Q, L)$,
the reduced $\bfJ$-norm of $(\mcZ, \mcL)$ is defined as:
\begin{equation}\label{eq-JNAT}
\bfJ^\NA_\bT(\mcZ, \mcL)=\inf_{\xi\in N_\bR} \bfJ^\NA(\mcZ_\xi, \mcL_\xi).
\end{equation}
For any graded filtration $\mcF$, its reduced $J$-norm is defined as:
\begin{equation}
\bfJ^\NA_\bT(\mcF)=\inf_{\xi\in N_\bR} \bfJ^\NA(\mcF_\xi).
\end{equation}
\end{defn}
The reason for defining $\bfJ^\NA_\bT$ comes from Hisamoto's slope formula:
\begin{thm}[{\cite[Theorem B]{His16b}}]\label{thm-JTslope}
Let $(\mcZ, \mcL)$ be a $\bT$-equivariant ample \Blue{test configuration} for $(Z, L)$. Let $\Phi=\{\vphi(s); s=-\Blue{\log|t|^2}\in [0, +\infty)\}$ be a bounded psh Hermitian metric on $\mcL$. Then we have the following limit formula:
\begin{equation}\label{eq-JTslope}
\lim_{s\rightarrow+\infty} \frac{\bfJ_\bT(\vphi(s))}{s}=\bfJ^\NA_\bT(\mcZ, \mcL).
\end{equation}
\end{thm}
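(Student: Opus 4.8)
\emph{Strategy and reduction.} The plan is to establish the two inequalities
\begin{equation}\label{eq-plan-sandwich}
\limsup_{s\to+\infty}\frac{\bfJ_\bT(\vphi(s))}{2s}\le\bfJ^\NA_\bT(\mcZ,\mcL)\le\liminf_{s\to+\infty}\frac{\bfJ_\bT(\vphi(s))}{2s}.
\end{equation}
The key bookkeeping is the rescaling identity coming from \eqref{eq-sigmaxi}: for $\xi\in N_\bR$ and $s>0$ one has $\sigma_\xi(1)=\sigma_{\xi/s}(s)=\exp(\xi)$, so by the $(S^1)^r$-invariance of $\bfJ$ (Lemma \ref{lem-infobt} and the discussion following it) one may identify $\bfJ_\bT(\vphi(s))=\inf_{\xi\in N_\bR}\bfJ(\sigma_\xi(1)^*\vphi(s))$. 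Writing $f_s(\eta):=\frac{1}{2s}\bfJ(\sigma_\eta(s)^*\vphi(s))$ for $\eta\in N_\bR$, the definition \eqref{eq-JbTvphi} gives
\begin{equation}\label{eq-plan-inf}
\frac{\bfJ_\bT(\vphi(s))}{2s}=\inf_{\eta\in N_\bR}f_s(\eta),
\end{equation}
the infimum being attained at some $\eta_s\in N_\bR$. Next, for a \emph{fixed} $\xi\in N_\bR$ the ray $s\mapsto\sigma_\xi(s)^*\vphi(s)$ is a bounded positively curved Hermitian metric on the twisted (possibly $\bR$-)test configuration $(\mcZ_\xi,\mcQ_\xi,\mcL_\xi)$: this is the observation recorded in the Remark following the twist identities when $\xi\in N_\bZ$, and the general case follows by base change for $\xi\in N_\bQ$ and by continuity for $\xi\in N_\bR$. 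Proposition \ref{prop-BHJslope} applied to $(\mcZ_\xi,\mcL_\xi)$ then yields
\begin{equation}\label{eq-plan-ptwise}
\lim_{s\to+\infty}f_s(\xi)=\bfJ^\NA(\mcZ_\xi,\mcL_\xi)=:f_\infty(\xi).
\end{equation}

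For the \emph{upper bound}, since $\sigma_\xi(s)\in\bT$ for every $\xi\in N_\bR$, \eqref{eq-plan-inf} gives $\frac{\bfJ_\bT(\vphi(s))}{2s}\le f_s(\xi)$; letting $s\to+\infty$ and using \eqref{eq-plan-ptwise} gives $\limsup_s\frac{\bfJ_\bT(\vphi(s))}{2s}\le\bfJ^\NA(\mcZ_\xi,\mcL_\xi)$, and taking the infimum over $\xi$ yields the left inequality in \eqref{eq-plan-sandwich}.

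For the \emph{lower bound} I would argue by convexity. By Proposition \ref{prop-convex1} each $f_s$ is convex on $N_\bR\cong\bR^r$, and by \eqref{eq-plan-ptwise} $f_s\to f_\infty$ pointwise, so $f_\infty$ is convex as well. The crucial input is that $f_\infty(\xi)=\bfJ^\NA(\mcZ_\xi,\mcL_\xi)$ is \emph{coercive}: its recession slope along $\bR_{>0}\xi$ equals $\bfJ^\NA(((Z,-(K_Z+Q))\times\bC)_\xi)$, which is positively homogeneous of degree one and strictly positive for $\xi\ne0$ (this is the slope computation behind the properness statement in Lemma \ref{lem-infobt}; see \cite{His16b} and \cite{BHJ17}), hence bounded below by $c|\xi|$ for some fixed $c>0$. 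Since moreover $\sup_s f_s(0)=\sup_s\frac{\bfJ(\vphi(s))}{2s}<+\infty$ by Proposition \ref{prop-BHJslope}, a standard argument applies: pointwise convergence of finite convex functions on $\bR^r$ is locally uniform, and the coercivity of $f_\infty$ together with the uniform bound on $f_s(0)$ forces the minimizers $\eta_s$ to stay in a fixed ball for $s$ large (by convexity, $f_s$ exceeds its value at $0$ outside a large ball as soon as $f_\infty$ does). Hence $\inf_\eta f_s(\eta)\to\inf_\eta f_\infty(\eta)$, which by \eqref{eq-plan-inf} and \eqref{eq-plan-ptwise} is exactly $\lim_s\frac{\bfJ_\bT(\vphi(s))}{2s}=\inf_{\eta\in N_\bR}\bfJ^\NA(\mcZ_\eta,\mcL_\eta)=\bfJ^\NA_\bT(\mcZ,\mcL)$, giving the right inequality in \eqref{eq-plan-sandwich}.

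\emph{The main obstacle} is precisely the coercivity of $\xi\mapsto\bfJ^\NA(\mcZ_\xi,\mcL_\xi)$ — equivalently, ruling out that the minimizing twists $\eta_s$ run off to infinity as $s\to+\infty$; without it the right inequality in \eqref{eq-plan-sandwich} can genuinely fail (pointwise convexity alone does not force the infima to converge). This comes down to identifying the behaviour of $\bfJ^\NA$ under large twists with that of a product test configuration, which is where the ampleness and normality of $(\mcZ,\mcL)$ and the intersection-theoretic formulas of \cite{BHJ17} enter. A secondary, routine point is the reduction in \eqref{eq-plan-ptwise} from $\xi\in N_\bZ$ (the case directly covered by Proposition \ref{prop-BHJslope} and the twist Remark) to $\xi\in N_\bQ$ via the base-change rescaling \eqref{eq-NAscaling}, and then to arbitrary $\xi\in N_\bR$ by continuity of $\xi\mapsto\bfJ^\NA(\mcZ_\xi,\mcL_\xi)$.
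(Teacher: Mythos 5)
Your proposal is correct and takes a genuinely different route from the paper's proof, so I will compare the two.

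Both arguments start identically: write $f_s(\eta):=\tfrac{1}{2s}\bfJ(\sigma_\eta(s)^*\vphi(s))$, observe that $\bfJ_\bT(\vphi(s))/2s=\inf_\eta f_s(\eta)$ by the $(S^1)^r$-invariance, and get the easy inequality $\limsup_s\le\bfJ^\NA_\bT$ from the slope formula (Proposition~\ref{prop-BHJslope}) applied to each fixed twist. The divergence is in the hard inequality, whose crux is controlling the minimizers $\eta_s$. The paper's route bounds $\eta_s$ by a purely Archimedean estimate: the quasi-triangle inequality for $\bfI$ (hence $\bfJ$) from~\cite[Theorem 1.8]{BBEGZ} gives $\bfJ_\psi(\sigma_{\eta_s}(s)^*\psi)\lesssim s$, and then the linear growth of $\xi\mapsto\bfJ_\psi(\sigma_\xi(1)^*\psi)$ (Lemma~\ref{lem-infobt}) pins $\eta_s$ to a bounded set; to conclude it extracts a convergent subsequence $\eta_{s_j}\to\xi_\infty$ and estimates $s_j^{-1}\sup_X|\sigma_{\eta_{s_j}}(s_j)^*\vphi(s_j)-\sigma_{\xi_\infty}(s_j)^*\vphi(s_j)|\to0$ by passing to an explicit $\bC^*\times\bT$-equivariant Fubini--Study embedding of $\mcX$ and a hands-on computation on projective space. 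Your route replaces both of these pieces by convex analysis: each $f_s$ is convex (Proposition~\ref{prop-convex1}), the pointwise limit $f_\infty=\bfJ^\NA(\mcZ_{\bullet},\mcL_{\bullet})$ is finite and convex, pointwise convergence of finite convex functions on $\bR^r$ is automatically locally uniform (Rockafellar), and the coercivity of $f_\infty$ plus a uniform bound on $f_s(0)$ traps the minimizers. The benefit of your approach is that it sidesteps entirely the Fubini--Study $L^\infty$-comparison (which is the most technically involved step of the paper's proof) and works at the level of the non-Archimedean functionals; the benefit of the paper's approach is that it never needs to verify the pointwise limit $f_s(\xi)\to f_\infty(\xi)$ for \emph{all} rational $\xi$ — it only applies Proposition~\ref{prop-BHJslope} at the extracted limit point $\xi_\infty$.

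Two places in your sketch deserve to be tightened, though they do not represent gaps. First, the coercivity of $f_\infty$ should be stated in the form $f_\infty(\xi)\ge c|\xi|-C$ (not $c|\xi|$); the paper's explicit version of this, Lemma~\ref{lem-JNAproper}, is proved under the assumption $\chw_L\equiv0$, which Theorem~\ref{thm-JTslope} does not impose — but the qualitative coercivity you need holds without that hypothesis, since the recession slope of $\Lam^\NA(\mcF_\xi)$ in direction $\xi$ is the support function of the full-dimensional moment polytope ${\bf P}$, while the recession slope of $\bfE^\NA(\mcF_\xi)$ is $-\Fut(\xi)=\int_{\bf P}\langle y,\xi\rangle\,\DHM_\bT$, the strictly smaller barycentric average (this is exactly the reasoning underlying the properness part of Lemma~\ref{lem-infobt}). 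Second, for irrational $\xi$ one should not separately argue "by continuity" before invoking Rockafellar; the cleanest logical order is to establish $f_s(\xi)\to f_\infty(\xi)$ only for the dense set $\xi\in N_\bQ$ (via Proposition~\ref{prop-BHJslope} together with the base-change identity~\eqref{TCbasechange}), and then let Rockafellar's theorem deliver locally uniform convergence on all of $N_\bR$ in one stroke.
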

For the convenience of the reader, we provide a refined proof of this result essentially following the argument in \cite{His16b} (which builds on some ideas of Berman). This will show that the arguments indeed work for any normal projective varieties. 
\begin{proof}[Proof of \ref{thm-JTslope}]
First, by using the slope formula for $\bfJ$ and the definition of $\bfJ_\bT$ as an infimum, it is easy to verify:
\begin{equation}
\limsup_{s\rightarrow+\infty}\frac{\bfJ_\bT(\vphi(s))}{\Blue{s}}\le \bfJ^\NA_\bT(\mcZ, \mcL).
\end{equation}
 For the other direction, by Lemma \ref{lem-infobt}, there exists $\xi_s\in N_\bR$ such that $\bfJ_\bT(\vphi(s))=\bfJ(\sigma_{\xi_s}(s)^*\vphi(s))$. By the quasi-triangle inequality for $\bfI$ (\cite[Theorem 1.8]{BBEGZ}) and hence for $\bfJ$, we have (for any fixed reference metric $\psi$):
\begin{eqnarray*}
\bfJ_\psi(\sigma_{\xi_s}(s)^*\psi)&\le& c_n \left(\bfJ_\psi(\sigma_{\xi_s}(s)^*\vphi(s))+\bfJ_{\sigma_{\xi_s}(s)^*\vphi(s)}(\sigma_{\xi_s}(s)^*\psi\right)\\
&\le& C \bfJ_\psi(\vphi(s))= C (\bfJ^\NA(\mcZ, \mcL) s+o(s))\le C's.
\end{eqnarray*}
By the properness of $\xi\mapsto \bfJ(\sigma_\xi(1)^*\psi)$ (Lemma \ref{lem-infobt}) and the identity $\sigma_{\xi_s}(s)=\sigma_{s\xi_s}(1)$,  this means that $\xi_s$ is uniformly bounded in $N_\bR$. Hence there exists $\Blue{\xi_\infty}\in N_\bR$ and a sequence $s_j\rightarrow+\infty$ such that $\xi_{s_j}\rightarrow \xi_\infty$.
We just need to show that
\begin{equation}\label{eq-JTslopebound}
\lim_{j\rightarrow+\infty}s_j^{-1}\left|\bfJ_\psi(\sigma_{\xi_{s_j}}(s_j)^*\vphi(s_j))-\bfJ_\psi(\sigma_{\xi_\infty}(s_j)^*\vphi(s_j))\right|=0,
\end{equation}
since it would imply the following inequality which concludes the proof:
\begin{eqnarray*}
\liminf_{j\rightarrow+\infty}\frac{\bfJ(\sigma_{\xi_{s_j}}(s_j)^*\vphi(s_j))}{\Blue{s_j}}&=&\lim_{s\rightarrow+\infty}\frac{\bfJ(\sigma_{\xi_\infty}(s_j)^*\vphi(s_j))}{\Blue{s_j}}\\
&=&\bfJ^\NA(\mcZ_{\xi_\infty}, \mcL_{\xi_\infty})\ge \bfJ^\NA_{\bT}(\mcZ, \mcL).
\end{eqnarray*}
To verify \eqref{eq-JTslopebound}, we use the easy fact $\left|\bfJ(\vphi_1)-\bfJ(\vphi_2)\right|\le 2\sup_X|\vphi_1-\vphi_2|$ to reduce to showing:
\begin{eqnarray}\label{eq-JTslopebound2}
\lim_{j\rightarrow+\infty}s_j^{-1}\sup_X |\sigma_{\xi_{s_j}}(s_j)^*\vphi(s_j)-\sigma_{\xi_\infty}(s_j)^*\vphi(s_j)|=0.
\end{eqnarray}
Now we fix a $\bC^*\times\bT$-equivariant embedding $\iota: \mcX\rightarrow \bP^{N_k-1}\times \bC$ with that $\iota^*\mathcal{O}_{\bP^{N_k-1}}(1)=\mcL^k$. The weight decomposition of $H^0(X, kL)$ allows us to choose homogeneous coordinates
$\{Z_1, \dots, Z_{N_k}\}$ on $\bP^{N_k-1}$ such that the $\bC^*\times\bT$-action is given by:
\begin{equation}
(\tau_0, \tau_1,\dots, \tau_r)\cdot Z_i=\tau_0^{\lambda_i}\prod_{p=1}^r \tau_k^{\alpha_i^p} \cdot Z_i.
\end{equation} 
Identify $X$ with the fibre at $t=1$: $X\cong \pi^{-1}(\{1\})\cap \mcX$, and set $e^{-\psi_{\FS}}=\left.\iota^*h_{\FS}^{1/k}\right|_X$ where $h_{\FS}$ is the standard Fubini-Study metric on $\bP^{N_k-1}$.
Then to verify \eqref{eq-JTslopebound}, we can replace 
$\{\vphi(s)\}$ by the $L^\infty$-comparable $\{\tilde{\vphi}(s)=\sigma_\eta(s)^*\psi_{\FS}\}$, which is given by the well-known explicit formula (recall that $s=-\log|t|$):
\begin{eqnarray*}
\tilde{\vphi}(s)-\psi_{\FS}&=&\frac{1}{k}\log\frac{ \sum_{i=1}^{N_k} |t|^{-2 \lambda_i}|Z_i|^2}{\sum_{i=1}^{N_k} |Z_i|^2}.
\end{eqnarray*}
More generally, for any $\xi\in N_\bR$, $\sigma_\xi(s)^*\tilde{\vphi}(s)$ is given by:
\begin{eqnarray*}
\sigma_\xi(s)^*\tilde{\vphi}(s)-\psi_{\FS}&=&\frac{1}{k}\log\frac{\sum_i |t|^{-2(\lambda_i+\la \alpha_i, \xi\ra)}|Z_i|^2}{\sum_i |Z_i|^2}.
\end{eqnarray*}
\Blue{Note that $N_\bR\cong \bR^r$ has a standard Euclidean norm. }
So we easily get for any $\xi, \xi'\in N_\bR$ (again with $s=-\log|t|$), 
\begin{equation}
\left|\sigma_\xi(s)^*\tilde{\vphi}(s)-\sigma_{\xi'}(s)^*\tilde{\vphi}(s)\right|=\frac{1}{k}\left|\log\frac{\sum_i |t|^{-2(\lambda_i+\la \alpha_i, \xi\ra)}|Z_i|^2}{\sum_i |t|^{-2(\lambda_i+\la \alpha_i, \xi'\ra} |Z_i|^2}\right|\le C (\log|t|^2) |\xi-\xi'|,
\end{equation}
where $C=C(k, \{\alpha_i\})$ does not depend on $\xi, \xi', s$. Substituting the variables $\xi$, $\xi', s$ by $\xi_{s_j}, \xi_\infty, s_j$ respectively into the above estimate, we easily get the limit \eqref{eq-JTslopebound2} by using the fact that $\xi_{s_j}\rightarrow \xi_\infty$.
\end{proof}

The next lemma generalizes \cite[Lemma 3.18]{His19}:
\begin{lem}\label{lem-JNAproper}
Assume $\chw_L\equiv 0$ on $\mathfrak{t}$. Then for any $\bT$-equivariant filtration $\mcF$ (satisfying the properties in Definition \ref{defn-gdfiltr}), $\xi\mapsto \bfJ^\NA(\mcF_\xi)$ is a convex and proper function. More precisely, there exists $C_1>0$ depending only on the $\bT$-action on $Z$, such that
\begin{equation}\label{eq-JNAproper}
\bfJ^\NA(\cF_\xi)\ge C_1 |\xi|-(e_-+\bfE^\NA(\cF)),
\end{equation}
where $e_-$ is any number satisfying $\cF^{m e_-}=0$ for $m\in \bN$ (see Definition \ref{defn-gdfiltr})).
As a consequence, it has a unique minimizer on $N_\bR$. Moreover if $\mcF=\mcF_{(\mcZ, \ell_0\mcL)}$ for some test configuration $(\mcZ, \mcL)$ of $(Z, L)$, then the minimizer is contained in $N_\bQ$.
\end{lem}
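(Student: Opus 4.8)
The plan is to reduce everything to the single function $\xi\mapsto\Lam^\NA(\mcF_\xi)$ and to analyze it through the $\bT$-weight decomposition. Since $\chw_L\equiv 0$ on $\ft$ and $\chw_L(\xi)=-\Fut_{(Z,Q)}(\xi)$ by \eqref{eq-CWFut}, the twist formula \eqref{eq-EFtwist} gives $\bfE^\NA(\mcF_\xi)=\bfE^\NA(\mcF)$ for every $\xi\in N_\bR$; hence $\bfJ^\NA(\mcF_\xi)=\Lam^\NA(\mcF_\xi)-\bfE^\NA(\mcF)$, and it suffices to treat $\xi\mapsto\Lam^\NA(\mcF_\xi)$. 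For convexity I would fix, for each sufficiently divisible $m$, a $\bT$-eigenbasis $\{s^{(m)}_j\}$ of $R_m$ adapted to $\mcF$, with weights $\alpha^{(m)}_j\in M_\bZ$ and successive minima $\lambda^{(m)}_j$, exactly as in the proof of \eqref{eq-EFxiCW}; by $\bT$-equivariance the successive minima of $\mcF_\xi$ on $R_m$ are the numbers $\lambda^{(m)}_j+\la\alpha^{(m)}_j,\xi\ra$, so $\lambda^{(m)}_{\max}(\mcF_\xi)=\max_j(\lambda^{(m)}_j+\la\alpha^{(m)}_j,\xi\ra)$ is a finite maximum of affine functions of $\xi$, hence convex, and $\Lam^\NA(\mcF_\xi)=\sup_m\frac{1}{m\ell_0}\lambda^{(m)}_{\max}(\mcF_\xi)$ is a supremum of convex functions, hence convex; therefore so is $\xi\mapsto\bfJ^\NA(\mcF_\xi)$.

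Next I would establish the quantitative lower bound. Linear boundedness in Definition \ref{defn-gdfiltr} gives $\lambda^{(m)}_j\ge me_-$ for all $j$, whence $\lambda^{(m)}_{\max}(\mcF_\xi)\ge me_-+\max_j\la\alpha^{(m)}_j,\xi\ra$. Because $\bT$ acts effectively on $Z$, the moment polytope $P$ of $(Z,L)$ in $M_\bR$ is $r$-dimensional, and because $\chw_L\equiv 0$ the origin is the barycentre of the Duistermaat-Heckman measure of $(Z,\bT,L)$, so $0\in{\rm int}(P)$; consequently there is $C_1>0$, depending only on the $\bT$-action on $Z$, with $\max_{\alpha\in P}\la\alpha,\xi\ra\ge C_1|\xi|$ for all $\xi$. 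Since $\frac1m\max_j\la\alpha^{(m)}_j,\xi\ra\to\max_{\alpha\in P}\la\alpha,\xi\ra$, dividing by $m\ell_0$ and taking the supremum over $m$ yields $\Lam^\NA(\mcF_\xi)\ge C_1|\xi|+e_-/\ell_0$ (after absorbing the normalization of $e_-$ and $\ell_0$ into $C_1$), which is the estimate \eqref{eq-JNAproper}; in particular $\xi\mapsto\bfJ^\NA(\mcF_\xi)$ is proper.

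A convex function on $N_\bR\cong\bR^r$ satisfying $\bfJ^\NA(\mcF_\xi)\ge C_1|\xi|-C$ attains its infimum on a nonempty compact convex set, so a minimizer exists. For uniqueness I would argue by contradiction: if $\xi_0\ne\xi_1$ both minimize, then $\xi\mapsto\Lam^\NA(\mcF_\xi)$ is constant on the segment $[\xi_0,\xi_1]$; letting $m\to\infty$, the rescaled weights $\alpha^{(m)}_j/m$ equidistribute over $P$ and the values $\lambda^{(m)}_j/(m\ell_0)$ stabilize to a concave function $g$ on $P$ with $\Lam^\NA(\mcF_\xi)=\max_{\alpha\in P}(g(\alpha)+\la\alpha,\xi\ra)$; along the segment this Legendre-type transform is affine only if the maximizing point is a fixed $\alpha^*$, and constancy forces the slope $\alpha^*=0$, so $0$ is a maximum point of $\alpha\mapsto g(\alpha)+\la\alpha,\xi\ra$ on $P$ for every $\xi\in[\xi_0,\xi_1]$; since $0\in{\rm int}(P)$, this interior-maximum condition together with the conical growth already established pins $\xi$ down and collapses the segment to a point. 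Finally, when $\mcF=\mcF_{(\mcZ,\ell_0\mcL)}$, the filtration is a finitely generated $\bZ$-filtration, so by Example \ref{ex-tc2fil} $\lambda^{(m)}_{\max}(\mcF)/m$ stabilizes at a sufficiently divisible $m_0$; then $\Lam^\NA(\mcF_\xi)=\frac{1}{m_0\ell_0}\max_j(\lambda^{(m_0)}_j+\la\alpha^{(m_0)}_j,\xi\ra)$ and $\bfJ^\NA(\mcF_\xi)$ are rational piecewise-linear convex functions of $\xi$, whose minimizer therefore lies in $N_\bQ$.

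The hard part will be the uniqueness assertion: convexity together with the bound $\bfJ^\NA(\mcF_\xi)\ge C_1|\xi|-C$ does not by itself exclude a positive-dimensional minimizing face, so the argument must genuinely use that $\chw_L\equiv 0$ places the origin in the \emph{interior} of the moment polytope and that $\xi\mapsto\Lam^\NA(\mcF_\xi)$ is the conjugate of a concave function on that polytope; this is also where the effectiveness of the torus action enters in an essential way. Everything else — the reduction via $\chw_L\equiv 0$, the convexity, and the conical estimate — is routine bookkeeping with the $\bT$-weight decomposition and the twist identities \eqref{eq-EFtwist} and \eqref{eq-EFxiCW}.
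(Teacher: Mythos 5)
Your reduction to $\Lam^\NA(\mcF_\xi)$ via $\chw_L\equiv 0$ and \eqref{eq-EFtwist}, the convexity via the max-of-affine-functions representation $\sup_m\max_j\frac{\lambda^{(m)}_j+\la\alpha^{(m)}_j,\xi\ra}{m\ell_0}$, and the linear lower bound through the weight polytope $P$ with $0\in\operatorname{int}(P)$ are exactly the steps in the paper's proof, as is the observation that for $\mcF=\mcF_{(\mcZ,\ell_0\mcL)}$ the function $\bfj$ becomes a rational piecewise-affine convex proper function. One small point worth recording: to pass from ``barycentre of the $\bT$-Duistermaat-Heckman measure is $0$'' to ``$0\in\operatorname{int}(P)$'' you need that $\DHM_\bT$ is absolutely continuous on the (full-dimensional) polytope $P$; effectiveness of the action alone gives $\dim P=r$, but the absolute continuity is a theorem (the paper cites \cite[Proposition 6.4]{BHJ17}), and without it the barycentre could a priori lie on a face.

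The uniqueness paragraph, however, has a genuine gap, and you flag the right place. From constancy of $\Lam^\NA(\mcF_\xi)$ on $[\xi_0,\xi_1]$ you can only conclude that the relevant slopes $\alpha^*$ satisfy $\la\alpha^*,\xi_1-\xi_0\ra=0$, not that $\alpha^*=0$; and minimality at an endpoint only yields $0\in\partial_\xi\Lam^\NA(\mcF_{\xi_0})$, i.e.\ $0$ lies in the convex hull of the limiting slopes, not that any single maximizing weight is $0$. There is no contradiction to extract, and in fact strict convexity fails for general $\bT$-equivariant filtrations. For instance take $(Z,L)=(\bP^1,\mcO(2))$ with the standard $\bC^*$-action normalized so that the weights on $R_m$ are $\{-m,\dots,m\}$, and set $\mcF^xR_m=\bigoplus_{|\alpha|\le -x}(R_m)_\alpha$. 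This is decreasing, left-continuous, multiplicative (since $|\alpha+\alpha'|\le|\alpha|+|\alpha'|$), linearly bounded and $\bT$-equivariant, but $\Lam^\NA(\mcF_\xi)=\max(0,|\xi|-1)$, whose minimum is attained on the entire segment $[-1,1]$. So the Legendre-conjugate picture you invoke does not force a unique minimizer; the hypothesis $\chw_L\equiv 0$ places $0$ in the interior of $P$ but says nothing about strict concavity of the limiting profile $g$ (which here is $g(a)=-|a|$). Note also that the existence of a genuine limit profile $g$ on $P$ with $\lambda^{(m)}_j/(m\ell_0)\to g(\alpha^{(m)}_j/(m\ell_0))$ is itself not automatic for an arbitrary filtration and would need justification.

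The paper's own proof does not actually establish uniqueness either: it proves convexity and the linear lower bound (hence existence of a minimizer and boundedness of the minimizing set), and in the test-configuration case that the minimizing set — being the face of a rational polyhedral function — contains a rational point. That is all that is used later (boundedness in Proposition~\ref{prop-limJT}, existence of a rational minimizer in Lemma~\ref{lem-uni2poly}), so you should drop the uniqueness claim from your argument rather than try to repair it; what your proof should deliver, and what the rest of your write-up does correctly deliver, is precisely the estimate~\eqref{eq-JNAproper} and the rationality statement.
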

\begin{proof}
Assume that $m$ is sufficiently divisible such that $m\ell_0L$ is globally generated. Let 
\begin{equation}
\lambda^{(m)}_1\ge \lambda^{(m)}_2\ge \cdots\ge \lambda^{(m)}_{N_m}
\end{equation}
be the successive minima of $\mcF R_m$. Then we have 
\begin{eqnarray}
\bfJ^\NA(\mcF_\xi)&=&\Lam^\NA(\mcF_\xi)-\bfE^\NA(\mcF_\xi)  \quad \quad (\text{see } \eqref{eq-lamaxF}-\eqref{eq-JNAF}) \nonumber \\
&=&\sup_m \max_j \frac{\lambda^{(m)}_j+\la \alpha^{(m)}_j, \xi\ra }{m\ell_0}-\bfE^\NA(\mcF) \quad (\text{by \eqref{eq-EFtwist}}) \label{eq-JNAconvexF}\\
&\ge&\max_j \frac{\la\alpha^{(m)}_j, \xi\ra}{m\ell_0}-(e_-+\bfE^\NA(\cF)). \label{eq-JNAproperF}
\end{eqnarray}
The second identity used \eqref{eq-EFxiCW} and Proposition \ref{BHJvol}. The last inequality is because by definition \ref{defn-gdfiltr} $\mcF$ is linearly bounded from below: $\lambda^{(m)}_j\ge m\ell_0 e_-$.
From the expression \eqref{eq-JNAconvexF} it is clear that $\xi\mapsto \bfJ^\NA(\mcF_\xi)=:\bfj(\xi)$ is a convex function in $\xi\in N_\bR$. We will show it is a proper function.
Let ${\bf P}\subset M_\bR$ be closed convex hull of the set:
\begin{equation}
\left\{\frac{\alpha^{(m)}_j}{m \ell_0};\quad j=1,\dots, N_m, m\in \bZ_{\ge 0} \right\}.
\end{equation}
The following measure is supported on ${\bf P}$.
\begin{equation}
\DHM_\bT=\lim_{m\rightarrow+\infty} \frac{1}{N_m} \sum_{m}\delta_{\frac{\alpha^{(m)}_j}{m\ell_0}}.
\end{equation}
By \cite[Proposition 6.4]{BHJ17} (see also \cite[Proposition 2.1]{Bri87} and \cite{Oko96}), $P$ is a rational polytope and $\DHM_\bT$ is absolutely continuous with respect to the Lebesgue measure.
The Chow weight of $\xi$ is then given by:
\begin{equation}
\chw_{L}(\xi)=\lim_{m\rightarrow+\infty}\frac{1}{N_m}\sum_{m}\frac{\la \alpha^{(m)}_j, \xi\ra}{m\ell_0}=\int_{\bf P} \la y, \xi\ra \DHM_\bT=\vol({\bf P}) \cdot \la {\rm bc}_\bT, \xi\ra,
\end{equation}
where ${\rm bc}_\bT$ is the barycenter of $\DHM_\bT$.

If  $\chw \equiv 0$ on $\mathfrak{t}$, then ${\rm bc}_\bT=0$. This implies that $0$ is in the interior of ${\bf P}$. If $\Delta$ denotes the standard simplex, then there exists $\theta>0$ such that $\theta\Delta\subset {\bf P}$. So for any $\epsilon>0$ and $k=1,\dots, n$, there exist $m=m(\epsilon)\gg 1$ and $\alpha^{(m)}_{j^{\pm}_k}$, such that 
\begin{equation}
\left|\frac{\alpha^{(m)}_{j^+_k}}{m\ell_0}-\theta {\bf e}_k\right|\le \epsilon, \quad \left|\frac{\alpha^{(m)}_{j^-_k}}{m\ell_0}+\theta {\bf e}_k\right|\le \epsilon.
\end{equation}
So we get the inequality:
\begin{equation}\label{eq-JNAproper2}
\left\la \frac{\alpha^{(m)}_{j^{\pm}_k}}{m\ell_0}, \xi\right\ra\ge \theta |\xi_k|-\epsilon |\xi|, \text{ for all } k.
\end{equation}
Combining this with \eqref{eq-JNAproperF}, we indeed get the properness of $\bfj(\xi)$:
\begin{equation}
\bfj(\xi)\ge \left(\frac{\theta}{\sqrt{n}}-\epsilon\right)|\xi|
\end{equation}
Now assume $\mcF=\mcF_{(\mcZ, \ell_0 \mcL)}$. When $m$ is sufficiently divisible such that $m\ell_0 \mcL$ is globally generated, we have the identity:
\begin{eqnarray}
\bfJ^\NA(\mcZ_\xi,\mcL_\xi)&=&\Lam^\NA(\mcZ_\xi, \mcL_{\xi})-\bfE^\NA(\mcZ_\xi, \mcL_\xi)  \nonumber \\
&=&\max_j \frac{\lambda^{(m)}_j+\la \alpha^{(m)}_j, \xi\ra }{m\ell_0}-\bfE^\NA(\mcZ, \mcL) \label{eq-JNAconvexTC}.
\end{eqnarray}
We see that in this case $\bfj$ is a rationally piecewisely linear, convex and proper function on $N_\bR$. So it obtains a minimum at some $\xi\in N_\bQ$.

\end{proof}

\begin{prop}\label{prop-limJT}
Assume $\chw_L(\xi)\equiv 0$ on $N_\bR$. Let $\cF$ be a filtration and $(\ccZ_m, \ccL_m)$ be the $m$-th approximating test configurations of $\mcF$ in Definition \ref{defn-ckTCm}. Then we have:
\begin{equation}
\limsup_{m\rightarrow+\infty}\bfJ^\NA_\bT(\ccZ_{m}, \ccL_{m})=\bfJ^\NA_\bT(\mcF).
\end{equation}

\end{prop}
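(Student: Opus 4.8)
The plan is to turn the statement into an interchange of an infimum with a limit for a family of convex functions on $N_\bR$. Write $\bfj_m(\xi):=\bfJ^\NA\big((\ccZ_m)_\xi,(\ccL_m)_\xi\big)$ and $\bfj(\xi):=\bfJ^\NA\big((\mcF_v)_\xi\big)$ for $\xi\in N_\bR$ (the values at irrational $\xi$ being the ones defined in Section \ref{sec-TCtwist}), so that $\bfJ^\NA_\bT(\ccZ_m,\ccL_m)=\inf_{N_\bR}\bfj_m$ and $\bfJ^\NA_\bT(\mcF)=\inf_{N_\bR}\bfj$. I would prove: (a) for every fixed $\xi$ one has $\bfj_m(\xi)\to\bfj(\xi)$; and (b) the $\bfj_m$ and $\bfj$ are convex on $N_\bR$ and there are $C_1>0$, $C_2$ with $\bfj_m(\xi)\ge C_1|\xi|-C_2$ for all $m$. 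Granting (a) and (b), the conclusion follows by a soft argument, and in fact gives the full limit, not just the $\limsup$.

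For (a) the point is that twisting commutes with the passage to the $m$-th approximant at the level of non-Archimedean metrics. Combining $\phi^{\mcF_v}_m=\phi_{(\ccZ_m,\ccL_m)}$ (see \eqref{eq-phiFTC}), the twist identity \eqref{eq-phitwist2} for test-configuration metrics, and the twist identity \eqref{eq-phimFtwist} for filtration metrics, one gets for every $w\in\rVal(X)^\bT$
\[
\phi_{((\ccZ_m)_\xi,(\ccL_m)_\xi)}(w)=\phi_{(\ccZ_m,\ccL_m)}(w_\xi)+\theta_\xi(w)=\phi^{\mcF_v}_m(w_\xi)+\theta_\xi(w)=\phi^{(\mcF_v)_\xi}_m(w),
\]
so $\bfj_m(\xi)=\bfJ^\NA\big(\phi^{(\mcF_v)_\xi}_m\big)$. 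By Proposition \ref{prop-Fvxi}, $(\mcF_v)_\xi$ is the shift $\mcF_{v_\xi}(-\theta_\xi(v))$ of the valuation filtration of $v_\xi$, and $v_\xi\in\rVal(X)$ since $A_{(X,D)}(v_\xi)=A_{(X,D)}(v)+\theta_\xi(v)<\infty$. A shift adds the same constant to $\phi^{\mcF'}_m$, $\bfE^\NA(\mcF')$ and $\Lam^\NA(\mcF')$, hence leaves $\bfJ^\NA$ unchanged; therefore $\bfj_m(\xi)=\bfJ^\NA\big(\ccZ^{\mcF_{v_\xi}}_m,\ccL^{\mcF_{v_\xi}}_m\big)\to\bfJ^\NA(\mcF_{v_\xi})=\bfJ^\NA\big((\mcF_v)_\xi\big)=\bfj(\xi)$ by Proposition \ref{prop-JccXconv} applied to the valuation $v_\xi$.

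For (b), convexity of $\bfj_m$ (in fact rational piecewise linearity) comes from \eqref{eq-JNAconvexTC} applied to $(\ccZ_m,\ell_0\ccL_m)$ and convexity of $\bfj$ from \eqref{eq-JNAconvexF}; Lemma \ref{lem-JNAproper} gives moreover that $\bfj$ is coercive with a unique minimizer $\xi_\ast$. The coercivity constant $C_1$ produced in the proof of Lemma \ref{lem-JNAproper} depends only on the fixed moment polytope of the $\bT$-action on $(X,L)$ — the hypothesis $\chw_L\equiv0$ pins the barycenter of $\DHM_\bT$ at the origin — so applying that lemma to the filtration $\mcF_{(\ccZ_m,\ell_0\ccL_m)}$ and using that $\bfE^\NA(\ccZ_m,\ccL_m)\to S_L(v)$ stays bounded produces the uniform bound. (To avoid tracking the lower linear-bound exponent of $\mcF_{(\ccZ_m,\ell_0\ccL_m)}$, one may instead argue that (a) together with convexity forces locally uniform convergence on compacta, so $\bfj_m$ is bounded below by a fixed positive constant on a large sphere $\{|\xi|=R\}$ for all $m\gg1$, while $\bfj_m(0)=\bfJ^\NA(\ccZ_m,\ccL_m)$ stays bounded, and convexity along the rays emanating from $0$ then yields a linear lower bound uniform in $m$.)

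Finally, for ``$\le$'' I would plug in the minimizer: $\inf_{N_\bR}\bfj_m\le\bfj_m(\xi_\ast)\to\bfj(\xi_\ast)=\inf_{N_\bR}\bfj$, hence $\limsup_m\bfJ^\NA_\bT(\ccZ_m,\ccL_m)\le\bfJ^\NA_\bT(\mcF)$; and for ``$\ge$'' I would pick $\xi_m$ with $\bfj_m(\xi_m)\le\inf_{N_\bR}\bfj_m+m^{-1}$, note that $\inf_{N_\bR}\bfj_m\le\bfj_m(0)$ is bounded so (b) bounds $|\xi_m|$, pass to a convergent subsequence $\xi_{m_k}\to\xi_\infty$, and use locally uniform convergence to get $\bfj_{m_k}(\xi_{m_k})\to\bfj(\xi_\infty)\ge\inf_{N_\bR}\bfj$, hence $\liminf_m\bfJ^\NA_\bT(\ccZ_m,\ccL_m)\ge\bfJ^\NA_\bT(\mcF)$; combining, the limit exists and equals $\bfJ^\NA_\bT(\mcF)$. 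The hard part will be (a) — correctly matching the twist of the $m$-th approximating test configuration of $v$ with the $m$-th approximant of the twisted valuation $v_\xi$, which is precisely the role of the identities \eqref{eq-phiFTC}, \eqref{eq-phitwist2}, \eqref{eq-phimFtwist} and of Proposition \ref{prop-Fvxi} — together with making the coercivity in (b) uniform in $m$; the interchange of $\inf$ and $\lim$ itself is routine.
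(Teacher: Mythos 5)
Your proof is correct and follows essentially the same route as the paper: establish pointwise convergence $\bfj_m(\xi)\to\bfj(\xi)$ via the twist identities \eqref{eq-phiFTC}, \eqref{eq-phimFtwist}, Proposition \ref{prop-Fvxi} (plus shift-invariance of $\bfJ^\NA$) and Proposition \ref{prop-JccXconv}; then deduce uniform coercivity of the convex functions $\bfj_m,\bfj$ from Lemma \ref{lem-JNAproper}; then interchange $\inf$ and $\lim$. The only (harmless) variation is at the interchange step, where you invoke the standard fact that pointwise convergence of finite convex functions is locally uniform, whereas the paper achieves uniform convergence on the compact set $\Xi_{C_3}$ via Dini's theorem along the monotone subsequence $m_p=k^p$; your version is marginally cleaner since it gives the full limit without passing to a subsequence, and your explicit treatment of the shift $(\mcF_v)_\xi=\mcF_{v_\xi}(-\theta_\xi(v))$ makes the application of Proposition \ref{prop-JccXconv} to $\mcF_\xi$ more airtight than the paper's terse statement.
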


\begin{proof}
By definition, we need to prove that:
\begin{equation}
{\bf I}:=\limsup_{m\rightarrow+\infty}\inf_{\xi\in N_\bR} \bfJ^\NA(\ccZ_{m,\xi}, \ccL_{m,\xi})=\inf_{\xi\in N_\bR} \bfJ^\NA(\mcF_\xi)=:{\bf II}.
\end{equation}
We first claim that for any $\xi\in N_\bR$:
\begin{equation}\label{eq-Jmxiconv}
\lim_{m\rightarrow+\infty} \bfJ^\NA(\ccZ_{m,\xi}, \ccL_{m,\xi})=\bfJ^\NA(\mcF_\xi).
\end{equation}
Indeed, by \eqref{eq-phimFtwist} we know $\phi^{\mcF_\xi}_m=\phi^{\mcF}_{m,\xi}$. 
On the other hand, by definition (see \eqref{eq-phiFTC})
$\phi^{\mcF}_{m,\xi}=\phi_{(\ccZ_m, \ccL_m),\xi}$. So we get:
\begin{equation}
\bfJ^\NA(\ccZ_{m,\xi}, \ccL_{m,\xi})=\bfJ^\NA\left(\phi_{(\ccZ_m, \ccL_m),\xi}\right)=\bfJ^\NA(\phi^{\mcF}_{m,\xi})=\bfJ^\NA(\phi^{\mcF_\xi}_m).
\end{equation}
So \eqref{eq-Jmxiconv} follows from \eqref{eq-limJccZm}. \eqref{eq-Jmxiconv} easily implies that ${\bf I}\le {\bf II}$, since for any $\xi\in N_\bR$, we then have:
\begin{equation}
\limsup_{m\rightarrow+\infty} \inf_{\xi'\in N_\bR}\bfJ^\NA(\ccZ_{m,\xi'}, \ccL_{m,\xi'})\le \lim_{m\rightarrow+\infty} \bfJ^\NA(\ccZ_{m,\xi}, \ccL_{m,\xi})=\bfJ^\NA(\mcF_\xi).
\end{equation}
We only need to prove ${\bf II}\le {\bf I}$. 

For simplicity of notations, set:
\begin{eqnarray*}
\bfj_m(\xi)&:=&\bfJ^\NA(\ccZ_{m,\xi}, \ccL_{m,\xi})=\Lam^\NA(\ccZ_{m,\xi}, \ccL_{m,\xi})-\bfE^\NA(\ccZ_{m,\xi}, \ccL_{m,\xi})\\
&=&\Lam^\NA(\ccZ_{m,\xi}, \ccL_{m,\xi})-\bfE^\NA(\ccZ_{m}, \ccL_{m})=:\mathfrak{f}_m(\xi)+\mathfrak{g}_m.\\
\bfj(\xi)&:=&\bfJ^\NA(\mcF_\xi)=\Lam^\NA(\cF_\xi)-\bfE^\NA(\cF_\xi)=\Lam^\NA(\cF_\xi)-\bfE^\NA(\cF)=:\mathfrak{f}(\xi)+\mathfrak{g}.
\end{eqnarray*}
Here we used \eqref{eq-EYBtwist}, \eqref{eq-EFtwist} and the assumption that $\Fut(\xi)=-\chw_L(\xi)=0$ \Blue{(see \eqref{eq-CWFut})} to see that $\mathfrak{g}_m$ and $\mathfrak{g}$ are constant functions on $N_\bR$. 

By \eqref{eq-limEccZm}, we know that $\lim_{m\rightarrow+\infty}\mathfrak{g}_m=\mathfrak{g}$.
By \eqref{eq-JNAproper} from Lemma \ref{lem-JNAproper}, we know that $\bfj_m(\xi)$ and $\bfj(\xi)$ satisfies the uniform properness estimates: there exist $C_1, C_2>0$ such that for any $\xi\in N_\bR$, we have
\begin{equation}
\bfj_m(\xi)\ge C_1|\xi|-C_2, \quad \bfj(\xi)\ge C_1|\xi|-C_2.
\end{equation}
So the infimum $\inf_{\xi\in N_\bR} \bfj_m(\xi)$ and $\inf_{\xi\in N_\bR}\bfj(\xi)$ are obtained on a uniformly bounded set of $\xi$, which we denote by $\Xi_{C_3}=\{\xi\in N_\bR; |\xi|\le C_3\}$. 

Moreover, by the proof of Lemma \ref{lem-JNAproper}, $\mathfrak{f}_m$ and $\mathfrak{f}$ are all convex functions on $\bR^r$. So $\mathfrak{f}_m$ are $\mathfrak{f}$ are continuous on $\bR^r$. Choose $m_p:=k^{p}, p\in \bN$ for some $k\in \bN$ sufficiently divisible. By Remark \ref{rem-supadd}, for any $\xi\in N_\bR$, $\mathfrak{f}_{m_p}(\xi)=\frac{\lambda^{(\max)}_{k^p}(\cF_{\xi})}{k^p}$ is increasing. So $\{\mathfrak{f}_{m_p}\}_{p\in \bN}$ is an increasing sequence of continuous functions converging pointwise to $\mathfrak{f}$ as $p\rightarrow+\infty$. By Dini's theorem, $\mathfrak{f}_{m_p}$ converges to $\mathfrak{f}$ uniformly on the compact set $\Xi_{C_3}$. 
As mentioned above, $\mathfrak{g}_m$ and $\mathfrak{g}$ do not depend on $\xi$ (because of the vanishing $\Fut\equiv 0$ on $N_\bR$). 
So we know that, as $p\rightarrow+\infty$, $\mathfrak{j}_{m_p}$ converges to $\mathfrak{j}$ uniformly over $\Xi_{C_3}$. So the convergence of infimum (over $\Xi_{C_3}$) also follows.
\end{proof}

%
\begin{rem}
One can also use the uniform estimates from \cite[section 5]{BlJ20} to get uniform convergence over $\Xi_{C_3}$ in the above proof.
\end{rem}

\begin{defn}[{see \cite{His16b, His19}}]\label{defn-Hisamoto}
$(Z, Q)$ is $\bG$-uniformly Ding-stable if there exists $\gamma>0$ such that for any $\bG$-equivariant test configuration $(\mcZ, \mcL)$ of $(Z, Q, L)$:
\begin{equation}
\bfD^\NA(\mcZ, \mcL)\ge \gamma\cdot \bfJ^\NA_\bT(\mcZ, \mcL).
\end{equation}
If one replaces $\bfD^\NA$ by $\Blue{\bfM^\NA}$, then one gets the definition of $\bG$-uniform K-stability.
\end{defn}
We should compare this notion with the following well-known definition:
\begin{defn}\label{defn-Gequivstability}
\begin{enumerate}
\item
$(Z, Q)$ is $\bG$-equivariantly uniformly Ding-stable if there exists $\gamma>0$ such that for any $\bG$-equivariant test configuration $(\mcZ, \mcL)$ of $(Z, Q, L)$:
\begin{equation}
\bfD^\NA(\mcZ, \mcL)\ge \gamma \cdot \bfJ(\mcZ, \mcL). 
\end{equation}
\item $(Z, Q)$ is $\bG$-equivariantly Ding-semistable if for any $\bG$-equivariant test configuration $(\mcZ,  \mcL)$ of $(Z, L)$:
\begin{equation}\label{eq-Dsemipoly}
\bfD^\NA(\mcZ,  \mcL)\ge 0. 
\end{equation}
$(Z, Q)$ is $\bG$-equivariantly Ding-polystable if $(Z, Q)$ is $\bG$-equivariantly Ding-semistable, and the identity in \eqref{eq-Dsemipoly} holds only when $(\mcZ, \mcQ, \mcL)$ is a product test configuration.
\end{enumerate}
If one replaces $\bfD^\NA$ by $\Blue{\bfM^\NA}$ in the above definition, one gets the definition of $\bG$-equivariantly uniform K-stability and so on. 
\end{defn}
\begin{rem}\label{rem-Gequivstability}
By running $\bC^*\times\bG$-equivariant MMP,
it is clear from the proof of \Blue{\cite[Theorem 1.4]{Fuj19a} (see also \cite{BBJ18})} (based on MMP process in \cite{LX14}) that $\bG$-equivariantly uniform Ding-stability is equivalent to $\bG$-equivariantly uniform K-stability. The same remark applies to $\bG$-equivariant semistability or polystability. \Blue{We refer to Appendix \ref{app-MMP} for more details. }
\end{rem}
Because $\bfJ^\NA_\bT\ge 0$, we see that $\bG$-uniform Ding-stability implies $\bG$-equivariant Ding-semistability, which in particular implies $\Fut_{(Z, Q)}\equiv 0$ on $\mathfrak{t}$.
In fact, $(Z, Q)$ \Blue{being $\bG$-uniformly Ding-stable} implies that $(Z, Q)$ is $\bG$-equivariantly Ding-polystable:
\begin{lem}[\cite{His16a, His16b}]\label{lem-uni2poly}
Assume $\chw_L\equiv 0$ on $\mathfrak{t}$. For any $\bT$-equivariant test configuration $(\mcZ, \mcQ, \mcL)$ of $(Z, Q, L)$,  $\bfJ_\bT(\mcZ, \mcL)=0$ if and only if $(\mcZ, \mcL)$ is a product test configuration generated by some $\eta\in N_\bZ$. As a consequence, 
if $(Z, Q)$ is $\bG$-uniformly Ding-stable, then for any $\bG$-equivariant test configuration $(\mcZ, \mcQ, \mcL)$ of $(Z, Q)$, $\bfD^\NA(\mcZ, \mcQ, \mcL)\ge 0$ and $=0$ if and only if $(\mcZ, \mcQ, \mcL)$ is a product test configuration generated by some $\eta\in N_\bZ$.
\end{lem}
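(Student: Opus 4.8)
The plan is to first prove the equivalence
$\bfJ^\NA_\bT(\mcZ,\mcL)=0 \Longleftrightarrow (\mcZ,\mcL)$ is a product test configuration generated by some $\eta\in N_\bZ$, and then to read off the statement about $\bfD^\NA$ from Definition \ref{defn-Hisamoto}, the twist identities \eqref{eq-EYBtwist}--\eqref{eq-DYBtwist}, and the fact (recalled just above the lemma) that $\bG$-uniform Ding-stability forces $\Fut_{(Z,Q)}\equiv 0$ on $\mathfrak{t}=N_\bR$. For the easy direction of the equivalence, one first notes $\bfJ^\NA_\bT(\mcZ,\mcL)\ge 0$: for $\xi\in N_\bQ$ one has $\bfJ^\NA(\mcZ_\xi,\mcL_\xi)=b^{-1}\bfJ^\NA((\mcZ_\xi,\mcQ_\xi,\mcL_\xi)^{(b)})\ge 0$ since the base change is a normal test configuration, and the general $\xi\in N_\bR$ case follows by continuity of $\xi\mapsto\bfJ^\NA(\mcZ_\xi,\mcL_\xi)$, which is visible from \eqref{eq-JNAconvexTC}. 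If $(\mcZ,\mcL)$ is a product test configuration generated by $\eta\in N_\bZ$, then its $(-\eta)$-twist is the trivial test configuration, which has $\bfJ^\NA=0$, whence $\bfJ^\NA_\bT(\mcZ,\mcL)=0$.

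For the converse, suppose $\bfJ^\NA_\bT(\mcZ,\mcL)=0$. By Lemma \ref{lem-JNAproper} applied to $\mcF=\mcF_{(\mcZ,\ell_0\mcL)}$, the function $\xi\mapsto\bfJ^\NA(\mcZ_\xi,\mcL_\xi)$ is convex, proper and rationally piecewise linear, so the infimum $0$ is attained at some $\xi_0\in N_\bQ$. Choosing $b\in\bN$ with $b\xi_0\in N_\bZ$, the base change $\mcW:=(\mcZ_{\xi_0},\mcQ_{\xi_0},\mcL_{\xi_0})^{(b)}$ is a normal test configuration of $(Z,Q,L)$ with $\bfJ^\NA(\mcW)=b\cdot\bfJ^\NA(\mcZ_{\xi_0},\mcL_{\xi_0})=0$. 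I then invoke the known fact (see \cite{BHJ17}, used in \cite{His16a,His16b}) that a normal test configuration with vanishing non-Archimedean $\bfJ$-energy is, up to a shift of the polarization, equivalent to the trivial test configuration; combined with the standing hypothesis $\chw_L\equiv 0$, equivalently $\Fut_{(Z,Q)}\equiv 0$ on $N_\bR$, which via \eqref{eq-EYBtwist} pins the polarization shift to zero, this gives that $\mcW$ is equivalent to the trivial test configuration. Descending through the finite base change $\mcW\to\mcZ$, whose restriction over $\bC^*$ is the quotient by $\mu_b\subset\bC^*$ acting on the $\bC$-factor, one gets that $(\mcZ,\mcL)$ itself is obtained from the trivial test configuration by the $(-\xi_0)$-twist, i.e. it is the product test configuration generated by $-\xi_0$; since its generating vector field generates an honest $\bC^*$-action this forces $-\xi_0\in N_\bZ$, and one takes $\eta=-\xi_0$.

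For the consequence, let $(\mcZ,\mcQ,\mcL)$ be a $\bG$-equivariant test configuration; since $\bT\subset\bG$ it is in particular $\bT$-equivariant, so if $(Z,Q)$ is $\bG$-uniformly Ding-stable with slope constant $\gamma>0$ then $\bfD^\NA(\mcZ,\mcQ,\mcL)\ge\gamma\cdot\bfJ^\NA_\bT(\mcZ,\mcL)\ge 0$. If equality holds, then $\bfJ^\NA_\bT(\mcZ,\mcL)=0$, so by the first part $(\mcZ,\mcL)$ is a product test configuration generated by some $\eta\in N_\bZ$. Conversely, for such a product, applying \eqref{eq-EYBtwist}--\eqref{eq-DYBtwist} with base point the trivial test configuration, which has $\bfE^\NA=\bL^\NA=\bfD^\NA=0$, gives $\bfD^\NA(\mcZ,\mcQ,\mcL)=\pm\Fut_{(Z,Q)}(\eta)$, and this vanishes since $\bG$-uniform Ding-stability forces $\Fut_{(Z,Q)}\equiv 0$ on $\mathfrak{t}$; hence $\bfD^\NA(\mcZ,\mcQ,\mcL)=0$.

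I expect the main obstacle to be the descent step in the converse direction: passing rigorously from ``$\mcW$ is, up to polarization shift, the trivial test configuration'' back to ``$(\mcZ,\mcL)$ is a product test configuration generated by an integral vector $\eta$'', while keeping track of the $\bC^*\times\bT$-equivariance and the polarization through the normalized base change and verifying that the residual shift is killed by the Futaki-vanishing hypothesis. Everything else is bookkeeping with the twist identities and scaling formulas already established in this section.
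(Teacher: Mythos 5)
Your proof follows essentially the same route as the paper's: invoke Lemma \ref{lem-JNAproper} to get a rational minimizer $\xi_0\in N_\bQ$, pass to the base change $(\mcZ_{\xi_0},\mcL_{\xi_0})^{(b)}$, apply the BHJ characterization of normal test configurations with vanishing $\bfJ^\NA$, and descend back to $(\mcZ,\mcL)$. You fill in several steps the paper states without elaboration (the nonnegativity of $\bfJ^\NA_\bT$, the easy direction, the descent through the base change, the rationality vs.\ integrality of $\xi_0$, and the $\bfD^\NA$ consequence), which is useful, and the overall argument is correct.

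One small point of friction: you invoke $\Fut_{(Z,Q)}\equiv 0$ via \eqref{eq-EYBtwist} ``to pin the polarization shift to zero.'' That step is not really how the hypothesis $\chw_L\equiv 0$ enters. The claimed conclusion is only that $(\mcZ,\mcL)$ is a product test configuration generated by some $\eta\in N_\bZ$, which is a statement about the total space $\mcZ\cong Z\times\bC$ together with the $\bC^*$-action; a shift $\mcL\mapsto\mcL+c\mcZ_0$ does not affect this, nor does it change $\bfJ^\NA$ or $\bfJ^\NA_\bT$. The role of $\chw_L\equiv 0$ here is upstream, in Lemma \ref{lem-JNAproper}: it is what guarantees that $\xi\mapsto\bfJ^\NA(\mcZ_\xi,\mcL_\xi)$ is proper so that a (rational) minimizer exists at all, and of course it reappears in the $\bfD^\NA$ consequence via the twist identity \eqref{eq-DYBtwist}. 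So the argument closes, but the attribution of where Futaki vanishing is used should be adjusted.
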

\begin{proof}
By Lemma \ref{lem-JNAproper}, $\xi\mapsto J(\mcZ_\xi, \mcL_\xi)$ has a unique minimizer $\xi\in N_\bQ$. Assume $b\in \bN$ satisfies $b \xi\in N_\bZ$. Then we consider the test configuration $(\mcZ_\xi, \mcL_\xi)^{(b)}$ defined in \eqref{eq-TCxib}. Then 
\begin{equation}
\bfJ^\NA_\bT(\mcZ, \mcL)=\bfJ^\NA(\mcZ_\xi, \mcL_\xi)=b^{-1} \bfJ^\NA((\mcZ_\xi, \mcL_\xi)^{(b)})=0.
\end{equation} 
By \cite{BHJ17}, this implies $(\mcZ_\xi, \mcL_\xi)^{(b)}$ is a product test configuration which implies $(\mcZ, \mcL)$ itself is a product test configuration.
\end{proof}

\begin{prop}\label{prop-uDtestFv}
Assume that $(Z, Q)$ is $\bG$-uniformly Ding-stable. Then for any $v\in (\Blue{\Zdiv})^\bG$ with its associated filtration $\mcF_v$, we have:
\begin{equation}\label{eq-uDtestFv}
\bfD^\NA(\mcF_v)\ge \gamma \cdot \inf_{\xi\in N_\bR} \bfJ^\NA(\mcF_{v_\xi})=\gamma \cdot \bfJ^\NA_\bT(\mcF_v).
\end{equation}
\end{prop}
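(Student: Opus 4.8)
The plan is to deduce the inequality from $\bG$-uniform Ding stability applied to the $m$-th approximating test configurations $(\ccZ^{\mcF_v}_m,\ccQ^{\mcF_v}_m,\ccL^{\mcF_v}_m)$ of $\mcF_v$ (Definition \ref{defn-ckTCm}) and then let $m\to\infty$, using the two convergence statements already established. First I would dispose of the asserted identity on the right-hand side. By Proposition \ref{prop-Fvxi}, identity \eqref{eq-Fvxi}, for every $\xi\in N_\bR$ the filtration $\mcF_{v_\xi}$ is the constant shift $(\mcF_v)_\xi(\theta_\xi(v))$ of $(\mcF_v)_\xi$ in the sense of \eqref{eq-Fshift}; since a shift $\mcF\mapsto\mcF(\theta)$ translates each successive minimum by $m\ell_0\theta$, it translates both $\Lam^\NA$ and $\bfE^\NA$ by $\theta$, hence leaves $\bfJ^\NA=\Lam^\NA-\bfE^\NA$ unchanged. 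Therefore $\bfJ^\NA(\mcF_{v_\xi})=\bfJ^\NA((\mcF_v)_\xi)$, and taking the infimum over $\xi\in N_\bR$ gives $\inf_{\xi}\bfJ^\NA(\mcF_{v_\xi})=\bfJ^\NA_\bT(\mcF_v)$, which is the equality part of the statement.

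Next I would record that the approximating test configurations are $\bG$-equivariant. Since $v\in\rVal(X)^\bG$, each $\mcF_v^xR_m$ is a $\bG$-invariant subspace, hence the ideals $I^{\mcF_v}_{m,x}$ and the flag ideal $\mcI^{\mcF_v(e_+)}_m\subseteq\mcO_{Z_\bC}$ are $\bG$-invariant; as $\bG$ acts trivially on the base $\bC$, its action commutes there with the fibrewise $\bC^*$-action. Consequently, for $m$ sufficiently divisible the normalized blow-up $(\ccZ_m,\ccQ_m,\ccL_m):=(\ccZ^{\mcF_v}_m,\ccQ^{\mcF_v}_m,\ccL^{\mcF_v}_m)$ carries a $\bG$-action making it a $\bG$-equivariant test configuration of $(Z,Q,L)$ in the sense of Definition \ref{defn-GequiTC}. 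Also, $\bG$-uniform Ding stability forces $\Fut_{(Z,Q)}\equiv 0$ on $\mathfrak{t}$ (equivalently $\chw_L\equiv 0$ on $N_\bR$), since $\bfJ^\NA_\bT\ge 0$, so the hypotheses of Proposition \ref{prop-limJT} and Lemma \ref{lem-JNAproper} are in force.

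Then I would run the limit argument. For each sufficiently divisible $m$, Definition \ref{defn-Hisamoto} gives
\begin{equation*}
\bfD^\NA(\ccZ_m,\ccQ_m,\ccL_m)\ge\gamma\cdot\bfJ^\NA_\bT(\ccZ_m,\ccL_m).
\end{equation*}
Taking $\limsup_{m\to\infty}$: the right-hand side tends to $\gamma\cdot\bfJ^\NA_\bT(\mcF_v)$ by Proposition \ref{prop-limJT}, while the left-hand side converges to $\bfD^\NA(\mcF_v)$ exactly as in the proof of Fujita's theorem recalled above — namely $\bL^\NA(\ccZ_m,\ccQ_m,\ccL_m)\to\bL^\NA(\mcF_v)$ by the definition of the log canonical threshold of a graded sequence of ideals, $\bfE^\NA(\ccZ_m,\ccL_m)\to\bfE^\NA(\mcF_v)$ by \eqref{eq-limEccZm}, and $\bfD^\NA=-\bfE^\NA+\bL^\NA$. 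Combining these two facts yields $\bfD^\NA(\mcF_v)\ge\gamma\cdot\bfJ^\NA_\bT(\mcF_v)$, which together with the first paragraph is the claim.

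All the substantive inputs — Proposition \ref{prop-limJT}, the convergence $\bfD^\NA(\ccZ_m)\to\bfD^\NA(\mcF_v)$, and the uniform properness of $\xi\mapsto\bfJ^\NA(\mcF_\xi)$ from Lemma \ref{lem-JNAproper} that is used inside Proposition \ref{prop-limJT} — are already available, so I do not expect a serious obstacle. The only care needed is bookkeeping: that all limits are taken along the $m$ sufficiently divisible for $(\ccZ_m,\ccQ_m,\ccL_m)$ to be defined, that it is $\limsup$ rather than $\liminf$ which appears in Proposition \ref{prop-limJT}, and that the $\bG$-action genuinely descends through blow-up along a $\bG$-invariant ideal followed by normalization (which is standard, both operations being functorial for automorphisms preserving the ideal).
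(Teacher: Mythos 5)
Your proof is correct and follows essentially the same route as the paper: apply $\bG$-uniform Ding stability to the $m$-th approximating test configurations and pass to the limit via Proposition \ref{prop-limJT} and the $\bfD^\NA$-convergence from (the proof of) Fujita's theorem. You supply several details the paper leaves implicit — the identification $\bfJ^\NA(\mcF_{v_\xi})=\bfJ^\NA((\mcF_v)_\xi)$ via the shift relation from \eqref{eq-Fvxi}, the $\bG$-equivariance of $(\ccZ_m,\ccQ_m,\ccL_m)$, and that $\chw_L\equiv 0$ is available so Proposition \ref{prop-limJT} applies — all of which are correct and needed.
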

\begin{proof}
Let $(\ccZ_m, \ccQ_m, \ccL_m)$ be $m$-th approximating test configurations for $\mcF_v$ in Definition \ref{defn-ckTCm}. By $\bG$-uniform Ding-stability, we have:

\begin{equation}
\bfD^\NA(\ccZ_m, \ccL_m) \ge \gamma \cdot \inf_{\xi\in N_\bR} \bfJ^\NA(\ccZ_{m,\xi}, \ccL_{m,\xi}).
\end{equation}
Letting $m\rightarrow+\infty$ and using Proposition \ref{prop-JccXconv} and Proposition \ref{prop-limJT}, we get the conclusion.

\end{proof}

\begin{cor}\label{cor-Gunival}
If $(Z, Q)$ is $\bG$-uniformly Ding-stable, then there exists $\gamma'>0$ such that for any $v\in (\Blue{\Zdiv})^\bG$,
\begin{equation}
\sup_{\xi\in N_\bR}\left[A_{(Z, Q)}(v_\xi)-(1+\gamma')\cdot S_{L}(v_\xi)\right]\ge 0.
\end{equation}
\end{cor}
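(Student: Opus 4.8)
The plan is to deduce the corollary from Proposition \ref{prop-uDtestFv}, combined with the comparison inequalities \eqref{eq-betavsbfD} and \eqref{eq-SvsJ} and the translation behaviour of $\beta$ recorded in \eqref{eq-betavxi}. In other words, nothing genuinely new is needed; the point is to assemble the pieces in the right order and pass infima correctly.

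First I would note that $\bG$-uniform Ding-stability implies $\Fut_{(Z,Q)}\equiv 0$ on $\mathfrak{t}\cong N_\bR$ (as observed just before Lemma \ref{lem-uni2poly}, since $\bfJ^\NA_\bT\ge 0$). Hence, by \eqref{eq-betavxi}, for a fixed $v\in\rVal(Z)^\bG$ the function $\xi\mapsto\beta(v_\xi)=A_{(Z,Q)}(v_\xi)-S_L(v_\xi)$ is constant, equal to $\beta(v)$. Therefore $A_{(Z,Q)}(v_\xi)-(1+\gamma')S_L(v_\xi)=\beta(v)-\gamma'\,S_L(v_\xi)$ for every $\xi$, and taking the supremum over $\xi\in N_\bR$ reduces the claim to the inequality $\beta(v)\ge\gamma'\,\inf_{\xi\in N_\bR}S_L(v_\xi)$ for a suitable $\gamma'>0$.

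To establish this, I would chain the following: $\beta(v)\ge\bfD^\NA(\mcF_v)$ by \eqref{eq-betavsbfD}; $\bfD^\NA(\mcF_v)\ge\gamma\,\bfJ^\NA_\bT(\mcF_v)=\gamma\,\inf_{\xi}\bfJ^\NA(\mcF_{v_\xi})$ by Proposition \ref{prop-uDtestFv}; and, for each $\xi$, $\bfJ^\NA(\mcF_{v_\xi})\ge\frac1n S_L(v_\xi)$ by the lower bound in \eqref{eq-SvsJ} applied to the valuation $v_\xi$ (which lies in $\rVal(Z)$ because $A_{(Z,Q)}(v_\xi)=A_{(Z,Q)}(v)+\theta_\xi(v)<\infty$). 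Passing to the infimum over $\xi$ in the last inequality and combining the three displays gives $\beta(v)\ge\frac{\gamma}{n}\,\inf_{\xi}S_L(v_\xi)$, so one may take $\gamma'=\gamma/n$.

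I do not expect a substantial obstacle here: the argument is a short assembly of results already in place. The only points deserving a line of care are verifying that the twisted valuations $v_\xi$ remain in $\rVal(Z)$, so that \eqref{eq-SvsJ} is applicable to them, and observing that the order of infima causes no loss, i.e. $\inf_\xi\bfJ^\NA(\mcF_{v_\xi})\ge\frac1n\inf_\xi S_L(v_\xi)$ is immediate from the pointwise bound.
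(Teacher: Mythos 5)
Your proof is correct and uses exactly the same set of ingredients as the paper's argument (Proposition \ref{prop-uDtestFv}, the lower bound $\bfJ^\NA(\mcF_{v_\xi})\ge\tfrac1n S_L(v_\xi)$ from \eqref{eq-SvsJ}, and $\beta(v)\ge\bfD^\NA(\mcF_v)$ from \eqref{eq-betavsbfD}, which is equivalent to the paper's $\bL^\NA(\phi_{v_\xi})\le A(v_\xi)$); the only cosmetic difference is that you use the constancy $\beta(v_\xi)=\beta(v)$ to pull that term out of the supremum before estimating, whereas the paper carries the chain of inequalities inside the supremum. As a side note, your derivation makes it plain that the right constant is $\gamma'=\gamma/n$, which appears to be mistyped as $\gamma'=1+\gamma n^{-1}$ in the paper.
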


\begin{proof}
By the paragraph above Lemma \ref{lem-uni2poly}, we know that $\Fut_{(Z,Q)}\equiv 0$ on $\mathfrak{t}$. 
Because $\bfD^\NA(\mcF_\xi)=\bfD^\NA(\mcF)$, we see the inequality \eqref{eq-uDtestFv} in Proposition \ref{prop-uDtestFv} can be re-written as:
\begin{equation}\label{eq-uDsup1}
\sup_{\xi\in N_\bR}\left[-\bfE^\NA(\mcF_{v_\xi})+\bL^\NA(\mcF_{v_\xi})-\gamma\cdot  \bfJ^\NA(\mcF_{v_\xi})\right]\ge 0.
\end{equation}
On the other hand, recall that \eqref{eq-ENAFv}
\begin{equation}
\bfE^\NA(\mcF_{v_\xi})=S(v_\xi).
\end{equation}
Moreover by \eqref{eq-SvsJ} (see \cite[Proposition 2.1]{Fuj19b}), we know that:
\begin{equation}
\frac{1}{n} S(v_\xi) \le \bfJ^\NA(\mcF_{v_\xi})=\Lam^\NA(\mcF_{v_\xi})-S(v_\xi)\le n S(v_\xi).
\end{equation}
So, with $\Blue{\gamma'=\gamma n^{-1}}$, \eqref{eq-uDsup1} implies the inequality:
\begin{eqnarray*}
\sup_{\xi\in N_\bR}\left[\bfL^\NA(\cF_{v_\xi})-(1+\gamma') S_{L}(v_\xi)\right]\ge 0.
\end{eqnarray*}
Set $\phi_{v_\xi}=\phi^{\mcF_{v_\xi}}$ (see Definition \ref{defn-phiF}).
By \eqref{eq-weakL} and $\phi_{v_\xi}(v_\xi)=0$ (see Lemma \ref{lem-phivv=0}), we then have:
\begin{eqnarray}\label{eq-weak2}
\bL^\NA(\cF_{v_\xi})\le \inf_w (A(w)+\phi_{v_\xi}(w))\le A(v_\xi).
\end{eqnarray}
As a consequence, we get the inequality:
\begin{equation}
\sup_{\xi\in N_\bR}\left[ A(v_\xi)-(1+\gamma') S_{L}(v_\xi)\right]\ge 0.
\end{equation}
\end{proof}

\begin{cor}
If $(Z, Q)$ is $\bG$-uniformly Ding-stable, then for any $\bG$-invariant divisorial valuation $v\in \Blue{\Zdiv}$, we have $\beta(v)\ge 0$ and $\beta(v)=0$ if and only if $v=\wt_\xi$ for some $\xi\in N_\bR$.
\end{cor}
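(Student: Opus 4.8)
The plan is to squeeze $\beta(v)$ between $\bfD^\NA(\mcF_v)$ and $0$ via the chain
\[
\beta(v)\;\ge\;\bfD^\NA(\mcF_v)\;\ge\;\gamma\cdot\bfJ^\NA_\bT(\mcF_v)\;\ge\;0,
\]
where the first inequality is \eqref{eq-betavsbfD}, the second is Proposition~\ref{prop-uDtestFv} (applicable since $v\in\rVal(Z)^\bG$), and the third holds because $\bfJ^\NA_\bT(\mcF_v)$ is an infimum of the nonnegative numbers $\bfJ^\NA(\mcF_{v_\xi})$. This yields $\beta(v)\ge 0$ at once. (Alternatively one could combine Corollary~\ref{cor-Gunival} with the twist identity \eqref{eq-betavxi} and the vanishing $\Fut_{(Z,Q)}\equiv 0$ on $N_\bR$ — guaranteed by $\bG$-uniform Ding-stability, cf. the paragraph before Lemma~\ref{lem-uni2poly} — which rewrites the left side of Corollary~\ref{cor-Gunival} as $\beta(v)-\gamma'\inf_{\xi\in N_\bR}S_L(v_\xi)$.)

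For the easy direction of the equality statement, suppose $v=\wt_\xi$ for some $\xi\in N_\bR$. I would write $\wt_\xi=(\wt_0)_\xi=(\triv)_\xi$ and note that, since $\bT=C(\bG)_0$ is central in $\bG$, every $g\in\bG$ preserves the $\bT$-weight decomposition of $\bC(Z)$; as $\wt_\xi$ depends only on $\bT$-weights it is $\bG$-invariant, and it lies in $\rVal(Z)$ because the $N_\bR$-action preserves $\rVal(Z)^\bT$. Then $A_{(Z,Q)}(\triv)=0$ and $S_L(\triv)=0$, so \eqref{eq-betavxi} together with $\Fut_{(Z,Q)}\equiv 0$ on $N_\bR$ give $\beta(\wt_\xi)=\beta(\triv)-\Fut_{(Z,Q)}(\xi)=0$.

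For the hard direction, assume $\beta(v)=0$. Then the displayed chain forces $\bfJ^\NA_\bT(\mcF_v)=0$. Since $\Fut_{(Z,Q)}\equiv 0$ implies $\chw_L\equiv 0$ on $\mathfrak{t}$, Lemma~\ref{lem-JNAproper} applies to the $\bT$-equivariant filtration $\mcF_v$: the convex proper function $\xi\mapsto\bfJ^\NA\bigl((\mcF_v)_\xi\bigr)$ attains its minimum $0$ at a unique $\xi_0\in N_\bR$. By \eqref{eq-Fvxi} the filtration $\mcF_{v_{\xi_0}}$ differs from $(\mcF_v)_{\xi_0}$ only by a shift, which leaves $\bfJ^\NA$ unchanged, so $\bfJ^\NA(\mcF_{v_{\xi_0}})=0$. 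Now $v_{\xi_0}\in\rVal(Z)^\bT$, so \eqref{eq-SvsJ} forces $S_L(v_{\xi_0})=0$, and then \eqref{eq-ENAFv} together with $\bfJ^\NA=\Lam^\NA-\bfE^\NA$ gives $\Lam^\NA(\mcF_{v_{\xi_0}})=0$, i.e. $\lambda_{\max}(\mcF_{v_{\xi_0}})=0$. Equivalently, $v_{\xi_0}(s)=0$ for every section $s$ of every power of $L$ (these values being automatically $\ge 0$).

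It then remains to conclude that $v_{\xi_0}$ is the trivial valuation: if not, its center on $Z$ would be a proper closed subvariety $W$, and for $m\gg 0$ with $m\ell_0 L$ very ample there exists a nonzero section of $m\ell_0 L$ vanishing along $W$, contradicting $v_{\xi_0}(s)=0$. Hence $v_{\xi_0}=\triv$ and therefore $v=(v_{\xi_0})_{-\xi_0}=(\triv)_{-\xi_0}=\wt_{-\xi_0}$, as claimed. I expect the one point needing genuine care to be exactly this last passage — deducing triviality of $v_{\xi_0}$ from $\bfJ^\NA(\mcF_{v_{\xi_0}})=0$ — since it relies on the twist formulas for $\mcF_v$ being compatible with the identities \eqref{eq-SvsJ} and \eqref{eq-ENAFv} for the honest valuation $v_{\xi_0}$, and on the elementary fact that a valuation with $\lambda_{\max}(\mcF_w)=0$ must be trivial; everything else is formal bookkeeping with the twist identities already established.
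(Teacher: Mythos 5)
Your proof is correct. The easy direction ($v=\wt_\xi\Rightarrow\beta(v)=0$) matches the paper's: both compute $\beta(\wt_\xi)=\beta(\triv)-\Fut_{(Z,Q)}(\xi)=0$ from \eqref{eq-betavxi}. For the hard direction, however, you take a genuinely different route. The paper argues contrapositively and directly from Corollary~\ref{cor-Gunival}: if $v\neq\wt_\xi$ for every $\xi$, then each twist $v_\xi$ is a nontrivial valuation, so $S_L(v_\xi)>0$; the $\xi$ supplied by Corollary~\ref{cor-Gunival} gives $\beta(v_\xi)\ge\gamma' S_L(v_\xi)>0$, and since $\Fut_{(Z,Q)}\equiv 0$ on $N_\bR$ the identity \eqref{eq-betavxi} transports this to $\beta(v)>0$. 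You instead squeeze the chain $\beta(v)\ge\bfD^\NA(\mcF_v)\ge\gamma\,\bfJ^\NA_\bT(\mcF_v)\ge 0$ to force $\bfJ^\NA_\bT(\mcF_v)=0$, invoke Lemma~\ref{lem-JNAproper} (applicable because $\Fut\equiv 0$ gives $\chw_L\equiv 0$) to locate a minimizer $\xi_0$ with $\bfJ^\NA((\mcF_v)_{\xi_0})=0$, pass from $(\mcF_v)_{\xi_0}$ to $\mcF_{v_{\xi_0}}$ by shift-invariance of $\bfJ^\NA$, and conclude from \eqref{eq-SvsJ} that $S_L(v_{\xi_0})=0$, hence that $v_{\xi_0}$ is trivial. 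Both routes are sound; the paper's is shorter and bypasses Lemma~\ref{lem-JNAproper} entirely, whereas yours exhibits exactly where equality in the chain becomes tight and identifies the optimal twist, which is arguably more transparent. A minor streamlining: once $S_L(v_{\xi_0})=0$ is in hand, triviality follows directly (for a nontrivial valuation, $\vol(L-tv)\to L^{\cdot n}$ as $t\to 0^{+}$, so $\int_0^\infty\vol(L-tv)\,dt>0$), so the detour through $\Lam^\NA$ and $\lambda_{\max}$ is not needed.
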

\begin{proof}
Fix any $v\in \Blue{\Zdiv}$, if $v=\wt_\xi$ for some $\xi\in N_\bR$, then $\beta(v)=\beta(\wt_\xi)=\Fut_{(Z,Q)}(\xi)=0$. Otherwise, there exists $\xi\in N_\bR$ such that
\begin{equation}
0\le A_{(Z,Q)}(v_\xi)-(1+\gamma') S_{(Z,Q)}(v_\xi)=\beta(v_\xi)-\gamma' S_{L}(v_\xi),
\end{equation}
which implies $\beta(v_\xi)\ge \gamma' S_{L}(v_\xi)>0$.
\end{proof}
\begin{rem}
We expect the converse to this result is also true.
\end{rem}

\section{Proof of Theorem \ref{thm-Gvalcriterion}}

\begin{proof}
Because $\bfM^\NA\ge \bfD^\NA$, so (2) implies  (1). 

We have pointed out in the paragraph below Remark \ref{rem-Gequivstability} that $\bG$-uniform Ding-stability implies that $\Fut_{(X,D)}\equiv 0$ on $\mathfrak{t}$. So (2) implying (3) follows from Corollary \ref{cor-Gunival}.

We prove (1) or (4) implies (2). Take any test configuration $(\mcX, \mcD, \mcL)$ for $(X, D, -(K_X+D))$. Because $\bG$ is \Blue{a} connected linear algebraic group, as explained in \Blue{Appendix} \ref{app-MMP} we can use $\bG$-equivariant MMP as in \cite{LX14} to get a special test configuration $(\mcX^s, \mcL^s)$. Moreover, there exists $d\in \bZ_{>0}$ such that, for any $\epsilon \in [0,1)$ and any $\xi\in N_\bR$, we have:
\begin{equation}\label{eq-D-Jxidecrease}
d(\bfD^\NA(\mcX, \mcL)-\epsilon \cdot \bfJ^\NA(\mcX_\xi, \mcL_\xi))\ge \bfD^\NA(\mcX^s,  \mcL^s)-\epsilon \cdot \bfJ^\NA(\mcX^s_{\xi}, \mcL^s_{\xi}).
\end{equation}
To verify the claim, first assume that $\xi\in N_\bZ$. \Blue{When $\xi=0$,  K. Fujita in \cite{Fuj19a} calculated  the variation of $\bfD^\NA-\epsilon\bfJ^\NA$ under the relative MMP process studied in \cite{LX14} by using intersection formulas on compactification of test configurations. We will explain this calculation in detail in Appendix \ref{app-MMP}. Recall that the compactification depends on the isomorphism between $(\mcX, \mcD, \mcL)\times_\bC\bC^*$ and $((X, D)\times\bC^*, p_1^*L)$ which is induced by the $\bC^*$-action. Assume that for the untwisted test configuration, the $\bC^*$-action is generated by $\eta$. 
Then to get the natural compactification of the $\xi$-twisted test configuration, we need to use the $\bC^*$-action generated by $\eta+\xi$ instead of the $\bC^*$-action generated by $\eta$. 
With this modification, \eqref{eq-D-Jxidecrease} follows directly from the same calculation as explained in Appendix \ref{app-MMP}. }

When $\xi\in N_\bQ$, choose $b\in \bN$ such that $b\xi\in N_\bZ$. Then by the discussion at the end of section \ref{sec-TCtwist} the $\xi$-twisted test configuration 
$(\mcX_\xi,  \mcL_\xi) $
is up to base change, or rescaling in terms of non-Archimedean \Blue{metrics}, equivalent to 
\begin{equation}
(\mcX, \mcL)^{(b)}:=\left(\text{normalization of } (\mcX, \mcL)\times_{\bC, {\rm m}_d}\bC, b \eta+b \xi\right)
\end{equation}
Then we can calculate the variation of intersection numbers on $(\mcX, \mcD, \mcL)^{(b)}$ to get inequality \eqref{eq-D-Jxidecrease}. For more details, we refer to section \ref{app-MMP}.

By continuity, \eqref{eq-D-Jxidecrease} holds for all $\xi\in N_\bR$.
Taking \Blue{the} supremum for $\xi$ ranging from $N_\bR$, we get:
\begin{equation}
\bfD^\NA(\mcX, \mcL)-\epsilon \bfJ^\NA_\bT(\mcX, \mcL)\ge \bfD^\NA(\mcX^s, \mcL^s)-\epsilon \bfJ^\NA_\bT(\mcX^s, \mcL^s).
\end{equation}
\Blue{So to check $\bG$-uniform Ding-stability, it suffices to check it on special test configurations. 
On the other hand, for a special test configuration, we have:
\begin{eqnarray*}
\bfM^\NA(\mcX^s_\xi,  \mcL^s_\xi)&=&
\bfD^\NA(\mcX^s_\xi, \mcL^s_\xi)=\bfD^\NA(\mcX^s, \mcL^s). 
\end{eqnarray*}
The second identity follows from \eqref{eq-DFtwist}.
So we get (1) implies (2), and also (4) implies (2). 
}

\Blue{
Now we prove that (3) implies (4). For a $\bG$-equivariant special test configuration $(\mcX^s, \mcL^s)$, if $v=\left.\ord(\mcX^s_0)\right|_{\bC(X)}$ denotes the $\bG$-invariant divisorial valuation obtained by restricting $\ord_{\mcX^s_0}$ to the functional sub-field $\bC(X)\subset \bC(X\times\bC)$ (see \cite[Lemma 4.1]{BHJ17}), then we claim to have the identities:
\begin{eqnarray}
\bfD^\NA(\mcX^s, \mcL^s)&=&A_{(X,D)}(v)-S_{L}(v)\nonumber \\
&=&A_{(X,D)}(v_\xi)-S_{L}(v_\xi)=\bfD^\NA(\mcX^s_\xi, \mcL^s_\xi).\label{eq-D2beta}
\end{eqnarray}
If this is true, then by using \eqref{eq-SvsJ}, we have:
\begin{align*}
A(v_\xi)-\delta_\bG \cdot S_L(v_\xi)&=\bfD^\NA(\mcX^s_\xi, \mcL^s_\xi)-(\delta_\bG-1)S_L(v_\xi)\\
&\le \bfD^\NA(\mcX^s, \mcL^s)-\frac{\delta_\bG-1}{n} \bfJ^\NA(\mcX^s_\xi, \mcL^s_\xi).
\end{align*}
So we get that (3) implies (4). To explain the claimed identities, note that, since the second identity in \eqref{eq-D2beta} follows from \eqref{eq-betavxi} and the last follows from \eqref{eq-DYBtwist}, 
we just need to explain the first identity using calculations in \cite{Fuj18, Li17} as follows.
First by \cite[Lemma 6.9]{Li17} we have
$\cF^t_{(\mcX^s, \mcL^s)}R_m=\cF_{v}^{t+m\ell_0 A(v)}R_m$ (with $A(v)=A_{(X,D)}(v))$ which implies 
\begin{equation}\label{eq-volshift}
\vol(\cF^{(t)}_{(\mcX^s, \mcL^s)})=\vol(\cF_{v}^{(t+\ell_0 A(v))}).
\end{equation} 
We can then calculate:
\begin{align*}
\bfD^\NA(\mcX^s, \mcL^s)&=-\bfE^\NA({\mcX^s, \mcL^s})=-\frac{1}{\ell_0^{n+1} L^{\cdot n}}\int_\bR (x-\ell_0 A(v))(-d\vol(\cF^{(x)}_{v}))\\
&=A(v)-\frac{1}{\ell_0^{n+1} L^{\cdot n}}\int_0^{+\infty}\vol(\cF^{(x)}_{v})dx=A(v)-S_L(v).
\end{align*}
The first identity follows from \eqref{eq-CMstc}. The second identity uses \eqref{eq-ENAcF}, \eqref{eq-volshift} and a change of variable. The third identity is obtained by using integration by parts. The last identify uses the expression of $S_L(v)$ in \eqref{eq-defSLv}. 
}
\end{proof}

\subsection{An alternative proof of the valuative criterion for $\bG$-uniform Ding stability}\label{sec-alternative}
Here we provide a proof of the valuative criterion for $\bG$-uniform Ding-stability without using the MMP program. In other words, we prove the equivalence of $(2)\Leftrightarrow (3)$ in Theorem \ref{thm-Gvalcriterion}. Since (2) implies (3) by Corollary \ref{cor-Gunival}, we just need to show the other direction. Our argument is motivated by Boucksom-Jonsson's work in \cite{BoJ18b} and will also be used in the proof of existence result in section \ref{sec-step4}. We first claim that it suffices to prove the following inequality: for any \Blue{non-Archimedean potential} $\phi=\phi_{(\mcX,\mcL)}$ coming from $\bG$-equivariant semi-ample test configuration,
\begin{equation}\label{eq-SL2bfE}
\inf_{v\in (\Xdiv)^{\bG}} (S_L(v)+\phi(v))\ge \inf_{v\in X^{\rm div}}(S_L(v)+\phi(v))\ge  \bfE^\NA(\phi).
\end{equation}
Assume that this is true. By the expression of $\bfL^\NA$ in \eqref{eq-LNAphi} (by using \eqref{eq-FeqTC}), we can find $v_k\in (\Xdiv)^\bG$ such that 
\begin{equation}\label{eq-bfLapprox}
\bfL^\NA(\phi)\le A_X(v_k)+\phi(v_k)\le \bfL^\NA(\phi)+\frac{1}{k}.
\end{equation}
Assuming the valuative condition, there exists $\xi_k\in N_\bR$ such that $A(v_{k,-\xi_k})\ge \delta S_L(v_{k,-\xi_k})$. By density, we can assume $\xi_k\in N_\bQ$ so that up to base change, $\phi_{\xi_k}$ is equivalent to a semi-ample test configuration.
So we can apply inequality \eqref{eq-SL2bfE} to $\phi_{\xi_k}$ to get:
\begin{eqnarray}\label{eq-Aphivk}
A(v_k)+\phi(v_k)&=&A(v_{k,-\xi_k})+\phi_{\xi_k}(v_{k,-\xi_k})\ge \delta S_L(v_{k,-\xi_k})+\phi_{\xi_k}(v_{k,-\xi_k})\nonumber \\
&\ge&\delta \bfE^\NA(\delta^{-1}\phi_{\xi_k}).
\end{eqnarray}
The first equality uses the identity \eqref{eq-A+phivxi}.
Combining \eqref{eq-bfLapprox}-\eqref{eq-Aphivk}, we get:
\begin{eqnarray*}
\bfD^\NA(\phi)&=&-\bfE^\NA(\phi)+\bfL^\NA(\phi)\ge-\bfE^\NA(\phi)+ A(v_k)+\phi(v_k)-\frac{1}{k}\\
&\ge& -\bfE^\NA(\phi_{\xi_k})+\delta \bfE^\NA(\delta^{-1}\phi_{\xi_k})-\frac{1}{k}=\delta \bfJ^\NA(\delta^{-1}\phi_{\xi_k})-\bfJ^\NA(\phi_{\xi_k})-\frac{1}{k}\\
&\ge&(1-\delta^{-1/n})\bfJ^\NA(\phi_{\xi_k})-\frac{1}{k}\ge (1-\delta^{-1/n})\bfJ^\NA_\bT(\phi)-\frac{1}{k},
\end{eqnarray*}
where we used the non-Archimedean version of Ding's inequality (\cite[Lemma 6.17]{BoJ18a}).

Coming back to the proof of the inequality \eqref{eq-SL2bfE}, we give a different proof \Blue{from that} in \cite{BoJ18b} (without using the Legendre duality by viewing $S_L(v)$ as $\bfE^*(\delta_v)$). 
To do this, we use the explicit description of the filtration $\cF=\cF_{(\mcX, \mcL)}$ associated to a normal semi-ample test configuration in \eqref{eq-filTCvan} and compare it with the filtration $\cF_v$ induced by any divisorial valuation $v$.  Using similar notation as there, we set
$\mcL=\rho^*L_\bC+D$ with $D=\sum_E a_E E$ where $E$ runs over irreducible components of the central fibre $X_0=\sum_E b_E E$.
By \eqref{eq-phicZcL}, we know that, for any fixed divisorial valuation $v$ over $X$:
\begin{equation}\label{eq-defphiva}
\phi(v)=\phi_{(\mcX, \mcL)}(v)=G(v)(D)=\sum_E a_E G(v)(E)=:a.
\end{equation}
Now for any $s\in \cF^x R_m$, $r(\ord_E)(s)+m\ell_0\cdot \ord_E(D)\ge x b_E$ by \eqref{eq-filTCvan}. This implies that:
\begin{eqnarray*}
v(s)&=&G(v)(\bar{s})=\sum_E G(v)(E) \ord_E(\bar{s})\ge \sum_E G(v)(E)(x b_E-m\ell_0 a_E)\\
&=&x G(v)(t)-m\ell_0\sum_E a_E G(v)(E)=x-m\ell_0 a.
\end{eqnarray*}
So we get $\cF^xR_m\subseteq \cF^{x-m\ell_0 a}_{v}R_m$. As a consequence, $\vol(\cF^{(t)})\le \vol(\cF_v^{(t-\ell_0a)})$. Because $\lambda_{\min}=\inf \{t\in \bR; \vol(\cF^{(t)})<\ell_0^n V\}$  by \cite[Corollary 5.4]{BHJ17} and $\vol(\cF_v^{(t)})<V\ell_0^n$ when $t>0$ (by Izumi's inequality, see \cite[5]{Li17}), we easily get the inequality $\lambda_{\min}\le \ell_0 a$.
We can then calculate as follows to get the wanted inequality:
\begin{eqnarray*}
\bfE^\NA(\phi)&=&-\frac{1}{V\ell_0^n}\int_\bR \frac{x}{\ell_0} d\vol(\cF^{(x)})=\frac{\lambda_{\min}}{\ell_0}+\frac{1}{V\ell_0^{n+1}}\int^{+\infty}_{\lambda_{\min}}\vol(\cF^{(x)})dx\\
&\le& a+\frac{1}{V \ell_0^{n+1}}\int_{\ell_0 a}^{+\infty}\vol(\cF^{(x)})dx\le a+\frac{1}{V\ell_0^{n+1}}\int_{\ell_0 a}^{+\infty}\vol(\cF_v^{(x-\ell_0a)})dx\\
&=&a+\frac{1}{V\ell_0^{n+1}}\int_0^{+\infty}\vol(\cF^{(t)}_v) dt=\phi(v)+S_L(v).
\end{eqnarray*}
The second identity is obtained by integration by parts (which holds even if $d\vol(\cF^{(x)})$ has a Dirac mass at $\lambda_{\max}(\cF)$). The second inequality is because the function $y\mapsto y+\frac{1}{V\ell_0^{n+1}}\int_y^{\infty}\vol(\cF^{(x)})dx$ is an increasing function of $y\in \bR$ (which is constant for $y\le \lambda_{\min}(\cF)$).
The last identity uses \eqref{eq-defphiva} and \eqref{eq-defSLv}.

\section{Proof of Theorem \ref{thm-YTD} and Theorem \ref{thm-AutYTD}}

The necessary part of Theorem \ref{thm-AutYTD} immediately follows from Theorem \ref{thm-torusproper} and Theorem \ref{thm-JTslope}. So the rest of this paper is devoted to proving Theorem \ref{thm-YTD}.  

By Theorem \ref{thm-analytic}, we just need to prove the Mabuchi functional is $\bG$-coercive. The general strategy is of course motivated by \cite{BBJ18} and our previous work \cite{LTW19}. However due to the various complications caused by twists, we need to re-work out the argument more carefully. One main point is that we only work with $\bK$-invariant (in particular $(S^1)^r$-invariant) metrics. The proof \Blue{proceeds by contradiction.} So we assume that the Mabuchi energy is not $\bG$-coercive.

\subsection{Step 1: Construct a destabilizing geodesic ray}\label{sec-step1}

In this step, assuming that the Mabuchi energy $\bfM=\bfM_{(X, D)}$ \Blue{is} not $\bG$-coercive,  we will find a destabilizing geodesic ray $\Phi=(\vphi(s))$ in $\mcE^1(X, L)^\bK$ such that
\begin{enumerate}[(1)]
\item
The Ding energy is decreasing along $\Phi=\{\vphi(s)\}$ for any $\xi\in N_\bR$:
\begin{equation*}
\bfD'^\infty(\Phi)=\lim_{s\rightarrow+\infty}\frac{\bfD(\vphi(s))}{\Blue{s}}\le 0.
\end{equation*} 
\item we have the normalization:
\begin{equation}
\sup(\vphi(s)-\psi)=0, \quad \bfE_{\psi}(\vphi(s))=- s.
\end{equation}
\item \Blue{For any $\xi\in N_\bR$, set $\Phi_\xi=\{\vphi_\xi(s)\}_{s\in [0, +\infty)}$ to be the geodesic ray defined by:
\begin{equation}\label{eq-Phixi}
\vphi_\xi(s):=\sigma_\xi(s)^*\vphi(s).
\end{equation}} 
Then $\Phi_\xi$ satisfies:
\begin{equation}
\bfJ'^\infty(\Phi_\xi)=\lim_{s\rightarrow+\infty}\frac{\bfJ_\psi(\sigma_\xi(s)^*\vphi(s))}{\Blue{s}}>0.
\end{equation}
\end{enumerate}

The argument for constructing such a destabilising geodesic ray is similar to the arguments in \cite{BBJ15, BBJ18}. All energy functionals in this step are on $X$ itself as defined in \eqref{eq-Ephi}-\eqref{eq-Mphi}.
 Assume the Mabuchi energy ${\bf M}={\bf M}_{\psi}$ (see \eqref{eq-Mphi}) is not $\bG$-coercive. Then choosing $\gamma_j\rightarrow 0$, we can pick a sequence $\{u_j\}_{j=1}^\infty\in (\mcE^1)^\bK=(\mcE^1(X, \omega))^\bK$ as in \cite[4.1]{LTW19} (in the $\bK$-invariant setting) such that $\vphi_j={\psi}+u_j$ satisfies:
\begin{equation}\label{eq-DMnotproper}
\bfD(\vphi_j)\le \bfM(\vphi_j)\le \gamma_j \bfJ_\bT(\vphi_j)-j\le \gamma_j \bfJ(\sigma^*\vphi_j)-j
\end{equation}
for any $\sigma\in \bT$. Because of Lemma \ref{lem-infobt}, we can assume that:
\begin{equation}\label{eq-vphijinf}
\Blue{\bfJ(\vphi_j)}=\inf_{\sigma\in \bT}\bfJ(\sigma^*\vphi_j).
\end{equation}
We normalize $\vphi_j$ such that $\sup (\vphi_j-{\psi})=0$. 
\Blue{Now the inequality (see \eqref{eq-Mge-nJ}) $\bfM(\vphi_j) \ge C-n \bfJ(\vphi_j)$ together with \eqref{eq-DMnotproper} 
implies the estimate:
\begin{eqnarray*}
\bfJ(\vphi_j)\ge \frac{j+C}{n+\gamma_j}\rightarrow+\infty \quad \text{ as } j\rightarrow+\infty,
\end{eqnarray*} 
and hence $\bfE(\vphi_j)\le \sup(\vphi_j-\psi)-\bfJ(\vphi_j)\le  -\bfJ(\vphi_j)\rightarrow-\infty$.}

Denote $V=(2\pi)^n (-K_X-D)^{\cdot n}$. By the work \cite{Dar17, DNG18}, we can connect ${\psi}$ and $\vphi_j$ by a unit speed geodesic segment $\{\vphi_j(s)\}\in \PSH_\bd(X, L)^{\bK}$ 
parametrized so that $S_j:=-\bfE(\vphi_j)\rightarrow +\infty$ with $s\in [0, S_j]$. In particular, $\bfE(\vphi_j(s))=-s$.
Then $\psi$ and $\vphi_{j,\xi}:=\sigma_\xi(S_j)^*\vphi_j$ is connected by the geodesic segment $\sigma_\xi(s)^*\vphi_j$, $s\in [0, S_j]$.

By \cite[4.1.2]{LTW19} (see also \cite{BB17, BDL17}), $\bfM$ satisfies the following convexity property along our geodesic segments:
\begin{eqnarray}\label{eq-Dvphisub}
\bfD(\vphi_j(s))&\le&\bfM(\vphi_j(s))\le \frac{S_j-s}{S_j}\bfM({\psi})+\frac{s}{S_j}\bfM(\vphi_j)\nonumber\\
&\le& C+\frac{s}{S_j}(\gamma_j \bfJ(\vphi_j)-j)\le C+\frac{s}{S_j}\gamma_j \bfJ(\vphi_j).
\end{eqnarray}

Using $\bfM\ge \bfH-n \bfJ$ \Blue{(see \eqref{eq-Mge-nJ})}, we get $\bfH(\vphi_j(s))\le (\gamma_j+n)s+C$. So for any fixed $S>0$ and $s\le S$, the metrics $e^{-\vphi_j(s)}$ lie in the set:
\[
\mathcal{K}_S:=\{\Blue{\vphi\in \cE^1}; \sup(\vphi-{\psi})= 0 \text{ and } \bfH(\vphi)\le (\gamma_j+n)S+C \}.
\]
This is a compact subset of the metric space $(\mcE^1, d_1)$ by Theorem \ref{thm-BBEGZ} from \cite{BBEGZ}. So, by arguing as in \cite{BBJ15}, after passing to a subsequence, $\{\vphi_j(s)\}$ converges to a geodesic ray
$\Phi:=\{\vphi(s)\}_{s\ge 0}$ in $(\mcE^1)^\bK$, uniformly for each compact time interval. Moreover $\{\vphi(s)\}_{s\in \bR}$ satisfies 
\begin{equation}\label{eq-Dvphidec}
\lim_{s\rightarrow+\infty} \frac{\bfD(\vphi(s))}{\Blue{s}}\le 0, \quad \sup(\vphi(s)-\psi)=0, \quad \bfE(\vphi(s))=-s.
\end{equation}
For any $\xi\in N_\bR$, by \eqref{eq-vphijinf} we have
\begin{equation}
\bfJ(\sigma_\xi(S_j)^* \vphi_{j})\ge \bfJ(\vphi_j)=-\bfE(\vphi_j)+O(1)=S_j+O(1) \rightarrow+\infty.
\end{equation} 
The second identity uses Lemma \ref{lem-Hartogs}.
Moreover $\{\sigma_\xi(s)^*\vphi_j(s)\}_{s\in [0, S_j]}$ converges strongly to the geodesic ray $\Phi_\xi:=\{\sigma_\xi(s)^*\vphi(s)\}_{s\ge 0}$. So we get, for any $\xi\in N_\bR$, 
\begin{equation}
\lim_{s\rightarrow+\infty} \bfJ_\psi(\sigma_\xi(s)^*\vphi(s))=+\infty
\end{equation}
This implies that  $\{\sigma_\xi(s)^*\vphi(s)\}$ is a nontrivial geodesic, because (for $\bfE$-normalized potentials) $\bfJ$-energy is comparable to $d_1$-distance which is linear along geodesics (see \cite[(31)]{Dar17}, \cite[Theorem 3.6]{DNG18}). In particular, for any $\xi\in N_\bR$
\begin{equation}\label{eq-J'infpositive}
\bfJ'^\infty(\Phi_\xi):=\lim_{s\rightarrow+\infty} \frac{\bfJ_\psi(\sigma_\xi(s)^*\vphi(s))}{\Blue{s}}>0.
\end{equation}
\begin{prop}[{see \cite[Proposition 1.6]{His16b}}]\label{prop-convex1}
Let $\Phi=\{\vphi(s)\}_{s\in [0, +\infty)}\subset \mcE^1(L)^{(S^1)^r}$ be a geodesic ray. The function 
Let $(s, \xi)\rightarrow \bfJ(\sigma_\xi(s)^*\vphi(s))$ is convex in $(s, \xi)\in [0, +\infty)\times N_\bR$.
\end{prop}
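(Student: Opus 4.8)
The plan is to write $\bfJ=\Lam-\bfE$ and to show that, along the twisted families, the Monge--Amp\`ere energy $\bfE$ contributes an \emph{affine} term in the parameters while the remaining part contributes a \emph{convex} term; the non-trivial point is to do this \emph{jointly} in $(s,\xi)$, where the twist $\sigma_\xi(s)=\exp(s\xi)$ grows with $s$.

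The two structural inputs I would isolate first are the following. \textbf{(i) Twisted rays are geodesics.} For $\xi\in N_\bZ$ the map $\bar\sigma_\xi\colon(x,\tau)\mapsto(\hat\sigma_\xi(\tau)\cdot x,\tau)$ is a $\bC^*$-equivariant biholomorphism of $X\times\bC^*$, and since $L=-(K_X+D)$ is $\bG$-equivariant it lifts to $p_1^*L$; pulling back the $p_1^*L$-psh potential of the geodesic ray $\Phi$ by $\bar\sigma_\xi$ again solves the homogeneous complex Monge--Amp\`ere equation, and — because each $\vphi(s)$ is $(S^1)^r$-invariant — its circle-invariant slices are exactly $\{\sigma_\xi(s)^*\vphi(s)\}_{s}$. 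Hence $s\mapsto\sigma_\xi(s)^*\vphi(s)$ is again a weak geodesic ray; by density of $N_\bZ$ in $N_\bR$ and continuity the same holds for every $\xi\in N_\bR$, so $s\mapsto\bfE(\sigma_\xi(s)^*\vphi(s))$ is affine for each fixed $\xi$. Combined with the transformation rule $\bfE(\sigma_\xi(s)^*\vphi(s))=\bfE(\vphi(s))-\log|t|^2\cdot\Fut_{(X,D)}(\xi)$ with $s=-\log|t|$ (the integrated form of the identities around \eqref{eq-EYBtwist}, cf.\ the Remark after \eqref{eq-DYBtwist}) and the linearity of $\Fut$ on $N_\bR$, this pins down the $\bfE$-piece. \textbf{(ii) Convexity of $\Lam$ from logarithmic convexity of psh functions.} Since $\Lam_\psi(\chi)=\frac{1}{(2\pi)^nV}\int_X(\chi-\psi)(\sddb\psi)^n$ is a \emph{positive linear} functional of $\chi$, it suffices to know that the twisted family is, pointwise in $x\in X$, a convex function of its parameters; this follows from the elementary fact that a torus-invariant plurisubharmonic function is convex in the $\log|\cdot|$-coordinates, applied on an auxiliary total space $X\times\bC^*_\tau\times(\bC^*)^r_t$ carrying the pull-back of the potential of $\Phi$ under the holomorphic twist $(x,\tau,t)\mapsto(\hat\sigma(t)\cdot x,\tau)$.

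With these in hand the proof assembles as follows: one builds a $p_1^*L$-psh potential on a total space over $X$ whose $(S^1)^{r+1}$-invariant slices are the twisted metrics $\sigma_\xi(s)^*\vphi(s)$, checks that it is psh and that its complex Hessian has rank at most $n+1$ by restricting to the rational complex one-parameter sub-tori $\{\tau=\lambda^{a_0},\ t_k=\lambda^{a_k}\}$ — on each of which the restriction is a finite holomorphic pull-back of the geodesic $\Phi$, hence still a weak geodesic — and then upgrades to the whole base by density of such sub-tori together with local boundedness of the potential. Feeding the $\Lam$-convexity from (ii) and the $\bfE$-affineness from (i) into $\bfJ=\Lam-\bfE$ gives the joint convexity of $(s,\xi)\mapsto\bfJ(\sigma_\xi(s)^*\vphi(s))$. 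The main obstacle I expect is exactly this last bookkeeping: because the twist is by $s\xi$ rather than by $\xi$, and products of logarithms are not pluriharmonic, one cannot realize the family $\{\sigma_\xi(s)^*\vphi(s)\}$ as the slices of a single psh object on a \emph{product} total space, so the joint (as opposed to separate) convexity must be extracted from the one-parameter sub-torus restrictions, where the coupling linearizes. A secondary, routine point is that $\Phi$ is only a weak (bounded, finite-energy) geodesic, so the Monge--Amp\`ere manipulations should be justified in the pluripotential sense, e.g.\ by first approximating $\Phi$ by smooth subgeodesics as in \cite{DNG18,BBJ18} and passing to the limit using the strong continuity of $\bfE$ and $\Lam$.
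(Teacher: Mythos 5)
Your decomposition $\bfJ=\Lam-\bfE$ and the observations that (i) twisted rays are geodesics, so $\bfE$ contributes affinely, and (ii) $\Lam$ is a positive linear functional, so its contribution is controlled by plurisubharmonicity of the twisted potential, both match what the paper's argument does implicitly. What does not work is your resolution of the coupling $s\xi$, and that is the entire content of the proposition.

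The paper does not go through a product total space $X\times\bC^*_\tau\times(\bC^*)^r_t$ at all. Fix $\xi_0,\xi'\in N_\bR$ and consider the holomorphic map
\[
F\colon X\times\bC_{\mathfrak{s}}\times\bC_{\mathfrak{c}}\longrightarrow X\times\bC,\qquad
(x,\mathfrak{s},\mathfrak{c})\longmapsto\bigl(\sigma_{\xi_0}(\mathfrak{s})\,\sigma_{\xi'}(\mathfrak{c}\mathfrak{s})\cdot x,\ \mathfrak{s}\bigr).
\]
The crucial point is the \emph{product} $\mathfrak{c}\mathfrak{s}$ in the argument of the second twist: it is a holomorphic polynomial in the two base parameters, so $F$ is holomorphic and $F^*\Phi$ is $p_1^*L$-psh on $X\times\bC^2$; its image lies in the $(n+1)$-dimensional space $X\times\bC$, so $(\sddb F^*\Phi)^{n+2}=0$ and the $\bfE$-part is pluriharmonic in $(\mathfrak{s},\mathfrak{c})$. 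Using $(S^1)^r$-invariance of $\vphi(s)$, the real slice at $u=d=0$ is precisely $\sigma_{\xi_0+c\xi'}(s)^*\vphi(s)$, and integrating along the fibre yields the convexity of $(s,c)\mapsto\bfJ(\sigma_{\xi_0+c\xi'}(s)^*\vphi(s))$. Since $\xi_0,\xi'$ are arbitrary, one obtains convexity along every line in $\bR\times N_\bR$, hence joint convexity.

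Your substitute — restricting to rational one-parameter sub-tori $\{\tau=\lambda^{a_0},\ t_k=\lambda^{a_k}\}$ of the \emph{decoupled} total space — only produces curves of the form $s\mapsto\sigma_{\vec a/a_0}(s)^*\vphi(s)$, i.e.\ lines in $(s,\xi)$-space that are \emph{parallel to the $s$-axis with $\xi$ held fixed}. Density of $\vec a/a_0\in N_\bQ$ gives you convexity in $s$ for every fixed $\xi$, but that is strictly weaker than joint convexity in $(s,\xi)$. (Allowing translated sub-tori produces arbitrary lines in the $(s,\zeta)$-variables with $\zeta=s\xi$, but the change of variables $(s,\xi)\mapsto(s,s\xi)$ is not affine, so convexity does not transfer: e.g.\ $(s,\zeta)\mapsto|\zeta|$ is convex while $(s,\xi)\mapsto|s\xi|$ is not.) So the "upgrade by density" step in your plan has no way to reach the directions with both $s$ and $\xi$ varying, which is exactly what the proposition asserts. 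The missing idea is that one must build the auxiliary psh object \emph{already in the coupled coordinates}, and the map $F$ with the bilinear argument $\mathfrak{c}\mathfrak{s}$ is precisely the device that makes this possible while remaining holomorphic.
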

\begin{proof}
Choose any $\xi_0, \xi'\in N_\bR$. 
Consider the holomorphic map (see \eqref{eq-sigmaxi}):
\begin{equation}
F: X\times \bC\times \bC \rightarrow X\times\bC , \quad (x, \mathfrak{z}=s+iu, \mathfrak{c}=c+id)\mapsto (\sigma_{\xi_0}(\mathfrak{z}) \sigma_{\xi'} (\mathfrak{c}\mathfrak{z})\cdot x, \mathfrak{z}).
\end{equation}
Then $F^*\Phi$ is a finite energy psh Hermitian metric on $p_1^*L$ where $p_1: X\times\bC\times \bC \rightarrow X$ is the projection. For any $c\in \bR$, denote $\xi_c:=\xi_0+c\xi'$.

Note that, because $\exp(J\xi), \exp(J\xi')\in(S^1)^r$ and $\vphi(s)\in \mcE^1(L)^{(S^1)^r}$, we have:
\begin{eqnarray*}
F^*\Phi&=&(\exp(s \xi_0)\exp(uJ\xi_0))^*\exp((sc-ud) \xi')^*\exp((sd+uc)J\xi')^*\vphi(s)\\
&=&\exp(s\xi_0)^*\exp((sc-ud)\xi')^*\vphi(s).
\end{eqnarray*} 
In particular, $F^*\Phi|_{u=0}$ is the twisted geodesic ray $\sigma_{\xi_0+c\xi'}(s)^*\vphi(s)$. Because $F$ is holomorphic we know that $\sddb F^*\Phi\ge 0$. \Blue{
For simplicity of notation, set $\mathfrak{J}(\mathfrak{z}, \mathfrak{c}, s):=\bfJ(\sigma_{\xi_0+\mathfrak{c}\xi'}(\mathfrak{z})^*\vphi(s))$. Then $\mathfrak{J}$ is a continuous function because $(\mathfrak{z}, \mathfrak{c}, s)\mapsto \sigma_{\xi_0+\mathfrak{c}\xi'}(\mathfrak{z})^*\vphi(s)\in \cE^1$ is continuous (see \cite[Theorem 1.7]{BBJ18}) and $\bfJ=\Lam-\bfE$ is continuous with respect to the strong topology (see Definition \ref{defn-strong}). 
To prove the convexity, we can calculate using integration along the fibre formula:
\begin{align*}
\hskip 2mm &(2\pi)^n L^{\cdot n}\cdot \sddb_{(\mathfrak{z}, \mathfrak{c})} \mathfrak{J}(\mathfrak{z}, \mathfrak{c}, s) \\ 
=&\sddb \left(\int_{X} (F^*\Phi-\psi) (\sddb \psi)^n-\frac{1}{n+1}\int_X (\Phi-\psi)\sum_{k=0}^n (\sddb\Phi)^k\wedge (\sddb\psi)^{n-k}\right)\\
=&\int_X \sddb(F^*\Phi)\wedge (\sddb\psi)^n-(\sddb\psi)^{n+1}\\
&\hskip 4cm -\frac{1}{n+1}\int_X F^*(\sddb\Phi)^{n+1}-(\sddb\psi)^{n+1}\\
=&\int_X F^*(\sddb\Phi)\wedge (\sddb\psi)^n\ge 0
\end{align*}
where we identified $\psi$ with $p_1^*\psi$ and used the vanishing $(\sddb\Phi)^{n+1}=0$ and $(\sddb\psi)^{n+1}=0$ on $X\times\bC\times\bC$.  
As a consequence we easily get $\mathfrak{J}(s,c):=\bfJ(\sigma_{\xi_0+c\xi'}(s)^*\vphi(s))$ is convex. See also \cite[Theorem 3.1]{ACKPZ} for a similar result in a local setting.}

\end{proof}

\begin{prop}
The function $\xi\mapsto \bfJ'^\infty(\Phi_\xi)$ is convex in $\xi\in N_\bR$.
\end{prop}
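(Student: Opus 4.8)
The plan is to deduce this immediately from the joint convexity established in Proposition \ref{prop-convex1}, via the elementary principle that the asymptotic slope of a convex function is again convex. Set
\begin{equation}
F(s,\xi):=\bfJ_\psi(\sigma_\xi(s)^*\vphi(s)),\qquad (s,\xi)\in[0,+\infty)\times N_\bR,
\end{equation}
which is a jointly convex function by Proposition \ref{prop-convex1}.

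First I would record that for each fixed $s>0$ the map $\xi\mapsto F(s,\xi)/(2s)$ is convex on $N_\bR$, being the restriction of the jointly convex function $F$ to an affine slice $\{s\}\times N_\bR$. By \eqref{eq-J'infpositive} the limit $\bfJ'^\infty(\Phi_\xi)=\lim_{s\to+\infty}F(s,\xi)/(2s)$ exists for every $\xi\in N_\bR$; in fact, since $s\mapsto F(s,\xi)$ is convex (another slice of $F$) with $F(0,\xi)=\bfJ_\psi(\vphi(0))$ a constant independent of $\xi$ because $\sigma_\xi(0)=\id$, the difference quotients $s\mapsto (F(s,\xi)-F(0,\xi))/s$ are non-decreasing, so one even has the representation $\bfJ'^\infty(\Phi_\xi)=\sup_{s>0}(F(s,\xi)-F(0,\xi))/(2s)$. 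Either way, $\xi\mapsto\bfJ'^\infty(\Phi_\xi)$ is a pointwise limit (equivalently, a pointwise supremum over $s>0$) of convex functions of $\xi$, hence convex.

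There is essentially no obstacle here: the substantive work was carried out in Proposition \ref{prop-convex1}, and what remains is the standard observation that recession slopes of convex functions inherit convexity. If in addition one wants $\bfJ'^\infty(\Phi_\xi)$ to be finite (hence real-valued) for all $\xi$, this follows from a linear-in-$s$ upper bound for $\bfJ$ along the geodesic ray, obtained exactly as in the proof of Theorem \ref{thm-JTslope} using the quasi-triangle inequality for $\bfI$; but this is not needed for the convexity assertion itself.
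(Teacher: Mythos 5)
Your proof is correct and takes essentially the same route as the paper: both arguments feed the joint convexity established in Proposition \ref{prop-convex1} (specifically, convexity in $\xi$ at each fixed $s>0$, which survives division by $2s$) into the standard fact that a pointwise limit (equivalently, by convexity in $s$ with $\xi$-independent value at $s=0$, a monotone supremum) of convex functions is convex. The paper writes out the explicit convexity inequality $f(s,c_1)\le(1-c_1/c_2)f(s,0)+(c_1/c_2)f(s,c_2)$ before passing to the $s\to\infty$ limit, while you phrase the same step as "pointwise limit of convex functions is convex"; the extra remark on finiteness via the quasi-triangle inequality is a harmless bonus not needed for the convexity claim.
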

\begin{proof}
Using the notations in the proof of the Proposition \ref{prop-convex1}, we consider the convex function $f(s, c):=\bfJ(\sigma_{\xi_0+c\xi'}(s)^*\vphi(s))$. Then for any $0<c_1<c_2$, by convexity we have
\begin{equation}
f(s, c_1)\le (1-\frac{c_1}{c_2}) f(s, 0)+\frac{c_1}{c_2} f(s, c_2).
\end{equation}
Dividing both sides by $s$ and letting $s\rightarrow+\infty$, we get the wanted convexity:
\begin{equation}
\bfJ'^\infty(\Phi_{\xi_0+c_1\xi'})\le (1-\frac{c_1}{c_2})\bfJ'^\infty(\Phi_{\xi_0})+\frac{c_1}{c_2}\bfJ'^\infty(\Phi_{\xi+c_2\xi'}).
\end{equation}
\end{proof}
Because a convex function on $N_\bR\cong \bR^r$ is continuous, it obtains a minimum on compact set. Combing this with \eqref{eq-J'infpositive} we get:
\begin{cor}\label{cor-minchi}
For any $C>0$ there exists $\chi=\chi(C,\Phi)>0$ such that
for any $\xi$ satisfying $|\xi|<C$, $\bfJ'^\infty(\Phi_\xi)\ge \chi>0$.
\end{cor}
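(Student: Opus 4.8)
The plan is to obtain Corollary \ref{cor-minchi} as an immediate consequence of the two facts just established: that $\xi\mapsto\bfJ'^\infty(\Phi_\xi)$ is a convex function on $N_\bR\cong\bR^r$, and that it is strictly positive at every point of $N_\bR$ by \eqref{eq-J'infpositive}. The argument is purely soft: a convex function that is finite on all of $\bR^r$ is automatically continuous there, and a continuous strictly positive function on a compact set has a strictly positive minimum.

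First I would record that $g(\xi):=\bfJ'^\infty(\Phi_\xi)$ is finite for every $\xi\in N_\bR$. This is already implicit in Step 1: for fixed $\xi$ the function $s\mapsto\bfJ_\psi(\sigma_\xi(s)^*\vphi(s))$ is convex (Proposition \ref{prop-convex1}) and grows at most linearly, so the limit $\lim_{s\to+\infty}s^{-1}\bfJ_\psi(\sigma_\xi(s)^*\vphi(s))$ defining $\bfJ'^\infty(\Phi_\xi)$ exists and is a finite real number. Hence $g$ is a finite convex function on $N_\bR$, and therefore continuous on all of $N_\bR$.

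Next I would restrict $g$ to the closed ball $\overline{B}_C:=\{\xi\in N_\bR;\ |\xi|\le C\}$, which is compact since $N_\bR$ is finite-dimensional. The continuous function $g$ attains its minimum on $\overline{B}_C$ at some point $\xi_0$ with $|\xi_0|\le C$; set $\chi:=g(\xi_0)$. By \eqref{eq-J'infpositive} one has $g(\xi_0)>0$, so $\chi>0$; and since $\{\xi;\ |\xi|<C\}\subseteq\overline{B}_C$, for every $\xi$ with $|\xi|<C$ we get $\bfJ'^\infty(\Phi_\xi)=g(\xi)\ge\chi>0$. Because $\chi$ depends only on $C$ and on the ray $\Phi$ (through $g$), this is exactly the claimed statement, so $\chi=\chi(C,\Phi)$ works.

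There is essentially no obstacle in this argument; the only point requiring a word of care is the finiteness of $g$ on \emph{all} of $N_\bR$, which is what allows one to pass from convexity to continuity and is already supplied by the construction of $\Phi$ in Step 1 (in particular \eqref{eq-J'infpositive} presupposes that $\bfJ'^\infty(\Phi_\xi)$ is a well-defined positive real for every $\xi\in N_\bR$).
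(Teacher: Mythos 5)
Your argument is exactly the one the paper uses: convexity of $\xi\mapsto\bfJ'^\infty(\Phi_\xi)$ (from the preceding proposition) gives continuity on $N_\bR$, and combining this with the strict positivity \eqref{eq-J'infpositive} at each point yields a positive minimum on the compact ball $\{|\xi|\le C\}$. The additional remark on finiteness of $\bfJ'^\infty(\Phi_\xi)$ is a harmless elaboration already implicit in the paper's setup.
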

\begin{rem}
Recently, the above corollary has been strengthened in \cite[Proof of Proposition 6.2]{Li20}. For the destabilising geodesic ray obtained above, we indeed have the inequality:
\begin{equation}
\inf_{\xi\in N_\bR}\bfJ'^\infty(\Phi_\xi)\ge 1.
\end{equation}
\end{rem}

\subsection{Step 2: Perturbed and twisted test configurations}\label{sec-pertTC}

Fix a $\bG$-equivariant resolution of singularities $\mu: Y\rightarrow X$ such that $\mu$ is an isomorphism over $X^\reg$, $\mu^{-1}(X^\sing)=\sum_{k=1}^{g}E_k$ is a $\bG$-invariant simple normal crossing divisor and that there exist $\theta_k\in \bQ_{>0}$ for $k=1,\dots, g$ such that $E_\theta=\sum_{k=1}^g \theta_k E_k$ satisfies $P:=P_\theta=\mu^*L-E_\theta$ is an ample $\bQ$-divisor over $Y$. We can then choose and fix a smooth $\bK$-invariant Hermitian metric $\vphi_P$ on $P$ such that $\sddb \vphi_P>0$. 

For any $\epsilon\in \bQ_{>0}$, define \Blue{$\bQ$-line} bundles on $Y$ by 
\begin{equation}\label{eq-Lepsilon}
\hat{L}_\epsilon:=(1+\epsilon)\mu^*L-\epsilon E_\theta=\mu^*L+\epsilon P, \quad L_\epsilon=\frac{1}{1+\epsilon}\hat{L}_\epsilon.
\end{equation}
Then $\hat{L}_\epsilon$ is a positive $\bQ$-line bundle on $Y$. Define a smooth reference potential on $\hat{L}_\epsilon$ by $\hat{\psi}_\epsilon=\psi+\epsilon \vphi_P\in (\mcE^1(X, L_\epsilon))^\bK$. Let $\Phi=\{\vphi(s)\}$ be \Blue{the} geodesic ray in $(\mcE^1(X, L))^\bK$ constructed in the above subsection, which satisfies:
\begin{equation}\label{eq-geocon}
\sup_X(\vphi(s)-\psi)=0, \quad \bfE_{\psi}(\vphi(s))=-s.
\end{equation}.

In this section we will first construct a sequence of test configurations \Blue{for} $(Y, \hat{L}_\epsilon)$ using the method from \cite{BBJ15}. Denote by $p'_i, i=1,2$ the projection of  $Y\times\bC$ to the two factors. Define a singular and a smooth $\bK$-invariant Hermitian metric on $p'^*_1 \hat{L}_\epsilon$ by 
\begin{equation}\label{eq-hatPhiep}
\hat{\Phi}_\epsilon:=\mu^*(\Phi)+\epsilon\; p'^*_1(\vphi_{P}), \quad \hat{\Psi}_\epsilon:=p'^*_1(\mu^*\psi+\epsilon\; \vphi_P).
\end{equation}
Here for simplicity of notation, we still use $\mu$ to denote the morphism $\mu\times \mathrm{id}: Y\times\bC\rightarrow X\times\bC$.  
Then $\sddb\hat{\Phi}_\epsilon\ge 0$, $\sddb\hat{\Psi}_\epsilon\ge 0$. Fix a very ample line bundle $H'$ over $Y$. Consider the following coherent sheaf:
\begin{eqnarray*}
\mcF_{\epsilon,m}&:=&\mcO_{Y}(p'^*_1(m\hat{L}_\epsilon))\otimes_{\mcO_Y} \mcJ(Y, m \hat{\Phi}_\epsilon)\\
&=&\mcO_Y\left(K_{Y}+m \mu^*L+(m\epsilon P-K_{Y}-(n+1)H')+(n+1)H'\right)\otimes_{\mcO_Y} \mcJ(Y, m \mu^*\Phi)
\end{eqnarray*}
\Blue{where $\mcJ(Y, m\hat{\Phi}_\epsilon)$ (resp. $\mcJ(Y, m\mu^*\Phi)$) denotes the multiplier ideal sheaf of the psh potential $m\hat{\Phi}_\epsilon$ (resp. $m\mu^*\Phi$). For the second identity, we substituted the expression of $\hat{L}_\epsilon$ in \eqref{eq-Lepsilon} and used the 
fact that $\mcJ(Y, m\hat{\Phi})=\mcJ(Y, m\mu^*\Phi)$ by the smoothness of the psh potential $\vphi_P$. }
Because $P$ is positive, for $m\gg \epsilon^{-1}$ and sufficiently divisible, 
$m \epsilon P-K_{Y}-(n+1)H' $ is an ample line bundle on $Y$.
In this case, by Nadel vanishing theorem, for any $j\ge 1$, 
\begin{eqnarray*}
R^j(p'_2)_*(\mcF_{\epsilon,m} \otimes p_1^*H'^{-j})=0. 
\end{eqnarray*}
By the relative Castelnuovo-Mumford criterion, $\mcF_{\epsilon,m}$ is $p'_2$-globally generated.
Because $\bD$ is Stein, $\mcO(p'^*_1(m\hat{L}_\epsilon)\otimes \mcJ(m\hat{\Phi}_\epsilon))$ is generated by global sections on $Y\times \bD$ if $m\gg \epsilon^{-1}$ and $m$ is sufficiently divisible.

Let $\pi'_m: \mcY_{\epsilon,m}\rightarrow Y_\bC$ denote the normalized blow-up of $Y\times \bC$ along $\mcJ(m \hat{\Phi}_\epsilon)$, with exceptional divisor $E_{\epsilon,m}$ and set 
\begin{equation}
\hat{\mcL}_{\epsilon,m}:=\pi'^*_m p_1'^*\hat{L}_\epsilon-\frac{1}{m}E_{\epsilon,m}, \quad \mcL_{\epsilon,m}=\frac{1}{1+\epsilon}\hat{\mcL}_{\epsilon,m}.
\end{equation}
Then $(\mcY_{\epsilon,m}, \hat{\mcL}_{\epsilon,m})$ is a $\bG$-equivariant normal semi-ample test configuration for $(Y, \hat{L}_\epsilon)$. To see $\bG$-equivariance, note that by the $\bK$-invariance of $\hat{\Phi}_\epsilon$, $\mcJ(Y, m\hat{\Phi})$ is invariant under the action of $\bK$ on $\mcO_Y$. Because $\bG=\bK^\bC$, the invariance under $\bG$ action follows from the biholomorphicity of the $\bG$-action.

The associated \Blue{non-Archimedean potential} $\hat{\phi}_{\epsilon,m}\in \mcH^\NA(\hat{L}_\epsilon)$ is given by:
\begin{equation}
\hat{\phi}_{\epsilon,m}(w)=-\frac{1}{m}G(w)(\Blue{\mcJ(m\mu^*\Phi)}),
\end{equation}
for each $w\in \Blue{Y^{\mathrm{div}}_\bQ}$. Note again that since $\vphi_P$ is a smooth Hermitian metric, 
\begin{equation}
\mcJ(m\hat{\Phi}_\epsilon)=\mcJ(m\mu^*\Phi)=:\Blue{\mcJ(m\tilde{\Phi})}.
\end{equation}

We will denote by $\hat{\Phi}_{\epsilon, m}=\{\hat{\vphi}_{\epsilon,m}(s)\}$ the geodesic ray associated to $(\mcY_{\epsilon,m}, \hat{\mcL}_{\epsilon, m})$. 
By Demailly's regularization result (\cite[Proposition 3.1]{Dem92}), $\hat{\Phi}_{\epsilon,m}$ is less singular then $\hat{\Phi}_{\epsilon}$. As a consequence, $\hat{\Phi}_{\epsilon,m,\xi}:=\{\sigma_\xi(s)^*\vphi_{\epsilon, m}(s)\}_{s\in [0, +\infty)}$ is less singular than $\hat{\Phi}_{\epsilon,\xi}=\{\sigma_\xi(s)^*\vphi_\epsilon(s)\}_{s\in [0,+\infty)}$. By the monotonicity of $\bfE$ and ${\bf \Lambda}$ energy (see \eqref{eq-monotone}), we get:
\begin{eqnarray}
\bfE^\NA_{\hat{L}_\epsilon}(\hat{\phi}_{\epsilon,m,\xi})&=&\lim_{s\rightarrow+\infty} \frac{\bfE_{\hat{\psi}_\epsilon}(\sigma_\xi(s)^*\hat{\vphi}_{\epsilon,m}(s))}{\Blue{s}}\nonumber \\
&&\ge \lim_{s\rightarrow+\infty} \frac{\bfE_{\hat{\psi}_\epsilon}(\sigma_\xi(s)^*\hat{\vphi}_{\epsilon}(s))}{\Blue{s}}=:\bfE'^\infty_{\hat{\psi}_\epsilon}(\hat{\Phi}_{\epsilon,\xi}).\label{eq-ENAepmlb}
\\
{\bf \Lambda}^\NA_{\hat{L}_\epsilon}(\hat{\phi}_{\epsilon,m,\xi})&=&\lim_{s\rightarrow+\infty} \frac{{\bf \Lambda}_{\hat{\psi}_\epsilon}(\sigma_\xi(s)^*\hat{\vphi}_{\epsilon,m})}{\Blue{s}}\nonumber\\
&&\ge \lim_{s\rightarrow+\infty} \frac{{\bf \Lambda}_{\hat{\psi}_\epsilon}(\sigma_\xi(s)^*\hat{\vphi}_{\epsilon}(s))}{\Blue{s}}
=:{\bf \Lambda}'^\infty_{\hat{\psi}_\epsilon}(\hat{\Phi}_{\epsilon,\xi}). \label{eq-KNAepmlb}
\end{eqnarray}

The following convergence will be important for us. 
\begin{lem}
With the above notations and assuming that $\Phi=\{\vphi(s)\}_{s\in [0, +\infty)}$ satisfies \eqref{eq-geocon}, for any $\xi\in N_\bR$ the following identities hold true:
\begin{equation}\label{eq-limENAep}
\lim_{\epsilon \rightarrow 0} \bfE'^\infty_{\hat{\psi}_\epsilon}(\hat{\Phi}_{\epsilon,\xi})=\lim_{s\rightarrow+\infty} \frac{\bfE_\psi(\vphi_\xi(s))}{\Blue{s}}=:\bfE'^\infty(\Phi_\xi).
\end{equation}
\begin{equation}\label{eq-limKNAep}
\lim_{\epsilon\rightarrow 0} {\bf \Lambda}'^\infty_{\hat{\psi}_\epsilon}(\hat{\Phi}_{\epsilon,\xi})=\lim_{s\rightarrow+\infty} \frac{{\bf \Lambda}_\psi(\vphi_\xi(s))}{\Blue{s}}=:{\bf \Lambda}'^\infty(\Phi_\xi).
\end{equation}
\end{lem}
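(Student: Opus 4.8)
The plan is to reduce the whole statement to the identity $\hat{\vphi}_\epsilon(s)=\mu^*\vphi(s)+\epsilon\,\vphi_P$, read as metrics on $\hat{L}_\epsilon=\mu^*L+\epsilon P$. Since $\mu$ is $\bG$-equivariant, writing $\omega_0:=\sddb\psi_0$ and $\omega_P:=\sddb\vphi_P$, this gives
\[
\sigma_\xi(s)^*\hat{\vphi}_\epsilon(s)-\hat{\psi}_\epsilon
=\mu^*\big(\vphi_\xi(s)-\psi_0\big)+\epsilon\big(\sigma_\xi(s)^*\vphi_P-\vphi_P\big),\qquad
\sddb\hat{\psi}_\epsilon=\mu^*\omega_0+\epsilon\,\omega_P,
\]
with $\vphi_\xi(s)=\sigma_\xi(s)^*\vphi(s)$ as in Step 1. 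I would then expand the relevant energy multinomially in $\epsilon$: the $\epsilon^0$-term reproduces, by the projection formula (valid in $\mcE^1$ since $\mu$ is birational and $\omega_0$ smooth), the corresponding functional of $\vphi_\xi(s)$ on $(X,L)$ up to the harmless factor $L^{\cdot n}/\hat{L}_\epsilon^{\cdot n}\to 1$, while every remaining term will be bounded by $C_\xi\,\epsilon\,s$ uniformly for $s\ge 1$, $\epsilon\in(0,1]$; dividing by $2s$, letting $s\to\infty$ and then $\epsilon\to0$ then yields \eqref{eq-limENAep} and \eqref{eq-limKNAep}. (That the limits $\bfE'^\infty_{\hat{\psi}_\epsilon}(\hat{\Phi}_{\epsilon,\xi})$, $\Lam'^\infty_{\hat{\psi}_\epsilon}(\hat{\Phi}_{\epsilon,\xi})$ exist is clear from convexity of $s\mapsto\bfE_{\hat{\psi}_\epsilon}(\sigma_\xi(s)^*\hat{\vphi}_\epsilon(s))$ and $s\mapsto\Lam_{\hat{\psi}_\epsilon}(\sigma_\xi(s)^*\hat{\vphi}_\epsilon(s))$ along the twisted perturbed subgeodesic ray, exactly as in Proposition \ref{prop-convex1}.)

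The estimate that makes the error terms small is a uniform-in-$\epsilon$ $L^1$-bound. First, $\sup_Y|\sigma_\xi(s)^*\vphi_P-\vphi_P|\le C_\xi s$ and $\sup_X|\sigma_\xi(s)^*\psi_0-\psi_0|\le C_\xi s$ for $s\ge1$, obtained by integrating the Lie derivative along the real one-parameter group $s\mapsto\sigma_\xi(s)$ and using that the sup-norm of a fixed smooth function is invariant under pullback by automorphisms of the compact $Y$; hence $\sup_X(\vphi_\xi(s)-\psi_0)\le\sup_X\sigma_\xi(s)^*(\vphi(s)-\psi_0)+\sup_X(\sigma_\xi(s)^*\psi_0-\psi_0)\le C_\xi s$. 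The key is then to apply Hartogs' Lemma \ref{lem-Hartogs} \emph{on the smooth resolution $Y$ with the fixed K\"{a}hler form $\Omega:=\mu^*\omega_0+\omega_P$} (which dominates $\mu^*\omega_0+\epsilon\omega_P$ for $\epsilon\le1$): since $\mu^*(\vphi_\xi(s)-\psi_0)$ is $\Omega$-psh, we get $\int_Y|\mu^*(\vphi_\xi(s)-\psi_0)|\,\Omega^n\le C_\xi s$, hence $\int_Y|\mu^*(\vphi_\xi(s)-\psi_0)|\,\theta\le C_\xi s$ for every fixed smooth semipositive $(n,n)$-form $\theta$ on $Y$, with constant independent of $\epsilon$. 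For the untwisted potential this improves to $\int_Y|\mu^*(\vphi(s)-\psi_0)|\,\Omega^n\le C$, while $\bfI_{\psi_0}(\vphi(s))\le(n+1)\bfJ_{\psi_0}(\vphi(s))=(n+1)\big(\Lam_{\psi_0}(\vphi(s))-\bfE_{\psi_0}(\vphi(s))\big)\le C(1+s)$ (using $\Lam_{\psi_0}(\vphi(s))\le0$ and $\bfE_{\psi_0}(\vphi(s))=-s$ from Step 1) controls the Dirichlet-type energies that appear after integration by parts below. This uniformity in $\epsilon$ — forcing one to use the fixed $\Omega$ rather than the degenerating $\mu^*\omega_0+\epsilon\omega_P$ in Hartogs' lemma — is the main delicate point; once in place, the required smallness is supplied entirely by the factors $\epsilon^i\le\epsilon$ ($i\ge1$) coming from the multinomial expansion.

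For $\Lam$ the conclusion is then direct: expanding $\Lam_{\hat{\psi}_\epsilon}(\sigma_\xi(s)^*\hat{\vphi}_\epsilon(s))=\tfrac{1}{(2\pi)^n\hat{L}_\epsilon^{\cdot n}}\int_Y\big[\mu^*(\vphi_\xi(s)-\psi_0)+\epsilon(\sigma_\xi(s)^*\vphi_P-\vphi_P)\big](\mu^*\omega_0+\epsilon\omega_P)^n$, the $\epsilon^0$-term equals $\tfrac{L^{\cdot n}}{\hat{L}_\epsilon^{\cdot n}}\Lam_{\psi_0}(\vphi_\xi(s))$; each $\epsilon^i$-term with $i\ge1$ is $\le C_\xi\epsilon s$ by the $L^1$-bound together with $\epsilon^i(\mu^*\omega_0)^{n-i}\wedge\omega_P^i\le\binom{n}{i}^{-1}(\mu^*\omega_0+\epsilon\omega_P)^n$; and the $\epsilon(\sigma_\xi(s)^*\vphi_P-\vphi_P)$-term is $\le C_\xi\epsilon s$ by the linear growth. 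Dividing by $2s$, $s\to\infty$, $\epsilon\to0$ gives \eqref{eq-limKNAep}. For $\bfE$ I would first invoke the cocycle identity $\bfE_{\hat{\psi}_\epsilon}(\sigma_\xi(s)^*\hat{\vphi}_\epsilon(s))=\bfE_{\hat{\psi}_\epsilon}(\sigma_\xi(s)^*\hat{\psi}_\epsilon)+\bfE_{\hat{\psi}_\epsilon}(\hat{\vphi}_\epsilon(s))$ (invariance of $\bfE$-differences under automorphisms), which decouples the twist from the singularities: $\sigma_\xi(s)^*\hat{\psi}_\epsilon$ is a smooth metric, so the $\epsilon$-expansion of the first summand has main term $\tfrac{L^{\cdot n}}{\hat{L}_\epsilon^{\cdot n}}\bfE_{\psi_0}(\sigma_\xi(s)^*\psi_0)$ and error $\le C_\xi\epsilon s$; the second summand carries no twist, has main term $\tfrac{L^{\cdot n}}{\hat{L}_\epsilon^{\cdot n}}\bfE_{\psi_0}(\vphi(s))$, and its $\epsilon^{\ge1}$-errors — which do involve the singular currents $\sddb\vphi(s)$ — reduce after a single integration by parts to $\int_Y|\mu^*(\vphi(s)-\psi_0)|\,\Omega^n$ plus Dirichlet energies $\le C\,\bfI_{\psi_0}(\vphi(s))$, hence are $\le C\epsilon(1+s)$. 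Adding the two summands, using $\bfE_{\psi_0}(\sigma_\xi(s)^*\psi_0)+\bfE_{\psi_0}(\vphi(s))=\bfE_{\psi_0}(\vphi_\xi(s))$, dividing by $2s$ and passing to the limits yields \eqref{eq-limENAep}. The auxiliary facts used here — the projection formula and cocycle property for $\mcE^1$-potentials, the integration-by-parts reduction of mixed Monge-Amp\`{e}re integrals, and the $\bfI$--$\bfJ$ comparison — are all standard.
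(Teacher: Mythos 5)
Your decomposition for $\bfE$ — cocycle identity plus $\bfE$'s invariance under pullback by automorphisms, to peel off the smooth $\bfE_{\hat\psi_\epsilon}(\sigma_\xi(s)^*\hat\psi_\epsilon)$ summand whose slope is $\chw_{\hat L_\epsilon}(\xi)=\chw_L(\xi)+\epsilon\,\chw_P(\xi)$ — is exactly the paper's argument. The only difference is that you then re-derive the untwisted convergence $\lim_\epsilon\bfE'^\infty_{\hat\psi_\epsilon}(\hat\Phi_\epsilon)=\bfE'^\infty(\Phi)$ rather than cite it from \cite{LTW19}. That re-derivation is the weakest step: after the multinomial expansion the $\epsilon^{\ge 1}$ errors contain mixed terms $\int_Y u\,(\sddb u)^{b}\wedge\theta$ with $u=\mu^*(\vphi(s)-\psi_0)\in\mcE^1$ and $b$ up to $n$, and for $b\ge 2$ ``a single integration by parts'' does not reduce them directly to $\int|u|\Omega^n$ and $\bfI$; one needs the iterated comparison estimates for mixed Monge--Amp\`ere energies from \cite{BBEGZ}. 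Since the paper simply cites \cite{LTW19} here, you could do the same and the structure of your $\bfE$ proof would be identical to the paper's.

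For $\Lam$ your route is genuinely different and, I think, a clean alternative. The paper writes the $\epsilon$-error as ${\bf I}_\epsilon+{\bf II}_\epsilon$, further splits ${\bf I}_\epsilon=\epsilon({\bf A}_\epsilon+{\bf B}_\epsilon)$ using $\sigma_\xi^*\vphi-\psi=\sigma_\xi^*(\vphi-\psi)+(\sigma_\xi^*\psi-\psi)$, and disposes of the only non-smooth piece $\epsilon{\bf A}_\epsilon$ (which is $\le 0$ because $\sup(\vphi(s)-\psi)=0$, and convex in $s$, and $\to 0$ pointwise as $\epsilon\to 0$) by the convexity-plus-pointwise-convergence trick cited from \cite[Lemma 4.2]{LTW19}. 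You instead keep $\mu^*(\vphi_\xi(s)-\psi_0)$ whole, prove the linear sup-bound $\sup_X(\vphi_\xi(s)-\psi_0)\le C_\xi s$ (a step the paper never needs because of its sign splitting), and then apply Hartogs' Lemma \ref{lem-Hartogs} \emph{on $Y$ with the fixed K\"ahler form $\Omega=\mu^*\omega_0+\omega_P$} to obtain a uniform-in-$\epsilon$ bound $\int_Y|\mu^*(\vphi_\xi(s)-\psi_0)|\,\Omega^n\le C_\xi(1+s)$, which controls all the $\epsilon^{\ge 1}$ terms simultaneously by $O(\epsilon s)$. This works: $\mu^*(\vphi_\xi(s)-\psi_0)$ is $\mu^*\omega_0$-psh, hence $\Omega$-psh, so Hartogs applies after normalizing by the sup, and the multinomial bound $(\mu^*\omega_0)^{n-i}\wedge\omega_P^i\le\binom{n}{i}^{-1}\Omega^n$ translates the $L^1(\Omega^n)$-control to every term. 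Your remark that one must use the fixed $\Omega$ rather than the degenerating $\mu^*\omega_0+\epsilon\omega_P$ is exactly the point. So the two arguments trade a sign observation plus a convexity trick (paper) for a sup estimate plus a Hartogs compactness bound (you); both are correct, and yours has the advantage of not re-invoking \cite{LTW19} for $\Lam$.
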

\begin{proof}
\Blue{
Set $\hat{\psi}_\epsilon=\mu^*\psi+\epsilon \vphi_P$ to be the smooth psh potential on the $\bQ$-line bundle $\hat{L}_\epsilon=\mu^*L+\epsilon P$ on $Y$. We also identify the smooth psh potential $\psi$ on $L$ with its pull back on $\mu^*L$. 
Because $\bfE$ satisfies \Blue{the} cocycle condition and is affine along geodesics, it is easy to verify that, for any $\Blue{\vphi\in \cE^1}(\hat{L}_\epsilon)$, 
\begin{eqnarray*}
\bfE_{\hat{\psi}_\epsilon}(\sigma_\xi(s)^*\hat{\vphi}_\epsilon)&=&\bfE_{\sigma_\xi(s)^*\hat{\psi}_\epsilon}(\sigma_\xi(s)^*\vphi)+E_{\hat{\psi}_\epsilon}(\sigma_\xi(s)^*\hat{\psi}_\epsilon)\\
&=&\bfE_{\hat{\psi}_\epsilon}(\hat{\vphi}_\epsilon)+\chw_{\hat{L}_\epsilon}(\xi)\cdot s,
\end{eqnarray*}
where $\chw_{\hat{L}_\epsilon}=\chw_L+O(\epsilon)$ is the Chow weight of $\xi$ (see \eqref{eq-defCW}). It was proved in \Blue{\cite[Proposition 4.11]{LTW19}} that:
\begin{equation}
\lim_{\epsilon\rightarrow 0}\Blue{\bfE'^\infty_{\hat{\psi}_\epsilon}({\hat{\Phi}}_\epsilon)=\bfE'^\infty_{\psi}(\Phi)}.
\end{equation} 
These combine to give \eqref{eq-limENAep}. Next we prove \eqref{eq-limKNAep}. By the definition of ${\bf \Lambda}$-energy (see \eqref{eq-Kphi})
\begin{eqnarray*}
(2\pi)^n \hat{L}_\epsilon^{\cdot n}\cdot {\bf \Lambda}_{\hat{\psi}_\epsilon}(\hat{\vphi}_{\epsilon,\xi}(s))&=&\int_X (\sigma_\xi(s)^*\vphi(s)+\epsilon\sigma_\xi(s)^*\vphi_P-(\psi+\epsilon \vphi_P)) (\sddb (\psi+\epsilon \vphi_P))^n\\
&=&\int_X (\sigma_\xi(s)^*\vphi(s)-\psi)(\sddb\psi)^n\\
&&\hskip 0.5cm+\int_X(\sigma_\xi(s)^*\vphi(s)-\psi)[(\sddb(\psi+\epsilon \vphi_P))^n-(\sddb\psi)^n]\\
&&\hskip 1cm+\epsilon\int_X (\sigma_\xi(s)^*\vphi_P-\vphi_P)(\sddb(\psi+\epsilon \vphi_P))^n\\
&=&(2\pi)^n L^{\cdot n} \cdot {\bf \Lambda}_\psi(\vphi_\xi(s))+{\bf I}_{\epsilon}(s)+{\bf II}_\epsilon(s)
\end{eqnarray*}
where we denoted:
\begin{eqnarray*}
{\bf I}_\epsilon&=&\epsilon\int_X(\sigma_\xi(s)^*\vphi(s)-\psi)\frac{(\sddb(\psi+\epsilon \vphi_P))^n-(\sddb \psi)^n}{\epsilon}\\
&=&\epsilon \int_X(\sigma_\xi(s)^*(\vphi(s)-\psi)+\sigma_\xi(s)^*\psi-\psi) \Omega_\epsilon\\
&=&\epsilon ({\bf A}_\epsilon(s)+{\bf B}_\epsilon(s))
\end{eqnarray*}
with
\begin{align*}
&\Omega_\epsilon:=\frac{1}{\epsilon}\left((\sddb(\psi+\epsilon \vphi_P))^n-(\sddb \psi)^n\right)\ge 0\\
&\mathbf{A}_\epsilon(s)=\int_X (\sigma_\xi(s)^*(\vphi(s)-\psi)\Omega_\epsilon, \quad \mathbf{B}_\epsilon(s)=\int_X (\sigma_\xi(s)^*\psi-\psi)\Omega_\epsilon;
\end{align*} 
and also:
\begin{align*}
\mathbf{II}_\epsilon(s)=\epsilon \int_X (\sigma_\xi(s)^*\vphi_P-\vphi_P)(\sddb (\psi+\epsilon \vphi_P)^n=\epsilon\cdot \mathbf{C}_\epsilon
\end{align*}
with
\begin{align*}
\mathbf{C}_\epsilon(s)=\int_X(\sigma_\xi(s)^*\vphi_P-\vphi_P)(\sddb(\psi+\epsilon\vphi_P)^n.
\end{align*}
Then we get the identity:
\begin{align}\label{eq-Lamep}
(2\pi)^n L_\epsilon^{\cdot n}\cdot {\bf \Lambda}'^\infty_{\hat{\psi}_\epsilon}(\Phi_{\epsilon,\xi})&=(2\pi)^nL^{\cdot n}\cdot {\bf \Lambda}'^\infty_\psi(\Phi_\xi)\nonumber \\
&+ \lim_{s\rightarrow+\infty}\frac{\epsilon {\bf A}_\epsilon(s)}{\Blue{s}}+\epsilon \lim_{s\rightarrow+\infty}\frac{{\bf B}_\epsilon(s)}{\Blue{s}}+\epsilon \lim_{s\rightarrow+\infty}\frac{{\bf C}_\epsilon(s)}{\Blue{s}}.
\end{align}
Note that all of ${\bf \Lambda}_{\psi}(\vphi_\xi(s))$, ${\bf A}_\epsilon$, ${\bf B}_\epsilon$ and ${\bf C}_\epsilon$ are convex in $s\in [0, +\infty)$.
We deal with the limits on the right-hand-side.
\begin{enumerate}
\item
Since $\sup(\vphi(s)-\psi)=0$, $\epsilon \mathbf{A}_\epsilon\le 0$. So $\epsilon \mathbf{A}'^\infty_\epsilon=\epsilon\cdot \lim_{s\rightarrow+\infty}\frac{\mathbf{A}_\epsilon(s)}{s}\le 0$. On the other hand, for any $s\in [0, +\infty)$, $\lim_{\epsilon\rightarrow 0}\epsilon \mathbf{A}_\epsilon(s)=0$. This together with the convexity of $s\mapsto \epsilon \mathbf{A}(s)$ gives, for any $s\in [0, +\infty)$,
\begin{equation}
\lim_{\epsilon\rightarrow0}\epsilon \mathbf{A}'^\infty_\epsilon\ge \lim_{\epsilon\rightarrow0}\epsilon \frac{\mathbf{A}_\epsilon(s)}{s}=0.
\end{equation}
So we get $\lim_{\epsilon\rightarrow 0}\epsilon \mathbf{A}'^\infty_\epsilon=0$. 
\item
It is straightforward to verify that there exists $C>0$ independent of $\epsilon$ such that $$\max\left\{\left|\frac{d}{ds}\sigma_\xi(s)^*\psi\right|, \left|\frac{d}{ds}\sigma_\xi(s)^*\vphi_P\right|\right\}\le C$$ which implies $|\sigma_\xi(s)^*\psi-\psi|\le Cs$, $|\sigma_\xi(s)^*\vphi_P-\vphi_P|\le Cs$. 
From the above expressions defining $\mathbf{B}_\epsilon$, $\mathbf{C}_\epsilon$ and the continuous dependence of the smooth volume $\Omega_\epsilon$ on $\epsilon$, we know that there exists $C>0$ independent of $\epsilon$ such that
$|\mathbf{B}_\epsilon(s)|\le Cs, |\mathbf{C}_\epsilon(s)|\le Cs$ which implies:  
$
\left|{\bf B}'^\infty_\epsilon\right|\le C, \left|{\bf C}'^\infty_\epsilon \right|\le C.
$
\end{enumerate}
Combining the above estimates with the convergence $\lim_{\epsilon\rightarrow 0}L_\epsilon^{\cdot n}=L^{\cdot n}$, we use \eqref{eq-Lamep} to get:
\begin{equation}
\lim_{\epsilon\rightarrow+\infty}{\bf \Lambda}'^\infty_{\hat{\psi}_\epsilon}(\Phi_{\epsilon,\xi})={\bf \Lambda}'^\infty_\psi(\Phi).
\end{equation}
}
\end{proof}

\subsection{Step 3: Uniform convergence of $\bL^\NA$ functions}
\Blue{Let $\mu: Y\rightarrow (X, D)$ be the same $\bG$-equivariant log resolution of singularities as explained at the beginning of section \ref{sec-pertTC}. We have the following identity:
\begin{eqnarray*}
K_Y=\mu^*(K_X+D)+\sum_{k=1}^g a_k E_k 
\end{eqnarray*}
where $E_k$ are exceptional divisors and $a_k>-1$ for any $k=1,\dots, g$. 
Recall that $E_\theta:=\sum_k \theta_k E_k$ is chosen such that $P:=\mu^*(-K_X-D)-E_\theta$ is ample over $Y$. Then it is easy to check that we have a decomposition (see \eqref{eq-Lepsilon}):
\begin{align}\label{eq-KBL}
-K_Y-B_\epsilon=\frac{1}{1+\epsilon}(\mu^*(-K_X-D)+\epsilon P)=L_\epsilon.
\end{align}
where, for simplicity of notation, we set:
\begin{equation}\label{eq-Bepsilon}
B_\epsilon=\sum_k (-a_k)E_k+\frac{\epsilon}{1+\epsilon}E_\theta.
\end{equation}
In general, $B_\epsilon$ is not effective and can be decomposed as:
\begin{equation*}
B_\epsilon=\Delta_\epsilon-F=\Delta_0+\frac{\epsilon}{1+\epsilon}E_\theta-F
\end{equation*}
where $\Delta_\epsilon=\Delta_0+\frac{\epsilon}{1+\epsilon}E_\theta$, $\Delta_0$ and $F$ are now effective divisors with
\begin{align}\label{eq-Delta0F}
\Delta_0=\sum_{-1<a_k< 0}(-a_k)E_k+\sum_{a_k\ge 0}(\lceil a_k\rceil-a_k)E_k, \quad F=\sum_{a_k\ge 0} \lceil a_k\rceil E_k.
\end{align}}
Note that the test configuration $(\mcY_{\epsilon,m}, \mcL_{\epsilon,m})$ constructed in the above section induces a test configuration $(\mcY_{\epsilon,m}, \mcB_{\epsilon,m}, \mcL_{\epsilon,m})$ of the pair $(Y, B_\epsilon)$.

We consider the Ding energy \eqref{eq-DB} to the pair $(Y, B_\epsilon)$. Set 
\begin{equation}
\vphi_\epsilon=\frac{\hat{\vphi}_\epsilon}{1+\epsilon}=\frac{\vphi+\epsilon\vphi_P}{1+\epsilon}\in (\mcE^1(L_\epsilon))^\bK
\end{equation}
and 
\begin{eqnarray*}
\bfD_{\psi_\epsilon}(\vphi_\epsilon)&=&-\bfE_{\psi_\epsilon}(\vphi_\epsilon)+\bL_{(Y, B_\epsilon)}({\vphi}_\epsilon)
\end{eqnarray*}
where 
\Blue{\begin{equation}\label{eq-psiep}
\psi_\epsilon=\frac{\hat{\psi}_\epsilon}{1+\epsilon}=\frac{\psi+\epsilon \vphi_P}{1+\epsilon}
\end{equation}}
and (with $B=B_\epsilon=\Delta_\epsilon-F$ in \eqref{eq-DB}), 
\begin{equation}
\bL_{(Y, B_\epsilon)}(\vphi_\epsilon)=- \log\left(\frac{1}{(2\pi)^nL_\epsilon^{\cdot n}} \int_Y e^{-\vphi_\epsilon} \frac{|s_F|^2}{|s_{\Delta_\epsilon}|^2}\right)=:\bL_\epsilon(\vphi_\epsilon).
\end{equation}

\Blue{For simplicity of notation in the following discussion, for any divisor $D$ on $Y$, we will denote by $D_\bC$ the divisor $D\times\bC$ on $Y\times\bC$.}
The following two results were proved in \cite[4.3]{LTW19}. The first one is based on \cite{Bern15, BBEGZ} and the second one based on \cite{BBJ18, BFJ08}.
\begin{prop}
\begin{enumerate}[(1)]
\item

With the above notations, let $\epsilon$ be sufficiently small such that $\lfloor \Delta_\epsilon\rfloor=0$.
Assume that $\Phi_\epsilon=\{\vphi_\epsilon(s)\}$ is a psh ray in $\mcE^1(Y, L_\epsilon)$. Then 
$\bL_{(Y, B_\epsilon)}(\vphi_\epsilon(s))$ is convex in $s=\log|t|^{-1}$.

\item

Fix $0\le \epsilon\ll 1$.
Let ${\Phi}_\epsilon=\{{\vphi}_\epsilon(s)\}$ be a psh ray in $\mcE^1(Y, {L}_\epsilon)$ normalized such that $\sup({\vphi}_\epsilon(s)-{\psi}_\epsilon)=0$. We consider $\Phi_\epsilon$ as an $S^1$-invariant psh metric on $p'^*_1{L}_\epsilon \rightarrow Y_\bC$. Then we have the identity:
\begin{equation}\label{eq-LBexpan}
\lim_{s\rightarrow+\infty} \frac{\bL_{(Y, B_\epsilon)} ({\vphi}_\epsilon(s))}{\Blue{s}}=
\inf_{w\in \mathfrak{W}} \left(A_{Y_\bC}(w)-w(\Phi_\epsilon)-w((\Delta_\epsilon)_\bC)+ w(F_\bC)\right)-1,
\end{equation}
where $\mathfrak{W}$ is the set of $\bC^*$-invariant divisorial valuations $w$ on $Y_\bC=Y\times \bC$ with $w(t)=1$.
\end{enumerate}

\end{prop}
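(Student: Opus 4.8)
Both assertions are the Archimedean and non-Archimedean sides of the $\bfL$-functional for the log pair $(Y,B_\epsilon)$ with $B_\epsilon=\Delta_\epsilon-F$ and $-(K_Y+B_\epsilon)=L_\epsilon$; the plan is to deduce (1) from Berndtsson-type positivity and (2) from the Boucksom--Jonsson valuative description of log canonical thresholds, following \cite{Bern09, BBEGZ} and \cite{BBJ18, BFJ08} respectively.

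For (1), I would regard the subgeodesic ray $\Phi_\epsilon=\{\vphi_\epsilon(s)\}$ as an $S^1$-invariant positively curved metric on $p'^*_1 L_\epsilon=p'^*_1(-(K_Y+B_\epsilon))$ over $Y\times\bD_I$. Since $\lfloor\Delta_\epsilon\rfloor=0$ the pair $(Y,B_\epsilon)$ is sub-klt and $F$ only improves integrability, so for each $\tau\in\bD_I$ the adapted measure $e^{-\vphi_\epsilon(\tau)}|s_F|^2/|s_{\Delta_\epsilon}|^2$ is a finite positive measure on $Y$ with total mass $e^{-\bL_{(Y,B_\epsilon)}(\vphi_\epsilon(\tau))}$. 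The relative anticanonical bundle $-(K_{Y\times\bD_I/\bD_I}+p'^*_1 B_\epsilon)=p'^*_1 L_\epsilon$ carries the psh metric $\Phi_\epsilon$, so Berndtsson's theorem on psh variation of fibrewise integrals of the form $\int e^{-\Phi}|s_F|^2/|s_{\Delta_\epsilon}|^2$ --- valid in this singular and sub-boundary setting after reducing to a log-smooth model and regularizing as in \cite{BBEGZ} --- shows that $\tau\mapsto\bL_{(Y,B_\epsilon)}(\vphi_\epsilon(\tau))=-\log\int_Y e^{-\vphi_\epsilon(\tau)}|s_F|^2/|s_{\Delta_\epsilon}|^2$ is psh on $\bD_I$. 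By $S^1$-invariance it is radial, hence convex in $-\log|\tau|=s$, which is (1).

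For (2), by (1) the limit $\ell:=\lim_{s\to+\infty}\bL_{(Y,B_\epsilon)}(\vphi_\epsilon(s))/(2s)$ exists, and the normalization $\sup(\vphi_\epsilon(s)-\psi_\epsilon)=0$ gives $\vphi_\epsilon(s)\le\psi_\epsilon$, hence $\bL_{(Y,B_\epsilon)}(\vphi_\epsilon(s))\le\bL_{(Y,B_\epsilon)}(\psi_\epsilon)<+\infty$ and $\ell\le 0$ is finite. Writing $\Psi$ for $\Phi_\epsilon$ viewed as a metric on $p'^*_1 L_\epsilon$ over $Y_\bC=Y\times\bC$, Fubini and the substitution $|t|=e^{-s}$ give, for $c>0$,
\[
\int_{\{|t|<r\}\times Y} e^{-\Psi}\,|t|^{-2c}\,\frac{|s_F|^2}{|s_{\Delta_\epsilon}|^2}\;=\;2\pi\int_{-\log r}^{+\infty} e^{-\bL_{(Y,B_\epsilon)}(\vphi_\epsilon(s))\,+\,(2c-2)s}\,ds,
\]
which, by the convexity in (1) and $\bL_{(Y,B_\epsilon)}(\vphi_\epsilon(s))/s\to 2\ell$, is finite exactly when $c<\ell+1$. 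Hence $\ell+1=\lct\big(Y_\bC,\,(B_\epsilon)_\bC+\Psi;\,(t)\big)$ (analytic $=$ algebraic, using $\lfloor\Delta_\epsilon\rfloor=0$ and compactness of $Y$). Finally I would invoke the valuative formula for log canonical thresholds of psh weights (\cite{BFJ08}, and its graded-ideal form \cite{JM12}), which expresses this threshold as $\inf_{\bar w}\big(A_{Y_\bC}(\bar w)-\bar w((\Delta_\epsilon)_\bC)+\bar w(F_\bC)-\bar w(\Phi_\epsilon)\big)/\bar w(t)$ over divisorial valuations $\bar w$ on $Y_\bC$; since the density is $S^1$-invariant the infimum is attained along $\bC^*$-invariant valuations, and the homogeneity normalization $\bar w(t)=1$ cuts these down to $\mathfrak{W}$. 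Subtracting $1$ yields the stated identity.

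The main obstacle is this last step: establishing the \cite{BFJ08}-type valuative identity for the genuinely \emph{singular} psh weight $\Psi$ --- rather than an ideal sheaf --- on the non-projective total space $Y\times\bC$, with the extra sub-boundary $B_\epsilon=\Delta_\epsilon-F$, and verifying that the infimum is both computed by $\bC^*$-invariant divisorial valuations and finite (equivalently, that $\bfL^\NA$ of the ray is $>-\infty$, which in the intended application is guaranteed because the ray is a limit of bounded geodesic segments). This requires combining openness and approximation of psh singularities with the equivariant structure; by contrast the Berndtsson positivity underlying (1) is routine once one reduces to a log-smooth model.
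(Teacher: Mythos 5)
Part (1) of your proposal is the same argument the paper points to: Berndtsson's positivity theorem applied to the fibrewise integral $\int_Y e^{-\vphi_\epsilon(\tau)}|s_F|^2/|s_{\Delta_\epsilon}|^2$, reduced to a log-smooth model, with $S^1$-invariance converting psh to convex. That matches the reference chain (\cite{Bern09,BBEGZ}).

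For part (2), your Fubini computation is a correct and pleasantly direct reduction: writing $\ell=\bL'^\infty(\Phi_\epsilon)$, the identity
\begin{equation*}
\int_{\{|t|<r\}\times Y} e^{-\Psi}|t|^{-2c}\,\frac{|s_F|^2}{|s_{\Delta_\epsilon}|^2}
=2\pi\int_{-\log r}^{+\infty} e^{-\bL_{(Y,B_\epsilon)}(\vphi_\epsilon(s))+(2c-2)s}\,ds
\end{equation*}
together with convexity of $\bL(\vphi_\epsilon(s))$ does show the integral converges precisely for $c<\ell+1$, so $\ell+1$ equals the analytic threshold at the central fibre. However, the step you yourself flag is exactly where the mathematical content lies, and in its present form it is a genuine gap rather than a citation. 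Boucksom--Favre--Jonsson \cite{BFJ08} establish a valuative formula for log canonical thresholds of psh weights in a local (or smooth projective) setting and without a sub-boundary; it does not literally apply to the singular, only locally bounded weight $\Psi$ on the non-projective total space $Y\times\bC$ together with the divisor $B_\epsilon=\Delta_\epsilon-F$, nor does it by itself yield the restriction to $\bC^*$-invariant valuations. What the paper does, following \cite{LTW19} and \cite{BBJ18}, is to avoid invoking a general psh-weight lct formula and instead pass through the multiplier ideals $\mcJ(m\Phi)$: these are coherent, $S^1$-invariant (hence $\bC^*$-invariant) ideal sheaves, whose log canonical thresholds on $Y_\bC$ have an exact valuative description via a $\bC^*$-equivariant log resolution as in Remark~\ref{rem-calLNA}; the inequalities $\bar{w}(\mcJ(m\Phi))\le m\,\bar{w}(\Phi)\le \bar{w}(\mcJ(m\Phi))+A_{Y_\bC}(\bar{w})$ of \eqref{eq-wJPhim} then give both directions of \eqref{eq-LBexpan} in the limit $m\to\infty$, uniformly over the bounded set $\mathfrak{W}_K$ of Proposition~\ref{prop-uniformLest}. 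So the route the paper takes converts your ``analytic lct $=$ valuative lct'' step into an approximation statement for ideal sheaves, which also resolves the $\bC^*$-invariance and the finiteness of $\bL^\NA$ (since each $\mcJ(m\Phi)$ gives a genuine test configuration and a finite $\bL^\NA$). Your write-up would become a complete proof once this multiplier-ideal approximation replaces the appeal to a BFJ08-type formula for the raw psh weight.
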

Now let $\hat{\Phi}_\epsilon$ be the same as in \eqref{eq-hatPhiep} and set $\Phi_\epsilon=\frac{1}{1+\epsilon}\hat{\Phi}_\epsilon$. 
To state the next result, we define functions on the set of valuations on $Y_\bC$:
\begin{eqnarray}\label{eq-defhepm}
h_{\epsilon, m}(w)&:=&A_{Y_\bC}(w)-\frac{1}{1+\epsilon}w(\hat{\Phi}_{\epsilon,m})-w((B_\epsilon)_\bC) \nonumber \\
&=&A_{Y_\bC}(w)-\frac{1}{1+\epsilon}\frac{1}{m}w(\Blue{\mcJ(m\tilde{\Phi})})-w((\Delta_\epsilon)_\bC)+w(F_\bC)\label{eq-hepsm}\\
h_{\epsilon}(w)&:=&A_{Y_\bC}(w)-\frac{1}{1+\epsilon}\Blue{w(\tilde{\Phi})}-w((B_\epsilon)_\bC)\nonumber\\
&=&A_{Y_\bC}(w)-w((\Delta_0)_\bC)+w(F_\bC)-\frac{1}{1+\epsilon}\Blue{w(\tilde{\Phi})}-\frac{\epsilon}{1+\epsilon} w((E_\theta)_\bC) \label{eq-heps}
\end{eqnarray}
\Blue{where $\tilde{\Phi}=\mu^*\Phi$. }
Then by \eqref{eq-LBexpan} we have the identity:
\begin{equation}\label{eq-Lviah}
\Blue{\bL'^\infty(\Phi_{\epsilon,m})=\inf_{w\in \mathfrak{W}}h_{\epsilon,m}(w)-1=:\bL^\NA(\phi_{\epsilon,m}), \quad 
\bL'^\infty(\Phi_\epsilon)=\inf_{w\in \mathfrak{W}}h_\epsilon(w)-1.}
\end{equation}

\begin{prop}\label{prop-uniformLest}
There exists $K>0$ such that if we set
\begin{equation}\label{eq-fWK}
\mathfrak{W}_K:=\{w\in \mathfrak{W}; A_{Y_\bC}(w)<K\},
\end{equation} 
then the following statements are true:
\begin{enumerate}[(1)]
\item The following identities hold true:
\begin{eqnarray}
\bL'^\infty(\Phi_\epsilon)=\inf_{w\in \mathfrak{W}_K}h_\epsilon(w)-1, \quad \bL^\NA(\phi_{\epsilon,m})=\inf_{w\in \mathfrak{W}_K}h_{\epsilon, m}(w)-1.
\end{eqnarray}

\item
There exists a constant $C'>0$ independent of $\epsilon$ and $m$ such that for any $\epsilon\ge 0$, $m\in \bN$ and $w\in \mathfrak{W}_K$, we have:
\begin{equation}\label{eq-unidiffh}
|h_{\epsilon, m}(w)-h_\epsilon(w)|\le C' \frac{1}{m}, \quad |h_{\epsilon}(w)-h_0(w)|\le C'\epsilon.
\end{equation}

\item
\Blue{For any $k\in \bN$, there exist $m_0=m_0(k)$ and $\epsilon_0=\epsilon_0(k)$ such that 
\begin{equation}\label{eq-Lslopeuniconv}
|\bfL^\NA(\phi_{\epsilon,m})-\bfL'^\infty(\phi_\epsilon)|\le k^{-1}
\end{equation}
for any $0<\epsilon\le \epsilon_0$ and any $m\ge m_0$. 
In particular we have the convergence:
\begin{eqnarray}\label{eq-Lepmconv}
\lim_{m\rightarrow+\infty} \bL^\NA(\phi_{\epsilon,m})&=&\lim_{s\rightarrow+\infty} \frac{\bL_{(Y, B_\epsilon)}(\vphi_\epsilon(s))}{\Blue{s}}=:\bL'^\infty(\Phi_\epsilon).
\label{eq-limLNAm}
\end{eqnarray}}
Moreover we have the convergence:
\begin{eqnarray}\label{eq-Lepconv}
\lim_{\epsilon\rightarrow 0}\bL'^\infty(\Phi_\epsilon)&=&\bL'^\infty(\Phi). \label{eq-limLNAep}
\end{eqnarray}

\end{enumerate}
\end{prop}
\begin{proof}
\Blue{First, by the valuative description of multiplier ideal sheaves, 
we have the following inequalities proved in \cite[Lemma B.4]{BBJ18}. 
For any $w\in \Blue{(Y\times\bC)^{\mathrm{div}}_\bQ}$, 
\begin{equation}\label{eq-wJPhim}
 w(\Blue{\mcJ(m\tilde{\Phi})})\le  m\; w(\tilde{\Phi})\le w(\Blue{\mcJ(m\tilde{\Phi})})+ A_{Y_\bC}(w)
\end{equation}
where $\tilde{\Phi}=\mu^*\Phi$. }
So we get the following inequality for functions defined in \eqref{eq-hepsm} and \eqref{eq-heps}:
\begin{eqnarray*}
h_\epsilon(w)&\le& h_{\epsilon, m}(w)\le h_\epsilon(w)+\frac{1}{m}A_{Y_\bC}(w)\le 2 A_{\bC}(w)-w((\Delta_0)_\bC)+w(F_\bC).
\end{eqnarray*}

So there exists $C_1>0$ such that
\begin{equation}\label{eq-hepless}
\inf_{w\in \mathfrak{W}}h_\epsilon(w)\le \inf_{w\in \Blue{\mathfrak{W}}}h_{\epsilon, m}(w)\le C_1.
\end{equation}
Let $W_{\epsilon,m}:=\{w\in \mathfrak{W}; h_{\epsilon, m}\le C_1+1\}$. Then 
\Blue{\begin{equation}\label{eq-bIepm}
\inf_{w\in \mathfrak{W}}h_{\epsilon}(w)=\inf_{w\in W_{\epsilon,m}}h_\epsilon(w)=:\textbf{I}_\epsilon, \quad \inf_{w\in \mathfrak{W}}h_{\epsilon,m}(w)=\inf_{w\in W_{\epsilon,m}}h_{\epsilon, m}(w)=:\textbf{I}_{\epsilon,m}.
\end{equation}}
For any $w\in W_{\epsilon, m}$, we have:
\begin{eqnarray*}
A_{Y_\bC}(w)&\le&C_1+1+w((\Delta_0)_\bC)-w(F_\bC)+\frac{1}{1+\epsilon}\frac{1}{m}w(\Blue{\mcJ(m\tilde{\Phi})})+\frac{\epsilon}{1+\epsilon}w((E_\theta)_\bC)\\
&\le& C_1+1+w((\Delta_0)_\bC)-w(F_\bC)+\frac{1}{1+\epsilon}\Blue{w(\tilde{\Phi})}+\frac{\epsilon}{1+\epsilon}w((E_\theta)_\bC)\\
&\le& C_1+1+w((\Delta_0)_\bC)+\Blue{w(\tilde{\Phi})}+w((E_\theta)_\bC)\\
&\le&C_1+1+C_2+(1-\tau)A_{Y_\bC}(w).
\end{eqnarray*}
The last inequality is by \cite[Lemma 5.5]{BBJ18}. So if we let $K=\frac{C_1+1+C_2}{\tau}$, then $W_{\epsilon,m}\subseteq \mathfrak{W}_K$ (see \eqref{eq-fWK}) for any $\epsilon, m$ \Blue{which implies $\inf_{w\in \mathfrak{W}} h_{\epsilon,m}\le \inf_{w\in \mathfrak{W}_K} h_{\epsilon,m}\le \inf_{w\in W_{\epsilon,m}}h_{\epsilon,m}$. So by \eqref{eq-bIepm} and \eqref{eq-hepless} we get:}
\begin{equation}
{\bf I}_\epsilon=\inf_{w\in \mathfrak{W}_K}h_\epsilon(w), \quad {\bf I}_{\epsilon,m}=\inf_{w\in \mathfrak{W}_K}h_{\epsilon, m}(w).
\end{equation}
This proves the statement in (1).

Moreover, for any $w\in \mathfrak{W}_K$ we then have:
\begin{equation}
h_\epsilon(w)\le h_{\epsilon,m}(w)\le h_\epsilon(w)+\frac{K}{m}.
\end{equation}
This proves the first estimate in \eqref{eq-unidiffh}. The second inequality was proved in \cite[Proposition 4.6]{LTW19}.
Finally the estimate \eqref{eq-Lslopeuniconv} follows from the first two statements, and the limit \eqref{eq-limLNAep} also follows formally from the second estimate in \eqref{eq-unidiffh}.

\end{proof}
 The following proposition says that the infimum in \eqref{eq-LBexpan} can be taken among $\bG$-invariant valuations.
\begin{prop}\label{prop-infGinv}
For any $0\le \epsilon \ll 1$, let $\Phi_\epsilon=\{\vphi_\epsilon(s)\} \subset (\mcE^1(Y, L_\epsilon))^\bK\times\bR$ be as before.
If we let $\mathfrak{W}^\bG$ denote the set of $\bC^*\times\bG$ invariant divisorial valuations $w$ on $Y\times\bC$ with $w(t)=1$. Then we have:
\begin{equation}\label{eq-LslopeGinv}
\bL'^\infty(\Phi_{\epsilon})=\inf_{w\in \mathfrak{W}^\bG} h_{\epsilon}(w)-1.
\end{equation}
\end{prop}
\begin{proof}

Note that $\Phi_{\epsilon,m}$ is associated to $\bC^*\times\bG$-equivariant test configuration $(\mcY_{\epsilon,m}, \mcB_{\epsilon,m}, \mcL_{\epsilon,m})$.
By choosing a $\bC^*\times\bG$-equivariant log resolutions in Remark \ref{rem-calLNA} and arguing as in the proof of the above proposition, we see that the following infimum calculating $\bL'^\infty(\Phi_{\epsilon,m})$ can be taken over $\mathfrak{W}^\bG\cap \mathfrak{W}_K$:
\begin{equation*}
\bL'^\infty(\Phi_{\epsilon,m})=\inf_{w\in \mathfrak{W}} h_{\epsilon,m}(w)=\inf_{w\in \mathfrak{W}^\bG\cap \mathfrak{W}_K}h_{\epsilon,m}(w)
\end{equation*} 
For $\bL'^\infty(\Phi_\epsilon)$, we can use \eqref{eq-unidiffh} to estimate:
\begin{equation}
\left|\inf_{w\in \mathfrak{W}^\bG\cap \mathfrak{W}_K}h_\epsilon-\inf_{w\in \mathfrak{W}^\bG\cap \mathfrak{W}_K}h_{\epsilon,m}\right|\le C'\frac{1}{m}
\end{equation}
So we can let $m\rightarrow+\infty$ and use \eqref{eq-limLNAm} to conclude.

\end{proof}

\subsection{Step 4: Completion of the proof}\label{sec-step4}

With the above preparations, we can complete the proof of our main result. On the one hand,
by \eqref{eq-Dvphidec}, 
\begin{eqnarray}\label{eq-Dslopeneg}
\bL'^\infty(\Phi)&=&\lim_{s\rightarrow+\infty} \frac{\bL(\vphi(s))}{\Blue{s}}
=
\lim_{s\rightarrow+\infty} \frac{\bfD(\vphi(s))}{s}+\lim_{s\rightarrow+\infty}\frac{\bfE(\vphi(s))}{s}\nonumber\\
&\le& \Blue{0+\bfE'^\infty(\Phi)}=-1.
\end{eqnarray}
\Blue{
For any $k\in \bN$, by using Proposition \ref{prop-infGinv} for $\epsilon=0$, we can choose a sequence of $\bG$-invariant divisorial valuations $v_k\in \Blue{\Xdiv}$ such that
\begin{equation}\label{eq-vkappr}
\bL'^\infty(\Phi)\le A_{(Y,B_0)}(v_k)-G(v_k)(\mu^*\Phi)<\bL'^\infty(\Phi)+\frac{1}{k},
\end{equation}
and $A_{(X,D)}(v_k)\le K-1$ where the constant $K$ is from Proposition \ref{prop-uniformLest}. 
Here we used the identity $h_0(G(v_k))-1=A_{(Y,B_0)}(v_k)-G(v_k)(\mu^*\Phi)$ obtained from the expression \eqref{eq-heps}.}
Note that $\bfL'^\infty(\Phi)$ is indeed finite by \cite[Theorem 5.4]{BBJ18}.

By Corollary \ref{cor-Gunival}, there exist $\delta=\delta_\bG(X,D)>1$ and $\xi_k\in N_\bR$ such that 
\begin{equation}\label{eq-Avkxi}
A_{(X,D)}(v_{k,\xi_k})\ge \delta S_{L}(v_{k,\xi_k})
\end{equation} 
where $L=-K_X-D$. 
We claim that $|\xi_k|$ is uniformly bounded. To see this first recall that $\Fut_{(Z,D)}\equiv 0$ on $\mathfrak{t}$ under the assumption of $\bG$-uniform Ding-stability. By using \eqref{eq-betavxi}, we then have
\begin{eqnarray}
0\le A_{(X,D)}(v_{k,\xi_k})-\delta S_{L}(v_{k,\xi_k})&=&\delta (A_{X,D}(v_{k,\xi_k})-S_{L}(v_{k,\xi_k}))-(\delta-1)A_{(X,D)}(v_{k,\xi_k})\nonumber \\
&=&\delta (A_{(X,D)}(v_k)-S_{L}(v_k))-(\delta-1)A_{(X,D)}(v_{k, \xi_k}).
\end{eqnarray}
So we get the estimate:
\begin{eqnarray*}
A_{(X,D)}(v_{k,\xi_k})\le \frac{\delta}{\delta-1}A_{(X,D)}(v_k)\le \frac{\delta}{\delta-1}(K-1)=C_1.
\end{eqnarray*}
This implies $|\xi_k|\le C_2$ for some $C_2$ independent of $k$. Indeed, we have $S_{L}(v_{k,\xi_k})\le \delta^{-1} C_1$, which implies $\Lam^\NA(\mcF_{v_{k, \xi_k}})\le (n+1)\delta^{-1} C_1$ (see \eqref{eq-SvsJ}). By the proof of Lemma \ref{lem-JNAproper}, we get $|\xi_k|\le C_2$ for some $C_2>0$ independent of $k$.

If $S_{L}(v_{k,\xi_k})=0$ then  $v_{k,\xi_k}$ is trivial and $S_{{L}_\epsilon}(v_{k,\xi_k})=0$ for $\epsilon\ge 0$. Otherwise, $S_{{L}_\epsilon}(v_{k,\xi_k})\neq 0$ for $0\le \epsilon\ll 1$. \Blue{Note that we can naturally identify the $\bQ$-line bundle $L=-(K_X+D)$ on $X$ with its pull back $\mu^*L=L_0=-K_Y-B_0$ on $Y$. Moreover recall that for any $0\le \epsilon \ll 1$ with $\epsilon\in \bQ$, $L_\epsilon$ is a $\bQ$-line bundle on $Y$ satisfying $-K_Y-B_\epsilon=L_\epsilon$ (see \eqref{eq-KBL}). 
We have the following crucial  estimate similar to \cite[Proposition 4.16]{LTW19}.}
\Blue{\begin{lem}
Let $v$ be any non-trivial divisorial valuation on $X$. For any $0\le \epsilon\ll 1$, set
\begin{equation}
\Theta(\epsilon):=\frac{A_{(Y, B_\epsilon)}(v)(-K_Y-B_\epsilon)^{\cdot n}}{\int_0^\infty \vol_Y(-K_Y-B_\epsilon-x\cdot v)dx}.
\end{equation}
Then there exists $C'>0$ independent of $\epsilon$ and $v$ such that
\begin{equation}\label{eq-Thetaest}
\Theta(\epsilon)\ge (1-C'\epsilon)\Theta(0).
\end{equation}
\end{lem}
\begin{proof}
Assume $v:=q\cdot\ord_F$ with $q\in \bQ_{>0}$. By rescaling invariance of $\Theta$ with respect to $v$, we can assume $q=1$.
We need to estimate:
\begin{equation}\label{eq-3ratios}
\frac{\Theta(\epsilon)}{\Theta(0)}=\frac{A_{(Y,B_\epsilon)}(v)}{A_{(Y,B_0)}(v)}\frac{L_{\epsilon}^{\cdot n}}{L^{\cdot n}}\frac{\int_0^{+\infty}\vol(L-tF)dt}{\int_0^{+\infty}\vol(L_\epsilon-tF)dt}.
\end{equation}
The first factor can be estimated as follows (recall that $B_\epsilon=B_0+\frac{\epsilon}{1+\epsilon}E_\theta$ in \eqref{eq-Bepsilon} and $B_0=\Delta_0-F$ with $\Delta_0$ and $F$ given in \eqref{eq-Delta0F}):
\begin{align*}
\frac{A_{(Y,B_\epsilon)}(v)}{A_{(X,D)}(v)}&=\frac{A_Y(v)-v(B_\epsilon)}{A_Y(v)-v(B_0)}=\frac{A_Y(v)-v(B_0)-\frac{\epsilon}{1+\epsilon}v(E_\theta)}
{A_Y(v)-v(B_0)}\\
&=1-\frac{\epsilon}{1+\epsilon}\left(\frac{A_Y(v)-v(B_0)}{v(E_\theta)}\right)^{-1}\ge 1-\frac{\epsilon}{1+\epsilon}\left(\frac{A_Y(v)-v(\Delta_0)}{v(E_\theta)}\right)^{-1}\\
&\ge1-\frac{\epsilon}{1+\epsilon}\lct(Y,\Delta_0; E_\theta)^{-1}.
\end{align*}
Note that the last quantity on the right-hand-side does not depend on $v$ and approaches 1 as $\epsilon\rightarrow 0$. The second ratio $\frac{L_\epsilon^{\cdot n}}{L^{\cdot n}}=\frac{L_\epsilon^{\cdot n}}{L_0^{\cdot n}}$ does not depend on $v$ and approaches 1 as $\epsilon\rightarrow 0$. We estimate the third ratio in \eqref{eq-3ratios} by simply estimating the integrand:
\begin{equation*}
\vol(L_\epsilon-tF)=\vol(\mu^*L-\frac{\epsilon}{1+\epsilon}E_\theta-tF)\le \vol(\mu^*L-tF)
\end{equation*}
because $E_\theta$ is effective. So the third ratio is always greater than 1. Combining these estimates of three ratios in \eqref{eq-3ratios}, the estimate \eqref{eq-Thetaest} follows easily. 
\end{proof}}
With \eqref{eq-Thetaest} proved, we can set $\delta':=1+\frac{\delta-1}{2}>1$. Then when $\epsilon$ is sufficiently small, we have
\begin{equation}\label{eq-AYBSep}
A_{(Y,B_\epsilon)}(v_{k,\xi_k})=\Theta(\epsilon)\delta S_{{L}_\epsilon}(v_{k,\xi_k})\ge (1-C'\epsilon)\delta S_{{L}_\epsilon}(v_{k,\xi_k})\ge \delta' S_{{L}_\epsilon}(v_{k,\xi_k}).
\end{equation}
\Blue{Now we have all the estimates available to complete the proof. 
First, by \eqref{eq-Lepconv}, there exists $\epsilon_0=\epsilon_0(k)>0$ such that for any $0\le \epsilon\le \epsilon_0$
\begin{equation}\label{eq-final1}
|\bfL'^\infty(\Phi_\epsilon)-\bfL'^\infty(\Phi)|\le k^{-1}.
\end{equation}
By \eqref{eq-Lslopeuniconv}, we can also assume that there exists $m_0=m_0(k)$ such that for $m\ge m_0$ and for any $0<\epsilon<\epsilon_0$:
\begin{equation*}
|\bfL^\NA(\phi_{\epsilon,m})-\bfL'^\infty(\Phi_\epsilon)|\le k^{-1}
\end{equation*}
which together with \eqref{eq-final1} implies:
\begin{equation}\label{eq-final2}
|\bfL^\NA(\phi_{\epsilon,m})-\bfL'^\infty(\Phi)|\le 2k^{-1}.
\end{equation}
Moreover by \eqref{eq-unidiffh}, by possibly replacing $\epsilon_0$ with $\min\{\epsilon_0, C'^{-1}k^{-1}\}$ and replacing $m_0$ with 
$\max\{m_0, C' k\}$, we can assume that $m_0$ and $\epsilon_0$ are chosen such that for any $m\ge m_0$ and any $0<\epsilon<\epsilon_0$, we have
\begin{align*}
|h_{\epsilon,m}(G(v_k))-h_0(G(v_k))|&\le |h_{\epsilon,m}(G(v_k))-h_\epsilon(G(v_k))|+|h_\epsilon(G(v_k))-h_0(G(v_k))|\\
&\le  C' (m_0^{-1}+\epsilon_0)<2 k^{-1}. 
\end{align*}
On the other hand, from the definition \eqref{eq-hepsm}-\eqref{eq-heps}, we know that:
\begin{align}\label{eq-final3}
&h_{\epsilon,m}(G(v_k))-1=A_{(Y,B_\epsilon)}(v_k)+\phi_{\epsilon,m}(v_k), \quad h_0(G(v_k))-1=A_{(X,D)}(v_k)-G(v_k)(\Phi).
\end{align}
So if we combine \eqref{eq-vkappr} with \eqref{eq-final2} and \eqref{eq-final3} , we get if $m\ge m_0$ and $0<\epsilon\le \epsilon_0$, then
\begin{align*}
&\hskip 5mm |A_{(Y,B_\epsilon)}(v_k)+\phi_{\epsilon,m}(v_k)-\bfL^\NA(\phi_{\epsilon,m})|=|h_{\epsilon,m}(G(v_k))-1-\bfL^\NA(\phi_{\epsilon,m})|\\
&\le |h_{\epsilon,m}(G(v_k))-h_0(G(v_k))|+|\bfL^\NA(\phi_{\epsilon,m})-\bfL'^\infty(\Phi)|+|h_0(G(v_k))-1-\bfL'^\infty(\Phi)|\\
&\le 5k^{-1}. 
\end{align*} 
Roughly speaking this means that $v_k$ approximately computes the infimum in the definition of $\bfL^\NA(\phi_{\epsilon,m})$ (see \eqref{eq-Lviah}).
\Blue{In the following estimates, $\psi_\epsilon$ is the smooth reference potential on $L_\epsilon=-(K_{Y}+B_\epsilon)$ defined in \eqref{eq-psiep}. 
$\Phi_\epsilon=\{\vphi_\epsilon(s)=\frac{1}{1+\epsilon}(\vphi(s)+\epsilon\psi_P)\}_{s\in [0, +\infty)}$ and $\Phi_{\epsilon,m,-\xi_k}=\{\sigma_{-\xi_k}(s)^*\vphi_\epsilon(s)\}_{s\in [0, +\infty)}$. }
We can continue to estimate:
for any $m\ge m_0$ and $0<\epsilon\le \epsilon_0$, }
\begin{align*}
&\quad \bL_{(Y, B_\epsilon)}^\NA(\phi_{\epsilon,m})+5k^{-1}\ge A_{(Y, B_\epsilon)}(v_k)+\phi_{\epsilon,m}(v_k) & \\
&=A_{(Y, B_\epsilon)}(v_{k,\xi_k})+\phi_{\epsilon,m,-\xi_k}(v_{k, \xi_k}) & (\text{by } \eqref{eq-A+phivxi}) \\
&\ge \delta' S_{{L}_\epsilon}(v_{k, \xi_k})+\phi_{\epsilon,m,-\xi_k}(v_{k, \xi_k})& (\text{by } \eqref{eq-AYBSep})\\
&=\delta' (S_{{L}_\epsilon}(v_{k, \xi_k})+\delta'^{-1}\phi_{\epsilon,m,-\xi_k}(v_{k, \xi_k})) & (\text{note } \delta'>1)\\
&\ge \delta' \bfE_{{L}_\epsilon}^\NA(\delta'^{-1}\phi_{\epsilon,m,-\xi_k}) & (\text{by \eqref{eq-SL2bfE}}) \\ 
&=\left(-\delta' \bfJ^\NA_{{L}_\epsilon}(\delta'^{-1}\phi_{\epsilon,m,-\xi_k})+\bfJ^\NA_{{L}_\epsilon}(\phi_{\epsilon,m,-\xi_k})\right)+\bfE^\NA_{{L}_\epsilon}(\phi_{\epsilon,m,-\xi_k}) & \text{ (by \eqref{eq-JNAphi}  and \eqref{eq-FeqTC}) }
\\
&\ge(1-\delta'^{-1/n})\bfJ^\NA_{{L}_\epsilon}(\phi_{\epsilon,m,-\xi_k})+\bfE^\NA_{{L}_\epsilon}(\phi_{\epsilon,m,-\xi_k}) & (\text{by } \eqref{eq-NADingineq})\\ 
&=(1-\delta'^{-1/n})({\bf \Lambda}'^\infty_{\psi_\epsilon}(\Phi_{\epsilon,m,-\xi_k})-\bfE'^\infty_{\psi_\epsilon}(\Phi_{\epsilon,m,-\xi_k}))+\bfE'^\infty_{\psi_\epsilon}(\Phi_{\epsilon,m,-\xi_k}) 
& (\text{by Proposition \ref{prop-BHJslope}})
\\
&=(1-\delta'^{-1/n}){\bf \Lambda}'^\infty_{\psi_\epsilon}(\Phi_{\epsilon,m,-\xi_k})+\delta'^{-1/n} \bfE'^\infty_{\psi_\epsilon}(\Phi_{\epsilon,m,-\xi_k}) & \text{(re-arrange)}\\
&\ge(1-\delta'^{-1/n}){\bf \Lambda}'^\infty_{\psi_\epsilon}(\Phi_{\epsilon,-\xi_k})+\delta'^{-1/n}\bfE'^\infty_{{\psi}_\epsilon}(\Phi_{\epsilon,-\xi_k})& (\text{by } \eqref{eq-ENAepmlb}-\eqref{eq-KNAepmlb})\\
&=(1-\delta'^{-1/n})\bfJ'^\infty_{{\psi}_\epsilon}(\Phi_{\epsilon,-\xi_k})+\bfE'^\infty_{{\psi}_\epsilon}(\Phi_{\epsilon,-\xi_k}). & \text{(re-arrange)}
\end{align*}

Letting $m\rightarrow+\infty$ and using \eqref{eq-limLNAm}, we get the following inequality:
\begin{eqnarray*}
\bL'^\infty_{(Y, B_\epsilon)}({\Phi}_\epsilon)+5 k^{-1}\ge (1-\delta'^{-1/n}) \bfJ'^\infty_{{\psi}_\epsilon}(\Phi_{\epsilon,-\xi_k})+ \bfE'^\infty_{{\psi}_\epsilon}(\Phi_{\epsilon, -\xi_k}).
\end{eqnarray*}
\Blue{Observe that $\bfE'^\infty_{\psi_\epsilon}(\Phi_{\epsilon, -\xi_k})=(1+\epsilon)^{-1}\bfE'^\infty_{\hat{\psi}_\epsilon}(\hat{\Phi}_{\epsilon,-\xi_k})$}. 
So we can let $\epsilon\rightarrow 0$, and use \eqref{eq-limLNAep} and \eqref{eq-limENAep}-\eqref{eq-limKNAep} to get:
\begin{align}
\bL'^\infty(\Phi)+5 k^{-1}&\ge (1-\delta'^{-1/n})\bfJ'^\infty(\Phi_{-\xi_k})+\bfE'^\infty(\Phi_{-\xi_k}) & \nonumber\\
&=(1-\delta'^{-1/n})\bfJ'^\infty(\Phi_{-\xi_k})+\bfE'^\infty(\Phi) & (\text{by } \Fut(\xi_k)=0)\nonumber \\
&\ge(1-\delta'^{-1/n})\chi-1. & (\text{ by Corollary \ref{cor-minchi} })  \nonumber
\end{align}
But when $k\gg 1$, this contradicts \eqref{eq-Dslopeneg} because $\chi>0$.
\Blue{\begin{rem}\label{rem-Hisamoto}
In the special case when $X$ is smooth and $D=0$, Hisamoto claimed to prove Theorem \ref{thm-YTD} involving only Ding-stability in the first version of \cite{His19}, with a different argument which also depends on Berman-Boucksom-Jonsson's variational approach. 
Hisamoto's original argument was however not complete and corrections have been made in a recent revision based on the uniform boundedness of $\xi_k$ and the monotonicity of $\Lam$ from the above proof.
\end{rem}
}

\appendix

\section{$\bG$-equivariant versions of results from \cite{Fuj19a, LX14}
}\label{app-MMP}
\Blue{Let $X$ be a normal projective variety and $D$ be a $\bQ$-divisor. Assume that $(X, D)$ is a log Fano pair, which means that $L:=-(K_X+D)$ is an ample $\bQ$-Cartier divisor and $(X, D)$ has at worst klt singularities.}
In this section we explain that the minimal model program (MMP) techniques in \cite{LX14} can be applied in our $\bG$-equivariant setting to simplify $\bG$-equivariant test configurations. This allows us to prove the following $\bG$-equivariant version of the result from \cite{Fuj19a}.
\begin{thm}
Assume that $\bG$ is a connected reductive group acting algebraically on $(X, D, L)$. 
Let $(\mcX, \mcD, \mcL)$ be a $\bG$-equivariant \Blue{test configuration} of $(X, D, L)$. There exist $d\in \bZ_{>0}$ and a $\bG$-equivariant special test configuration $(\mcX^s, \mcD^s, \mcL^s)$ such that for any $\epsilon \in [0,1]$ and any $\xi\in N_\bQ$, we have:
\begin{eqnarray*}
d\left(\bfD^\NA(\mcX_\xi, \mcL_\xi)-\epsilon\cdot \bfJ^\NA(\mcX_\xi, \mcL_\xi)\right)\ge \bfD^\NA(\mcX^s_\xi,  \mcL_\xi)-\epsilon\cdot \bfJ^\NA(\mcX^s_\xi, \mcL^s_\xi).
\end{eqnarray*}
\end{thm}
We will just explain the key points of the original proof that need to be modified to get this result. 
For simplicity of notations, we assume that $D=\emptyset$ like in \cite{LX14}. The logarithmic case can be obtained by running a log MMP and using the same argument (see \cite[section 6]{Fuj19a}).  
\begin{proof}[Sketch of the proof]
There are three main steps of using MMP process in \cite{LX14} to obtain a special test configuration from any given test configuration. Step 1 is to use semistable reduction and run a relative MMP to get the log canonical modification $(\mcX^\lc, \mcL^\lc)$. Step 2 is to run an MMP with rescaling to get $(\mcX^\ac, \mcL^\ac)$ with $\mcL^\ac=-K_{\mcX^\ac}$. Step 3 is to do a Fano extension to get a special test configuration $(\mcX^s, -K_{\mcX^s})$. 
There are essentially two key facts that \Blue{make this process work in a $\bG$-equivariant fashion}. 
The first is the well-known fact that resolution of singularities can be carried out in the $\bG$-equivariant fashion. This follows from the existence of functorial resolution of singularities (see \cite{Kol07}).  The second fact is that, under the assumption that $\bG$ is connected, the outputs of MMP are automatically $\bG$-equivariant. 
Indeed, it is enough to see that the extremal contractions are $\bG$-equivariant, since then the flips are also $\bG$-equivariant and the result from \cite{BCHM} including termination applies directly. A quick way to get this $\bG$-equivariance is by using a result of Blanchard in the following general form \footnote{The author learned this application of Blanchard's result to the equivariant MMP from \cite{Pas17}.}:
\begin{thm}[{\cite[Proposition 4.2.1]{BSU13}, see also \cite[\S 2.4]{Akh95}}]
Let $f: X\rightarrow Y$ be a proper morphism of varieties (or even general schemes) such that $f_*(\mcO_X)=\mcO_Y$. Let $\bG$ be a connected group scheme acting on $X$. Then there exists a unique $\bG$-action on $Y$ such that $f$ is $\bG$-equivariant.
\end{thm}
Roughly speaking, this says that an algebraic action by a connected group $\bG$ moves points in the same fibre to points in the same fibre. Note that this theorem applies directly to any extremal contraction $f$ in MMP, which by definition satisfies $f_*\mcO_X=\mcO_Y$ (see \cite[Definition 1.25]{KM98}). 
Alternatively, as pointed out in \cite[1.5]{And01} and \cite[pg. 228]{LX14}, the $\bG$-equivariance of the MMP comes from the facts the connected group $\bG$ carries any curve to a numerically equivalent curve, and that an extremal contraction contracts all and only the set of numerically equivalent curves in an extremal ray.

Moreover, because the intersection numbers are functorial under base change and birational morphisms, we can verify the inequality in the theorem by adapting the calculation in \cite{Fuj18} twisted by base change and by birational map $\bar{\sigma}_{b\xi}$ away from the central fiber. We will now write down the details of calculations for each step. 

\begin{enumerate}
\item Step 1: Using the same argument as in \cite[Proof of Lemma 5]{LX14} by replacing $\bC^*$ by $\bC^*\times \bG$ (which is based on the existence of functorial resolution of singularities), we know that there exist a base change $z^d: \bC\rightarrow \bC$ and a semistable family $\mcY$ over $\bC$ with a $(\bC^*\times \bG)$-equivariant morphism $\pi: \mcY\rightarrow \tilde{\mcX}$ that is a log resolution of $(\tilde{\mcX}, \tilde{\mcX}_0)$, where $\tilde{\mcX}$ is the normalization of $(\mcX\times_{\bC, z^d}\bC)$ with a natural morphism ${\rm m}_d: \tilde{\mcX}\rightarrow \mcX$. Set 
\begin{equation}
\rho: \mcX^{\rm lc}={\rm Proj}\; R(\mcY/\tilde{\mcX}, K_{\mcY})\rightarrow \tilde{\mcX}.
\end{equation}
\Blue{Then the projective morphism $\rho$ is $(\bC^*\times\bG)$-equivariant and is the log canonical modification of $(\tilde{\mcX}, \tilde{\mcX}_0)$, which means that $(\mcX^\lc, \mcX^\lc_0)$ has log canonical singularities and $K_{\mcX^\lc}+\mcX^\lc_0$ is ample over $\tilde{\mcX}$. See \cite[Proposition 2]{LX14}. }

Set $\mcL^\lc_0=\pi^*{\rm m}_d^*\mcL$ and let $E$ be the $\bQ$-divisor on $\mcX^\lc$ defined by
$$
{\rm Supp}(E)\subset \mcX^\lc_0, \quad E \sim_\bQ K_{\mcX^\lc/\bC}+\mcL^\lc_0.
$$
Set $\mcL^\lc_t=\mcL^\lc_0+tE$. Because $E$ is relatively ample over $\tilde{\mcX}$, $(\mcX^\lc, \mcL^\lc_t)/\bC$ is a normal, ample test configuration for $(X, -K_X)$ for $0<t\ll 1$ (see \cite[Theorem 2]{LX14}), which is $(\bC^*\times\bG)$-equivariant. 
Let $\mcX^\lc_0=\sum_{i=1}^p E_i$ be the irreducible decomposition and set $E:=\sum_{i=1}^p e_i E_i$. Assume $e_1\le \cdots\le e_p$. Set $\Delta_t:=-K_{\mcX^\lc}-\mcL^\lc_t=-(1+t)E$. Because $(\mcX^\lc, \mcX^\lc_0)$ is log canonical,  we can calculate:
\begin{equation}
\bL^\NA(\mcX^\lc, \mcL^\lc_t)=\lct(\mcX^\lc, \Delta_t; \mcX^\lc_0)=1+(1+t)e_1.
\end{equation}

Choose $b\in \bZ_{>0}$ such that $b\xi\in N_\bZ$. We consider the following commutative diagrams, where $\mcZ$ is the normalization of the graph $\bar{\sigma}_{b\xi}\circ \mathfrak{i}_{b\eta}$. 
\begin{equation}\label{eq-diagtwist}
\xymatrix{
& \ar_{\Pi}[ld] \mcZ   \ar^{\Theta}[rd] & \\
(X\times\bP^1)^{(b)}  \ar^{{\rm m}_b}[d] \ar^{\mathfrak{i}_{b\eta}}@{-->}[r] & (\mcX^\lc)^{(b)} \ar^{{\rm m}_b}[d]  \ar^{\bar{\sigma}_{b\xi}}@{-->}[r] & (\mcX^\lc)^{(b)} \ar^{{\rm m}_b}[d]   \\
X\times\bP^1\ar^{\mathfrak{i}_\eta}@{-->}[r]  & \mcX^\lc & \mcX^\lc 
}
\end{equation}
For simplicity of notations, set $\tilde{\phi}_{t, b\xi}:=\Theta^*{\rm m}_b^*\bar{\mcL}^\lc_t$ and $\tilde{\psi}:=\Pi^*{\rm m}_b^* p_1^*(-K_X)$. Note that $\bfD^\NA$ and $\bfL^\NA$ are multiplicative under base change (see \cite[Proposition 2.5.(3)]{Fuj18}). Moreover, $\bL^\NA$ is invariant under twisting: $\bL^\NA(\mcX^\lc_\xi, \mcL^\lc_{t, \xi})=\bL(\mcX^\lc,  \mcL^\lc_{t})$ (by \eqref{eq-bLYBtwist}). Then we can calculate:
\begin{eqnarray*}
&&b V\cdot  (\bfD^\NA(\mcX^\lc_\xi, (\mcL^\lc_t)_\xi)-\epsilon \bfJ^\NA(\mcX^\lc_\xi, (\mcL^\lc_t)_\xi))\\
&=&V\cdot \left((1-\epsilon)\bfE^\NA+\bfL^\NA-\epsilon\Lam^\NA\right)(\mcX^\lc_{b\xi}, (\mcL^\lc_t)_{b\xi})\\
&=&
-\frac{1-\epsilon}{n+1}\tilde{\phi}_{t,b\xi}^{\cdot n+1}+1+(1+t) e_1 V-\epsilon \tilde{\psi}^{\cdot n}\cdot \tilde{\phi}_{t,b\xi}.
\end{eqnarray*}
Taking derivative with respect to $t$, we get, for $0\le t\ll 1$:
\begin{eqnarray*}
&&b V\cdot \frac{d}{dt}\left[\bfD^\NA(\mcX^\lc_\xi, (\mcL^\lc_t)_\xi)-\epsilon \bfJ^\NA(\mcX^\lc_\xi, (\mcL^\lc_t)_\xi)\right]\\
&=&-(1-\epsilon)\tilde{\phi}_{t,b\xi}^{\cdot n}\cdot \Theta^*{\rm m}_b^*E+e_1 V-\epsilon \tilde{\psi}^{\cdot n}\cdot \Theta^*{\rm m}_b^*E\\
&=&-(1-\epsilon)\tilde{\phi}^{\cdot n}_{t, b\xi}\cdot \Theta^*{\rm m}_b^*\sum_{j=1}^p(e_j-e_1)E_j-\epsilon \tilde{\psi}^{n-1}\cdot \Theta^*{\rm m}_b^*\sum_{j=1}^p (e_j-e_1)E_j\le 0.
\end{eqnarray*}
The last inequality uses the relative nefness of $\tilde{\phi}_{t,b\xi}$ and $\tilde{\psi}$. 
After integration, we get, for any $0\le t\ll 1$:
\begin{eqnarray}
d\cdot \left(\bfD^\NA(\mcX_\xi, \mcL_\xi)-\epsilon \bfJ^\NA(\mcX_\xi, \mcL_\xi)\right)&=&\bfD^\NA(\mcX^\lc_\xi, (\mcL^\lc_0)_\xi)-\epsilon \bfJ^\NA(\mcX^\lc_\xi, (\mcL^\lc_0)_\xi)\nonumber \\
&\ge& \bfD^\NA(\mcX^\lc_\xi, (\mcL^\lc_t)_\xi)-\epsilon \bfJ^\NA(\mcX^\lc_\xi, (\mcL^\lc_t)_\xi). \label{eq-step1}
\end{eqnarray}
We set $\mcL^\lc=\mcL^\lc_t$ for some fixed $t\in \bQ_{>0}$ sufficiently small.
\item Step 2: With the $(\mcX^\lc, \mcL^\lc)$ from the first step, we run a relative MMP with scaling to get a normal, ample test configuration $(\mcX^\ac, \mcL^\ac)$  for $(X, -K_X)$ such that $(\mcX^\ac, \mcX^\ac_0)$ log canonical and $-K_{\mcX^\ac}\sim_{\bQ}\mcL^\ac$ \Blue{(the superscript ``ac" stands for ``anti-canonical")}. More concretely, take $\ell \gg 1$ such that $\mcH^\lc=\mcL^\lc-(\ell+1)^{-1}(\mcL^\lc+K_{\mcX^\lc})$ is relatively ample. Set $\mcX^0=\mcX^\lc$, $\mcL^0=\mcL^\lc$, $\mcL^0=\mcH^\lc$ and $\lambda_0=\ell_0+1$. Then $K_{\mcX^0}+\lambda_0\mcH^0=\ell \mcL^0$. We run a $K_{\mcX^0}$-MMP over $\bC$ with scaling $\mcH^0$. Then we obtain a sequence of models:
\begin{equation*}
\mcX^0\dasharrow \mcX^1\dasharrow \cdots \dasharrow \mcX^k
\end{equation*}
and a sequence of critical values 
\begin{equation*}
\lambda_{i+1}=\min\{\lambda; K_{\mcX^i}+\lambda \mcH^i \text{ is nef over } \bC\}
\end{equation*}
with $\ell+1=\lambda_0\ge \lambda_1\ge \cdots \ge \lambda_k>\lambda_{k+1}=1$. For any $\lambda_i\ge \lambda\ge \lambda_{i+1}$, let $\mcH^i$ be the pushforward of $\mcH$ to $\mcX^i$ and set:
\begin{equation*}
\mcL^i_\lambda=\frac{1}{\lambda-1}(K_{\mcX^i}+\lambda \mcH^i)=\frac{1}{\lambda-1}(K_{\mcX^i}+\mcH^i)+\mcH^i=:\frac{1}{\lambda-1}E+\mcH^i. 
\end{equation*}
By the earlier discussion, $(\mcX^i, \mcL^i)$ is indeed automatically $(\bC^*\times\bG)$-equivariant and $E$ is a $\bG$-invariant divisor supported on the central fibre $\mcX^i_0$. 

Write $E=\sum^k_{j=1}e_j\mcX^i_{0,j}$ with $e_1\le e_2\le \cdots\le e_k$. Using similar diagram and notations as in Step 1, we can calculate
\begin{equation*}
Vb\cdot \left(\bfD^\NA(\mcX^i_\xi, \mcL^i_\xi)-\epsilon \bfJ^\NA(\mcX^i_\xi, \mcL^i_\xi)\right)=-(1-\epsilon)\frac{\tilde{\phi}_{\lambda,b\xi}^{\cdot n+1}}{n+1}-\epsilon \tilde{\psi}^{\cdot n} \cdot \tilde{\phi}_{\lambda,b\xi}+\frac{\lambda}{\lambda-1}e_1 V
\end{equation*}
whose derivative with respect to $\lambda$ is given by:
\begin{equation}\label{eq-der2}
\frac{1}{(\lambda-1)^2}\sum_i \left((1-\epsilon)\tilde{\phi}_{\lambda,b\xi}^{\cdot n}+\epsilon \tilde{\psi}^{\cdot n}\right)\cdot (e_i-e_1) \Theta^*{\rm m}_b^*E_i\ge 0.
\end{equation}
As in \cite{Fuj19a, LX14}, we verify easily that 
$\bfF^\NA(\mcX^i_\xi, (\mcL^i_{\lambda_{i+1}})_\xi)=\bfF^\NA(\mcX^{i+1}_{\xi}, (\mcL^{i+1}_{\lambda_{i+1}})_\xi)$ for $\bfF\in \{\bfD, \bfJ\}$. 
Moreover by \cite[Lemma 2]{LX14}, we know that $K_{\mcX^k}+\mcL^k_{\lambda_k}\sim_{\bQ} 0$. Set $\mcX^\ac={\rm Proj}\; R(\mcX^k/\bC, \mcL^k_{\lambda_k})$ and $\mcL^\ac=-K_{\mcX^\ac}$. 

After integrating \eqref{eq-der2} over each subinterval $[\lambda_{i+1}, \lambda_{i}]$ and summing up, we then get, for any $\xi\in N_\bQ$:
\begin{equation}\label{eq-step2}
\bfD^\NA(\mcX^\lc_\xi, \mcL^\lc_\xi)-\epsilon \bfJ^\NA(\mcX^\lc_\xi, \mcL^\lc_\xi)\ge \bfD^\NA(\mcX^\ac_\xi, \mcL^\ac_\xi)-\epsilon \bfJ^\NA(\mcX^\ac_\xi, \mcL^\ac_\xi). 
\end{equation}
\item Step 3: Combining the \cite[Theorem 6]{LX14} with the previous discussion on the equivariant MMP, we know that by a base change $\tilde{\mcX}^\ac=\mcX^\ac\times_{\bC, z^d}\bC$ and running an appropriate $(\bC^*\times\bG)$-equivariant MMP, we can get a $(\bC^*\times \bG)$-equivariant diagram:
\begin{equation}\label{eq-Fanoext}
\xymatrix{
& & \ar_{p}[ld] \hat{\mcX}   \ar^{q}[rd] & \\
\tilde{\mcX}^\ac  & \ar_{\pi'}[l] \mcX' \ar@{-->}[rr] &  & \mcX^s
}
\end{equation}
which satisfies $A(\mcX^s_0; \tilde{\mcX}^\ac, \tilde{\mcX}^\ac_0)=0$ and $\pi'$ exactly extracts the divisor $\mcX^s_0$ \Blue{(here the superscript ``s" stands for ``special")}. Then we have $\pi'^*K_{\tilde{\mcX}^\ac}=K_{\mcX'}$ and, with $\mcL^\ac=-K_{\mcX^\ac}$ (resp. $\tilde{\mcL}^\ac=-K_{\tilde{\mcX}^\ac}$) and $\mcL'=-K_{\mcX'}$, 
\begin{eqnarray*}
d \cdot (\bfD^\NA(\mcX^\ac_\xi, \mcL^\ac_\xi)-\epsilon\bfJ^\NA(\mcX^\ac_\xi, \mcL^\ac_\xi))&=&\bfD^\NA(\tilde{\mcX}^\ac_\xi, \tilde{\mcL}^\ac_\xi)-\epsilon \bfJ^\NA(\tilde{\mcX}^\ac_\xi, \tilde{\mcL}^\ac_\xi)\\
&=&\bfD^\NA(\mcX'_\xi, \mcL'_\xi)-\epsilon \bfJ^\NA(\mcX'_\xi, \mcL'_\xi).
\end{eqnarray*} 

Set $E=p^*K_{\mcX'}-q^*K_{\mcX^s}=\sum_{i=1}^q e_i E_i$ with $e_1\le \cdots \le e_q$. Then $E\ge 0$ by the negativity lemma. Set $\hat{\mathcal{L}}_\lambda=-p^* K_{\mcX'/\bC}+\lambda E$. Applying the diagram and notations similar to \eqref{eq-diagtwist} in the first step to $(\hat{\mcX}, \hat{\mcL}_\lambda)$, we get:
\begin{eqnarray*}
&&V b\cdot \frac{d}{d\lambda}\left(\bfD^\NA(\hat{\mcX}_\xi, (\hat{\mcL}_\lambda)_{\xi})-\epsilon \bfJ^\NA(\hat{\mcX}_\xi, (\hat{\mcL}_\lambda)_\xi)\right)\\
&=&
V \cdot \frac{d}{d\lambda}\left((1-\epsilon)\frac{\tilde{\phi}_{\lambda, b\xi}^{\cdot n+1}}{n+1}-\epsilon \tilde{\psi}^{\cdot n}\cdot \tilde{\phi}_{\lambda, b\xi} +\lambda e_1\right)\\
&=&-\sum_{i=1}^q \left((1-\epsilon)\tilde{\phi}_{\lambda, b\xi}^{\cdot n}+\epsilon \tilde{\psi}^{\cdot n}\right)\cdot (e_i-e_1)E_i\le 0.
\end{eqnarray*}
After integration we get:
\begin{eqnarray}
d \left(\bfD^\NA(\mcX^\ac_\xi, \mcL^\ac_\xi)-\epsilon \bfJ^\NA(\mcX^\ac_\xi, \mcL^\ac_\xi)\right)
&=& \bfD^\NA(\hat{\mcX}_\xi, (\hat{\mcL}_0)_\xi)-\epsilon \bfJ^\NA(\hat{\mcX}_\xi, (\hat{\mcL}_0)_\xi)\nonumber \\
&\ge& \bfD^\NA(\hat{\mcX}_\xi, (\hat{\mcL}_1)_\xi)-\epsilon \bfJ^\NA(\hat{\mcX}_\xi, (\hat{\mcL}_1)_\xi)
\nonumber \\
&=&\bfD^\NA(\mcX^s_\xi, \mcL^s_\xi)-\epsilon \bfJ^\NA(\mcX^s_\xi, \mcL^s_\xi). \label{eq-step3}
\end{eqnarray}
\end{enumerate}
Finally, combining the inequalities \eqref{eq-step1}, \eqref{eq-step2} and \eqref{eq-step3} from the above three steps, we get the conclusion.

\end{proof}

\section{Some properties of reductive groups}\label{app-Yu}
{\large{\bf Jun Yu}\footnote{BICMR, Peking University, junyu@bicmr.pku.edu.cn}}

\vskip 4mm
\begin{prop}\label{prop-Yu1}
Let $G$ be a connected reductive complex Lie group and $K$ be a maximal compact subgroup of $G$. Then
we have $N_G(K)=C(G)\cdot K=C(G)_0\cdot K$ where $C(G)_0$ is the identity component of the center $C(G)$ of $G$.
\end{prop}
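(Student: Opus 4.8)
The statement to prove is Proposition~\ref{prop-Yu1}: for a connected reductive complex Lie group $G$ with maximal compact subgroup $K$, one has $N_G(K) = C(G)\cdot K = C(G)_0\cdot K$.

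\textbf{Plan of proof.} First I would reduce to the semisimple case. Write $G = Z\cdot G'$ where $Z = C(G)_0$ is the central torus and $G' = [G,G]$ is the (connected) semisimple part; then $K = (Z\cap K)\cdot K'$ with $K' = K\cap G'$ a maximal compact subgroup of $G'$, and $Z\cap K$ the maximal compact torus of $Z$. Since $Z$ centralizes everything, it suffices to show $N_{G'}(K') \subseteq C(G')\cdot K' = C(G')\cdot K'$ (the center of a semisimple group being finite, $C(G')\subset K'$, so this says $N_{G'}(K') = K'$); the general statement then follows by combining with the central factor. So the heart of the matter is: \emph{in a connected semisimple complex Lie group, a maximal compact subgroup is self-normalizing.}

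\textbf{Key steps for the semisimple case.} Let $g\in N_{G'}(K')$. The idea is to use the Cartan decomposition $G' = K'\exp(\mathfrak{p})$ (where $\mathfrak{g}' = \mathfrak{k}'\oplus\mathfrak{p}$ is the Cartan decomposition of the Lie algebra, $\mathfrak{p} = i\mathfrak{k}'$), writing $g = k\exp(X)$ with $k\in K'$, $X\in\mathfrak{p}$. Replacing $g$ by $k^{-1}g$, we may assume $g = \exp(X)$. Then $\exp(X)K'\exp(-X) = K'$ forces $\mathrm{Ad}(\exp(X))\mathfrak{k}' = \mathfrak{k}'$, hence $\mathrm{ad}(X)$ maps $\mathfrak{k}'$ into itself; but $\mathrm{ad}(X)$ is symmetric with respect to a $K'$-invariant inner product adapted to the Cartan decomposition (it swaps $\mathfrak{k}'$ and $\mathfrak{p}$ up to the involution — more precisely $\mathrm{ad}(X)\mathfrak{k}'\subseteq\mathfrak{p}$ and $\mathrm{ad}(X)\mathfrak{p}\subseteq\mathfrak{k}'$ since $[\mathfrak{p},\mathfrak{k}']\subseteq\mathfrak{p}$, wait — one uses $[\mathfrak{p},\mathfrak{p}]\subseteq\mathfrak{k}'$ and $[\mathfrak{k}',\mathfrak{p}]\subseteq\mathfrak{p}$). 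From $\mathrm{Ad}(\exp(X))\mathfrak{k}'=\mathfrak{k}'$ and analyticity (looking at the one-parameter group $t\mapsto\mathrm{Ad}(\exp(tX))$, which must then preserve $\mathfrak{k}'$ for all $t$, so $\mathrm{ad}(X)\mathfrak{k}'\subseteq\mathfrak{k}'$), combined with $\mathrm{ad}(X)\mathfrak{k}'\subseteq\mathfrak{p}$, we get $\mathrm{ad}(X)\mathfrak{k}' = 0$; since $\mathfrak{g}'$ is semisimple and $[\mathfrak{k}',\mathfrak{p}] = \mathfrak{p}$ (as $\mathfrak{k}'\oplus\mathfrak{p}$ has no ideal contained in $\mathfrak{p}$), this gives $\mathrm{ad}(X) = 0$ on $\mathfrak{g}'$, so $X$ is central, hence $X = 0$. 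Therefore $g = k\in K'$. Finally, assemble: for general $g\in N_G(K)$, project to $G/Z \cong G'/(\text{finite})$ — or more cleanly, note $N_G(K)\subseteq N_G(K^\circ$-derived part$)$ and use the decomposition directly — to conclude $g\in Z\cdot K = C(G)_0\cdot K$, and note $C(G)_0\cdot K\subseteq C(G)\cdot K\subseteq N_G(K)$ trivially.

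\textbf{Main obstacle.} The technical subtlety is handling the (possibly disconnected) component group and the fact that $C(G)$ need not lie in $K$ — so I must be careful that the Cartan-decomposition argument, which is cleanest for connected semisimple $G'$, correctly transfers information about $N_G(K)$ back. The cleanest route is probably to argue: given $g\in N_G(K)$, its image $\bar g$ in the adjoint group $G_{\mathrm{ad}} = G/C(G)$ normalizes the image $\bar K$ of $K$, which is a maximal compact subgroup of the \emph{connected} semisimple adjoint group; by the self-normalizing property just proved, $\bar g\in\bar K$, so $g\in C(G)\cdot K$. The equality $C(G)\cdot K = C(G)_0\cdot K$ then follows because $C(G)/C(G)_0$ is a finite group all of whose elements, being central and of finite order in a reductive group, can be absorbed: every element of $C(G)$ lies in $C(G)_0\cdot(C(G)\cap K)$ since $C(G)\cap K$ contains the torsion of $C(G)$ together with the maximal compact torus of $C(G)_0$. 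I expect this last bookkeeping step — verifying $C(G) = C(G)_0\cdot(C(G)\cap K)$ — to require the most care, relying on the structure of $C(G)$ as an extension of a finite group by a complex torus and the fact that $K$ meets every component.
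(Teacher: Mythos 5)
Your route is genuinely different from the paper's. The paper reduces to simple factors $G_i$ and observes that, for $\mathfrak{g}$ simple, the compact real form $\mathfrak{k}$ is a \emph{maximal} Lie subalgebra, so $\mathrm{Lie}(N_G(K))$ must equal $\mathfrak{k}$ or $\mathfrak{g}$; ruling out $\mathfrak{g}$ makes $N_G(K)$ compact, and the Cartan decomposition $G=K\exp(\mathfrak{p}_0)$ then forces $N_G(K)=K$ by comparing eigenvalue types. You instead work element-wise: write $g=k\exp(X)$, reduce to $g=\exp(X)$, and show $X=0$ by a Lie-algebra computation. Both approaches use the Cartan decomposition, but yours avoids the maximality-of-$\mathfrak{k}$ fact (which is a real input, requiring simplicity) and extends cleanly to the semisimple case without first splitting into simple factors. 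The tradeoff is that you need a sharper Lie-algebraic argument about what $\mathrm{Ad}(\exp X)\mathfrak{k}'=\mathfrak{k}'$ forces. Your passage to the adjoint group for the final bookkeeping is also a clean alternative to the paper's product decomposition $K=K_0\cdot K_1\cdots K_s$.

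There is, however, a genuine gap at the step you label ``analyticity.'' You claim that $\mathrm{Ad}(\exp X)\mathfrak{k}'=\mathfrak{k}'$ implies $\mathrm{Ad}(\exp tX)\mathfrak{k}'=\mathfrak{k}'$ for all $t$, hence $\mathrm{ad}(X)\mathfrak{k}'\subseteq\mathfrak{k}'$. That inference is not automatic: a linear map preserving a subspace gives no information at all about whether its ``$t$-th powers'' or logarithm do, unless one exploits extra structure. The correct argument uses the Cartan involution $\theta$ and the $K'$-invariant inner product $\langle Y,Z\rangle=-B(Y,\theta Z)$, under which $\mathrm{ad}(X)$ is self-adjoint (because $X\in\mathfrak{p}$). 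Decompose $\mathfrak{g}'=\bigoplus_\lambda\mathfrak{g}_\lambda$ into $\mathrm{ad}(X)$-eigenspaces; since $\theta\,\mathrm{ad}(X)\,\theta=-\mathrm{ad}(X)$, the involution $\theta$ swaps $\mathfrak{g}_\lambda$ and $\mathfrak{g}_{-\lambda}$. For $Y=\sum_\lambda Y_\lambda\in\mathfrak{k}'$ one has $\theta Y_\lambda=Y_{-\lambda}$, and requiring $\mathrm{Ad}(\exp X)Y=\sum_\lambda e^\lambda Y_\lambda$ to be $\theta$-fixed forces $(e^\lambda-e^{-\lambda})Y_\lambda=0$ for every $\lambda$, i.e.\ $Y_\lambda=0$ for $\lambda\neq0$, so $\mathrm{ad}(X)Y=0$. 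This gives $\mathrm{ad}(X)\mathfrak{k}'=0$ directly, after which your conclusion $X=0$ follows from semisimplicity of $\mathfrak{k}'$ and $\mathfrak{g}'=\mathfrak{k}'\oplus i\mathfrak{k}'$. So the conclusion you wanted is right, but ``analyticity'' is not a valid justification; you need the self-adjointness of $\mathrm{ad}(X)$ and the compatibility with $\theta$. With that repair, your alternative proof is sound.
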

\begin{proof}
\Blue{First we can decompose $G=C(G)_0\cdot G_1\cdot G_2\cdots G_s$ where $G_1, \dots, G_s$ are simple factors of $G$. By the connectedness assumption, this follows from the corresponding decomposition of the reductive Lie algebra $\mathfrak{g}=Lie(G)$ (see \cite[Corollary 6.4, Theorem 6.24]{Kir08}). }
Write $K_i=K\cap G_i$, $K_0=
K\cap C(G)_0$. Then $K=K_0\cdot K_1\cdots K_s$ and each $K_i$ is a maximal compact subgroup of $G_i$ ($1\leq i\leq s$). 
Clearly $C(G)_0\subset N_G(K)$. 

Conversely, if $g=g_1\cdot g_2\cdots g_s$ (\Blue{with $g_i\in G_i$}) normalizes $K$, then \Blue{$K=gKg^{-1}=K_0\cdot \prod_{i} g_i K_i g_i^{-1}$ which implies $g_i K_i g_i^{-1}=K\cap G_i=K_i$, i.e. each $g_i$ normalizes $K_i$}. Hence it suffices to show that 
$N_{G_i}(K_i)=K_i$ for each $i (1\le i\le s)$. By this discussion, we may assume that $G$ itself is simple. Write $H=N_G(K)$. 
Then $H$ is a closed subgroup of $G$, and $K$ is a normal subgroup of $H$.

Since $G$ is assumed to be simple, the only Lie subalgebras of $\mathfrak{g}={\rm Lie}(G)$ \Blue{containing $\mathfrak{k}=
{\rm Lie}(K)$} are $\mathfrak{g}$ and $\mathfrak{k}$. Thus $\mathfrak{h}={\rm Lie}(H)=\mathfrak{g}$ or $\mathfrak{k}$. 
When $\mathfrak{h}=\mathfrak{g}$, then $H=G$ which is impossible.

When $\mathfrak{h}=\mathfrak{k}$, $H$ is also compact. Then for any $x\in H$, $\Ad(x)\in {\rm GL}(\mathfrak{g})$ is elliptic 
(i.e. \Blue{the} eigenvalues of $\Ad(x)$ all have norm 1). On the other hand, we have the Cartan decomposition $G=K\exp(\mathfrak{p}_{0})$  
where $\mathfrak{p}_{0}$ is the orthogonal complement of $\mathfrak{k}$ in $\mathfrak{g}$ with respect to the Killing form. 
Since for any $g\in\exp(\mathfrak{p}_{0})$, $\Ad(g)$ has positive real eigenvalues, $H\cap\exp(\mathfrak{p}_{0})=1$. Then
\begin{equation*} 
H=H\cap G=H\cap K\exp(\mathfrak{p}_{0})=K\cap(H\cap\exp(\mathfrak{p}_{0}))=K. 
\end{equation*} 
\end{proof}

\begin{prop}\label{prop-Yu2}
Let $G$ be a connected complex reductive Lie group, and $K_1, K_2$ be two maximal compact subgroups.
Assume that $K_1, K_2$ have a common maximal torus $T$.
Set $T_\bC=C_G(T)$ which is a maximal torus of $G$.
Then the following hold true:
\begin{enumerate}[(1)]
\item $K_2=t K_1 t^{-1}=: \Ad(t)K_1$ for some $t\in T_\bC$.
\item If $K_2=\Ad(t) K_1$, then $K_1=K_2$ if and only if $t\in T$.
\end{enumerate}
\end{prop}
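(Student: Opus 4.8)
\textbf{Proof plan for Proposition \ref{prop-Yu2}.}

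The statement to prove is: given a connected complex reductive Lie group $G$ with two maximal compact subgroups $K_1, K_2$ sharing a common maximal torus $T$, and setting $T_\bC = C_G(T)$, then (1) $K_2 = \Ad(t) K_1$ for some $t \in T_\bC$, and (2) when $K_2 = \Ad(t) K_1$, equality $K_1 = K_2$ holds iff $t \in T$.

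For part (1), the plan is to use the conjugacy of maximal compact subgroups together with a normalizer argument. First I would invoke the classical fact that any two maximal compact subgroups of $G$ are conjugate, so $K_2 = \Ad(g) K_1$ for some $g \in G$. Now $T$ is a maximal torus of both $K_1$ and $K_2 = \Ad(g) K_1$; hence both $T$ and $\Ad(g) T$ are maximal tori of $K_2$, so they are conjugate in $K_2$: there is $k \in K_2$ with $\Ad(k)\Ad(g) T = T$. Replacing $g$ by $kg$ (which does not change $\Ad(g)K_1 = K_2$ since $k \in K_2$), we may assume $\Ad(g) T = T$, i.e. $g \in N_G(T)$. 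The next step is to decompose this normalizer: $N_G(T_\bC) = N_G(T)$ is not quite what we want, but rather we should argue that $N_G(T) = T_\bC \cdot N_{K_1}(T)$, using that $N_G(T) \cap K_1 = N_{K_1}(T)$ maps onto the Weyl group $W = N_G(T_\bC)/T_\bC$ (the Weyl groups of $G$ and of $K_1$ relative to their maximal tori coincide). Writing $g = t \cdot n$ with $t \in T_\bC$ and $n \in N_{K_1}(T) \subset K_1$, we get $\Ad(g) K_1 = \Ad(t)\Ad(n) K_1 = \Ad(t) K_1$, which is the desired conclusion. The main obstacle here is justifying cleanly that $N_G(T) = T_\bC \cdot N_{K_1}(T)$; this rests on the standard Cartan–Killing theory comparing the Weyl group of the complex reductive $G$ with that of its compact form $K_1$, and one should cite or reconstruct that $W(G, T_\bC) \cong W(K_1, T)$ and that $N_G(T)$ normalizes $C_G(T) = T_\bC$.

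For part (2), the ``if'' direction is immediate: if $t \in T \subset K_1$ then $\Ad(t) K_1 = K_1$, so $K_2 = K_1$. For the ``only if'' direction, suppose $K_2 = \Ad(t) K_1 = K_1$ with $t \in T_\bC$; then $t \in N_G(K_1)$. By Proposition \ref{prop-Yu1}, $N_G(K_1) = C(G)_0 \cdot K_1$. So write $t = z \cdot k$ with $z \in C(G)_0$ and $k \in K_1$. Since $t \in T_\bC$ and $z \in C(G)_0 \subset T_\bC$ (the center lies in every maximal torus), we get $k = z^{-1} t \in T_\bC \cap K_1 = T$ (as $T_\bC = C_G(T)$ is a maximal torus of $G$ and its intersection with the maximal compact $K_1$ is precisely the maximal torus $T$). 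Hence $t = z k$ with $z \in C(G)_0$ and $k \in T$. Now $z \in C(G)_0$ is a central element, and in a compact form $C(G)_0 \cap K_1$ is the maximal compact subgroup of the central torus $C(G)_0 \cong (\bC^*)^c$, which is a compact torus contained in $T$; but $z$ need not a priori lie in this compact subgroup. However, $z = t k^{-1} \in T_\bC$, and $C(G)_0 \subset T_\bC$, so in fact $z \in C(G)_0$ automatically, and we must argue $z \in T$: since $z \in C(G)_0 \cap T_\bC = C(G)_0$ and $C(G)_0$'s maximal compact torus sits inside $T$, while $z = t k^{-1}$ with $t \in T_\bC$... here I would instead argue directly that $\Ad(t)$ fixing $K_1$ pointwise-as-a-set, combined with $t \in T_\bC$, forces $\Ad(t)$ to act trivially on $\mathfrak{k}_1 = \mathrm{Lie}(K_1)$: indeed $\Ad(t)$ preserves $\mathfrak{k}_1$ and is semisimple with eigenvalues that are ratios of roots evaluated at $t$; preserving the compact real form forces these eigenvalues to have modulus one, but being a ratio of positive reals (as $t \in T_\bC = T \cdot \exp(i\mathfrak{t})$, write $t = t_0 a$ with $t_0 \in T$, $a \in \exp(i\mathfrak{t})$; then $\Ad(a)$ has positive real eigenvalues on each root space) forces $a$ to act trivially, i.e. all roots vanish on $\log a \in i\mathfrak{t}$, whence $a \in C(G)_0 \cap \exp(i\mathfrak{t})$, and since $C(G)_0 \subset T_\bC$ one gets $a = 1$, so $t = t_0 \in T$. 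I expect this last eigenvalue/positivity argument — essentially the same Cartan-decomposition trick used in the proof of Proposition \ref{prop-Yu1} — to be the main subtlety, and I would present it carefully using the polar decomposition $T_\bC = T \cdot \exp(i\mathfrak{t})$ and the action of $\exp(i\mathfrak{t})$ on root spaces.
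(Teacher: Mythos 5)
Your argument for part (1) is essentially the paper's: conjugacy of maximal compacts gives $K_2=\Ad(g)K_1$; conjugating $\Ad(g)T$ back to $T$ inside $K_2$ lands the modified $g$ in $N_G(T)$; and one then splits $N_G(T)$ as $T_\bC$ times a compact normalizer of $T$. You split through $N_{K_1}(T)$ while the paper uses $N_{K_2}(T)$, but since $T_\bC$ is normal in $N_G(T)$ both factorizations work. Your argument for part (2), after the detour through Proposition~\ref{prop-Yu1}, arrives at the paper's polar-decomposition/root-space computation: write $t=t_0 a$ with $t_0\in T$, $a\in A:=\exp(i\ft)$, observe that $\Ad(a)$ sends $X_\alpha+a_\alpha X_{-\alpha}$ to $\alpha(a)\bigl(X_\alpha+a_\alpha\alpha(a)^{-2}X_{-\alpha}\bigr)$, and conclude that $\Ad(a)\fk_1=\fk_1$ forces $\alpha(a)=1$ for every root $\alpha$.

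The final deduction, ``$\alpha(a)=1$ for all $\alpha$, hence $a=1$'', is a gap that your proposal shares with the paper, and in fact the stated equivalence in (2) fails for non-semisimple $G$. Vanishing of all roots on $\log a$ only shows that $a$ is central; when $\dim C(G)_0>0$, the set $C(G)_0\cap A$ of non-compact central elements is a positive-dimensional group, and every $a$ in it satisfies $\Ad(a)=\mathrm{id}$, hence $\Ad(a)K_1=K_1$, without lying in $T$. The minimal counterexample is $G=\bC^*$: then $K_1=K_2=T=S^1$, $T_\bC=\bC^*$, and $t=2$ gives $\Ad(t)K_1=K_1$ while $t\notin T$. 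Notably, your abandoned first attempt via Proposition~\ref{prop-Yu1} --- factoring $t=zk$ with $z\in C(G)_0$, $k\in T$, and observing that $z$ ``need not a priori lie in'' the compact torus --- had already pinpointed exactly this obstruction; that observation was the one to pursue, not to discard. The correct form of (2) is that $\Ad(t)K_1=K_1$ iff, in the polar decomposition $t=t_0 a$, the non-compact factor $a$ lies in $C(G)_0$, i.e.\ iff $t\in T\cdot(C(G)_0\cap A)$; the statement and your proof (and the paper's) are fine as written when $G$ is semisimple. This does not affect the rest of the paper, since only part (1) is invoked later, in the proof of Proposition~\ref{prop-Yu3}.
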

\begin{proof}
\begin{enumerate}[(1)]
\item It is well-known that any two maximal compact subgroups of $G$ are conjugate. Thus there exists $g\in G$ such that 
$K_2=\Ad(g)K_1$. Then $\Ad(g)T$ and $T$ are maximal tori of $K_2$. Hence there exists $k_2\in K_2$ such that $\Ad(g)T=
\Ad(k_2)T$. Set $g'=k_2^{-1}g$. Then \begin{equation*}
\Ad(g')K_1=\Ad(k_2)\Ad(g)K_1=\Ad(k_2^{-1})K_2=K_2
\end{equation*}
and
\begin{equation*}
\Ad(g')T=\Ad(k_2^{-1})\Ad(g)T=\Ad(k_2)^{-1}\Ad(k_2)T=T.
\end{equation*}
Thus $g'\in N_G(T)$. It is well-known that $T_\bC:=C_G(T)$ is a maximal torus of $G$ and
\begin{equation*}
N_G(T)=N_{K_2}(T)\cdot T_\bC.
\end{equation*}
Write $g'=n\cdot t$ for $n\in N_{K_2}(T)$ and $t\in T_\bC$. Then
\begin{equation*}
K_2=\Ad(n^{-1})K_2=\Ad(n^{-1})\Ad(g')K_1=\Ad(n^{-1}g')K_1=\Ad(t) K_1.
\end{equation*}
\item Set $\fg={\rm Lie}(G)$ and $\ft_\bC={\rm Lie}(T_\bC)$. Then one has a root space decomposition:
\begin{equation*}
\fg=\ft_\bC\bigoplus \left(\bigoplus_{\alpha\in \Delta}\fg_\alpha\right),
\end{equation*}
where $\Delta=\Delta(\fg, \ft_\bC)$ are roots of $\fg$ with respect to $\ft_\bC$ and $\fg_\alpha$ is the root 
space of $\alpha$. It is well-known that each $\fg_\alpha$ has dimension one. Chose $0\neq X_\alpha\in \fg_\alpha$ 
for any $\alpha\in\Delta$. Choose a positive system $\Delta^+\subset \Delta$. It is well-known that 
\begin{equation}\label{eq-LieK1}
\fk_1:={\rm Lie}(K_1)=\ft\bigoplus \left(\bigoplus_{\alpha\in \Delta^+}\big(\bR(X_\alpha+a_\alpha X_{-\alpha})\oplus
\bR\mathbf{i}(X_\alpha+b_\alpha X_\alpha)\big)\right)
\end{equation} 
for some constants $a_\alpha, b_\alpha\in\mathbb{C}^{\times}$ with $a_\alpha\neq b_\alpha$. 

Set $\fa$ to be the orthogonal complement of $\ft$ in $\ft_\bC$ and $A={\rm exp}(\fa)$. Then $T_\bC=AT$. Assume 
$\Ad(t)K_1=K_1$. Clearly $\Ad(t_1)K_1=K_1$ for $t_1\in T\subset K_1$. So one may assume that $t=a\in A$. For any 
$\alpha\in \Delta^+$, $\alpha(a)>0$. Then the Lie algebra of $\Ad(t)K_1=\Ad(a)K_1$ is equal to: 
\begin{equation}\label{eq-AdtK1}
\ft\bigoplus\left(\bigoplus_{\alpha\in \Delta^+}\big(\bR(X_\alpha+a_\alpha \alpha(a)^{-2}X_{-\alpha})\oplus \bR\mathbf{i}
(X_\alpha+b_\alpha \alpha(a)^{-2}X_{-\alpha})\big)\right). 
\end{equation} 
For it to be equal to $\fk_1$, one must have $\alpha(a)^{-2}=1$ for all $\alpha\in \Delta^+$. Then $a=1$. 
\end{enumerate}
\end{proof}
\begin{prop}\label{prop-Yu3}
Let $G$ be a connected complex reductive Lie group, and $K_1, K_2$ be two maximal compact subgroups.
Assume that $K_1, K_2$ have a common compact subgroup $K$ that in turn contains a maximal compact torus $T$ of $G$.
Then $K_2=t K_1 t^{-1}$ for some $t\in C(K_\bC)$ (the center of $K_\bC$).
\end{prop}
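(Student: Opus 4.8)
The statement to be proven, Proposition~\ref{prop-Yu3}, is a hybrid of Propositions~\ref{prop-Yu1} and \ref{prop-Yu2}: we are given two maximal compact subgroups $K_1, K_2$ sharing a common compact subgroup $K$ which contains a maximal compact torus $T$ of $G$, and we must produce $t \in C(K_\bC)$ with $K_2 = tK_1t^{-1}$. The plan is to first enlarge $K$ to a common maximal compact torus and then run the conjugation argument of Proposition~\ref{prop-Yu2}, but keeping track of the extra constraint that the conjugating element must lie in $C(K_\bC)$ rather than just in a maximal torus $T_\bC$ of $G$. Note that $K_\bC$ (the complexification of $K$) is itself a connected reductive complex Lie group, $K$ is a maximal compact subgroup of $K_\bC$, and $T$ is a common maximal torus of $K$ sitting inside both $K_1$ and $K_2$.

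\textbf{Step 1: pin down a common maximal torus.} Since $T$ is a maximal compact torus of $G$ contained in $K$, it is a maximal torus of $K$, hence a fortiori a maximal torus of each $K_i$ (as $K \subseteq K_i$ and $T$ is already maximal in the larger group $G$, it is maximal in $K_i$). So $K_1$ and $K_2$ do have a common maximal torus $T$, and $T_\bC := C_G(T)$ is a maximal torus of $G$. We are now exactly in the setup of Proposition~\ref{prop-Yu2}, which gives $t \in T_\bC$ with $K_2 = \Ad(t)K_1$, and moreover tells us $t$ is unique modulo $T$ (by part (2), since replacing $t$ by $t't$ with $t' \in T$ does not change $\Ad(t)K_1$, while if $\Ad(t_1)K_1 = \Ad(t_2)K_1$ then $t_1t_2^{-1} \in T$).

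\textbf{Step 2: show $t$ can be taken in $C(K_\bC)$.} The new content is to improve ``$t \in T_\bC$'' to ``$t \in C(K_\bC)$''. Here is the key observation: conjugation by $K$ fixes $K$ pointwise (as a set), and since $\Ad(k)K_2 = \Ad(k)\Ad(t)K_1 = \Ad(k)\Ad(t)\Ad(k^{-1}) K_1 = \Ad(ktk^{-1}) K_1$ for any $k \in K$ (using $\Ad(k)K_1 = K_1$), we get $\Ad(ktk^{-1})K_1 = K_2 = \Ad(t)K_1$ for all $k \in K$. By the uniqueness in Step 1, $ktk^{-1} t^{-1} \in T$ for all $k \in K$. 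Now decompose, following the proof of Proposition~\ref{prop-Yu2}(2): write $T_\bC = AT$ where $A = \exp(\mathfrak{a})$, $\mathfrak{a}$ being the orthogonal complement of $\mathfrak{t} = \operatorname{Lie}(T)$ in $\mathfrak{t}_\bC$; then $t = a t_0$ with $a \in A$, $t_0 \in T$, and since $T \subseteq C_G(T)$-commutes appropriately we may as well work with $t = a \in A$. The relation $kak^{-1}a^{-1} \in T$ for all $k \in K$, combined with $A$ being a connected subgroup on which roots of $T_\bC$ take positive values, should force $a$ to centralize $K$: indeed $a$ normalizes $T$, and the map $k \mapsto kak^{-1}a^{-1}$ is a continuous map $K \to T$ that is trivial on $T$ (as $A$ centralizes $T$), so it factors through $K/\!\!\sim$; analyzing this on the root-space level as in \eqref{eq-LieK1}--\eqref{eq-AdtK1} but now for the roots of $\mathfrak{k}_\bC = \operatorname{Lie}(K_\bC)$ with respect to $\mathfrak{t}_\bC$ shows $\alpha(a) = 1$ for every root $\alpha$ of $K_\bC$, i.e. $\Ad(a)$ acts trivially on $\mathfrak{k}_\bC$, hence $a \in C(K_\bC)$ after possibly absorbing the $T$-part. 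One must be slightly careful that $C(K_\bC)$, $T$, and $A$ interact correctly: $t = at_0$ with $a \in C(K_\bC) \cap A$ and $t_0 \in T \subseteq C(K_\bC)$ (since $T$ is central in $K_\bC$? — no, $T$ is only a maximal torus of $K$, so $t_0$ need not be central), so the clean statement is rather that the $A$-component $a$ lands in $C(K_\bC)$ and then one checks $t_0 \in T \subseteq K \subseteq$ normalizer-of-$K_1$ so $\Ad(t_0)K_1 = K_1$, giving $K_2 = \Ad(t)K_1 = \Ad(a)K_1$ with $a \in C(K_\bC)$.

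\textbf{Main obstacle.} The delicate point is Step 2, specifically the claim that $kak^{-1}a^{-1} \in T$ for all $k \in K$ forces $a \in C(K_\bC)$. The subtlety is that $T$ is only a maximal torus of $K$, not central in $K$, so we cannot simply say ``$a$ commutes with $K$ modulo the center.'' The argument must use that $A$ consists of elements whose adjoint action has positive real eigenvalues on the root spaces (the ``hyperbolic'' part), so that the commutator $kak^{-1}a^{-1}$ landing in the compact group $T$ — whose elements act elliptically — forces the hyperbolic distortion to vanish on all of $\operatorname{Lie}(K_\bC)$. This is essentially a refinement of the Cartan-decomposition rigidity used in Proposition~\ref{prop-Yu1}'s proof (where $H \cap \exp(\mathfrak{p}_0) = 1$), applied fiberwise over $K$. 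Once $\Ad(a)$ is seen to fix $\mathfrak{k}_\bC$ pointwise, $a \in C(K_\bC)_0 \subseteq C(K_\bC)$ follows, and the proof concludes.
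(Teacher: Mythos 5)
Your Step 1 is exactly the paper's first move: apply Proposition \ref{prop-Yu2} to the common maximal torus $T$ to obtain $t\in T_\bC$ with $K_2=\Ad(t)K_1$, and reduce (modulo $T$) to $t=a\in A$. So the setup is right.

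The gap is in Step 2, and it is a genuine one, not just a "delicate point." Your commutator derivation concludes $ktk^{-1}t^{-1}\in T$ for all $k\in K$ by appealing to the uniqueness clause of Proposition \ref{prop-Yu2}(2). But that uniqueness statement is only about elements of the fixed maximal torus $T_\bC=C_G(T)$: it says that if $t_1,t_2\in T_\bC$ and $\Ad(t_1)K_1=\Ad(t_2)K_1$, then $t_1t_2^{-1}\in T$. For a general $k\in K$, the element $ktk^{-1}$ does \emph{not} lie in $T_\bC$ (this requires $k\in N_K(T)$), so the hypothesis of the uniqueness statement is not met and the conclusion $ktk^{-1}t^{-1}\in T$ does not follow. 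What one can say from $\Ad(t^{-1}ktk^{-1})K_1=K_1$ and Proposition \ref{prop-Yu1} is only that $t^{-1}ktk^{-1}\in C(G)_0 K_1$, which does not by itself localize anything to $T$ or force $\alpha(a)=1$. So the commutator route, besides being left unfinished in your writeup, does not close as written.

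The commutator argument is also unnecessary: the paper closes Step 2 in one stroke without it. Since $K\subseteq K_2=\Ad(a)K_1$, at the Lie-algebra level $\fk\subseteq\Ad(a)\fk_1$. Write $\fk$ as in \eqref{eq-LieK1} but with the sum over a positive system $\Delta'^+$ of roots of $\fk_\bC=\mathrm{Lie}(K_\bC)$ with respect to $\ft_\bC$ (a subset of $\Delta^+$ with the same constants $a_\alpha,b_\alpha$, since $\fk\subseteq\fk_1$), and write $\Ad(a)\fk_1$ as in \eqref{eq-AdtK1}. For each $\alpha\in\Delta'^+$ the $\alpha$-piece of $\fk$ is a $2$-real-dimensional subspace of $\fg_\alpha\oplus\fg_{-\alpha}$ that must be contained in (hence equal to) the corresponding $2$-real-dimensional $\alpha$-piece of $\Ad(a)\fk_1$; comparing the two expressions forces $\alpha(a)^{-2}=1$ for every $\alpha\in\Delta'^+$. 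Since $\Ad(a)$ is then trivial on $\ft_\bC$ and on every root space of $\fk_\bC$, we get $a\in C(K_\bC)$, and $K_2=\Ad(a)K_1$ with $a\in C(K_\bC)$. I recommend replacing your Step 2 with this direct root-space comparison; it both fixes the gap and shortens the proof.
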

\begin{proof}
We use the same notations as in the proof of the last proposition.
By Proposition \ref{prop-Yu2}, there exists $t\in T_\bC$ such that $K_2=t K_1 t^{-1}$. We just need to show that $t\in C(K_\bC)$. Similar to \eqref{eq-LieK1}, we have the decomposition
\begin{eqnarray*}
\fk:={\rm Lie}(K)=\ft\bigoplus \left(\bigoplus_{\alpha\in \Delta'^+}\big(\bR(X_\alpha+a_\alpha X_{-\alpha})\oplus
\bR\mathbf{i}(X_\alpha+b_\alpha X_\alpha)\big)\right),
\end{eqnarray*}
where $\Delta'^+$ is a positive system for ${\rm Lie}(K_\bC)$ with respect to $\ft_\bC$. Because $K_1\subseteq K$, $\fk$ embeds into $\fk_1$ via the inclusion $\Delta'^+\subseteq \Delta^+$. By using the expression in \eqref{eq-AdtK1}, we see that
the Lie algebra of $K_2={\rm Ad}(t)K_1$ contains ${\rm Lie}(K)$ if and only if
$\alpha(a)^{-2}=1$ for all $\alpha\in \Delta'^+$. This holds if and only if $t\in C(K_\bC)$.

\end{proof}

\vskip 3mm

\noindent
Department of Mathematics, Purdue University, West Lafayette, IN 47907-2067

\noindent
{\it Current address:} Department of Mathematics, Rutgers University, Piscataway, NJ 08854-8019.

\noindent
{\it E-mail address:} chi.li@rutgers.edu

\vskip 2mm






\end{document}